\theoremstyle{plain}
\newtheorem{thm}{Theorem}[section]
\newtheorem{cor}[thm]{Corollary}
\newtheorem{lem}[thm]{Lemma}
\newtheorem{prop}[thm]{Proposition}
\newtheorem{intro}{Theorem}
\theoremstyle{definition}
\newtheorem{defn}[thm]{Definition}
\newtheorem{rem}[thm]{Remark}
\newtheorem{ex}[thm]{Example}
\theoremstyle{Inductive Assumption}
\newtheorem{inda}[thm]{Inductive Assumption}
\newcommand{\al}{\alpha}
\newcommand{\be}{\beta}
\newcommand{\f}{\varphi}
\newcommand{\s}{\sigma}
\newcommand{\e}{\varepsilon}
\newcommand{\dd}{\partial}
\newcommand{\g}{\gamma}
\newcommand{\la}{\lambda}
\newcommand{\G}{\Gamma}
\newcommand{\Si}{\Sigma}
\newcommand{\om}{\omega}
\newcommand{\Om}{\Omega}
\newcommand{\D}{\Delta}
\renewcommand{\k}{\kappa}
\renewcommand{\t}{\tau}
\renewcommand{\L}{\Lambda}
\newcommand{\N}{\mathbb N}
\newcommand{\R}{\mathbb R}
\newcommand{\Z}{\mathbb Z}
\newcommand{\Q}{\mathbb Q}
\newcommand{\Ø}{\emptyset}
\newcommand{\M}{\mathcal{M}_g}
\newcommand{\proj}{\textup{proj}}
\newcommand{\genus}{\textup{genus}}
\newcommand{\Map}{\textup{Map}}
\newcommand{\Cyk}{\textup{Cyc}}
\newcommand{\im}{\textup{Im}}
\newcommand{\iso}{\cong}
\newcommand{\set}[1]{\left\{#1\right\}}
\newcommand{\To}{\longrightarrow}
\newcommand{\Tto}{\Rightarrow}
\newcommand{\into}{\hookrightarrow}
\newcommand{\id}{\textup{id}}
\newcommand{\indre}[2]{\langle #1,#2 \rangle}
\newcommand{\del}{\subseteq}
\newcommand{\fra}{\setminus}
\newcommand{\rand}{\sharp \dd}
\newcommand{\tensor}{\otimes}
\newcommand{\lil}{\scriptstyle}
\newcommand{\orient}{\circlearrowleft}
\newcommand{\Rel}{\textup{Rel}}
\newcommand{\Sibar}{\overline{\Si}}
\newcommand{\Dbar}{\overline{\D}}
\newcommand{\round}[1]{\lfloor#1\rfloor}
\author{Søren K. Boldsen}
\title{Improved homological stability for the mapping class group with integral or twisted coefficients}
\date{\today}
\begin{document}
\maketitle
\begin{abstract}In this paper we prove stability results for the homology of the mapping class group of a surface. We get a stability range that is near optimal, and extend the result to twisted coefficients.
\end{abstract}
\section*{Introduction}
Let $F_{g,r}$ denote the compact oriented surface of genus $g$ with
$r$ boundary circles, and let $\G_{g,r}$ be the associated mapping
class group,
\begin{equation*}
    \G_{g,r}=\pi_{0}\text{Diff}_+(F_{g,r};\dd),
\end{equation*}
the components of the group of orientation-preserving
diffeomorphisms of $F_{g,r}$ keeping the boundary pointwise
fixed. Gluing a pair of pants onto one  or two boundary circles induce maps
\begin{equation*}
    \Si_{0,1}: \G_{g,r}\To \G_{g,r+1},\quad \Si_{1,-1}: \G_{g,r}\To \G_{g+1,r-1}
\end{equation*}
whose composite $\Si_{1,0}:=\Si_{1,-1}\circ \Si_{0,1}$ corresponds to adding to $F_{g,r}$ a genus one surface with two boundary circles. Using the mapping cone of $\Si_{i,j}$, $(i,j)=(0,1), (1,-1)$ or $(1,0)$ we get a relative homology group, which fits into the exact sequence
\begin{equation*}
   \ldots \To H_n(\Si_{i,j}\G_{g,r})\To H_n(\Si_{i,j}\G_{g,r},\G_{g,r})\To H_{n-1}(\G_{g,r})\To \ldots
\end{equation*}
Homology stability results for the mapping class group can then be derived from the vanishing the relative group (in some range).

We wish to show such a stability result for not only for trivial coefficients but also for so-called coefficients systems of a finite degree. For this, we work in Ivanov's category $\mathfrak{C}$ of marked surfaces, cf. \cite{Ivanov1} and $\S4.1$ below for details. The maps $\Si_{1,0}$ and $\Si_{0,1}$ are functors on $\mathfrak{C}$, and $\Si_{1,-1}$ is a functor on a subcategory.

A coefficient system is a functor $V$ from $\mathfrak{C}$ to the category of abelian groups without infinite division. If the functor is constant, we say $V$ has degree 0. We then define a coefficient system of degree $k$ inductively, by requiring that the maps $V(F){\To}V(\Si_{i,j}F)$ are split injective and their cokernels are coefficient systems of degree $k-1$, see Definition \ref{d:coef}. As an example, the functor $H_1(F;\Z)$ is a coefficients system of degree $1$, and its $k$th exterior power $\L^k H_1(F;\Z)$, considered in \cite{Morita1}, has degree $k$. To formulate our stability result, we consider relative homology group with coefficients in $V$,
\begin{equation*}
    Rel_n^V(\Si_{l,m}F,F)=H_n(\Si_{l,m}\G(F),\G(F); V(\Si_{l,m}F),V(F)).
\end{equation*}
These groups again fit into a long exact sequence. Our main result is
\begin{intro}For $F$ a surface of genus $g$ with at least 1 boundary component, and $V$ a coefficient system of degree $k_V$, we have
\begin{equation*}
    Rel_n^V(\Si_{1,0}F,F)=0 \text{ for } 3n\le 2g-k_V,
\end{equation*}
\begin{equation*}
    Rel_n^V(\Si_{0,1}F,F)=0 \text{ for } 3n\le 2g-k_V.
\end{equation*}
Moreover, if $F$ has at least 2 boundary components, we have
\begin{equation*}
    Rel_q^V(\Si_{1,-1}F,F)=0 \text{ for } 3q\le 2g-k_V+1.
\end{equation*}
\end{intro}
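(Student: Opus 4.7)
The plan is to prove the three vanishing statements simultaneously by a double induction: the outer parameter is the quantity $2g-k_V$ and the inner parameter is the homological degree $n$. The base case $k_V=0$ of constant coefficients recovers the classical Harer/Ivanov stability theorem (in its sharpened $2/3$-slope version), and the ranges are vacuous when $2g-k_V<3$, providing the start of the induction.

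For the inductive step I would follow Ivanov's arc-complex method. To each surface $F$ I associate a semi-simplicial set $\mathcal{A}_\bullet(F)$ of isotopy classes of tuples of disjoint non-separating arcs, adapted to the particular stabilization map: arcs beginning and ending on the distinguished boundary for $\Si_{0,1}$, arcs joining two distinguished boundary components for $\Si_{1,-1}$, and a combination for $\Si_{1,0}$. The key geometric input is that the geometric realizations of these complexes are highly connected (Harer, Hatcher--Vogtmann, Wahl), with connectivity roughly $\lfloor g/2\rfloor$, which is what is needed to produce the $2/3$ slope. The arc complex relevant to $\Si_{1,-1}$ is one step more connected than the others, accounting for the ``$+1$'' in the third range.

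The action of the mapping class group on $|\mathcal{A}_\bullet(\Si_{i,j}F)|$ produces a Quillen-type spectral sequence
\begin{equation*}
    E^1_{p,q}=\bigoplus_{[\sigma]}H_q\bigl(\mathrm{Stab}(\sigma);V(\Si_{i,j}F)\bigr)\Longrightarrow H_{p+q}\bigl(\Si_{i,j}\G(F);V(\Si_{i,j}F)\bigr),
\end{equation*}
and analogously for $F$ itself. Shapiro's lemma identifies each stabilizer term with the homology of the mapping class group of the cut surface $F_\sigma$ (of smaller genus) acting on the restriction of $V$. The two spectral sequences can then be assembled into one converging to $\Rel_*^V(\Si_{i,j}F,F)$, whose $E^1$-columns are controlled by the inductive hypothesis applied to $F_\sigma$. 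The degree-$k_V$ machinery contributes a filtration of the restricted coefficient system whose subquotients are coefficient systems of strictly smaller degree on the smaller surface, so the outer induction on $2g-k_V$ closes: each stabilizer column is treated either by the genus-reduction step (same $k_V$, smaller $g$) or by the degree-reduction step (smaller $k_V$, comparable $g$).

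The main obstacle, I expect, is the bookkeeping in this comparison argument. One has to verify that cutting along arcs drops the genus by exactly the amount required by the range; that the cokernel filtration arising from the splittings $V(F)\hookrightarrow V(\Si_{i,j}F)$ truly yields degree-$(k_V-1)$ coefficient systems when restricted to the stabilizer subcategory; and that the extra connectivity of the $\Si_{1,-1}$ arc complex propagates cleanly through the spectral sequence to give the improved bound $3q\le 2g-k_V+1$. A related delicate point is handling $\Si_{1,0}$ without losing a unit of range compared to the factorization $\Si_{1,-1}\circ\Si_{0,1}$; this typically requires a separate connectivity estimate for a composite arc complex rather than naively combining the two stabilization ranges.
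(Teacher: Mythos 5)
Your outline captures the broad architecture — Quillen-type spectral sequence from a group action on an arc complex, reduction to stabilizers of cut surfaces, and a filtration of the coefficient system by the cokernels $\Delta_{i,j}V$ of degree $k_V-1$ — but there are two gaps that are not "bookkeeping" and would prevent the argument from closing.

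First, you misattribute the slope $2/3$ to the connectivity of the arc complex and misremember that connectivity as roughly $\lfloor g/2\rfloor$. The paper uses Harer's complexes $C_*(F;i)$ of arcs joining two fixed boundary points, which are $(2g-3+i)$-connected; this is so high that connectivity is never the bottleneck. The entire point of the argument is that a naive induction through the resulting spectral sequence only yields the classical slope-$1/2$ range (Ivanov's $g>2k$), because the inductive hypothesis does not control $E^2_{2,k-1}$ (and, for $\Si_{1,-1}$, $E^2_{3,k-2}$): the cut surfaces attached to $1$- and $2$-simplices have genus $g-1$ and $g-2$, and the composite stabilization maps back up to $\G(F)$ fail the required range by exactly one. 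To kill these groups the paper develops an extra combinatorial mechanism: the permutation labelling $P:\D_n(F;i)/\G\hookrightarrow\Si_{n+1}$, the identification of $C_*(F;i)/\G$ with a split-exact subcomplex of $\Z\Si_*$ (Lemma \ref{l:Hlignul}), and, crucially, the Dehn-twist cancellation of Proposition \ref{p:8-tal}, which makes certain face components of $d^1$ agree and hence cancel in pairs; this is what produces the surjectivity/injectivity of $d^1$ in degrees $p=2,3$ that the inductive hypothesis cannot reach (Lemmas \ref{l:E^2_2,q} and \ref{l:E^2_3,q}, and their twisted analogue \ref{p:8-tal2}). None of this appears in your proposal, so the inner induction would stall exactly one column short of what is needed.

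Second, your base case for the outer induction is circular. You propose to quote the "classical Harer/Ivanov stability theorem in its sharpened $2/3$-slope version" for $k_V=0$. There is no such classical theorem: Ivanov's published range is slope $1/2$, and Harer's slope-$2/3$ claim is in an unpublished manuscript whose key assertions the present paper explicitly declines to rely on. The $k_V=0$ case \emph{is} the paper's Theorem \ref{t:Main} and requires the full mechanism above. The paper then anchors the twisted-coefficient induction $I_{k,n}$ (on the pair $(k,n)$, not on $2g-k_V$) via Ivanov/Cohen--Madsen's weaker-range stability for large $g$ (Theorem \ref{t:IvanovCoMa}) together with a telescoping argument through $(\Si_{1,0})^d$ (Theorem \ref{t:vis_indu}) to get the range started, rather than by invoking a ready-made constant-coefficient statement.
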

As a corollary, we obtain that $H_n(\G_{g,r}; V(F_{g,r}))$ is independent of $g$ and $r$ for $3n\le 2g-k_V-2$ and $r\ge 1$. For a more precise statement, see Theorem \ref{t:abstwist}. This uses that $\Si_{0,1}$ is always injective, since the composition $\G_{g,r}\stackrel{\Si_{0,1}}{\To}\G_{g,r+1}\stackrel{\Si_{0,-1}}{\To}\G_{g,r}$ is an isomorphism, where $\Si_{0,-1}$ is the map gluing a disk onto a boundary component.

The proof of Theorem 1 with twisted coefficients uses the setup from \cite{Ivanov1}. His category of marked surfaces is slightly different from ours, since we also consider surfaces with more than one boundary component and thus get results for $\Si_{0,1}$ and $\Si_{1,-1}$.

For constant coefficients, $V=\Z$, we also consider the map $\Si_{0,-1}:\G_{g,1}\To \G_{g}$ induced by gluing a
disk onto the boundary circle, where our result is:
\begin{intro}The map
\begin{equation*}
   \Si_{0,-1}: H_k(\G_{g,1};\Z)\To H_k(\G_{g};\Z)
\end{equation*}
is surjective for $2g \ge 3k - 1$, and an isomorphism for $2g \ge 3k
+ 2$.
\end{intro}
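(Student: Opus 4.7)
The plan is to factor the capping map through the mapping class group $\G_g^1$ of $F_g$ with one marked point:
\begin{equation*}
\G_{g,1}\xrightarrow{p_1}\G_g^1\xrightarrow{p_2}\G_g.
\end{equation*}
Here $p_1$ forgets the framing at the boundary, and its kernel is the central $\Z$ generated by the boundary Dehn twist; $p_2$ forgets the marked point, and for $g\ge 2$ its kernel is $\pi_1(F_g,*)$ by the Birman exact sequence (the genus $0,1$ cases can be treated separately). Each factor carries a Serre spectral sequence which I will exploit.

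For $p_1$, the two-row Serre spectral sequence of the central extension collapses to the Gysin long exact sequence
\begin{equation*}
\cdots\to H_{k-2}(\G_g^1)\xrightarrow{\cap e} H_k(\G_g^1)\to H_k(\G_{g,1})\to H_{k-1}(\G_g^1)\to\cdots
\end{equation*}
For $p_2$, the Serre spectral sequence is
\begin{equation*}
E^2_{p,q}=H_p(\G_g;H_q(F_g;\Z))\Rightarrow H_{p+q}(\G_g^1),
\end{equation*}
supported in rows $q=0,1,2$ with $\G_g$-modules $\Z$, $H_1(F_g;\Z)$, $\Z$. Crucially, these modules correspond to coefficient systems of degrees $0,1,0$ on $\mathfrak{C}$ in the sense of the paper, so Theorem 1 applies to the homology of $\G_{g,1}$ with coefficients in them.

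The strategy is to bootstrap from Theorem 1: that theorem says $\Si_{1,0}$ induces isomorphisms on $H_k(\G_{g,1};V)$ in the range $3k\le 2g-k_V-2$ for $V$ of degree $k_V\le 1$. Comparing the Gysin sequences for $g$ and $g+1$ via $\Si_{1,0}$ and applying the five lemma inductively in $k$, I transfer this to analogous twisted stability for $\G_g^1$. Plugging the resulting stability of the $E^2$-entries $H_p(\G_g;H_q(F_g;\Z))$ into the Birman spectral sequence, and comparing with its counterpart for $g+1$ via the evident stabilization $\G_g^1\to\G_{g+1}^1$ (obtained from $\Si_{1,0}$ with a compatibly chosen marked point), a spectral sequence comparison shows that the edge map $H_*(\G_g^1)\to H_*(\G_g)$ is surjective, respectively an isomorphism, in the stable range. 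Composing with the analogous statements for $(p_1)_*$ yields the claimed ranges for $\Si_{0,-1}=(p_2)_*\circ(p_1)_*$.

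The main obstacle is controlling the differentials---cap-with-Euler-class in the Gysin sequence, and in the Birman spectral sequence $d^2\colon E^2_{p,1}\to E^2_{p-2,2}$ (essentially cap with the Euler class of $TF_g$) together with any higher differentials into the bottom row---in the stable range. The compatibility of both spectral sequences with the stabilization maps, together with the twisted stability provided by Theorem 1, is what forces these subquotients to be trivial on the relevant filtration; tight bookkeeping of the ranges at each stage is required to recover the sharp bounds $2g\ge 3k-1$ for surjectivity and $2g\ge 3k+2$ for isomorphism.
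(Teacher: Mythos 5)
Your proposal takes a genuinely different route from the paper (which compares the $\G_{g,1}$- and $\G_g$-actions on the complex $D_*(F)$ of embedded circles, applies Moore's comparison theorem, and analyses the stabilizers $\G_\al$ via the two exact sequences reducing them to mapping class groups of cut surfaces with boundary, where Theorem \ref{t:Main} applies). However, the proposed factorization $\G_{g,1}\xrightarrow{p_1}\G_g^1\xrightarrow{p_2}\G_g$ contains a fundamental gap: \emph{neither} $(p_1)_*$ nor $(p_2)_*$ is individually a stable surjection, let alone an isomorphism, so the final step, ``composing with the analogous statements for $(p_1)_*$,'' cannot work.

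Concretely, for $p_1$ the Gysin sequence you write down is
\begin{equation*}
\cdots \to H_n(\G_{g,1}) \xrightarrow{(p_1)_*} H_n(\G_g^1) \xrightarrow{\cap e} H_{n-2}(\G_g^1) \to H_{n-1}(\G_{g,1}) \to \cdots,
\end{equation*}
where $e$ is the Euler class of the vertical tangent bundle at the marked point. Stably $H^*(\G_g^1;\Q)\iso\Q[\k_1,\k_2,\ldots][e]$, so $\cup e$ is injective on cohomology and dually $\cap e$ is surjective on homology. Hence the cokernel of $(p_1)_*$ is all of $H_{n-2}(\G_g^1)$, and $(p_1)_*$ fails to be surjective as soon as $H_{n-2}(\G_g^1)\ne 0$ (already for $n=2$, where $H_2(\G_g^1)\iso\Z^2$ versus $H_2(\G_{g,1})\iso\Z$). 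Dually, $(p_2)_*$ acquires a nonzero kernel from the $q=2$ row of the Birman spectral sequence, whose $E^2$-entries $H_{*}(\G_g;H_2(F_g))\iso H_*(\G_g;\Z)$ are certainly not zero, so the edge map $H_*(\G_g^1)\to H_*(\G_g)$ is not an isomorphism either. The failures of $(p_1)_*$ and $(p_2)_*$ do cancel in the composite---this is precisely what makes Theorem 2 nontrivial---but showing that cancellation requires coupling the Gysin sequence to the $d^2$ of the Birman spectral sequence (cap with the Euler class of $TF_g$), not treating the two maps independently. There is also a secondary issue: your comparison of Birman spectral sequences ``for $g$ and $g+1$'' implicitly invokes a stabilization $\G_g\to\G_{g+1}$ on the base, which does not exist for closed surfaces; that is again the whole content of Theorem 2. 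The paper sidesteps all of this by never passing through $\G_g^1$: both $\G_{g,1}$ and $\G_g$ act on the same circle complex, Moore's comparison theorem transfers the question to the stabilizers, and the extensions \eqref{e:exact}, \eqref{e:exact2} reduce those to boundary mapping class groups covered by Theorem \ref{t:Main}.
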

The proof of Theorem 2 follows \cite{Ivanov1}, where a stability
result for closed surfaces is deduced from a stability theorem on
surfaces with boundary. We get an improved result, because Theorem 1
has a better bound than Ivanov's stability theorem (which has
isomorphism for $g>2k$).

In this paper, we first prove Theorem 1 for constant integral coefficients, $V=\Z$. Our proof of Theorem 1 in this case is much inspired by Harer's manuscript \cite{Harer2}, which was never published. Harer's manuscript is about rational homology stability. The rational stability results claimed in \cite{Harer2} are ''one degree better'' than what is obtained here with integral coefficients. Before discussing the discrepancy it is convenient to compare the stability with Faber's conjecture.

Let $\M$ be Riemann's moduli space; recall that $H^*(\M;\Q) \iso
H^*(\G_g;\Q)$. From above we have maps
\begin{equation*}
    H^*(\G_g;\Q) \To H^*(\G_{g,1};\Q) \longleftarrow H^*(\G_{\infty,1};\Q)
\end{equation*}
and by \cite{MW},
\begin{equation}\label{e:Ib1}
    H^*(\G_{\infty,1};\Q) = \Q[\k_1,\k_2,\ldots].
\end{equation}
The classes $\k_i\in H^{2i}(\G_{g,r})$ for $r\ge 0$ are the standard classes defined by
Miller, Morita and Mumford ($\k_i$ is denoted $e_i$ by Morita).

The tautological algebra $R^*(\M)$ is the subring of $H^*(\G_g;\Q)$
generated multiplicatively by the classes $\k_i$. Faber conjectured
in \cite{Faber} the complete algebraic structure of $R^*(\M)$. Part
of the conjecture asserts that it is a Poincaré duality algebra
(Gorenstein) of formal dimension $2g-4$, and that it is
generated by $\k_1, \ldots,\k_{[g/3]}$, where $[g/3]$ denotes $g/3$ rounded down. The latter statement was
proved by Morita (cf. \cite{Morita1} prop 3.4).

It follows from our theorems above that $\k_1, \ldots, \k_{[g/3]}$
are non-zero in $H^*(\G_g;\Q)$ when $*\le 2[\frac{g}{3}]-2$. More precisely, if $g\equiv 1,2\:(\textrm{mod } 3)$
then our results show that
\begin{equation}\label{e:Ib2}
    H^*(\G_g;\Q) \iso H^*(\G_{\infty,1};\Q) \quad \text{for }*\le
    2[\textstyle\frac{g}{3}\displaystyle],
\end{equation}
but if $g\equiv 0\:(\textrm{mod } 3)$, our
result only show the isomorphism for $*\le
2[\textstyle\frac{g}{3}\displaystyle]-1$. In contrast, \cite{Harer2}
asserts the isomorphism for $*\le
2[\textstyle\frac{g}{3}\displaystyle]$ for all $g$. We note that is follows from \eqref{e:Ib1} and Morita's result that the best possible stability range for $H^*(\G_{g};\Q)$ is $*\le 2[\textstyle\frac{g}{3}\displaystyle]$. We are ''one
degree off'' when $g\equiv 0\:(\textrm{mod } 3)$.

The stability of \cite{Harer2} is based on three unproven
assertions that I have not been able to verify. I will discuss two
of them below, and the third in section \ref{sc:lemmas}.

Boundary connected sum of surfaces with non-empty boundary defines a
group homomorphism $\G_{g,r}\times \G_{h,s}\To\G_{g+h,r+s-1}$, and
hence a product in homology
\begin{equation*}
    H_*(\G_{g,r})\tensor H_*(\G_{h,s})\To H_*(\G_{g+h,r+s-1}), \quad
    r,s>0.
\end{equation*}
The classes $\k_i$ are primitive with respect to this homology
product, in the sense that $\indre{\k_i}{a\cdot b}=0$ if both $a$
and $b$ have positive degree \cite{Morita2}. Harer proves in \cite{Harer3} that $H^2(\G_{3,1};\Q)=\Q\set{\k_1}$.
Let $\check{\k}_1\in H_2(\G_{3,1};\Q)$ be the dual to $\k_1$, and
let $\check{\k}_1^{\phantom{,}n}$ be the $n$'th power under the
multiplication
\begin{equation*}
    H_2(\G_{3,1})^{\tensor n}\To H_{2n}(\G_{3n,1}).
\end{equation*}
Then $\indre{\k_1^{\phantom{,}n}}{\check{\k}_1^{\phantom{,}n}}=n!$,
so $\check{\k}_1^{\phantom{,}n}\ne 0$ in $H^{2n}(\G_{3n,1};\Q)$, cf.
part $(i)$ of Theorem 1. Dehn twist around the $(r+1)$st boundary circle yields a group homomorphism $\Z\To \G_{1, r+1}$, and hence a class $\t_{r+1}\in H_1(\G_{1,r+1})$.

We can now formulate two of Harer's three assertions one needs in order to improve the rational stability result by ''one degree'' when $g\equiv 0\:(\textrm{mod } 3)$, i.e. from $*\le 2[\frac{g}{3}]-1$ to  $*\le 2[\frac{g}{3}]$. The assertions are:
\begin{itemize}
  \item[$(i)$]$\check{\k}_1^{\phantom{,}n}=0$ in $H_{2n}(\G_{g,r};\Q)$ for $g<3n$.
  \item[$(ii)$]$\t_{r+1}\cdot\check{\k}_1^{\phantom{,}n}$ is non-zero in
  $\textrm{Coker}(H_{2n+1}(\G_{3n+1,r};\Q)\To H_{2n+1}(\G_{3n+1,r+1};\Q)$.
\end{itemize}
The third assertion one needs is stated in Remark \ref{r:third}.

\paragraph{Acknowledgements}This article is part of my ph.d. project at the University of Aarhus. It is a great pleasure to thank my thesis advisor Ib Madsen for his help and encouragement during my years as a graduate student. I am also grateful to Mia Hauge Dollerup for her help in composing this paper.

\newpage
\tableofcontents\newpage
\section{Homology of groups and spectral sequences}

\subsection{Relative homology of groups}\label{S:1,1} For a group $G$, and  $\Z[G]$-modules $M$ and $M'$, left and right modules, respectively, we
have the bar construction:
\begin{equation*}
    B_n(M',G,M)=M'\tensor(\Z[G])^{\tensor n}\tensor M,
\end{equation*}
with the differential
\begin{eqnarray*}
    d_n(m'\tensor g_1\tensor \cdots \tensor g_n \tensor m) &=& (m'g_1)\tensor g_2\tensor \cdots\tensor g_n\tensor m
    \\ &+&
    \sum_{i=1}^{n-1}(-1)^i m'\tensor g_1\tensor \cdots\tensor g_ig_{i+1}\tensor \cdots\tensor g_n\tensor m\\
     &+& (-1)^n m' \tensor g_1\tensor \cdots\tensor g_{n-1}\tensor (g_nm).
\end{eqnarray*}
If either $M$ or $M'$  are free $\Z[G]$-modules, $B_*(M',G,M)$ is
contractible. If $M'=\Z$ with trivial $G$-action, we write
$B_*(G,M)$. Then the $n$th homology group of $G$ with coefficients
in $M$ is defined to be
\begin{equation*}
   H_n(G;M)=H_n(B_*(G,M))\iso \text{Tor}_n^{\Z G}(\Z,M).
\end{equation*}
There is a relative version of this. Suppose $f:G\To H$ is a group
homomorphism and $\f:M\To N$ is an $f$-equivariant map of
$\Z[G]$-modules. One defines the relative homology $H_*(H,G;N,M)$ to
be the homology of the algebraic mapping cone of
\begin{equation*}
    (f,\f)_*: B_*(G,M)\To B_*(H,N),
\end{equation*}
so that there is a long exact sequence
\begin{equation*}
    \cdots\to H_n(G;M)\to H_n(H;N)\to H_n(H,G;M,N)\to
    H_{n-1}(G;M)\to \cdots
\end{equation*}

\subsection{Spectral sequences of group actions}\label{sc:spectral}
Suppose next that $X$ is a connected simplicial complex with a
simplicial action of $G$. Let $C_*(X)$ be the cellular chain complex
of $X$. Given a $\Z[G]$-module $M$, define the chain complex
\begin{equation}\label{e:dagger}
    C_n^\dagger(X;M)=\left\{
                       \begin{array}{ll}
                         0, & \hbox{$n<0$;} \\
                         M, & \hbox{$n=0$;} \\
                         C_{n-1}(X)\otimes_\Z M, & \hbox{$n\ge 1$;}
                                  \end{array}
                     \right.
\end{equation}
with differential $\dd_n^\dagger$ defined to be $\dd_{n-1}\tensor
\id_M$ for $n>1$, and equal to the augmentation $\e\tensor \id_M$
for $n=1$. Note if $X$ is $d$-connected for some $d\ge 1$, or more
generally, if the homology $H_i(X)=0$ for $1\le i \le d$, then
$C_*^\dagger(X;M)$ is exact for $*\le d+1$. This is used below in the spectral sequence.

Again there is a relative version. Let $f:G\To H$, $\f:M\To N$ be as
above, and let $X\del Y$ be a pair of simplicial complexes with a
simplicial action of $G$ and $H$, respectively, compatible with $f$
in the sense that the inclusion $i: X\To Y$ is $f$-equivariant.
Assume in addition that the induced map on orbits,
\begin{equation}\label{e:orbits}\xymatrix{
    i_\sharp:X/G\ar[r]^{\:\:\:\iso} &Y/H}
\end{equation}
is a bijection.

\begin{defn} With $G$, $M$ and $X$ as above, let $\s$ be a $p$-cell of $X$. Let $G_{\s}$ denote the
stabiliser of $\s$, and let $M_\s=M$, but with a twisted
$G_\s$-action, namely
\begin{equation*}
    g*m=\left\{
          \begin{array}{ll}
            gm, & \hbox{if $g$ acts orientation preservingly on $\s$;} \\
            -gm, & \hbox{otherwise.}
          \end{array}
        \right.
\end{equation*}
\end{defn}

\begin{thm}\label{s:ss}
Suppose $X$ and $Y$ are $d$- connected and that the orbit map \eqref{e:orbits} is a bijection. Then there is a spectral sequence $\set{E_{r,s}^n}_n$
converging to zero for $r+s\le d+1$, with
\begin{equation*}
    E_{r,s}^1 \iso \bigoplus_{\s\in \Bar\D_{r-1}} H_s(H_\s,G_\s; N_\s, M_\s).
\end{equation*}
Here $\Bar\D_p=\Bar\D_{p}(X)$ denotes a set of representatives for
the $G$-orbits of the $p$-simplices in $X$.
\end{thm}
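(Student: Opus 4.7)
The plan is to construct the spectral sequence from a standard equivariant double complex, identify the $E^1$-page via Shapiro's lemma with careful attention to orientation sign-twists, and obtain convergence to zero from the $d$-connectedness hypothesis. The relative version is obtained by taking algebraic mapping cones between the $G$-construction and the $H$-construction at each step.

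For the double complex, I choose free resolutions $P_*\To M$ over $\Z[G]$ and $Q_*\To N$ over $\Z[H]$, together with an $f$-equivariant chain map $\tilde\f:P_*\To Q_*$ lifting $\f$. I then form
$$A^G_{r,s} \;=\; \bigl(C_r^\dagger(X;\Z)\tensor_\Z P_s\bigr)\tensor_{\Z[G]}\Z$$
with $G$ acting diagonally on the inner tensor product, and analogously $A^H_{r,s}$ for $(H,Y,Q_*)$. The data $(i,f,\tilde\f)$ induces a map $A^G\To A^H$, and I take the total complex of its algebraic mapping cone $\Bar A$, filtered by $r$, to obtain the spectral sequence $\set{E_{r,s}^n}$.

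To identify the $E^1$-page I take vertical ($s$-)homology in the $r$-th column on the $G$-side, obtaining $H_s(G;\, C_{r-1}(X)\tensor_\Z M)$ for $r\ge 1$. Decomposing $C_{r-1}(X)$ as a $\Z[G]$-module into $\bigoplus_{\s\in\Bar\D_{r-1}}\Z[G/G_\s]_\pm$, where the subscript $\pm$ encodes the orientation action of $G_\s$ on $\s$, Shapiro's lemma yields $\bigoplus_\s H_s(G_\s;M_\s)$ with the twist exactly as in the Definition above. The same applies on the $H$-side, giving $\bigoplus_\s H_s(H_\s;N_\s)$. By the orbit bijection \eqref{e:orbits} the representatives match up, and on each orbit the induced map is the one from $G_\s\into H_\s$ and $M_\s\To N_\s$; its mapping cone is precisely $H_s(H_\s,G_\s;N_\s,M_\s)$ by the construction in Section \ref{S:1,1}.

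For convergence, I filter instead by $s$. Using the standard identification $(C_r^\dagger\tensor_\Z\Z[G])\tensor_{\Z G}\Z\iso C_r^\dagger$ coming from the diagonal action, each row $A^G_{*,s}$ is isomorphic to a direct sum of copies of $C_*^\dagger(X;\Z)$. Since $X$ is $d$-connected, $C_*^\dagger(X;\Z)$ is exact through degree $d+1$ (as noted immediately before the theorem), so the $r$-homology of each row vanishes through $r\le d+1$. Hence the total complex of $A^G_{*,*}$ is acyclic in degrees $\le d+1$; the same argument using $d$-connectedness of $Y$ handles $A^H$; and then the mapping cone $\Bar A$ is acyclic there too, forcing $E_{r,s}^\infty=0$ for $r+s\le d+1$. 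The main obstacle I expect is the sign-bookkeeping in the $E^1$ step: one has to verify that the orientation twist picked up in the Shapiro-lemma decomposition of $C_{r-1}(X)$ coincides exactly with the $\pm$-twisted $G_\s$-action defining $M_\s$, and that this twist is preserved under the inclusion $G_\s\into H_\s$ used in the relative setting (which should be automatic from $f$-equivariance of $i$, but must be checked on the nose).
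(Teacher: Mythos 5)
Your proof is correct and takes essentially the same route as the paper: both construct a double complex combining the twisted cellular chain complex $C^\dagger$ with a free resolution, take its algebraic mapping cone for the relative version, and run the two spectral sequences of the double complex---one filtration gives acyclicity of the total complex from $d$-connectedness, the other identifies $E^1$ via the orbit decomposition of $C_{r-1}(X)$ and Shapiro's lemma. The only cosmetic difference is that the paper resolves $\Z$ by the bar resolution $F_*(G)$ and forms $F_*(G)\tensor_{\Z[G]}C^\dagger_r(X;M)$, whereas you resolve $M$ (resp.\ $N$) by an arbitrary free resolution and form $(C^\dagger_r(X;\Z)\tensor_\Z P_*)\tensor_{\Z[G]}\Z$; both compute the same $\textup{Tor}^{\Z[G]}_s(\Z,\,C^\dagger_r\tensor M)$, and the sign/twist bookkeeping you flag is handled in the paper exactly by the isomorphism $\Z[G\cdot\s]\tensor_\Z M\iso\textup{Ind}_{G_\s}^G M_\s$.
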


\begin{proof}
Consider the double complex with chain groups
\begin{equation*}
    C_{n,m}= F_n(H)\otimes_{\Z[H]}C_m^\dagger(Y,N)\oplus
    F_{n-1}(G)\otimes_{\Z[G]}C_m^\dagger(X,M),
\end{equation*}
where $F_n(G)= B_n(G,\Z[G])$, and differentials (superscripts
indicate horizontal and vertical directions)
\begin{eqnarray}\label{e:dv}
  d^h_m &=& \id\otimes\dd^Y_m\oplus\id\otimes\dd^X_m \nonumber\\
  d^v_n &=&
  \dd^H_n\otimes\id\oplus \left(f_*\otimes(i,\f)_*+\dd^G_{n-1}\otimes\id\right).
\end{eqnarray}

Standard spectral sequence constructions give two spectral sequences
both converging to $H_*(\textrm{Tot}\, C)$, where $\textrm{Tot}\, C$
is the total complex of $C_{*,*}$,
$\displaystyle(\textrm{Tot}\,C)_k=\bigoplus_{n+m=k}C_{n,m}$ and
$d^{\textrm{Tot}}=d^h+d^v$. The vertical spectral sequence (induced
by $d^v$) has $E^1$ page:
\begin{eqnarray*}
  E^1_{r,s} &=& H_r(C_{s,*}) \\
    &=& H_r\left(F_s(H)\otimes_{\Z[H]}C_*^\dagger(Y;N)\right)\oplus
    H_r\left(F_{s-1}(G)\otimes_{\Z[G]}C_*^\dagger(X;M)\right).
\end{eqnarray*}
Since the resolutions $F_*$ are free, this is zero where
$C_*^\dagger(X;M)$ and $C_*^\dagger(Y;N)$ are exact, i.e. for $r\le
d+1$. So this spectral sequence converges to zero where $r+s\le
d+1$, and we conclude that $H_*(\textrm{Tot}\,C)=0$ for $*\le d+1$.

The horizontal spectral sequence, which consequently also converges
to zero in total degrees $\le d+1$, has $E^1$ page
\begin{equation}\label{e:E1}
    E^1_{r,s} = H_s\left(F_*(H)\otimes_{\Z[H]}C_r^\dagger(Y,N)\oplus
    F_{*-1}(G)\otimes_{\Z[G]}C_r^\dagger(X,M)\right).
\end{equation}
For $r\ge 1$ we have
\begin{eqnarray}\label{e:dagM}
  C_r^\dagger(X,M) &=& C_{r-1}(X)\tensor_{\Z[G]} M \iso \bigoplus_{\s\in\D_{r-1}(X)}\Z[G\cdot
\s]\tensor_{\Z[G]}
M \nonumber \\
    &\iso& \bigoplus_{\s\in\Bar\D_{r-1}}\Z[G]\tensor_{\Z[G_\s]} M_\s = \bigoplus_{\s\in\Bar\D_{r-1}}\textup{Ind}_{G_{\s}}^G M_{\s},
\end{eqnarray}
where $\D_p(X)$ denotes the $p$-cells in $X$, and where
$\Bar\D_p\del \D_p(X)$ is a set of representatives for the
$G$-orbits. Finally, $\textup{Ind}_{G_{\s}}^G
M_{\s}=\Z[G]\otimes_{\Z[G_{\s}]}M_{\s}$.

By assumption \eqref{e:orbits}, the image of $\Bar\D_{r-1}$ under
$i$ also works as representatives for the $H$-orbits of
$(r-1)$-cells in $Y$. Therefore we also have:
\begin{equation}\label{e:dagN}
    C_r^\dagger(Y,N)\iso \bigoplus_{\s\in\Bar\D_{r-1}}\textup{Ind}_{H_{\s}}^H
    N_{\s}.
\end{equation}
We insert (\ref{e:dagM}) and (\ref{e:dagN}) into the formula
(\ref{e:E1}) to get for $r\ge1$:
\begin{eqnarray}\label{e:E1rs}
   E^1_{r,s} &=& H_s\left(F_*(H)\otimes_{\Z[H]}C_r^\dagger(Y,N)\oplus
    F_{*-1}(G)\otimes_{\Z[G]}C_r^\dagger(X,M)\right)  \nonumber\\
    &\iso& H_s\left(F_*(H)\otimes_{\Z[H]}
    \bigoplus_{\s\in\Bar\D_{r-1}}\textup{Ind}_{H_{\s}}^H
    N_{\s}\oplus
    F_{*-1}(G)\otimes_{\Z[G]}\bigoplus_{\s\in\Bar\D_{r-1}}
    \textup{Ind}_{G_{\s}}^G M_{\s}\right)  \nonumber\\
    &\iso& \bigoplus_{\s\in\Bar\D_{r-1}} H_s\left(F_*(H)\otimes_{\Z[H]}
    \textup{Ind}_{H_{\s}}^H
    N_{\s} \oplus F_{*-1}(G)\otimes_{\Z[G]} \textup{Ind}_{G_{\s}}^G
    M_{\s} \right)  \nonumber\\
    &\iso& \bigoplus_{\s\in\Bar\D_{r-1}} H_s\left(F_*(H)\otimes_{\Z[H_\s]}
    N_{\s} \oplus F_{*-1}(G)\otimes_{\Z[G_\s]} M_{\s} \right)  \nonumber\\
    &\iso& \bigoplus_{\s\in\Bar\D_{r-1}} H_s(H_\s,G_\s,N_\s,M_\s).
\end{eqnarray}
The final isomorphism above uses that $F_*(H)$ is also a
$\Z[H_\s]$-module. For $r=0$,
\begin{equation*}
    E^1_{0,s}= H_s(H,G;N,M).
\end{equation*}
Thus we set $H_\s=H$ when $\s \in \Bar\D_{-1}=\{\Ø\}$.\end{proof}

For application in the proof of Theorem \ref{t:maintwist}, we need to relax the condition \eqref{e:orbits} to the situation where $i_\sharp$ is only injective:
\begin{thm}\label{s:spectralinj}With the assumptions of Theorem \ref{s:ss}, but with $i_\sharp: X/G \To
Y/H$ is only \emph{injective}, there is a
spectral sequence $\set{E_{r,s}^n}_n$ converging to zero for $r+s\le
d+1$, and
\begin{equation*}
    E_{r,s}^1 \iso \bigoplus_{\s\in \Si_{r-1}(X)} H_s(H_\s,G_\s; N_\s, M_\s)\oplus
    \bigoplus_{\s\in\G_{r-1}(Y)}H_s(H_\s,N_{\s}).
\end{equation*}
Here $\Si_{p}(X)$ denotes a set of representatives for the
$G$-orbits of the $p$-cells in $X$, and $\G_n(Y)$ denotes a set of representatives for those $H$-orbits which do not come from
$n$-cells in $X$ under $i_\sharp$.
\end{thm}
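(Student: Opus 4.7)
The plan is to re-run the double-complex argument of Theorem \ref{s:ss} essentially verbatim, with only the orbit-decomposition step adjusted to accommodate the $H$-orbits in $Y$ that do not arise from $X$. I would use the same double complex
\begin{equation*}
    C_{n,m}= F_n(H)\otimes_{\Z[H]}C_m^\dagger(Y,N)\oplus F_{n-1}(G)\otimes_{\Z[G]}C_m^\dagger(X,M)
\end{equation*}
equipped with the horizontal and vertical differentials \eqref{e:dv}. The vertical spectral sequence computation is insensitive to whether $i_\sharp$ is surjective: the bar resolutions $F_*$ are free, and $C_*^\dagger(X;M)$, $C_*^\dagger(Y;N)$ are exact in the same range, so $H_*(\textup{Tot}\,C)=0$ for $*\le d+1$, and therefore the horizontal spectral sequence also converges to zero in that range.

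The real work is in re-identifying the $E^1$ page of the horizontal spectral sequence. The decomposition \eqref{e:dagM} for $C_r^\dagger(X;M)$ as $\bigoplus_{\s\in\Si_{r-1}(X)}\textup{Ind}_{G_\s}^G M_\s$ is unchanged. For $C_r^\dagger(Y;N)$, the $H$-orbits of $(r-1)$-cells in $Y$ split into those in the image of $i_\sharp$ — which, by injectivity of $i_\sharp$, are in canonical bijection with $\Si_{r-1}(X)$ via $\s\mapsto i(\s)$ — and those not in the image, which are indexed by $\G_{r-1}(Y)$. Hence
\begin{equation*}
    C_r^\dagger(Y;N)\iso \bigoplus_{\s\in\Si_{r-1}(X)}\textup{Ind}_{H_\s}^H N_\s\;\oplus\;\bigoplus_{\s\in\G_{r-1}(Y)}\textup{Ind}_{H_\s}^H N_\s,
\end{equation*}
where for $\s\in\Si_{r-1}(X)$ we understand $H_\s$ to mean the stabiliser $H_{i(\s)}$.

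Substituting these decompositions into \eqref{e:E1} and using $f$-equivariance of $i$, the cone differential $(i,\f)_*$ sends the $\s$-summand of $C_r^\dagger(X;M)$ into the $i(\s)$-summand of $C_r^\dagger(Y;N)$ and, crucially, has no component into the $\G_{r-1}(Y)$ part. Thus the $E^1$ page splits as a direct sum indexed separately by $\Si_{r-1}(X)$ and $\G_{r-1}(Y)$. For each $\s\in\Si_{r-1}(X)$ the computation \eqref{e:E1rs} of Theorem \ref{s:ss} applies verbatim and contributes the relative group $H_s(H_\s,G_\s;N_\s,M_\s)$; for each $\s\in\G_{r-1}(Y)$ only the $Y$-side of the double complex is present, so the Shapiro/induction identification reduces the corresponding summand to the bar complex $F_*(H)\otimes_{\Z[H_\s]}N_\s$ and contributes the absolute group $H_s(H_\s;N_\s)$. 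The single step that requires genuine care is verifying that $(i,\f)_*$ respects the $\Si_{r-1}(X)/\G_{r-1}(Y)$ splitting of $C_r^\dagger(Y;N)$; this is forced by the $f$-equivariance of $i$ together with injectivity of $i_\sharp$, and it is exactly here that the hypothesis enters the proof in an essential way.
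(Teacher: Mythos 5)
Your proposal is correct and follows essentially the same route as the paper: choose the representatives $\Si_n(Y)=i(\Si_n(X))\cup\G_n(Y)$, observe that the cone differential respects this splitting, and identify the two resulting summands as relative and absolute homology respectively. The paper's proof is considerably more terse, but the content and the key observation (that the cells of $\G_n(Y)$ are not in orbit with cells from $X$, so only the $Y$-side of the double complex contributes there) are the same.
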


\begin{proof}
We can choose $\Si_n(Y)=i(\Si_n(X))\cup \G_n(Y)$. In this case we obtain:
\begin{equation*}
    E^1_{r,s}\iso \bigoplus_{\s\in\Si_{r-1}}
    H_s(H_\s,G_\s,N_\s,M_\s)\oplus
    \bigoplus_{\s\in\G_{r-1}(Y)}H_s(H_\s,N_{\s}).
\end{equation*}
The first direct sum is obtained in the same way as in the bijective
case. The second consists of absolute homology, since the cells of
$\G_n(Y)$ are not in orbit with cells from $X$.
\end{proof}

We are primarily going to use the absolute case, $Y=\emptyset$:
\begin{cor}\label{c:spectralbad}For a group $G$ acting on a $d$-connected
simplicial complex $X$, and a $G$-module $M$, there is a spectral
sequence converging to zero for $r+s \le d+1$, with
\begin{equation*}
    E^1_{r,s}= \bigoplus_{\s\in\Bar\D_{r-1}}
H_s(G_\s,M_\s),
\end{equation*}
where $\Bar\D_{r-1}$ is a set of representatives of the $G$-orbits
of $(r-1)$-cells in $X$.
\end{cor}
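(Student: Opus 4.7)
The corollary is the absolute specialisation of Theorem \ref{s:spectralinj}. The plan is to apply that theorem with the source data trivialised: take the triple $(G,M,X)$ appearing there to be the degenerate $(1,0,\emptyset)$, and relabel its $(H,N,Y)$ as the $(G,M,X)$ of the corollary. The induced orbit map $\emptyset \to X/G$ is vacuously injective, so the hypotheses hold; $\Si_{r-1}(\emptyset)=\emptyset$ kills the first summand of the $E^1$-term, and the second summand, running over all $G$-orbit representatives of $(r-1)$-cells in $X$, produces exactly $\bigoplus_{\s\in\Bar\D_{r-1}} H_s(G_\s;M_\s)$. Convergence to zero for $r+s\le d+1$ is immediate from the $d$-connectedness of $X$, since $C_*^\dagger(\emptyset;0)\equiv 0$ imposes no further condition on the empty source.

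Equivalently, and more transparently, one can re-run the bicomplex argument of Theorem \ref{s:ss} using the simpler single-summand double complex
\begin{equation*}
C_{n,m} = F_n(G)\otimes_{\Z[G]} C_m^\dagger(X;M),
\end{equation*}
with horizontal differential $\id\otimes \dd^X$ (and augmentation in horizontal degree one) and vertical differential $\dd^G\otimes \id$. Both induced spectral sequences behave exactly as in the original proof: one collapses because $F_*(G)$ is a free resolution and $C_*^\dagger(X;M)$ is exact in degrees $\le d+1$, so $H_*(\textrm{Tot}\, C)=0$ in that range; the other, via the orbit decomposition \eqref{e:dagM} and Shapiro's lemma $F_*(G)\otimes_{\Z[G]}\textup{Ind}_{G_\s}^G M_\s \iso F_*(G)\otimes_{\Z[G_\s]} M_\s$, identifies $E^1_{r,s}$ for $r\ge 1$ as $\bigoplus_{\s\in\Bar\D_{r-1}} H_s(G_\s;M_\s)$. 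The edge row $r=0$ contributes $H_s(G;M)$, which the statement absorbs via the convention $\Bar\D_{-1}=\{\emptyset\}$, $G_\emptyset=G$, $M_\emptyset=M$, exactly as at the end of the proof of Theorem \ref{s:ss}.

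There is no substantive obstacle here: every ingredient has already been assembled in the proofs of Theorems \ref{s:ss} and \ref{s:spectralinj}. The only points that need care are the treatment of the $r=0$ edge and the twisted $G_\s$-action on $M_\s$, both of which are handled automatically by the conventions set up in the preceding theorem.
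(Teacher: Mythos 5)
Your proof is correct and takes the route the paper intends: the corollary is simply the absolute specialisation of the relative Theorems \ref{s:ss} and \ref{s:spectralinj}, which your first paragraph makes precise by trivialising the source triple to $(1,0,\emptyset)$ and relabelling $(H,N,Y)$ as the corollary's $(G,M,X)$ (note that the paper's phrase ``the absolute case, $Y=\emptyset$'' is a slip, since $X\del Y$ would then force $X=\emptyset$ as well; what is meant is precisely the trivialisation of the source that you use). You also rightly observe that the $d$-connectedness hypothesis on an empty $X$ is harmless because $C_*^\dagger(\emptyset;0)\equiv 0$, and the direct bicomplex argument in your second paragraph is an equivalent, self-contained alternative.
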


In our applications, we often have a rotation-free group action, in
the following sense:

\begin{defn}\label{d:rotationfree}
A simplicial group action of $G$ on $X$ is rotation-free if for each
simplex $\s$ of $X$, the elements of $G_\s$ fixes $\s$ pointwise.
\end{defn}

\begin{cor}\label{c:spectral} For rotation-free actions, the spectral sequence of Thm.
\ref{s:ss} takes the form:
\begin{equation*}
   E^1_{r,s}\iso \bigoplus_{\s\in\Bar\D_{r-1}} H_s(H_\s,G_\s,N,M)
\end{equation*}
in the relative case, and
\begin{equation*}
    E^1_{r,s}\iso \bigoplus_{\s\in\Bar\D_{r-1}} H_s(G_\s,M)
\end{equation*}
in the absolute case.
\end{cor}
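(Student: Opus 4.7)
The plan is to unwind the definition of $M_\s$ (and $N_\s$) under the rotation-free hypothesis and observe that the twist disappears, so that Theorem \ref{s:ss} and Corollary \ref{c:spectralbad} specialize to the stated formulas.

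Concretely, recall that by Definition \ref{d:rotationfree}, every $g\in G_\s$ fixes the simplex $\s$ pointwise. A simplicial map that fixes $\s$ pointwise is in particular the identity on $\s$, and the identity is orientation-preserving. Therefore, for each $g\in G_\s$ and each $m\in M$, we are always in the first case of the definition of the twisted action, namely $g*m=gm$. This means that $M_\s=M$ as a $G_\s$-module, with no sign twist. The same argument, applied to $H_\s$ acting on $\s$ (viewed as a simplex of $Y$ via the inclusion $i$, using that $H_\s\supseteq G_\s$ still fixes $\s$ pointwise if we assume the ambient action of $H$ is rotation-free), gives $N_\s=N$ as an $H_\s$-module.

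With these identifications, the $E^1$-page of the spectral sequence from Theorem \ref{s:ss} reads
\begin{equation*}
    E^1_{r,s} \iso \bigoplus_{\s\in\Bar\D_{r-1}} H_s(H_\s,G_\s;N_\s,M_\s) \iso \bigoplus_{\s\in\Bar\D_{r-1}} H_s(H_\s,G_\s;N,M),
\end{equation*}
which is the first claim. In the absolute case $Y=\emptyset$, applying the same observation to Corollary \ref{c:spectralbad} yields
\begin{equation*}
    E^1_{r,s} \iso \bigoplus_{\s\in\Bar\D_{r-1}} H_s(G_\s,M_\s) \iso \bigoplus_{\s\in\Bar\D_{r-1}} H_s(G_\s,M),
\end{equation*}
proving the second claim.

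There is really no obstacle here beyond verifying that pointwise fixation forces orientation-preservation on the simplex; the content of the corollary is simply to record that rotation-freeness eliminates the sign twists in the coefficient modules appearing on the $E^1$-page.
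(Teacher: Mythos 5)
Your proof is correct and takes essentially the same approach as the paper: both argue that pointwise fixation of $\s$ by $G_\s$ (and $H_\s$) forces orientation preservation on $\s$, so the twist in $M_\s$ (and $N_\s$) disappears and the coefficient modules reduce to $M$ and $N$. The only cosmetic difference is that you explicitly flag the need for the $H$-action to be rotation-free as well in the relative case, which is implicit in the paper's phrasing.
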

\begin{proof}
The extra assumption implies that each $g\in G_\s$ preserves the
orientation of $\s$. Thus $g$ acts on $M_\s$ in the same way as on
$M$, so $M_\s$ and $M$ are identical as $G_\s$-modules. The same applies to $N$.
\end{proof}

\begin{rem}\label{b:spectral}In some of our applications of the
absolute version of the spectral sequence, $G$ acts both transitively and
rotation-freely on the $n$-simplices of $X$. In this case there is
only one $G$-orbit, so we get
\begin{equation*}
    E^1_{r,s}\iso H_s(G_\s;M),
\end{equation*}
where $\s$ is any $(r-1)$-cell in $X$.
\end{rem}

\subsection{The first differential}\label{sc:diff} We will need a
formula for the first differential $d^1_{r,s}:E^1_{r,s}\To
E^1_{r-1,s}$. From the construction of the spectral sequences of a
double complex, $d^1$ is induced from the vertical differentials
$d^v$ on homology. In the absolute version of the spectral sequence,
assuming that $G$ acts rotation-freely on $X$,
\begin{equation*}
    E^1_{r,s}\iso \bigoplus_{\s\in\Bar\D_{r-1}} H_s(G_\s,M).
\end{equation*}
and it is not hard to se that the differential
\begin{equation*}
   d^1_{r,s}:\bigoplus_{\s\in\Bar\D_{r-1}} H_s(G_\s,M) \To \bigoplus_{\tau\in\Bar\D_{r-2}}
   H_s(G_\tau,M).
\end{equation*}
has the following description (see e.g. \cite{Brown}, Chapter VII,
Prop 8.1.) Let $\s$ be an $(r-1)$-simplex of $X$ and $\tau$ an
$(r-2)$-dimensional face of $\s$. We have the boundary operator
\begin{equation*}
    \dd: C_{r-1}(X,M)\To C_{r-2}(X,M)
\end{equation*}
and we denote its $(\s,\tau)$th component by $\dd_{\s\tau}:M\To M$.
This is a $G_{\s}$-map, so together with the inclusion $G_\s\To
G_\tau$ it induces a map
\begin{equation*}
    u_{\s\tau}:H_*(G_\s,M)\To H_*(G_\tau,M).
\end{equation*}
Up to a sign $u_{\s\tau}$ is the inclusion, because $X$ is a simplicial complex. Consequently
\begin{equation*}
\dd(\s)=\sum_{j=0}^{r-1}(-1)^j(j\text{th face of }\s).
\end{equation*}
So if $\tau$ is the $i$th face of $\s$, then $u_{\s\tau}=(-1)^i$.
For $\s\in\Bar\D_{r-1}$, we cannot be sure that
$\tau\in\Bar\D_{r-2}$, but there is a $g(\tau)\in G$ such that
$g(\tau)\tau=\tau_0\in \Bar\D_{r-2}$. The conjugation, $g\mapsto
g(\tau)gg(\tau)^{-1}$, induces a map from $G_\tau$ to $G_{\tau_0}$
and hence an isomorphism,
\begin{equation*}
    c_{g(\tau)}:H_*(G_\tau,M)\stackrel{\iso}{\To} H_*(G_{\tau_0},M).
\end{equation*}
Now $d^1$ is given by
\begin{equation}\label{e:d1}
   d^1\mid_{H_*(G_\s,M)}=\sum_{\tau \text{ face of
   }\s}u_{\s\tau}c_{g(\tau)}.
\end{equation}
Denoting the $i$th face of $\s$ by $\tau_i$, this can be written:
\begin{equation}\label{e:d1bedre}
   d^1|_{H_*(G_\s,M)}=\sum_{i=0}^{r-1}(-1)^i c_{g(\tau_i)}.
\end{equation}

\section{Arc complexes and permutations}
We write $F_{g,r}$ for a compact oriented surface of genus $g$ with
$r$ boundary components.
\begin{defn}Let $F$ be a surface with boundary. The mapping class group
\begin{equation*}
\G(F)=\pi_0(\textup{Diff}_+(F,\dd F))
\end{equation*}
is the connected components of the group of orientation-preserving
diffeomorphisms which are the identity on a small collar
neighborhood of the boundary. We write $\G_{g,r}=\G(F_{g,r})$.
\end{defn}
To establish stability results about the homology of $\G_{g,r}$, we
will make extensive use of cutting along arcs in $F_{g,r}$. These
arcs will be the vertices in simplicial complexes, the so-called arc
complexes. The mapping class group act on these arc complexes, and
we can use the spectral sequences of section \ref{sc:spectral}. The
differentials in the spectral sequences are closely related to the
homomorphisms of Theorem 1 and Theorem 2 from the introduction.

\subsection{Definitions and basic properties}\label{sc:arc}
Let $F$ be a surface with boundary. To define the ordering of the
vertices used in the arc complexes, we will need the orientation of
$\dd F$. An orientation at a point $p\in\dd F$ is determined
by a tangent vector $v_p$ to the boundary circle at $p$. Let $w_p$ be tangent to $F$ at $p$, perpendicular to $v_p$ and
pointing into $F$. We call the orientation of $\dd F$ at $p$
determined by $v_p$ \emph{incoming} if the pair $(v_p,w_p)$ is
positively oriented, and \emph{outgoing} if $(v_p,w_p)$ is
negatively oriented, and use the same terminology for the connected
component of $\dd F$ that contains $p$.

\begin{defn}\label{d:arc}Given a surface $F$ with non-empty
boundary. Fix two points $b_0$ and $b_1$ in $\dd F$. If $b_0$ and
$b_1$ are on the same boundary component, the arc complex we define
is denoted $C_*(F,1)$. If $b_0$ and $b_1$ are on two different
boundary components of $F$, the resulting arc complex is denoted
$C_*(F;2)$.
\begin{itemize}
\item \noindent A \emph{vertex} of $C_*(F;i)$ is the isotopy class rel endpoints of an
arc (image of a curve) in $F$ starting in $b_0$ and ending in $b_1$,
which has a representative that meets $\dd F$ transversally and only
in $b_0$ and $b_1$.
  \item An \emph{$n$-simplex} $\al$ in $C_*(F;i)$ (called an arc simplex) is set of $n+1$ vertices, such that there are
representatives meeting each other transversally in $b_0$ and $b_1$
and not intersecting each other away from these two points. We
further require that the complement of the $n+1$ arcs be connected.

The set of arcs is ordered by using the incoming orientation of $\dd
F$ at the starting point $b_0$, and we write
$\al=(\al_0,\ldots,\al_n)$.

  \item Let $\D_n(F;i)$ denote the set of
$n$-simplices, and let $C_*(F,i)$ be the chain complex with chain
groups $C_n(F;i)=\Z\D_n(F;i)$ and differentials
  $d: C_n(F;i)\To C_{n-1}(F;i)$ given by:
  \begin{equation*}
  d(\al)=\sum_{j=1}^n(-1)^j\dd_j(\al), \text{ where } \dd_j(\al)=(\al_0,\ldots,\widehat{\al}_j,\ldots,\al_n).
  \end{equation*}
\end{itemize}
\end{defn}
The mapping class group $\G(F)$ acts on $\D_n(F;i)$ (by acting on
the $n+1$ arcs representing an $n$-simplex), and thus on $C_n(F;i)$.
This action is obviously compatible with the differentials $d:
C_n(F;i)\To C_{n-1}(F;i)$, so we can consider the quotient complex
with chain groups $C_n(F;i)/\G(F)$.

To apply the spectral sequence of the action of $\G_{g,r}$ on
$C_*(F_{g,r};i)$, we need to know that the complex is
highly-connected:
\begin{thm}[\cite{Harer1}]\label{s:connected}The chain complex $C_*(F_{g,r};i)$ is
$(2g-3+i)$-connected.
\end{thm}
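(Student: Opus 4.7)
I would prove this by double induction on the genus $g$ and the number of boundary components $r$, treating the cases $i=1$ and $i=2$ simultaneously. Since $C_*(F;i)$ is a simplicial complex, it suffices to show that any simplicial map $f:S^n\to C_*(F;i)$ with $n\le 2g-3+i$ extends to the $(n+1)$-ball, after a suitable subdivision of $S^n$. The strategy is the now-standard ``star–link'' surgery argument: fix a reference vertex $\alpha_0\in C_*(F;i)$; since $\textup{St}(\alpha_0)$ is a simplicial cone, it is contractible, so it is enough to homotope $f$ into $\textup{St}(\alpha_0)$ vertex by vertex. The link of $\alpha_0$ is naturally identified with the arc complex on the cut surface $F\fra \alpha_0$, which has strictly smaller complexity than $F$, so the inductive hypothesis supplies the connectivity needed to fill in the local modifications.

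The base cases are low-genus surfaces: for $g=0$ and $i=1$ the asserted connectivity is $-2$, which is vacuous, while for $g=0$, $i=2$ one checks by hand that $C_*(F_{0,r};2)$ is nonempty; for $g=1$ one checks that the complex is path-connected via direct surgery on pairs of arcs. For the inductive step, after making $f:S^n\to C_*(F;i)$ simplicial I would analyse the vertices $v$ of the triangulation whose image $f(v)=\be$ meets $\alpha_0$ transversally in interior points. Pick an innermost such intersection on $\alpha_0$; resolving it by cut-and-paste yields two new arcs $\be'$, $\be''$ each having strictly fewer intersections with $\alpha_0$ than $\be$, each disjoint from $\be$, and with $\{\alpha_0,\be',\be''\}$ forming a simplex of $C_*(F;i)$ whenever the complementary region is connected (which can be arranged by choosing the surgery appropriately). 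This gives a local replacement rule pushing $\be$ towards the star of $\alpha_0$.

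To globalise the local surgery I would argue as follows. Consider the full subcomplex $C^{\alpha_0}_*(F;i)\subseteq C_*(F;i)$ spanned by arcs disjoint from $\alpha_0$; this subcomplex is the join of $\{\alpha_0\}$ with $\textup{Lk}(\alpha_0)$, so it is contractible. Order the vertices of $S^n$ by their geometric intersection number $|\be\cap\alpha_0|$ and proceed by induction on the maximum intersection number $N$ over the vertex set. If $N=0$ we are inside $C^{\alpha_0}_*(F;i)$ and done. Otherwise pick a vertex $v$ with $f(v)=\be$ realising $N$; the star of $v$ in $S^n$ maps into the star of $\be$ in $C_*(F;i)$, and I want to replace this data by a cone whose apex is an arc with fewer intersections with $\alpha_0$. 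The surgered arcs $\be',\be''$ above are such candidates, and the obstruction to performing the replacement is exactly the connectivity of the link of $\be$ intersected with the subcomplex of arcs disjoint from $\be$ but with strictly fewer intersections with $\alpha_0$. That link is, up to the relevant identification, an arc complex on a surface obtained from $F$ by cutting along $\be$, hence covered by the induction on genus and boundary count.

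The hard part, and the one where the proof really earns the bound $2g-3+i$, will be bookkeeping of the cut surface $F\fra\be$: depending on whether $\be$ is separating or non-separating and on the positions of $b_0, b_1$ on $\dd F$, one gets a surface (or a pair of surfaces joined at the cut) with parameters $(g',r',i')$ satisfying $2g'-3+i' \ge 2g-3+i -1$, which is precisely what is needed for the inductive hypothesis to cover the dimension range. Verifying this numerical miracle in every case, and ensuring that the surgery produces an honest simplex (the complement of the three arcs $\alpha_0,\be',\be''$ remains connected), is the main technical obstacle. Once the inductive step is in place, the conclusion that $C_*(F_{g,r};i)$ is $(2g-3+i)$-connected follows by standard simplicial approximation.
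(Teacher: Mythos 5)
The paper gives no proof of this statement; it is quoted directly from Harer's 1985 paper \cite{Harer1}. So the relevant comparison is with Harer's argument (and with the later streamlined arguments of Hatcher and of Wahl), not with anything in this paper. Your sketch is recognisably the modern surgery/flow approach. This is genuinely different from Harer's original route, which proves contractibility of the \emph{full} arc complex (without the connected-complement restriction) and then deduces the connectivity of the non-separating subcomplex by a separate homotopy-theoretic argument.

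There is, however, a genuine gap in your sketch, and it sits exactly where the theorem becomes non-trivial: the requirement in Definition \ref{d:arc} that the complement of the arcs in a simplex be connected. This condition is not preserved by naive surgery, and it also breaks the cone structure you rely on. Concretely: (a) the star of $\alpha_0$ is \emph{not} the join of $\alpha_0$ with the full subcomplex of arcs disjoint from $\alpha_0$ --- two arcs disjoint from $\alpha_0$ can span a simplex of $C_*(F;i)$ and yet fail to span one together with $\alpha_0$, because adjoining $\alpha_0$ can disconnect the complement. So the subcomplex $C^{\alpha_0}_*$ you introduce is not visibly contractible, and the assertion that it equals $\{\alpha_0\}\ast\mathrm{Lk}(\alpha_0)$ is false in general. (b) The surgered arcs $\beta',\beta''$ may have disconnected complement, i.e. fail to be vertices of $C_*(F;i)$ at all; you dismiss this with ``which can be arranged by choosing the surgery appropriately,'' but no argument is given, and this is precisely where the bound $2g-3+i$ is earned. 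The standard remedy (in Harer, Hatcher, and in Hatcher--Wahl / Randal-Williams--Wahl) is a two-step strategy: first establish contractibility or high connectivity of the larger complex with the connectivity condition dropped, and then transfer it to the non-separating subcomplex by a ``bad simplex'' or long-exact-sequence argument. Your sketch does not separate these two steps, so as written the induction does not close; you have correctly flagged the numerical bookkeeping as hard, but the structural issue with the non-separating condition is the deeper obstruction and is not addressed.
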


\begin{defn}\label{d:N}Given an arc simplex $\al$ in
$C_*(F;i)$, we denote by $N(\al)$ the union of a small, open normal
neighborhood of $\al$ with an open collar neighborhood of the
boundary component(s) of $F$ containing $b_0$ and $b_1$. Then the
cut surface $F_\al$ is given by
\begin{equation*}
    F_\al = F\fra N(\al).
\end{equation*}
\end{defn}
For a surface $S$, let $\sharp \dd S$ denote the number of boundary
components of $S$. Then we have the following
\begin{equation}\label{e:boundary}
    \sharp \dd(F_\al)=\sharp\dd N(\al)+r-2i.
\end{equation}

\begin{lem}\label{l:cut}
    Given an $n$-simplex $\al$ in $C_*(F;i)$, the Euler
characteristic of the cut surface $F_ \al$ is
\begin{equation*}
    \chi(F_\al)=\chi(F)+n+1
\end{equation*}
\end{lem}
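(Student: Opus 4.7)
The plan is to reduce the claim to the standard fact that cutting a surface along a properly embedded arc with distinct endpoints raises the Euler characteristic by one, and then to iterate this cutting operation $n+1$ times.

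First I would shrink the one or two boundary components of $F$ containing $b_0$ and $b_1$ slightly inward, passing to the surface $F'$ obtained by removing a small open collar of these components; since $F'$ deformation retracts onto $F$, one has $\chi(F')=\chi(F)$, and $F_\al$ is canonically identified with $F'$ cut along the images of the arcs. The arcs $\al_0,\ldots,\al_n$ in the simplex meet $\dd F$ only at $b_0$ and $b_1$, and by the pairwise transversality requirement of Definition~\ref{d:arc} all $n+1$ incoming tangent directions at $b_0$ are distinct, and similarly at $b_1$. Pushing the endpoints radially along the collar therefore turns $\al_0,\ldots,\al_n$ into $n+1$ pairwise disjoint properly embedded arcs in $F'$ whose $2(n+1)$ endpoints are distinct points of $\dd F'$.

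Next I would invoke the one-arc cutting lemma: if $\g$ is a properly embedded arc in a surface $S$ with endpoints in the interior of $\dd S$ and $N(\g)$ is an open tubular neighborhood, then $\chi(S\fra N(\g))=\chi(S)+1$. The quickest justification is Mayer--Vietoris applied to $S=(S\fra N(\g))\cup \overline{N(\g)}$: the closed neighborhood $\overline{N(\g)}$ is a rectangle with $\chi=1$, the overlap consists of two disjoint arcs with total $\chi=2$, and additivity of $\chi$ gives the result. Iterating this lemma once for each of the $n+1$ disjoint arcs (which remain disjoint, properly embedded, and with disjoint endpoints after each cut) produces
\begin{equation*}
    \chi(F_\al)=\chi(F')+(n+1)=\chi(F)+n+1.
\end{equation*}

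The only genuine geometric step is the initial reduction to disjointness: one must verify that pairwise transversality at the shared endpoints combined with disjointness elsewhere really does yield $n+1$ distinct tangent directions at each of $b_0$ and $b_1$, so that the collar push separates all endpoints simultaneously. Once this is granted, the rest is a routine Mayer--Vietoris bookkeeping, and I note that the connectedness clause in the definition of an $n$-simplex plays no role in the Euler characteristic calculation, only in controlling the topology of $F_\al$ as a surface.
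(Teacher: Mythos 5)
Your proof is correct and follows essentially the same iterative scheme as the paper: cut along one arc at a time and verify that each cut raises $\chi$ by one. The paper justifies the one-arc step by giving $F$ a CW structure with $\al_0$ as a $1$-cell between the $0$-cells $b_0$ and $b_1$ and observing that the cut adds one $1$-cell and two $0$-cells; you instead first pass to the collar-shrunk surface $F'$ so the arcs become properly embedded with distinct endpoints, and then apply Mayer--Vietoris to the decomposition into $S\fra N(\g)$ and a rectangle. Your version is a touch more careful about the fact that the arcs share endpoints (the collar push makes the disjointness explicit before iterating), at the cost of an extra reduction step; both are valid and essentially interchangeable.
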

\begin{proof}
    We prove the formula inductively by removing one arc $\al_0$ at
a time, so it suffices to show that $\chi(F_{\al_0})=\chi(F)+1$.
Give $F$ the structure of a CW complex with $\al_0$ as a $1$-cell
(glued onto the $0$-cells $b_0$ and $b_1$). When we cut along
$\al_0$, we get two copies of $\al_0$; that is, an additional
$1$-cell and two additional $0$-cells. Using the standard formula
for the Euler characteristic of a CW complex, we see that it
increases by $1$.
\end{proof}

\subsection{Permutations}\label{s:permu}
Let $\Si_{n+1}$ denote the group of permutations of the set
$\set{0,1,\ldots,n}$. I will write a permutation $\s\in\Si_n$ as
$\s=\left[\s(0)\,\s(1)\,\ldots\, \s(n)\right]$; e.g. $[0\,2\,1]$ in
$\Si_3$ is the permutation fixing $0$ and interchanging $1$ and $2$.

To each $n$-arc simplex $\al$ in one of the arc complexes $C_*(F;i)$
we assign a permutation $P(\al)$ in $\Si_{n+1}$ as follows: Recall
that the arcs in $\al=(\al_0,\al_1,\ldots, \al_n)$ are ordered using
the incoming orientation of $\dd F$ at the starting point $b_0$. We
use the \emph{outgoing} orientation in the end point $b_1$ to read
off the positions of the $n+1$ arcs at $b_1$: $\al_{j}$ is the
$\s(j)$'th arc at $b_1$, for $j=0,\ldots, n$. In other words, the
arcs at $b_1$ will be ordered
$(\al_{\s^{-1}(0)},\al_{\s^{-1}(1)},\ldots, \al_{\s^{-1}(n)})$. This
gives the permutation $\s=P(\al)$. See Example \ref{ex:perm} below.

So we have a map $P:\D_n(F;i)\To \Si_{n+1}$. Since $\g\in\G(F)$
keeps a small neighborhood of $\dd F$ fixed, this induces a
well-defined map
\begin{equation*}
    P:\D_n(F;i)/\G(F) \To \Si_{n+1}.
\end{equation*}

There are several reasons why it is useful to look at the
permutation $P(\al)$ of an arc simplex $\al$. One is that $P(\al)$
determines the number of boundary components of the cut surface
$F_ \al$, as we shall see below. Before explaining this, we will
need a few preliminary remarks.

Let $\al$ be an arc in $C_*(F;i)$. We orient it from $b_0$ to $b_1$,
and let $t_p(\al)$ be the (positive) tangent vector at $p\in \al$. A
normal vector $v_p$ to $\al$ at $p$ is called \emph{positive} if
$(v_p,t_p(\al))$ is a positive basis of $T_pF$. We say that the
right-hand side of $\al$ is the part of the normal tube given by the
positive normal vectors.

When drawing pictures to aid the geometric intuition, we always
indicate the orientation of $F$ and $\dd F$ (with arrows). Also, the
orientation of $F$ will always be the same, namely the orientation
induced by the standard orientation of this paper. This has the
advantage that orientation-depending properties like the right-hand
side will be consistent throughout the picture, even if we draw two
different areas of one surface.

\begin{ex}\label{ex:perm} Let $\al=(\al_0,\al_1,\al_2)$ be a $2$-simplex in $C_*(F_{g,r};1)$, with
permutation $P(\al)=[1\,2\,0]$. Close to $b_0$ and $b_1$ we see the situation depicted on Figure
\ref{b:alfa2}, with the orientations of $\dd F$ at $b_0$ and $b_1$
used for determining the permutation as indicated.
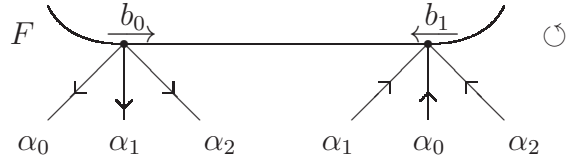
\begin{figure}[!hbt]
\begin{center}
\setlength{\unitlength}{0.5cm}
\begin{picture}(15,4)(0,1)
\put(3,3){\circle*{0.2}}     \put(11,3){\circle*{0.2}}
\put(2.9,3.5){$b_0$} \put(10.9,3.5){$b_1$}

\put(0,3){$F$} \put(14,3){$\orient$} \put(2.5,3.05){$\To$}
\put(10.5,3.05){$\longleftarrow$}
\put(3,3){\line(-1,-1){2}} \put(3,3){\line(0,-1){2}}
\put(3,3){\line(1,-1){2}} \put(11,3){\line(-1,-1){2}}
\put(11,3){\line(0,-1){2}} \put(11,3){\line(1,-1){2}}
\put(0.2,0.2){$\al_0$} \put(2.6,0.2){$\al_1$} \put(5.1,0.2){$\al_2$}
\put(8.2,0.2){$\al_1$} \put(10.6,0.2){$\al_0$}
\put(13.1,0.2){$\al_2$}

\put(10,2){\line(-1,0){0.3}} \put(10,2){\line(0,-1){0.3}}
\put(12,2){\line(1,0){0.3}} \put(12,2){\line(0,-1){0.3}}
\qbezier(11,1.7)(11,1.7)(10.8,1.5)\qbezier(11,1.7)(11,1.7)(11.2,1.5)
\put(1.7,2){\line(0,-1){0.3}} \put(2,1.7){\line(-1,0){0.3}}
\put(4.3,2){\line(0,-1){0.3}} \put(4,1.7){\line(1,0){0.3}}
\qbezier(3,1.3)(3,1.3)(2.8,1.5) \qbezier(3,1.3)(3,1.3)(3.2,1.5)

\qbezier(3,3)(1.5,3)(1,4) \qbezier(11,3)(12.5,3)(13,4)

\put(3,3){\line(1,0){8}}

\end{picture}
\end{center}\caption{An arc with permutation $[1\,2\,0]$ in $C_*(F;1)$.}\label{b:alfa2}\end{figure}

We want to find the number of boundary components of $F_\al$.
This goes as follows. Pick an arc, say $\al_0$, at $b_0$ and start
coloring the right-hand side of it (here, we color it dark grey),
following the arc all the way to $b_1$. See Figure \ref{b:cut1}.
Here, continue to the left-hand side of the next arc; in our case it
is $\al_2$. Note that in general this means going from
$\al_{\s^{-1}(j)}$ to $\al_{\s^{-1}(j-1)}$ (see the definition); in
this example $j=1$. Color the left-hand side of $\al_2$, reaching
$b_0$ again and continuing to the right-hand side of the arc next to
$\al_2$. In this algorithm the boundary component(s) containing
$b_0$ and $b_1$ also counts as arcs, as shown in the figure.
Continue in this fashion until you get back where you started (i.e.
the right-hand side of $\al_0$). This closed, dark grey loop
constitutes one boundary component of $F_\al$. Start over again
with a different color (here light grey) at another arc, and you get
a picture as in Figure \ref{b:cut1}. So there are $2+(r-1)=r+1$
boundary components of $(F_{g,r})_\al$ for $\al \in C_*(F;1)$ with
$P(\al)=[1\,2\,0]$.
\begin{figure}[!hbt]
\begin{center}
\setlength{\unitlength}{0.5cm}
\begin{picture}(15,4)(0,1)
\put(3,3){\circle*{0.2}}     \put(11,3){\circle*{0.2}}
\put(2.9,3.5){$b_0$} \put(10.9,3.5){$b_1$}

\put(0,3){$F$} \put(14,3){$\orient$} \put(2.5,3.05){$\To$}
\put(10.5,3.05){$\longleftarrow$}

\linethickness{0.5mm}
\color[rgb]{0.50,0.50,0.50} \qbezier(2.6,2.85)(2.6,2.85)(0.95,1.2)
\qbezier(2.6,2.85)(1.4,2.9)(0.8,3.9)
\qbezier(3.15,2.6)(3.15,2.6)(3.15,1)
\qbezier(3.15,2.6)(3.15,2.6)(4.8,0.95)
\qbezier(3.4,2.85)(3.4,2.85)(5.05,1.2)
\qbezier(11.15,2.6)(11.15,2.6)(11.15,1)
\qbezier(11.15,2.6)(11.15,2.6)(12.8,0.95)
\qbezier(10.6,2.85)(10.6,2.85)(8.95,1.2)
\qbezier(11.4,2.85)(11.4,2.85)(13.05,1.2)
\qbezier(11.4,2.85)(12.6,2.9)(13.2,3.9)
\qbezier(3.4,2.85)(3.4,2.85)(10.6,2.85)

\color[rgb]{0.8,0.8,0.8} \qbezier(10.85,2.6)(10.85,2.6)(9.2,0.95)
\qbezier(10.85,2.6)(10.85,2.6)(10.85,1)
\qbezier(2.85,2.6)(2.85,2.6)(1.2,0.95)
\qbezier(2.85,2.6)(2.85,2.6)(2.85,1)\color{black}\thinlines

\put(3,3){\line(-1,-1){2}} \put(3,3){\line(0,-1){2}}
\put(3,3){\line(1,-1){2}} \put(11,3){\line(-1,-1){2}}
\put(11,3){\line(0,-1){2}} \put(11,3){\line(1,-1){2}}
\put(0.2,0.2){$\al_0$} \put(2.6,0.2){$\al_1$} \put(5.1,0.2){$\al_2$}
\put(8.2,0.2){$\al_1$} \put(10.6,0.2){$\al_0$}
\put(13.1,0.2){$\al_2$}

\put(10,2){\line(-1,0){0.3}} \put(10,2){\line(0,-1){0.3}}
\put(12,2){\line(1,0){0.3}} \put(12,2){\line(0,-1){0.3}}
\qbezier(11,1.7)(11,1.7)(10.8,1.5)\qbezier(11,1.7)(11,1.7)(11.2,1.5)
\put(1.7,2){\line(0,-1){0.3}} \put(2,1.7){\line(-1,0){0.3}}
\put(4.3,2){\line(0,-1){0.3}} \put(4,1.7){\line(1,0){0.3}}
\qbezier(3,1.3)(3,1.3)(2.8,1.5) \qbezier(3,1.3)(3,1.3)(3.2,1.5)

\qbezier(3,3)(1.5,3)(1,4) \qbezier(11,3)(12.5,3)(13,4)

\put(3,3){\line(1,0){8}} 

\end{picture}
\end{center}\caption{Boundary components of $F_\al$ for $\al$ in
$C_*(F;1)$.}\label{b:cut1}
\end{figure}
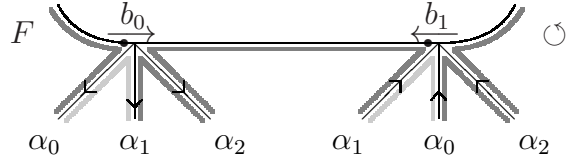

We could consider the same permutation in $C_*(F_{g,r};2)$, and we
would get a different picture (Figure \ref{b:cut2}). So there are
$3+(r-2)=r+1$ boundary components of $(F_{g,r})_\al$ for $\al\in
 C_*(F;2)$ with $P(\al)=[1\,2\,0]$.
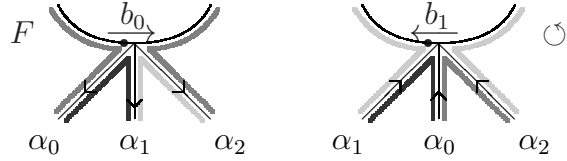
\begin{figure}[!hbt]
\begin{center}
\setlength{\unitlength}{0.5cm}
\begin{picture}(15,4)(0,1)
\put(3,3){\circle*{0.2}}     \put(11,3){\circle*{0.2}}
\put(2.9,3.5){$b_0$} \put(10.9,3.5){$b_1$}

\put(0,3){$F$} \put(14,3){$\orient$} \put(2.5,3.05){$\To$}
\put(10.5,3.05){$\longleftarrow$}

\linethickness{0.5mm}
\color[rgb]{0.50,0.50,0.50} \qbezier(2.6,2.85)(2.6,2.85)(0.95,1.2)
\qbezier(2.6,2.85)(1.4,2.9)(0.8,3.9)
\qbezier(3.4,2.85)(3.4,2.85)(5.05,1.2)
\qbezier(11.15,2.6)(11.15,2.6)(11.15,1)
\qbezier(11.15,2.6)(11.15,2.6)(12.8,0.95)
\qbezier(3.4,2.85)(4.6,2.9)(5.2,3.9)

\color[rgb]{0.20,0.20,0.20} \qbezier(2.85,2.6)(2.85,2.6)(1.2,0.95)
\qbezier(2.85,2.6)(2.85,2.6)(2.85,1)
\qbezier(10.85,2.6)(10.85,2.6)(9.2,0.95)
\qbezier(10.85,2.6)(10.85,2.6)(10.85,1)

\color[rgb]{0.8,0.8,0.8}\qbezier(3.15,2.6)(3.15,2.6)(3.15,1)
\qbezier(3.15,2.6)(3.15,2.6)(4.8,0.95)
\qbezier(10.6,2.85)(10.6,2.85)(8.95,1.2)
\qbezier(11.4,2.85)(11.4,2.85)(13.05,1.2)
\qbezier(11.4,2.85)(12.6,2.9)(13.2,3.9)
\qbezier(10.6,2.85)(9.4,2.9)(8.8,3.9)
 \color{black}\thinlines

\put(3,3){\line(-1,-1){2}} \put(3,3){\line(0,-1){2}}
\put(3,3){\line(1,-1){2}} \put(11,3){\line(-1,-1){2}}
\put(11,3){\line(0,-1){2}} \put(11,3){\line(1,-1){2}}
\put(0.2,0.2){$\al_0$} \put(2.6,0.2){$\al_1$} \put(5.1,0.2){$\al_2$}
\put(8.2,0.2){$\al_1$} \put(10.6,0.2){$\al_0$}
\put(13.1,0.2){$\al_2$}

\put(10,2){\line(-1,0){0.3}} \put(10,2){\line(0,-1){0.3}}
\put(12,2){\line(1,0){0.3}} \put(12,2){\line(0,-1){0.3}}
\qbezier(11,1.7)(11,1.7)(10.8,1.5)\qbezier(11,1.7)(11,1.7)(11.2,1.5)
\put(1.7,2){\line(0,-1){0.3}} \put(2,1.7){\line(-1,0){0.3}}
\put(4.3,2){\line(0,-1){0.3}} \put(4,1.7){\line(1,0){0.3}}
\qbezier(3,1.3)(3,1.3)(2.8,1.5) \qbezier(3,1.3)(3,1.3)(3.2,1.5)

\qbezier(3,3)(1.5,3)(1,4) \qbezier(11,3)(12.5,3)(13,4)


\qbezier(3,3)(4.5,3)(5,4) \qbezier(11,3)(9.5,3)(9,4)

\end{picture}
\end{center}\caption{Boundary components of $F_\al$ for $\al$ in
$C_*(F;2)$.}\label{b:cut2}\end{figure}
\end{ex}

The method of the above example gives a formula -- albeit a rather
cumbersome one -- for $\sharp\dd N(\al)$, and thus by
\eqref{e:boundary} for the number of boundary components of $F_
\al$ in terms of $P(\al)$:

\begin{prop}\label{p:grim}Let $\sharp\dd S$ denote the number of boundary components
in $S$, and let $\s_{k}\in\Si_{k}$ be given by $\s_k=[1\,2\,\cdots\,
k\!-\!1\,0]$. Then
\begin{enumerate}
  \item[$(i)$]If $\al\in C_{n-1}(F;1)$ then $\sharp\dd N(\al) =
  \Cyk \Big(\s_{n+1}
  \widehat{P(\al)}^{-1}\s_{n+1}^{-1}\widehat{P(\al)}\Big)+1$.
  \item[$(ii)$]If $\al\in C_{n-1}(F;2)$ then $\sharp\dd N(\al) =
  \Cyk\Big(
  \s_{n}P(\al)^{-1}\s_{n}^{-1}P(\al)\Big)+2$,
\end{enumerate}
Here $\Cyk: \Sigma_{k} \to \N$ denotes the number of disjoint
cycles in the given permutation, and for
$\tau\in\Si_{k}$, $\widehat{\tau}\in \Si_{k+1}$ is given by
$\widehat{\tau}=[0, \tau+1]$, that is
\begin{equation*}
    \widehat{\tau}(j)=\left\{
                \begin{array}{ll}
                  0, & \hbox{$j=0$,} \\
                  \tau(j-1)+1, & \hbox{$i=1,\ldots,k$.}
                \end{array}
              \right.
\end{equation*}
In particular, $\sharp\dd N(\al)$ depends only on $P(\al)$.
\end{prop}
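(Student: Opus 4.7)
My plan is to view $N(\alpha)$ as a regular neighborhood of a graph $G$ in $F$ and count its boundary components via ribbon graph combinatorics. Take $G$ to be the graph with vertices $\{b_0,b_1\}$ whose edges are the arcs $\alpha_0,\ldots,\alpha_{n-1}$ together with the boundary component(s) of $F$ containing $b_0,b_1$ (treated as edges between the two vertices). Then $N(\alpha)$ is a regular neighborhood of $G$, and its boundary naturally splits into two types: circles lying along the original $\partial F$ (these contribute $1$ circle for $i=1$ and $2$ circles for $i=2$), and circles lying in $\mathrm{int}(F)$. The ``$+1$'' and ``$+2$'' in the two formulas already account for the first type, so the task is to count the second type.

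An outer boundary circle is traced by a standard ribbon-graph walk: it alternates between a side of some arc (running from one vertex to the other) and a short passage through a sector at a vertex, which connects that arc's side to the opposite side of the next arc. The adjacencies at the vertices are determined by the cyclic orders of edges at $b_0$ and $b_1$. At $b_0$ the incoming orientation of $\partial F$ gives the order $\alpha_0,\alpha_1,\ldots,\alpha_{n-1}$, and at $b_1$ the outgoing orientation gives the order $\alpha_{P(\alpha)^{-1}(0)},\ldots,\alpha_{P(\alpha)^{-1}(n-1)}$. Following one ``round-trip'' of the boundary walk (right side of $\alpha_j$ at $b_0 \to $ traverse to $b_1 \to$ cross the sector to the left side of an adjacent arc $\to$ traverse back to $b_0 \to$ cross the sector to the right side of a further arc) produces a permutation $f$ on the arc indices $\{0,\ldots,n-1\}$, and its orbits are in one-to-one correspondence with the outer boundary components. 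A bookkeeping computation identifies $f^{-1}$ with the commutator $\sigma_n P(\alpha)^{-1} \sigma_n^{-1} P(\alpha)$, so that formula $(ii)$ follows since orbit counts are invariant under inversion.

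For case $(i)$, where $b_0$ and $b_1$ lie on the same boundary component $C$, the argument must be modified because the outer boundary of the collar of $C$ at $b_0$ is connected through $\partial F$ to the outer boundary of the collar at $b_1$ (they are parts of the same circle on the collar). I handle this by augmenting the index set with a new element $0$ standing for the ``outer collar strip'', so the indices become $\{0,1,\ldots,n\}$ with $j\ge 1$ corresponding to $\alpha_{j-1}$. In this enlarged setting the role of $P(\alpha)$ is played by $\widehat{P(\alpha)}$ (which fixes $0$ and permutes $\{1,\ldots,n\}$ by the arc permutation), and the cyclic shift becomes $\sigma_{n+1}$. The same walk argument then yields formula $(i)$. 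I expect the main difficulty to be in the second paragraph: aligning the orientation conventions ($v_p, w_p$, incoming vs.\ outgoing, left vs.\ right sides of an oriented arc) so that the traced permutation is exactly $\sigma_n P^{-1}\sigma_n^{-1}P$ rather than some conjugate or inverse. Fortunately orbit counts are insensitive to such ambiguities, so it should suffice to verify the combinatorial picture against the small examples already given in the text (the case $n=3,\ P=[1\,2\,0]$ depicted in Figures~\ref{b:cut1} and \ref{b:cut2}).
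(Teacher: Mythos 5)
Your proof is correct and takes essentially the same route as the paper's: both count the boundary of $N(\alpha)$ by the alternating left/right ribbon walk governed by the cyclic orders of arcs at $b_0$ and $b_1$, identify the resulting circles with cycles of a commutator of $P(\alpha)$ and a cyclic shift, and reduce case $(i)$ to case $(ii)$ by adjoining a distinguished index for the boundary strip (the paper literally closes up the boundary component to form $\hat F$ and adds the arc $\hat\alpha_0$, which plays the role of your new index $0$, so that $P(\hat\alpha)=\widehat{P(\alpha)}$). The paper is equally informal about pinning down the exact commutator rather than a conjugate or inverse, appealing to the worked example just as you do.
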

\begin{proof}This is simply a way to formulate the method described
in Example \ref{ex:perm}. Let us look at $C_*(F;2)$ first, so $b_0$
and $b_1$ are in different boundary components. As in the example,
we start on the right-hand side of one of the arcs at $b_0$, follow
it (using $P(\al)$), then at $b_1$ we go left to the next arc (using
$\s^{-1}$). Now we follow the right side of that arc (using
$P(\al)^{-1}$) ending at $b_0$, and we must now go left to the next
arc (using $\s$). Thus the permutation $P(\al)\s^{-1}
  P(\al)^{-1}\s$ captures how the
boundary of $N(\al)$ behaves, and a boundary component in $\dd
N(\al)$ clearly corresponds to a cycle in the permutation.
Remembering the two extra components corresponding to the components
of $\dd N(\al)$ containing $b_0$ and $b_1$, this proves $(ii)$.

For $C_*(F;1)$, $b_0$ and $b_1$ lie on the same boundary component. We
wish to use $(ii)$, so we consider a new surface $\hat F$ and a new
arc simplex, $\hat\al= (\hat\al_0, \hat\al_1, \ldots, \hat\al_n)$ in
$C_*(\hat F, 2)$, which are constructed from $F$ and $\al$ as follows.

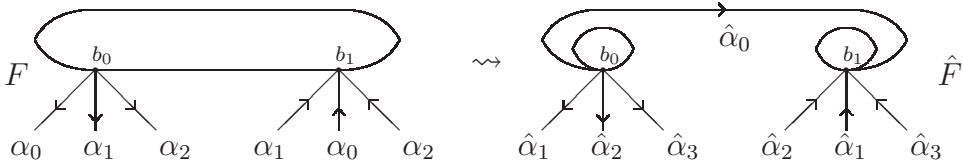
\begin{figure}[!hbt]
\begin{center}
\setlength{\unitlength}{0.4cm}
\begin{picture}(15,4)(0,1)
\put(3,3){\circle*{0.2}}     \put(11,3){\circle*{0.2}}
\put(2.9,3.3){$\lil b_0$} \put(10.9,3.3){$\lil b_1$}

\put(0,2.5){$F$}
\put(3,3){\line(-1,-1){2}} \put(3,3){\line(0,-1){2}}
\put(3,3){\line(1,-1){2}} \put(11,3){\line(-1,-1){2}}
\put(11,3){\line(0,-1){2}} \put(11,3){\line(1,-1){2}}
\put(0.2,0.2){$\al_0$} \put(2.6,0.2){$\al_1$} \put(5.1,0.2){$\al_2$}
\put(8.2,0.2){$\al_1$} \put(10.6,0.2){$\al_0$}
\put(13.1,0.2){$\al_2$}

\put(10,2){\line(-1,0){0.3}} \put(10,2){\line(0,-1){0.3}}
\put(12,2){\line(1,0){0.3}} \put(12,2){\line(0,-1){0.3}}
\qbezier(11,1.7)(11,1.7)(10.8,1.5)\qbezier(11,1.7)(11,1.7)(11.2,1.5)
\put(1.7,2){\line(0,-1){0.3}} \put(2,1.7){\line(-1,0){0.3}}
\put(4.3,2){\line(0,-1){0.3}} \put(4,1.7){\line(1,0){0.3}}
\qbezier(3,1.3)(3,1.3)(2.8,1.5) \qbezier(3,1.3)(3,1.3)(3.2,1.5)

\qbezier(3,3)(1.5,3)(1,4) \qbezier(11,3)(12.5,3)(13,4)
\qbezier(3,5)(1.5,5)(1,4) \qbezier(11,5)(12.5,5)(13,4)

\put(3,3){\line(1,0){8}} \put(3,5){\line(1,0){8}}

\end{picture}
\begin{picture}(1,4)
\put(0,2){$\rightsquigarrow$}
\end{picture}
\begin{picture}(15,4)(0,1)
\put(3,3){\circle*{0.2}}     \put(11,3){\circle*{0.2}}
\put(2.9,3.3){$\lil b_0$} \put(10.9,3.3){$\lil b_1$}

\put(14,2.5){$\hat F$}

\put(3,3){\line(-1,-1){2}} \put(3,3){\line(0,-1){2}}
\put(3,3){\line(1,-1){2}} \put(11,3){\line(-1,-1){2}}
\put(11,3){\line(0,-1){2}} \put(11,3){\line(1,-1){2}}
\put(0.2,0.2){$\hat\al_1$} \put(2.6,0.2){$\hat\al_2$}
\put(5.1,0.2){$\hat\al_3$} \put(8.2,0.2){$\hat\al_2$}
\put(10.6,0.2){$\hat\al_1$} \put(13.1,0.2){$\hat\al_3$}
\put(6.8,3.8){$\hat\al_0$}

\put(10,2){\line(-1,0){0.3}} \put(10,2){\line(0,-1){0.3}}
\put(12,2){\line(1,0){0.3}} \put(12,2){\line(0,-1){0.3}}
\qbezier(11,1.7)(11,1.7)(10.8,1.5)\qbezier(11,1.7)(11,1.7)(11.2,1.5)
\put(1.7,2){\line(0,-1){0.3}} \put(2,1.7){\line(-1,0){0.3}}
\put(4.3,2){\line(0,-1){0.3}} \put(4,1.7){\line(1,0){0.3}}
\qbezier(3,1.3)(3,1.3)(2.8,1.5) \qbezier(3,1.3)(3,1.3)(3.2,1.5)
\qbezier(7,5)(7,5)(6.8,4.8)\qbezier(7,5)(7,5)(6.8,5.2)

\qbezier(3,3)(1.5,3)(1,4) \qbezier(11,3)(12.5,3)(13,4)
\qbezier(3,5)(1.5,5)(1,4) \qbezier(11,5)(12.5,5)(13,4)

\qbezier(3,3)(2.3,3)(2,3.7) \qbezier(11,3)(11.7,3)(12,3.7)
\qbezier(3,4.4)(2.3,4.4)(2,3.7) \qbezier(11,4.4)(11.7,4.4)(12,3.7)
\qbezier(3,3)(3.7,3)(4,3.7) \qbezier(11,3)(10.3,3)(10,3.7)
\qbezier(3,4.4)(3.7,4.4)(4,3.7) \qbezier(11,4.4)(10.3,4.4)(10,3.7)
 \put(3,5){\line(1,0){8}}

\end{picture}
\end{center}\caption{Constructing $\hat F$ and $\hat \al$ from $F$ and $\al$.}\label{b:ref}\end{figure}

We take the boundary component of $F$ containing $b_0$ and $b_1$,
and close up part of it between $b_0$ and $b_1$ so we get two
boundary components, cf. Figure \ref{b:ref}. Then $\hat\al_0$ will
be the arc from $b_0$ to $b_1$ consisting of the part of the old
boundary component which was first (i.e. right-most) in the incoming
ordering at $b_0$ (cf. Figure \ref{b:ref}), and $\hat\al_j =
\al_{j-1}$ for $1\le j \le n$. By this construction, $\sharp\dd
N(\al) = \sharp\dd N(\hat\al)-1$, since we count two boundary
components for $\hat\al\in C_*(\hat F;2)$, and we should count only
one. Clearly $\textstyle P(\hat\al)=\widehat{P(\al)}$, and the
result now follows from $(ii)$.
\end{proof}
I would like to thank my brother, Jens Boldsen, for help with the above proposition.

\begin{prop}\label{p:inj}The permutation map
\begin{equation*}
P:\D_n(F;i)/\G(F) \To \Si_{n+1}
\end{equation*}
 is injective.
\end{prop}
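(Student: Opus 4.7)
The plan is, given $\alpha,\beta\in\Delta_n(F;i)$ with $P(\alpha)=P(\beta)$, to construct $\phi\in\textup{Diff}_+(F,\partial F)$ taking the ordered tuple $(\alpha_0,\ldots,\alpha_n)$ onto $(\beta_0,\ldots,\beta_n)$; its class $[\phi]\in\Gamma(F)$ then witnesses that $\alpha$ and $\beta$ lie in the same orbit, proving injectivity.

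First I would show that the cut surfaces $F_\alpha$ and $F_\beta$ are abstractly diffeomorphic. Both are connected by the connectedness clause in Definition~\ref{d:arc}; they have the same Euler characteristic by Lemma~\ref{l:cut}; and by \eqref{e:boundary} combined with Proposition~\ref{p:grim}, they have the same number of boundary components, because $\sharp\partial N(\cdot)$ depends only on the permutation. The classification of compact oriented surfaces with boundary then yields an orientation-preserving diffeomorphism $F_\alpha\cong F_\beta$.

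Next I would construct a local model on the ribbon neighborhoods. The regular neighborhood $N(\alpha)$, together with a small collar of $\partial F$ near $b_0$ and $b_1$, is a planar surface whose combinatorial type is fully determined by the two cyclic orderings of its arc endpoints: the order at $b_0$ is $(\alpha_0,\ldots,\alpha_n)$ by convention and the order at $b_1$ is precisely $P(\alpha)$. Since $P(\alpha)=P(\beta)$, a diffeomorphism $\psi_1\colon N(\alpha)\to N(\beta)$ exists sending $\alpha_j$ to $\beta_j$, fixing a collar of $\partial F$ near $b_0,b_1$ pointwise, and realising any prescribed orientation-preserving bijection of the boundary components of $\partial N(\alpha)\cap F_\alpha$ with those of $\partial N(\beta)\cap F_\beta$. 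To finish, I would use the standard extension principle for surfaces with boundary to promote the abstract diffeomorphism $F_\alpha\cong F_\beta$ to a diffeomorphism $\psi_2\colon F_\alpha\to F_\beta$ that agrees with $\psi_1$ on the "cut" boundary and restricts to the identity on the unaffected part of $\partial F$, and then glue $\psi_1$ and $\psi_2$ to produce $\phi$.

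The main obstacle is the boundary bookkeeping: cutting along $\alpha$ may split one boundary component of $F$ into several components of $F_\alpha$, and similarly for $\beta$, so one must check that the bijection between these new components imposed by $\psi_1$ can be realised by a diffeomorphism of the cut surfaces that fixes all other boundary components. This is precisely what Proposition~\ref{p:grim} guarantees: the permutations $P(\alpha),P(\beta)$ control this matching cycle by cycle, so when they agree the required combinatorial data is consistent. With that compatibility in place, the existence of $\psi_2$ reduces to the classical fact that on a compact oriented surface with boundary, any orientation-preserving diffeomorphism of the boundary that preserves each boundary component extends over the surface, so the gluing goes through and yields the mapping class $[\phi]$ carrying $\alpha$ to $\beta$.
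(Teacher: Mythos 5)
Your proposal follows essentially the same route as the paper: use $P(\alpha)=P(\beta)$ together with Lemma~\ref{l:cut} and Prop.~\ref{p:grim} to conclude $F_\alpha\cong F_\beta$ with matching boundary data, extend the resulting boundary diffeomorphism over the cut surfaces, and glue with a diffeomorphism of the regular neighborhoods to produce $\phi\in\textup{Diff}_+(F,\partial F)$ carrying $\alpha$ to $\beta$. One factual slip worth correcting: $N(\alpha)$ is \emph{not} planar in general --- its genus is $S(\alpha)$ (Definition~\ref{d:genus}), which is positive for most permutations (cf.\ Lemma~\ref{l:Slignul}). Fortunately your argument does not actually need planarity; what you use is that the ribbon structure of $N(\alpha)$ (and hence its diffeomorphism type and the matching of its boundary circles) is determined by the cyclic orderings at $b_0$ and $b_1$, i.e.\ by $P(\alpha)$, and that remains true. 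Relatedly, $\psi_1$ cannot realise an \emph{arbitrary} bijection of the cut-boundary circles --- the bijection is forced by the ribbon structure --- but the forced one is exactly the one required for the gluing, so the conclusion stands.
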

\begin{proof}
We have to show that given two $n$-arc simplices $\al$ and $\be$
with $P(\al)=P(\be)$, there exists $\g\in\G$ such that $\g\al=\be$.
Consider the cut surfaces $F_\al$ and $F_\be$. Since the
permutations are the same, $F_\al$ and $F_\be$ have the same
number of boundary components, by Prop. \ref{p:grim} above. Now
since we have parameterizations of the boundary components and the
curves $\al_0,\ldots, \al_n$ this gives a diffeomorphism
$\f:\dd(F_\al) \To \dd(F_\be)$. The Euler characteristic of
$F_\al$ and $F_\be$ are also the same, according to Lemma
\ref{l:cut}. This implies that $F_\al$ and $F_\be$ have the
same genus. By the classification of surfaces with boundary,
$F_\al\iso F_\be$ via an orientation preserving diffeomorphism
$\Phi$ extending $\f$. Gluing both $F_\al$ and $F_\be$ up
again gives a diffeomorphism $\bar{\Phi}:F\To F$ taking $\al$ to
$\be$. Thus $\al$ and $\be$ are conjugate under
$\g=\left[\bar{\Phi}\right]$ in the mapping class group $\G(F)$.
\end{proof}

Whether $P$ is surjective depends on the genus $g$, cf. Corollary
\ref{c:bij} below.

\begin{rem}The proof of this proposition also shows that the action of $G(F)$ on $C_*(F;i)$ is rotation-free, cf. Def. \ref{d:rotationfree}. For given $\al\in\D_n(F;i)$ and $\g=[\f]\in \G_\al$,
\end{rem}

\subsection{Genus}

\begin{defn}[Genus]\label{d:genus} To an arc simplex $\al$ we associate
the number $S(\al) = $ genus$(N(\al))$, cf. Def. \ref{d:N}. We call
$S(\al)$ the genus of $\al$.
\end{defn}
Note that Harer calls this quantity the \emph{species} of $\al$.
\begin{lem}\label{l:euler}
    For $\al\in \D_n(F;i)$, we have
\begin{equation*}
    \chi(N(\al))=-(n+1)
\end{equation*}
\end{lem}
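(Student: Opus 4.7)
The plan is to exhibit $N(\al)$ as homotopy equivalent to an explicit graph and then compute the Euler characteristic combinatorially by counting cells.

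By Definition \ref{d:N}, $N(\al)$ is the union of a small open normal neighborhood of the arcs $\al_0,\ldots,\al_n$ with an open collar of the boundary component(s) of $F$ containing $b_0$ and $b_1$. Write $\dd_0 F$ for this union of one or two boundary circles. I claim $N(\al)$ deformation retracts onto the $1$-dimensional subcomplex
\begin{equation*}
    \G = \al_0\cup\cdots\cup\al_n\cup \dd_0 F \del F.
\end{equation*}
Indeed, the closure $\overline{N(\al)}$ is a regular neighborhood in $F$ of $\G$ (a $1$-complex embedded in a surface), obtained by thickening each $\al_j$ to a normal tube and attaching a collar of $\dd_0 F$, compatibly near $b_0$ and $b_1$. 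Such regular neighborhoods deformation retract onto the underlying $1$-complex by retracting each normal tube onto its core arc and the collar onto $\dd_0 F$, matching up at $b_0, b_1$.

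Give $\G$ the CW structure with $0$-cells $\{b_0,b_1\}$, so $V=2$. The arcs contribute $n+1$ one-cells. The contribution of $\dd_0 F$ is $2$ one-cells in both cases of Definition \ref{d:arc}: when $i=1$, $\dd_0 F$ is a single circle carrying the two marked points $b_0, b_1$, which divide it into two arcs; when $i=2$, $\dd_0 F$ is a disjoint union of two circles, each a loop based at one of $b_0, b_1$. In either case $E=(n+1)+2=n+3$, so
\begin{equation*}
    \chi(N(\al))=\chi(\G)=V-E=2-(n+3)=-(n+1),
\end{equation*}
as required.

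The only point requiring care is the deformation retract claim, in particular the compatibility of the retractions near $b_0$ and $b_1$ where several normal tubes meet the collar. This is routine: locally at $b_0$ one may model $N(\al)$ as a half-disk in $F$ out of which a union of rectangles (one per arc $\al_j$ emanating from $b_0$) extends, and the standard radial retraction onto $\G$ extends smoothly. Once this homotopy equivalence is in hand, the cell count above gives the result. An alternative sanity check is to observe that the formula is consistent with Lemma \ref{l:cut}: writing $F$ as $F_\al \cup \overline{N(\al)}$ glued along arcs on $\dd F$, one can recover $\chi(N(\al))=\chi(F)-\chi(F_\al)=-(n+1)$.
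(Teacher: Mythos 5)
Your proof is correct and takes essentially the same route as the paper: both observe that $N(\al)$ deformation retracts onto the $1$-complex $\al\cup\dd_0F$ and then compute its Euler characteristic. The only cosmetic difference is that the paper further contracts this graph to a wedge of $n+2$ circles and uses $\chi(\vee_{n+2}S^1)=-(n+1)$, whereas you read off $V-E$ directly from a CW structure; the arithmetic is the same.
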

\begin{proof}
In $C_*(F;1)$, $N(\al)$ has $\al\cup_{b_0,b_1} S^1$ as a retract. Now
there is a homotopy taking $b_1$ to $b_0$ along $S^1$, so up to
homotopy, this is a wedge of $n+2$ copies of $S^1$ coming from
$\al_0,\ldots,\al_n$ and from the boundary component. This gives the
result. For $C_*(F;2)$ the argument is similar.
\end{proof}

\begin{prop}\label{p:genus}Let $\sharp\dd S$ denote the number of boundary components
in a surface $S$. Let $i=1,2$. Then for any $\al\in\D_n(F_{g,r};i)$, the following relations hold:
\begin{enumerate}
  \item[$(i)$] $S(\al)=½\big(n+3-\sharp\dd N(\al)\big)$,
  \item[$(ii)$] $\sharp\dd (F_\al) = r+n-S(\al)+3-2i$,
  \item[$(iii)$] $\textup{genus}(F_\al)= g+S(\al)-(n+2-i)$,
\end{enumerate}
\end{prop}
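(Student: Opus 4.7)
The plan is to derive all three identities from the Euler characteristic formula
\begin{equation*}
\chi(S)=2-2\,\genus(S)-\sharp\dd S,
\end{equation*}
applied to the two compact connected surfaces $N(\al)$ and $F_\al$. The necessary Euler characteristics are already at hand: Lemma \ref{l:euler} gives $\chi(N(\al))=-(n+1)$ and Lemma \ref{l:cut} gives $\chi(F_\al)=\chi(F)+n+1$. Both surfaces are connected: $F_\al$ by the defining requirement in Definition \ref{d:arc} that the complement of the $n+1$ arcs be connected, and $N(\al)$ because every arc's endpoints lie on the boundary components whose collars are incorporated into $N(\al)$. The only other ingredient needed is the boundary count \eqref{e:boundary}, $\sharp\dd(F_\al)=\sharp\dd N(\al)+r-2i$.

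First I would prove $(i)$ by equating $-(n+1)=2-2S(\al)-\sharp\dd N(\al)$ and solving for $S(\al)$, yielding directly $S(\al)=\tfrac{1}{2}(n+3-\sharp\dd N(\al))$.

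Next, $(ii)$ follows at once by inserting the value $\sharp\dd N(\al)=n+3-2S(\al)$ from $(i)$ into \eqref{e:boundary}.

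For $(iii)$, I would apply the Euler formula to $F_\al$, combining Lemma \ref{l:cut} with $\chi(F_{g,r})=2-2g-r$:
\begin{equation*}
(2-2g-r)+n+1=2-2\,\genus(F_\al)-\sharp\dd(F_\al),
\end{equation*}
and then substitute the expression for $\sharp\dd(F_\al)$ obtained in $(ii)$ and solve for $\genus(F_\al)$. No step presents a real obstacle; the whole proposition reduces to elementary bookkeeping with Euler characteristics once connectivity of $N(\al)$ and $F_\al$ has been noted.
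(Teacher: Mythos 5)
Your proof is correct and follows essentially the same route as the paper: for $(i)$ apply $\chi = 2 - 2\,\genus - \sharp\dd$ to $N(\al)$ using Lemma \ref{l:euler}, for $(ii)$ combine $(i)$ with \eqref{e:boundary}, and for $(iii)$ apply the same Euler formula to $F_\al$ using Lemma \ref{l:cut}. The only addition is your explicit remark on connectivity of $N(\al)$ and $F_\al$, which the paper leaves implicit but which is indeed needed for the genus formula to apply.
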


\begin{proof}
\noindent$(i)$ As $S(\al)$ is the genus of $N(\al)$, we can derive
this
  from the Euler characteristic of $N(\al)$, which by Lemma
  \ref{l:euler} is $-(n+1)$. Using the formula $\chi(N(\al)) =
  2-2S(\al)-\sharp\dd N(\al)$ gives the result.
\newline\newline
 \noindent$(ii)$ This follows from $(i)$ and \eqref{e:boundary}.
\newline\newline
\noindent$(iii)$ As in $(i)$ we use the connection between Euler
characteristic, genus and number of boundary
  components, together with $(i)$ and $(ii)$:
\begin{eqnarray*}
  \textup{genus}(F_\al) &=& \textstyle ½\big(-\chi(F_\al)- \sharp\dd (F_\al) +2\big)\\
   &=& \textstyle ½\big(-(2-2g-r)-(n+1)-(\sharp\dd N(\al)+r-2i)+2\big)\\
   &=& \textstyle ½\big(2g+(n+1-\sharp\dd N(\al)+2)+2i-2-2(n+1)\big)\\
    &=& g+S(\al)-(n+2-i)
\end{eqnarray*}
\end{proof}
Consequently all information about $F_\al$ can be extracted from
$\rand(F_\al)$, so it is important that we can compute this
quantity:
\begin{lem}\label{l:nu}Given $\al\in\D_n(F;i)$ be given, and let $\nu\in\D_0(F;i)$ be an arc such that $\al'=\al\cup \nu$ is an $(n+1)$-simplex. Consider $\al'\in C_*(F_\al;i)$. Then:
\begin{equation*}
    \rand(F_{\al'})=\left\{
                    \begin{array}{ll}
                     \rand(F_\al)+ 1, & \hbox{if $\nu\in \D_0(F_\al;1)$;} \\
                 \rand(F_\al)- 1, &     \hbox{if $\nu\in \D_0(F_\al;2)$.}
                \end{array}
         \right.
\end{equation*}
\end{lem}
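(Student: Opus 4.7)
The plan is to reduce the question to cutting $F_\al$ along the single new arc $\nu$ and then read off the answer from a regular-neighborhood picture. From Definition \ref{d:N}, $N(\al')=N(\al\cup\nu)$ is obtained from $N(\al)$ by adjoining a normal neighborhood of $\nu$; the open collars around the boundary components of $F$ containing $b_0,b_1$ are already absorbed into $N(\al)$. Up to diffeomorphism, $F_{\al'}$ therefore agrees with $(F_\al)_\nu$, so it suffices to compute $\rand((F_\al)_\nu)-\rand(F_\al)$ in each of the two cases for $\nu$.

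Suppose first that $\nu\in\D_0(F_\al;1)$, so both endpoints lie on a single boundary component $C$ of $F_\al$. A regular neighborhood of $\nu\cup C$ in $F_\al$ deformation retracts onto $\nu\cup C\simeq S^1\vee S^1$ and is consequently a pair of pants (three-holed sphere). Exactly one of its three boundary circles, the one parallel to $C$, lies on $\dd F_\al$; the other two lie in the interior of $F_\al$. Removing $N(\nu)$ therefore erases the component $C$ and introduces two new boundary circles, giving $\rand(F_{\al'})=\rand(F_\al)-1+2=\rand(F_\al)+1$. Suppose instead that $\nu\in\D_0(F_\al;2)$, with $b_0\in C_0$ and $b_1\in C_1$ on distinct boundary components. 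Then a neighborhood of $\nu\cup C_0\cup C_1$ is again a pair of pants, this time built out of two annular collars bridged by a band along $\nu$. Now two of its three boundary circles coincide with $C_0$ and $C_1$ on $\dd F_\al$, and only one boundary circle lies in the interior. Removing $N(\nu)$ deletes $C_0$ and $C_1$ and introduces one new circle, giving $\rand(F_{\al'})=\rand(F_\al)-2+1=\rand(F_\al)-1$.

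As a consistency check, in both cases $N(\nu)$ is a pair of pants, so $\chi(N(\nu))=-1$, in agreement with $\chi(F_{\al'})=\chi(F_\al)+1$ from Lemma \ref{l:cut}. The only step needing genuine verification is the identification of $N(\nu)$ as a pair of pants in the two cases, which is a routine regular-neighborhood computation; one could alternatively deduce it from Proposition \ref{p:grim} applied to the relevant $0$-simplex in $C_*(F_\al;i)$.
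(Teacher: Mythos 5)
Your proof is correct and proceeds along the same lines as the paper's: both arguments reduce the problem to cutting $F_\al$ along the single new arc $\nu$ and tracking how the boundary circles of $F_\al$ near $\nu$ reassemble. The paper does this by inspecting Figure~\ref{b:rand} directly, whereas you package the same local analysis as a regular-neighborhood computation together with an Euler-characteristic count. One small subtlety worth flagging in the case $\nu\in\D_0(F_\al;1)$: a regular neighborhood of $\nu\cup C$ deformation retracting to $S^1\vee S^1$ is not, by itself, forced to be a pair of pants --- a one-holed torus is also a surface with Euler characteristic $-1$. What rules it out here is orientability of $F_\al$ (the band for $\nu$ must be attached without a half-twist, so the inner circle of the collar of $C$ splits into two after attaching), or, as you suggest, the $n=1$ case of Proposition~\ref{p:grim}, which indeed gives $\rand N(\nu)=3$ in both cases. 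With that observation supplied, the argument is complete and equivalent to the one in the text.
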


\begin{proof}
    Let $k=\rand(F_\al)$. Since all boundary components in $F_{\al'}$ not
intersecting $\nu$ correspond to boundary components in $F_\al$,
it is enough to consider the situation close to $\nu$. There are two
possibilities: Either $\nu$ will start and end on two different
boundary components of $F_\al$, so $\nu\in \D_0(F_\al;2)$, or
$\nu$ will start and end on the same boundary component of $F_\al$, so $\nu\in \D_0(F_\al;1)$. Cf. Figure \ref{b:rand}, where the
boundary components of $F_\al$ are indicated as in Example
\ref{ex:perm}.

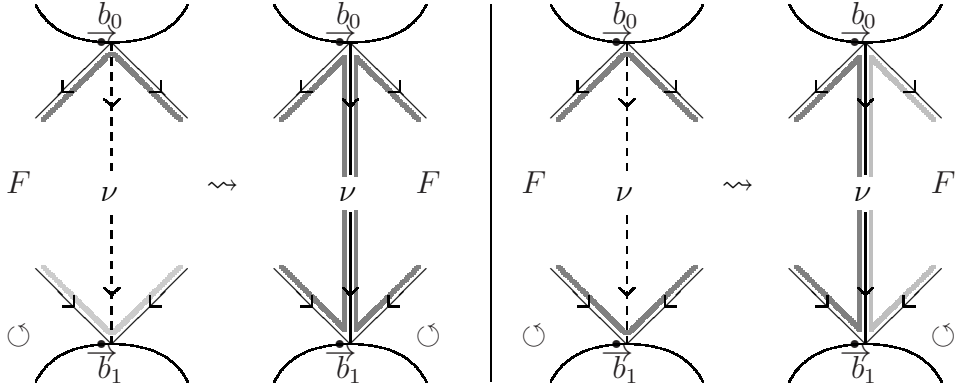
\begin{figure}[!h]
\begin{center}
\setlength{\unitlength}{0.5cm}
\begin{picture}(6,9)(0,1)
\put(3,9){\circle*{0.2}} \put(2.9,9.5){$b_0$}
\put(3,1){\circle*{0.2}} \put(2.9,0.1){$b_1$}

\put(0.5,5){$F$} \put(0.5,1){$\orient$} \put(2.6,9.05){$\to$}
\put(2.6,0.55){$\to$}
\linethickness{0.5mm}
\color[rgb]{0.50,0.50,0.50} \qbezier(2.85,8.6)(2.85,8.6)(1.2,6.95)
\qbezier(3.15,8.6)(3.15,8.6)(4.8,6.95)
\qbezier(3.15,8.6)(3,8.8)(2.85,8.6)

\color[rgb]{0.8,0.8,0.8} \qbezier(3.15,1.4)(3,1.2)(2.85,1.4)

\qbezier(2.85,1.4)(2.85,1.4)(1.2,3.05)

\qbezier(3.15,1.4)(3.15,1.4)(4.8,3.05)\color{black}\thinlines

\put(3,9){\line(-1,-1){2}}
\multiput(3,9)(0,-0.4){9}{\line(0,-1){0.2}}
\put(3,9){\line(1,-1){2}} \put(2.7,4.8){$\nu$}
\put(3,1){\line(-1,1){2}} \multiput(3,1)(0,0.4){9}{\line(0,1){0.2}}
\put(3,1){\line(1,1){2}}

\put(1.7,8){\line(0,-1){0.3}} \put(2,7.7){\line(-1,0){0.3}}
\put(4.3,8){\line(0,-1){0.3}} \put(4,7.7){\line(1,0){0.3}}
\qbezier(3,7.3)(3,7.3)(2.8,7.5) \qbezier(3,7.3)(3,7.3)(3.2,7.5)
\put(1.7,2){\line(1,0){0.3}} \put(2,2){\line(0,1){0.3}}
\put(4.3,2){\line(-1,0){0.3}} \put(4,2){\line(0,1){0.3}}
\qbezier(3,2.3)(3,2.3)(2.8,2.5) \qbezier(3,2.3)(3,2.3)(3.2,2.5)

\qbezier(3,9)(1.5,9)(1,10) 
\qbezier(3,1)(1.5,1)(1,0)
\qbezier(3,9)(4.5,9)(5,10) \qbezier(3,1)(4.5,1)(5,0)
\put(5.5,5){$\rightsquigarrow$}
\end{picture}
\begin{picture}(7,9)(0,1)
\put(3,9){\circle*{0.2}} \put(2.9,9.5){$b_0$}
\put(3,1){\circle*{0.2}} \put(2.9,0.1){$b_1$}

\put(5,5){$F$} \put(5,1){$\orient$} \put(2.6,9.05){$\to$}
\put(2.6,0.55){$\to$}

\linethickness{0.5mm}
\color[rgb]{0.50,0.50,0.50} \qbezier(2.85,8.6)(2.85,8.6)(1.2,6.95)
\qbezier(2.85,8.6)(2.85,8.6)(2.85,5.5)
\qbezier(2.85,1.4)(2.85,1.4)(1.2,3.05)
\qbezier(2.85,1.4)(2.85,1.4)(2.85,4.5)
\qbezier(3.15,8.6)(3.15,8.6)(3.15,5.5)
\qbezier(3.15,8.6)(3.15,8.6)(4.8,6.95)
\qbezier(3.15,1.4)(3.15,1.4)(3.15,4.5)
\qbezier(3.15,1.4)(3.15,1.4)(4.8,3.05) \color{black} \thinlines
\put(3,9){\line(-1,-1){2}} \put(3,9){\line(0,-1){3.5}}
\put(3,9){\line(1,-1){2}} \put(2.7,4.8){$\nu$}
\put(3,1){\line(-1,1){2}} \put(3,1){\line(0,1){3.5}}
\put(3,1){\line(1,1){2}}

\put(1.7,8){\line(0,-1){0.3}} \put(2,7.7){\line(-1,0){0.3}}
\put(4.3,8){\line(0,-1){0.3}} \put(4,7.7){\line(1,0){0.3}}
\qbezier(3,7.3)(3,7.3)(2.8,7.5) \qbezier(3,7.3)(3,7.3)(3.2,7.5)
\put(1.7,2){\line(1,0){0.3}} \put(2,2){\line(0,1){0.3}}
\put(4.3,2){\line(-1,0){0.3}} \put(4,2){\line(0,1){0.3}}
\qbezier(3,2.3)(3,2.3)(2.8,2.5) \qbezier(3,2.3)(3,2.3)(3.2,2.5)

\qbezier(3,9)(1.5,9)(1,10) 
\qbezier(3,1)(1.5,1)(1,0)
\qbezier(3,9)(4.5,9)(5,10) \qbezier(3,1)(4.5,1)(5,0)
\put(6.7,0){\line(0,1){10}}
\end{picture}
\begin{picture}(6,9)(0,1)
\put(3,9){\circle*{0.2}} \put(2.9,9.5){$b_0$}
\put(3,1){\circle*{0.2}} \put(2.9,0.1){$b_1$}

\put(0.5,5){$F$} \put(0.5,1){$\orient$}  \put(2.6,9.05){$\to$}
\put(2.6,0.55){$\to$}

\linethickness{0.5mm}
\color[rgb]{0.50,0.50,0.50} \qbezier(2.85,8.6)(2.85,8.6)(1.2,6.95)

\qbezier(2.85,1.4)(2.85,1.4)(1.2,3.05)

\qbezier(3.15,8.6)(3.15,8.6)(4.8,6.95)

\qbezier(3.15,1.4)(3.15,1.4)(4.8,3.05)

\qbezier(3.15,8.6)(3,8.8)(2.85,8.6)

\qbezier(3.15,1.4)(3,1.2)(2.85,1.4) \color{black}\thinlines

\put(3,9){\line(-1,-1){2}}
\multiput(3,9)(0,-0.4){9}{\line(0,-1){0.2}}
\put(3,9){\line(1,-1){2}} \put(2.7,4.8){$\nu$}
\put(3,1){\line(-1,1){2}} \multiput(3,1)(0,0.4){9}{\line(0,1){0.2}}
\put(3,1){\line(1,1){2}}

\put(1.7,8){\line(0,-1){0.3}} \put(2,7.7){\line(-1,0){0.3}}
\put(4.3,8){\line(0,-1){0.3}} \put(4,7.7){\line(1,0){0.3}}
\qbezier(3,7.3)(3,7.3)(2.8,7.5) \qbezier(3,7.3)(3,7.3)(3.2,7.5)
\put(1.7,2){\line(1,0){0.3}} \put(2,2){\line(0,1){0.3}}
\put(4.3,2){\line(-1,0){0.3}} \put(4,2){\line(0,1){0.3}}
\qbezier(3,2.3)(3,2.3)(2.8,2.5) \qbezier(3,2.3)(3,2.3)(3.2,2.5)

\qbezier(3,9)(1.5,9)(1,10) 
\qbezier(3,1)(1.5,1)(1,0)
\qbezier(3,9)(4.5,9)(5,10) \qbezier(3,1)(4.5,1)(5,0)
\put(5.5,5){$\rightsquigarrow$}
\end{picture}
\begin{picture}(5,9)(0,1)
\put(3,9){\circle*{0.2}} \put(2.9,9.5){$b_0$}
\put(3,1){\circle*{0.2}} \put(2.9,0.1){$b_1$}

\put(5,5){$F$} \put(5,1){$\orient$} \put(2.6,9.05){$\to$}
\put(2.6,0.55){$\to$}
\linethickness{0.5mm}
\color[rgb]{0.50,0.50,0.50} \qbezier(2.85,8.6)(2.85,8.6)(1.2,6.95)
\qbezier(2.85,8.6)(2.85,8.6)(2.85,5.5)
\qbezier(2.85,1.4)(2.85,1.4)(1.2,3.05)
\qbezier(2.85,1.4)(2.85,1.4)(2.85,4.5)
\color[rgb]{0.75,0.75,0.75} \qbezier(3.15,8.6)(3.15,8.6)(3.15,5.5)
\qbezier(3.15,8.6)(3.15,8.6)(4.8,6.95)
\qbezier(3.15,1.4)(3.15,1.4)(3.15,4.5)
\qbezier(3.15,1.4)(3.15,1.4)(4.8,3.05) \color{black}\thinlines

\put(3,9){\line(-1,-1){2}} \put(3,9){\line(0,-1){3.5}}
\put(3,9){\line(1,-1){2}} \put(2.7,4.8){$\nu$}
\put(3,1){\line(-1,1){2}} \put(3,1){\line(0,1){3.5}}
\put(3,1){\line(1,1){2}}

\put(1.7,8){\line(0,-1){0.3}} \put(2,7.7){\line(-1,0){0.3}}
\put(4.3,8){\line(0,-1){0.3}} \put(4,7.7){\line(1,0){0.3}}
\qbezier(3,7.3)(3,7.3)(2.8,7.5) \qbezier(3,7.3)(3,7.3)(3.2,7.5)
\put(1.7,2){\line(1,0){0.3}} \put(2,2){\line(0,1){0.3}}
\put(4.3,2){\line(-1,0){0.3}} \put(4,2){\line(0,1){0.3}}
\qbezier(3,2.3)(3,2.3)(2.8,2.5) \qbezier(3,2.3)(3,2.3)(3.2,2.5)

\qbezier(3,9)(1.5,9)(1,10) 
\qbezier(3,1)(1.5,1)(1,0) 
\qbezier(3,9)(4.5,9)(5,10) \qbezier(3,1)(4.5,1)(5,0)

\end{picture}
\end{center}
\caption{Before and after cutting along the arc $\nu$ -- the two
cases.}\label{b:rand}\end{figure}
 Taking the case $\nu\in \D_0(F_\al;2)$ (left-hand side of Figure \ref{b:rand}), when we cut along
$\nu$ we get one boundary component instead of two. So we get $k-1$
boundary components in this case. In the case $\nu\in \D_0(F_\al;1)$ (right-hand side of Figure \ref{b:rand}) cutting along $\nu$
splits the boundary component into two, so we get $k+1$ boundary
components.
\end{proof}
Combining Lemma \ref{l:nu} and Prop. \ref{p:genus}, we have proved,
\begin{cor}\label{c:Sgenus}For $\al\in\D_0(F;i)$, let $\al'=\al\cup \nu$ as in Lemma \ref{l:nu}. Then:
\begin{equation*}
    S(\al')=\left\{
                       \begin{array}{ll}
                         S(\al), & \hbox{if $\nu\in \D_0(F_\al;1)$;} \\
                         S(\al)+1, & \hbox{if $\nu\in \D_0(F_\al;2)$.}
                       \end{array}
                     \right.
\end{equation*}
and
\begin{equation*}
    \genus(F_{\al'})=\left\{
                       \begin{array}{ll}
                         \genus(F_\al)-1, & \hbox{if $\nu\in \D_0(F_\al;1)$;} \\
                         \genus(F_\al), & \hbox{if $\nu\in \D_0(F_\al;2)$.}
                       \end{array}
                     \right.
\end{equation*}\qed
\end{cor}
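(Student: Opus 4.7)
The plan is to deduce both halves of the corollary from Lemma \ref{l:nu} and Proposition \ref{p:genus} by a short arithmetic manipulation; no new geometric argument is needed.

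First I would determine how $S(\al')$ differs from $S(\al)$. Using equation \eqref{e:boundary} to rewrite $\rand(F_\al)=\rand N(\al)+r-2i$ (and similarly for $\al'$), Lemma \ref{l:nu} becomes the statement that $\rand N(\al')-\rand N(\al)=+1$ when $\nu\in\D_0(F_\al;1)$ and $-1$ when $\nu\in\D_0(F_\al;2)$. Applying Proposition \ref{p:genus}(i) once to the $0$-simplex $\al$ and once to the $1$-simplex $\al'$, and subtracting the two identities, gives
\[
2\bigl(S(\al')-S(\al)\bigr) \;=\; 1 + \rand N(\al) - \rand N(\al') \;=\; 1\mp 1,
\]
which equals $0$ when $\nu\in\D_0(F_\al;1)$ and $2$ when $\nu\in\D_0(F_\al;2)$. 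This is the first half of the statement.

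Next I would feed this into Proposition \ref{p:genus}(iii), applied once with $n=0$ to $\al$ and once with $n=1$ to $\al'$. Subtracting the two identities yields
\[
\genus(F_{\al'})-\genus(F_\al)\;=\;\bigl(S(\al')-S(\al)\bigr)-1,
\]
which is $-1$ in the case $\nu\in\D_0(F_\al;1)$ and $0$ in the case $\nu\in\D_0(F_\al;2)$, exactly as asserted.

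The only place that needs real care is keeping the two cases for $\nu$ (and the corresponding sign in Lemma \ref{l:nu}) consistent throughout; beyond that, the geometric content has already been absorbed into Lemmas \ref{l:euler}, \ref{l:nu} and Proposition \ref{p:genus}, so the corollary is the arithmetic tail of their combination, as the author's one-line introduction already signals.
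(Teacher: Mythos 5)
Your proof is correct and takes essentially the same route as the paper, whose entire proof is the one-line remark ``Combining Lemma \ref{l:nu} and Prop.\ \ref{p:genus}, we have proved.'' Your explicit arithmetic (subtract the $n=0$ and $n=1$ instances of Prop.\ \ref{p:genus}$(i)$ and $(iii)$ after translating Lemma \ref{l:nu} into a statement about $\rand N(\al')$ via \eqref{e:boundary}) is precisely the intended combination, made explicit.
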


\begin{lem}\label{l:Slignul}Let $\al\in\D_0(F;i)$. Then $S(\al)=0$ if and only if
\begin{itemize}
  \item[$(i)$]for $i=1$, $P(\al)=\id$.
  \item[$(ii)$]for $i=2$, $P(\al)$ is a cyclic permutation, i.e. one of the following:
  \begin{equation*}
\id,[1\, 2\cdots n \,0], [2 \,3  \cdots n \,0 \,1], \cdots, [n\, 0\,
1\cdots n\!\!-\!\!1].
\end{equation*}
\end{itemize}
\end{lem}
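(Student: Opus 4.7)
The plan is to combine Proposition \ref{p:genus}(i) with Proposition \ref{p:grim} to turn the vanishing of $S(\al)$ into a purely combinatorial condition on $P(\al)$: namely commutation with a long cycle. By Proposition \ref{p:genus}(i), $S(\al)=0$ is equivalent to $\rand N(\al)=n+3$, so in both cases we just need to characterise when the cycle count appearing in Proposition \ref{p:grim} is maximal.

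For the case $i=2$, Proposition \ref{p:grim}(ii) gives
\begin{equation*}
\rand N(\al)=\Cyk\!\big(\s_{n+1}P(\al)^{-1}\s_{n+1}^{-1}P(\al)\big)+2,
\end{equation*}
so $S(\al)=0$ iff the commutator $\s_{n+1}P(\al)^{-1}\s_{n+1}^{-1}P(\al)$ has $n+1$ disjoint cycles. A permutation of the $(n+1)$-element set has at most $n+1$ cycles, with equality only for the identity, so this is equivalent to $P(\al)$ centralising $\s_{n+1}$. Since $\s_{n+1}$ is an $(n+1)$-cycle in $\Si_{n+1}$, its centraliser is the cyclic subgroup it generates; the list $\id,[1\,2\cdots n\,0],\dots,[n\,0\,1\cdots n\!-\!1]$ in part $(ii)$ is precisely $\{\s_{n+1}^k\mid 0\le k\le n\}$, which gives the desired characterisation.

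The case $i=1$ is analogous: by Proposition \ref{p:grim}(i),
\begin{equation*}
\rand N(\al)=\Cyk\!\big(\s_{n+2}\widehat{P(\al)}^{-1}\s_{n+2}^{-1}\widehat{P(\al)}\big)+1,
\end{equation*}
so $S(\al)=0$ iff $\widehat{P(\al)}$ centralises $\s_{n+2}$ in $\Si_{n+2}$, hence $\widehat{P(\al)}=\s_{n+2}^k$ for some $k$. By the definition of the hat operation we have $\widehat{P(\al)}(0)=0$, whereas $\s_{n+2}^k(0)=k\pmod{n+2}$, which forces $k=0$. Therefore $\widehat{P(\al)}=\id$, and unwinding the hat yields $P(\al)=\id$ in $\Si_{n+1}$, as claimed in $(i)$.

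There is no real obstacle here; the only point that requires a moment of thought is recognising that the centraliser of a full cycle in the symmetric group is the cyclic subgroup it generates, and then, in the $i=1$ case, using the extra constraint $\widehat{P(\al)}(0)=0$ to single out the identity among those powers.
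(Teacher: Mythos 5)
Your proof is correct, and it takes a genuinely different route from the paper's. The paper argues the ``only if'' direction via Corollary \ref{c:Sgenus}: it observes that every subsimplex of $\al$ must also have genus $0$, which for $i=1$ forces every $1$-subsimplex to have permutation $\id$ (hence $P(\al)=\id$), and for $i=2$ forces every $2$-subsimplex to have permutation $\id$, $[1\,2\,0]$ or $[2\,0\,1]$ (hence $P(\al)$ is a cyclic shift); the ``if'' direction the paper dispatches in one line by appealing to Propositions \ref{p:grim} and \ref{p:genus}$(i)$, i.e.\ essentially your computation. You instead run the whole lemma, both directions at once, through the identity
\begin{equation*}
S(\al)=0 \iff \Cyk\big(\s\,P^{-1}\s^{-1}P\big)=\text{(size of underlying set)} \iff P \text{ centralises } \s,
\end{equation*}
using that a permutation of an $m$-element set has $m$ cycles (fixed points included) exactly when it is the identity, and that the centraliser of a full $m$-cycle in $\Si_m$ is the cyclic group it generates. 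This replaces the paper's descent-to-subsimplices argument with a single group-theoretic fact and gives a uniform biconditional; in the $i=1$ case the constraint $\widehat{P(\al)}(0)=0$ cleanly singles out the identity among the powers of $\s_{n+2}$. The paper's argument is closer to the geometry and reuses Corollary \ref{c:Sgenus}, which the text needs anyway elsewhere; yours is shorter and makes the appearance of exactly the cyclic powers in part $(ii)$ structurally transparent rather than an observed pattern from small cases. Both are sound.
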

\begin{proof} We prove ''only if''. The converse is clear, e.g. by Prop. \ref{p:grim} and Prop. \ref{p:genus} $(i)$.

By Cor. \ref{c:Sgenus}, any subsimplex of $\al$ has
genus equal to or lower than $S(\al)=0$, so any subsimplex of
$\al$ must have genus 0. If $\al\in \D_n(F;1)$, this means all
1-subsimplices must have permutation equal to the identity, and this
forces $P(\al)=\id$. If $\al\in \D_n(F;2)$ the condition on
1-subsimplices is vacuous, but for a $2$-subsimplex $\be$ of $\al$,
we see by Cor. \ref{c:Sgenus} that $S(\be)=0$ implies that $P(\be)$
is either $\id$, $[1\, 2\, 0]$, or $[2\, 0\, 1]$. For this to hold
for any 2-subsimplex of $\al$, $P(\al)$ must be as stated in $(ii)$.
\end{proof}

\subsection{More about permutations}
By Prop. \ref{p:grim}, given $\al\in\D_n(F;i)$, the
number $\sharp \dd N(\al)$ is a function only of $P(\al)$ and $i$.
By Prop. \ref{p:genus}$(i)$, the same is true for $S(\al)$. Thus,
given a permutation $\s\in\Si_{n+1}$, we can calculate these
quantities and simply define the numbers $\sharp\dd N(\s)$ and
$S(\s)$ by the formulas of Prop. \ref{p:grim} and
\ref{p:genus}$(i)$.

Now we are going to see that given a permutation $\s\in \Si_{n+1}$,
there exists $\al\in\D_n(F_{g,r};i)$ with $P(\al)=\s$
if at all possible, that is, provided the formula $(iii)$ of Prop. \ref{p:genus} for the genus of $F_\al$ gives a non-negative
result. Rearranging this conditions we have the following lemma,
also stated in \cite{Harer2}:

\begin{lem}\label{l:perm}Given a permutation $\s\in \Si_{n+1}$, let $s=S(\s)$ as above. There exists $\al\in\D_0(F;i)$ with $P(\al)=\s$ if and only if
\begin{equation}\label{e:spicies}
  s \ge n-g+2-i.
\end{equation}
\end{lem}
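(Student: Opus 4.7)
The ``only if'' direction is immediate from Proposition \ref{p:genus}(iii): if $\al\in\D_n(F;i)$ realizes $\s$, then $\genus(F_\al)=g+s-(n+2-i)\ge 0$, which rearranges to \eqref{e:spicies}.

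For the converse, my plan is to construct $\al$ by building a model for $N(\al)$ as a ribbon-graph thickening, constructing a complementary surface $F'$ separately, and gluing them to obtain $F\iso F_{g,r}$ containing the desired arcs. Given $\s\in\Si_{n+1}$, consider the graph with two vertices $b_0,b_1$ and $n+3$ edges: $n+1$ ``arc-edges'' representing $\al_0,\ldots,\al_n$, together with the edges representing the part of $\dd F$ in $N(\al)$ (two parallel edges from $b_0$ to $b_1$ if $i=1$, or one loop at each of $b_0,b_1$ if $i=2$). Assign cyclic orders at $b_0$ and $b_1$ according to the incoming and outgoing orientations, placing the arc-edges in the positions determined by $\s$. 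Its ribbon-graph thickening $N_\s$ is an oriented compact surface of Euler characteristic $-(n+1)$ (matching Lemma \ref{l:euler}), with $b:=\rand N(\s)$ boundary components (by Proposition \ref{p:grim}), and hence of genus $s=S(\s)$ (by Proposition \ref{p:genus}(i)). Exactly $i$ of the boundary components are \emph{distinguished}, namely those containing the edges coming from $\dd F$; the formulas in Proposition \ref{p:grim} yield $b\ge i+1$, so at least one boundary circle of $N_\s$ is non-distinguished.

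Now set $g':=g+s-(n+2-i)$, which is nonnegative by \eqref{e:spicies}. Let $F'$ be a connected compact oriented surface of genus $g'$ with $b+r-2i$ boundary circles, which exists by the classification of surfaces. Match the $b-i$ non-distinguished boundary circles of $N_\s$ with $b-i$ of the boundary circles of $F'$ arbitrarily and glue. Since $b-i\ge 1$ and both pieces are connected, the resulting surface $F$ is connected. A direct Euler-characteristic and boundary bookkeeping (using $b=n+3-2s$) gives $\chi(F)=2-2g-r$ and $\rand F=r$, so $F\iso F_{g,r}$. The $n+1$ arc-edges inside $N_\s\subset F$ yield the desired arc simplex $\al\in\D_n(F;i)$ with $P(\al)=\s$ by construction, whose complement $F_\al=F'$ is connected as required by Definition \ref{d:arc}.

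The step requiring most care is the ribbon-graph construction of $N_\s$: one must verify that the boundary count of the thickening actually agrees with the formula of Proposition \ref{p:grim}. This amounts to running the tracing algorithm of Example \ref{ex:perm} in reverse, noting that the disjoint cycles of the permutation $\s_{n+2}\widehat{\s}^{-1}\s_{n+2}^{-1}\widehat{\s}$ (or the analogous expression for $i=2$) are precisely what enumerate the boundary components of the thickening under the chosen cyclic orders at $b_0$ and $b_1$.
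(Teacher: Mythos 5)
Your proof is correct, and it takes a genuinely different route from the paper's. The paper constructs $\al$ \emph{inductively}, adding one arc at a time: it assumes $F_\be$ connected for a $k$-subsimplex $\be$ and then must argue that the next arc $\nu$ can be chosen so that $(F_\be)_\nu$ is still connected; the problematic case is when $\nu$ returns to the same boundary component of $F_\be$, in which case $\nu$ must wind around a genus-hole, forcing the paper to establish an auxiliary inequality $S(\be)-k\ge s-n+1$ — and the proof of that inequality is itself indirect, comparing with a parallel construction on a higher-genus surface. Your approach sidesteps all of this by building the whole neighborhood $N(\al)$ at once as a ribbon-graph thickening: the permutation $\s$ furnishes the cyclic orders at $b_0,b_1$, and Propositions \ref{p:grim} and \ref{p:genus}$(i)$ (which are purely combinatorial) give the genus and boundary count of the thickening $N_\s$ directly. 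You then only have to check one Euler-characteristic/boundary bookkeeping identity to see that gluing $N_\s$ to a complementary surface $F'$ of genus $g'=g+s-(n+2-i)\ge 0$ along the non-distinguished circles reproduces $F_{g,r}$, with the connectivity condition on $F_\al=F'$ built in by choosing $F'$ connected. The trade-off: the paper's inductive argument stays close to the definition of arc simplices and needs no ribbon-graph formalism, while yours is more transparent about \emph{why} the inequality \eqref{e:spicies} is exactly the obstruction — it is just the condition $\genus(F')\ge 0$ — and it handles both directions of the ``if and only if'' symmetrically (the paper in fact omits the easy ``only if'' direction, which you supply). Your check that $b-i\ge 1$ via $\Cyk\ge 1$ in Proposition \ref{p:grim} is the detail that makes the gluing nondegenerate, and it is good that you flagged the ribbon-graph boundary count as the step requiring care; it does go through because the cycle-tracing in Example \ref{ex:perm} is a local computation at $b_0,b_1$ that makes sense for an abstract thickening just as well as inside $F$.
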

\begin{proof}
%

 Given a permutation $\s$, one can try to construct an
arc simplex $\al$ inductively with $P(\al)=\s$ by first choosing an
arc $\al_0\in\D_0(F;i)$ from $b_0$ to $b_1$,
and cutting $F$ up along it. This will give us two copies of $b_0$
and $b_1$, respectively, one to the left of our arc and one to the
right. The permutation determines from which copy of $b_0$ and $b_1$
a new arc will join.

Suppose we have constructed $k+1\le n+1$ arcs as above, i.e. a
$k$-simplex $\be=(\al_0,\ldots,\al_{k})$, and consider the cut surface $F_\be$. Inductively we assume that $F_\be$ is connected. Now we
must verify that when adding a new arc, $\nu$, as in Lemma \ref{l:nu},
the cut surface $(F_\be)_\nu$ is connected. If this holds,
$\be\cup\nu$ is a $(k+1)$-simplex, and we have completed the induction
step.

There are two cases. First assume that $\nu$ must join two
different boundary components of $F_\be$. Then
$(F_\be)_\nu$ is connected, no matter how we choose $\nu$,
since $F_\be$ is connected.

Secondly, if $\nu$ connects two points on the same boundary component of $F_\be$, we choose $\nu$ so that it winds
around a genus-hole in $F_\be$. This ensures that
$(F_\be)_\nu$ is connected, so we must prove that
genus$(F_\be)\ge 1$. From Prop. \ref{p:genus}, we know that
$\textup{genus}(F_\be)= g+S(\be)-(k+2-i)$,
and we want to prove
\begin{equation}\label{e:inequality}
    S(\be) - k \ge s - n + 1.
\end{equation}
Using this, we can complete the induction step:
\begin{equation*}
    \textup{genus}(F_\be)= g+S(\be)-k-2+i \ge g + s -n -1+i \ge
    1
\end{equation*}
by assumption \eqref{e:spicies}.

To prove \eqref{e:inequality}, recall that $S(\be)$ only depends on $P(\be)$, not on the surface $F$. So consider another surface $F'$ with genus $g'>n$. We can construct $\be'\in\D_{k}(F',i)$ with $P(\be')=P(\be)$, as above. We can further construct $\al'\in\D_n(F',i)$ with $\be'$ as a subsimplex and $P(\al')=\s$, simply by adding $n-k$ new arcs to $\be'$ which each wind around a genus-hole in $F'$. This is possible because $g'>n$. We claim
\begin{equation}\label{e:help}
S(\al')\le S(\be')+n-k-1.
\end{equation}
Applying Cor. \ref{c:Sgenus} $n-k$ times to $\be'$, we obviously get $S(\al')\le S(\be')+n-k$. We get the extra $-1$, because the first time we add an arc $\nu'$ to $\be'$ we have $\nu'\in \D_0(F'_{\be'};1)$, since $\nu\in\D_0(F_\be,1)$ by assumption. This proves  \eqref{e:help}. Since $P(\be')=P(\be)$ and $P(\al')=\s$, \eqref{e:help} implies
$s=S(\s)\le S(\be)+n-k-1$. This proves \eqref{e:inequality}.
\end{proof}

Combining Prop. \ref{p:inj} and Lemma \ref{l:perm} we have proved,
\begin{cor}
\label{c:bij}The permutation map
\begin{equation*}
P:\D_n(F;i)/\G(F) \To \Si_{n+1}
\end{equation*}
is bijective if $n\le g-2+i$.\qed
\end{cor}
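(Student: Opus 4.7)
The plan is straightforward: this corollary packages together the two results immediately preceding it. Proposition \ref{p:inj} already provides injectivity of $P$ with no restriction on $n$ or $g$, so the only thing remaining is surjectivity in the stated range.

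For surjectivity, I would appeal directly to Lemma \ref{l:perm}. Given any $\s\in\Si_{n+1}$, that lemma guarantees the existence of $\al\in\D_n(F;i)$ with $P(\al)=\s$ precisely when
\begin{equation*}
    S(\s)\ge n-g+2-i.
\end{equation*}
Now $S(\s)$ is by construction the genus of a surface (namely the genus of $N(\al)$ for any realizing $\al$, or equivalently the quantity defined via Prop. \ref{p:grim} and Prop. \ref{p:genus}$(i)$), so $S(\s)\ge 0$ automatically. Hence whenever $n-g+2-i\le 0$, i.e.\ $n\le g-2+i$, the hypothesis of Lemma \ref{l:perm} is trivially satisfied for \emph{every} $\s\in\Si_{n+1}$, and surjectivity follows.

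Combining the injectivity from Prop. \ref{p:inj} with this surjectivity gives the bijection. There is no real obstacle: all the substantive work — the inductive arc-by-arc construction controlling connectedness of the cut surface, and the genus bookkeeping in \eqref{e:inequality} — has already been done inside the proof of Lemma \ref{l:perm}. The corollary is essentially just the observation that the nonnegativity of genus makes the numerical condition of that lemma vacuous in the stated range.
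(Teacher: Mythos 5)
Your argument is exactly the paper's: the corollary is stated with a bare \qed immediately after the phrase ``Combining Prop.~\ref{p:inj} and Lemma~\ref{l:perm} we have proved,'' so the intended proof is precisely the two-step packaging you give. The only point worth making explicit is that $S(\s)\ge 0$ must hold for \emph{every} $\s\in\Si_{n+1}$, not just those realized by an arc simplex — but this follows directly from the defining formulas of Prop.~\ref{p:grim} and Prop.~\ref{p:genus}$(i)$, since the cycle count of a permutation of $m$ symbols is at most $m$, so $\sharp\dd N(\s)\le n+3$ and hence $S(\s)=\tfrac{1}{2}\bigl(n+3-\sharp\dd N(\s)\bigr)\ge 0$.
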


%
%
\begin{lem}[\cite{Harer4}]\label{l:Hlignul}For $F=F_{g,b}$ with $g\ge 2$, the sequence
\begin{equation*}
 C_{p+1}(F;i)/\G(F) \stackrel{d^1}{\To} C_p(F;i)/\G(F) \stackrel{d^1}{\To} C_{p-1}(F;i)/\G(F)
\end{equation*}
is split exact for $1\le p\le g-2+i$.
\end{lem}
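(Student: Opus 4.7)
The approach is to use Corollary~\ref{c:bij} to reformulate the problem as a statement about symmetric groups, and then construct an explicit contracting homotopy.

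In the range $n\le g-2+i$, Corollary~\ref{c:bij} yields an isomorphism $C_n(F;i)/\G(F)\cong\Z[\Si_{n+1}]$ via the permutation map $P$. Since $\G(F)$ acts rotation-freely on $C_*(F;i)$ (remark following Prop.~\ref{p:inj}), formula \eqref{e:d1bedre} shows that under this isomorphism the induced differential becomes $d^1(\sigma)=\sum_{j=1}^{n}(-1)^j\dd_j\sigma$, where for $\sigma\in\Si_{n+1}$ the face $\dd_j\sigma\in\Si_n$ is obtained by deleting position $j$ from the domain and the value $\sigma(j)$ from the codomain, and then order-preservingly relabeling both sides to $\{0,\ldots,n-1\}$.

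Next, I would construct an explicit chain contraction $s:\Z[\Si_{p+1}]\to\Z[\Si_{p+2}]$ by ``inserting an additional arc just after $\al_0$'': for $\sigma\in\Si_{p+1}$, set $s(\sigma)(0)=\sigma(0)$, $s(\sigma)(1)=\sigma(0)+1$, and for $j\ge 2$,
\begin{equation*}
    s(\sigma)(j)=\sigma(j-1)+\begin{cases}0,&\sigma(j-1)\le\sigma(0),\\ 1,&\sigma(j-1)>\sigma(0).\end{cases}
\end{equation*}
A direct face-by-face check using the description of $\dd_j$ above yields $\dd_1 s(\sigma)=\sigma$ and $\dd_j s(\sigma)=s(\dd_{j-1}\sigma)$ for $2\le j\le p+1$, and a standard signed-sum reindexing then gives $d^1 s+s d^1=-\id$ on $\Z[\Si_{p+1}]$. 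This identity immediately implies split exactness at position~$p$.

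The main obstacle is realizability at the upper end of the range: for $p=g-2+i$ the target $C_{p+1}(F;i)/\G(F)$ is only a \emph{proper} subgroup of $\Z[\Si_{p+2}]$, namely (via Prop.~\ref{p:inj} and Lemma~\ref{l:perm}) the span of those $\tau\in\Si_{p+2}$ with $S(\tau)\ge (p+1)-g+2-i$, where $S(\tau)$ is computed from $\tau$ via Prop.~\ref{p:grim}. One must therefore verify that $s(\sigma)$ always satisfies this species inequality, or equivalently that every geometric realization $\al$ of $\sigma$ admits an extension $\al'=\al\cup\nu\in\D_{p+1}(F;i)$ with $P(\al')=s(\sigma)$. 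By Cor.~\ref{c:Sgenus} this comes down to choosing $\nu$ to connect two different boundary components of $F_\al$ when the combinatorics of $s(\sigma)$ demand $S(\al')=S(\al)+1$, and otherwise to run around a genus-hole in $F_\al$; ensuring that at least one such geometric choice is available in both the $i=1$ and $i=2$ cases, throughout the range $1\le p\le g-2+i$, is the most delicate step of the argument.
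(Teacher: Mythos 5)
Your reduction to permutations and the construction of a chain contraction on $\Z[\Si_*]$ is essentially the same strategy the paper follows (with a slightly different formula: the paper takes $D(\sigma)=[0\;\sigma(0)+1\;\cdots\;\sigma(n)+1]$, prepending a new arc at position $0$, rather than inserting one at position $1$). The real issue is the step you flag as ``the most delicate'' and then do not carry out — and it cannot be carried out in the way you suggest, because the obstruction is real, not just a verification.

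Concretely, take $p=g-2+i$ and $\sigma\in\Si_{p+1}$ with $S(\sigma)=0$. By Prop.~\ref{p:genus}$(iii)$ the cut surface $F_{\al}$ for $\al=T(\sigma)$ then has genus exactly $S(\sigma)=0$, so there is no genus-hole to wind a new arc around. Your $s(\sigma)$ inserts an arc parallel to $\al_0$, which forces $\nu\in\D_0(F_\al;1)$, hence $S(s(\sigma))=S(\sigma)=0$ by Cor.~\ref{c:Sgenus}. But Lemma~\ref{l:perm} requires $S\ge (p+1)-g+2-i=1$ for a permutation in $\Si_{p+2}$ to be represented when $p=g-2+i$, so $s(\sigma)$ is \emph{not} in the image of $P$, and the diagonal lift $\bar D$ you want simply does not exist on this element. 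Saying ``choose $\nu$ to connect two different boundary components'' would change the permutation, so it is not an option. The paper closes this gap by a separate argument: by Lemma~\ref{l:Slignul}, for $i=1$ the only $\sigma$ with $S(\sigma)=0$ is $\id$, and one directly computes $\dd[0\,1\cdots g]=0$ if $g$ is odd (then set $\bar D(\id)=0$), while if $g$ is even one finds a replacement permutation $\tau=[2\,0\,1\,3\cdots g]$ with $S(\tau)>0$ and $\dd(\tau)=\dd[0\,1\cdots g]$ and sets $\bar D(\id)=P^{-1}(\tau)$; the $i=2$ case is similar. Your proposal needs this extra case analysis to be complete.

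A secondary point: you write the differential as $\sum_{j=1}^n(-1)^j\dd_j$, taken literally from Def.~\ref{d:arc}, but the proof of Lemma~\ref{l:Hlignul} (and also \eqref{e:d1bedre}) uses the standard convention $\sum_{j=0}^n(-1)^j\dd_j$; the two formulas in the paper are inconsistent. With the full alternating sum starting at $j=0$, one computes $\dd_0 s(\sigma)=\sigma$, so $d^1s+sd^1 = s\circ\dd_0 \neq -\id$ — your $s$ is not a chain contraction under that convention. The paper's $D$ does satisfy $\dd D+D\dd=\id$ precisely because the extra face $\dd_0 D(\sigma)=\sigma$ is what produces the identity term. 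So you should either justify the $j=1$ starting index or switch to a homotopy compatible with the $j=0$ convention.
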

\begin{proof}Let $\Z\Si_*$ denote the chain complex with chain groups
$\Z\Si_n$, $n\ge 1$, and differentials
\begin{equation*}
    \dd:\Z\Si_{n+1}\To \Z\Si_{n}
\end{equation*}
given as follows: For $\s=[\s(0) \cdots \s(n)]\in\Si_{n+1}$, let
\begin{equation*}
\dd_j(\s)=[\s(0)\cdots\s(j-1)\, \s(j+1)\ldots\s(n)],
\end{equation*}
where the set $\set{0,1,\ldots,n}\fra\set{\s(j)}$ is identified with $\set{0,1,\ldots,n-1}$ by subtracting $1$ from all numbers exceeding $\s(j)$. Then we define
$\dd(\s)=\sum_{j=0}^n(-1)^j\dd_j(\s)$ and extend linearly. Extending
the permutation map $P$ linearly leads to the
commutative diagram
\begin{equation}\label{e:comm}
     \xymatrix{C_n(F;i)/\G(F)\ar[r]^d\ar[d]^P& C_{n-1}(F;i)/\G(F)\ar[d]^P\\
     \Z\Si_{n+1}\ar[r]^{\dd}&\Z\Si_{n}
     }
\end{equation}
i.e. a chain map $C_*(F;i)/\G(F) \To \Z\Si_*$.
By Prop. \ref{p:inj}, $P$ is injective, so $C_*(F;i)/\G(F)$ is
isomorphic to a subcomplex of $\Z\Si_*$, namely the subcomplex generated
by permutations $\s\in\Si_{n+1}$ with $S(\s)$ satisfying the
requirements of Lemma \ref{l:perm}. In particular, for $n\le g-2+i$, the
chain groups of $\Z\Si_*$ and of $C_*(F;i)/\G(F)$ are identified.

Define $D:\Z\Si_n\To \Z\Si_{n+1}$ by
\begin{equation}\label{e:D}
D(\s)=\hat\s=
[0\quad\s(0)\!+\!1\quad\s(1)\!+\!1\quad\cdots\quad\s(n)\!+\!1].
\end{equation}
It is an easy consequence of the definitions that $D\dd+ \dd D=1$,
so $D$ is a contracting homotopy and $\Z\Si_*$ is split exact. By the diagram \eqref{e:comm}, $C_*(F;i)/\G(F)$ is also split exact in the range where
\begin{equation}\label{e:billede}
D\circ P\Big(C_n(F;i)/\G(F)\Big)\del P\Big(C_{n+1}(F;i)/\G(F)\Big),
\end{equation}
since $D$ lifts to  a contracting homotopy $\bar{D}$ of $C_{*}(F;i)/\G(F)$.

We will first consider $C_*(F;1)/\G(F)$. By Cor. \ref{c:bij}, $P$ is
bijective for $n\le g-1$, so \eqref{e:billede} is satisfied for
$n\le g-2$. It remains to consider the degree $n=g-1$. We have the
commutative diagram,
\begin{equation*}
     \xymatrix{C_{g}(F;i)/\G(F)\ar[r]^d\ar@{^(->}[d]^P& C_{g-1}(F;i)/\G(F)\ar[r]^d\ar[d]^P_{\iso}& C_{g-2}(F;i)/\G(F)\ar[d]^P_{\iso}\\
     \Z\Si_{g+1}\ar[r]^{\dd}&\Z\Si_{g}\ar[r]^{\dd}&\Z\Si_{g-1}
     }
\end{equation*}
with the bottom sequence exact. We must show that
\begin{equation*}
P\circ d(C_{g}(F;i)/\G(F))=\dd(\Z\Si_{g+1}).
\end{equation*}
According to Cor. \ref{c:bij}, $P:C_{g}(F;1)/\G(F)\To\Z\Si_{g+1}$ hits everything except what is
generated by permutations $\s$ with $S(\s)=0$. Thus we must show
$\dd(\s)\in \im(P\circ d)=\im(\dd\circ P)$ for all $\s\in\Si_{g+1}$
with $S(\s)=0$. From Lemma \ref{l:Slignul} we know that the only such
permutation is the identity. As
\begin{equation*}
\dd([0\,1\,\cdots\,
g])=\sum_{j=0}^{g}(-1)^{j}[0\,1\,\cdots\,g\!-\!1]=\left\{
                                                             \begin{array}{ll}
                                                               0, & \hbox{if $g$ is odd,} \\
                                                               \id, & \hbox{if $g$ is even,}
                                                             \end{array}
                                                           \right.
\end{equation*}
we are done if $g$ is odd, and the desired contracting homotopy
$\bar{D}$ is obtained by lifting $D$ when $S(\al)>0$ and setting by
$\bar{D}(\al)=0$ when $S(\al)=0$.

If $g$ is even, consider $\t=[2\,0\,1\,3\,4\,\cdots\,g]\in
\Si_{g+1}$. Then by Lemma \ref{l:Slignul} $S(\t)>0$, and
\begin{eqnarray*}
\dd(\t)&=&[0\,1\,2\,\cdots \,g\!-\!1]-[1\,0\,2\,3\,\cdots\,g\!-\!1]+[1\,0 \,2\,3\,\cdots\,g\!-\!1]\\
&&+
\sum_{j=3}^{g}(-1)^{j}[2\,0\,1\,3\,4\,\cdots\,g\!-\!1]=[0\,1\,2\,\cdots\,g\!-\!1]
= \dd[0\,1\,2\,\cdots\,g].
\end{eqnarray*}
Thus we can obtain a contracting homotopy $\bar{D}$ by taking $\bar
D(\al)=P^{-1}(\t)$ when $S(\al)=0$.

For $C_*(F;2)/\G(F)$, Cor. \ref{c:bij} gives that $P$ is
bijective for $n\le g$, so we are left with $j=g$, where we use
exactly the same method as above. We must show that
$\dd(\s)\in\im(\dd\circ P)$ for all $\s\in\Si_{g+2}$ with $S(\s)=0$.
We only need to consider $\s\in\im(D)$, because $\im\dd=\im(\dd\circ D)$ by the equation $\dd D+D\dd=1$. The only $\s\in\Si_{g+2}$
with $S(\s)=0$ and $P\in\im D$ is the identity, according to Lemma
\ref{l:Slignul}. Now we are in the same situation as above, so we
can use $\t=[2\,0\,1\,3\,4\,\cdots\,g\,g\!+\!1]\in \Si_{g+2}$ which
has genus $S(\t)>0$ in $C_*(F;2)$, since $g\ge 2$.
\end{proof}

\newpage

\section{Homology stability of the mapping class group}

Let $F$ be a surface with boundary. Given $F$ we can glue on a
''pair of pants'', $F_{0,3}$, to one or two boundary components. We
denote the resulting surface by $\Si_{i,j}F$, the subscripts
indicating the change in genus and number of boundary components,
respectively.

\begin{figure}[!hbt]
\begin{center}
\setlength{\unitlength}{0.5cm}
\begin{picture}(7,2)(-2,1)
\put(-1,0.6){\large \emph{F}} \qbezier(-2,0)(-2,0)(1,0)
\qbezier(-2,2)(-2,2)(1,2) \qbezier(1,0)(0.3,1)(1,2)
 \qbezier(1,0)(2.5,0)(4,-0.5)
\qbezier(1,2)(3,2)(4,2.5) \qbezier(4,0.5)(3.5,0)(4,-0.5)
\qbezier(4,1.5)(3.5,2)(4,2.5) \qbezier(4,0.5)(4.5,0)(4,-0.5)
\qbezier(4,1.5)(4.5,2)(4,2.5) \qbezier(4,0.5)(2,1)(4,1.5)
\linethickness{0.05mm}\qbezier(1,0)(1.7,1)(1,2)
\end{picture}
\begin{picture}(7,2)(0,1)
\put(1.6,0.6){\large \emph{F}}
\qbezier(1,0)(2.5,0)(4,-0.5)\qbezier(1,2)(3,2)(4,2.5)
\qbezier(4,0.5)(3.5,0)(4,-0.5) \qbezier(4,1.5)(3.5,2)(4,2.5)
\qbezier(4,0.5)(2,1)(4,1.5) \qbezier(4,0.5)(6,1)(4,1.5)
\qbezier(7,0)(6.3,1)(7,2) \qbezier(7,0)(5.5,0)(4,-0.5)
\qbezier(7,2)(5,2)(4,2.5) \qbezier(7,0)(7.7,1)(7,2)
\linethickness{0.05mm} \qbezier(4,0.5)(4.5,0)(4,-0.5)
\qbezier(4,1.5)(4.5,2)(4,2.5)
\end{picture}
\end{center}\caption{\quad $\Si_{0,1}F$ \qquad and \qquad $\Si_{1,-1}F$.\quad\quad}\label{b:sigma}\end{figure}These two operations induce homomorphisms between the mapping class
groups after extending a mapping class by the identity on the pair of pants;
\begin{equation*}
    \Si_{i,j}:\G(F)\To \G( \Si_{i,j}F).
\end{equation*}
Given a surface $F$, applying $\Si_{0,1}$ and then adding a disk at one of the
pant legs gives a surface diffeomorphic to $F$ (with a
cylinder glued onto a boundary component). It is easily seen
that the induced composition
\begin{equation*}
    \G(F)\To \G( \Si_{0,1}F)\To \G(F)
\end{equation*}
is the identity, so $\Si_{0,1}$ induces an
injection on homology
\begin{equation}\label{e:injektiv}
    H_n(\G(F))\into H_n(\G(\Si_{0,1}F)).
\end{equation}

For the proof of the stability theorems, the opposite operation is
essential: One expresses the surface $F$ as the result of cutting
$\Si_{0,1}F$ or $\Si_{1,-1}F$ along an arc representing a
$0$-simplex in one of the arc complexes of definition \ref{d:arc}:
\begin{equation*}
    F \iso (\Si_{0,1}F)_\al,\quad \textup{and} \quad F \iso (\Si_{1,-1}F)_\be,
\end{equation*}
for $\al\in \D_0(\Si_{0,1}F,2)$ and $\be\in \D_0(\Si_{1,-1}F,1)$ as indicated below
\begin{figure}[!hbt]\label{f:skaer}
\begin{center}
\setlength{\unitlength}{0.5cm}
\begin{picture}(9,3)(-2,0)
\put(-1,0.6){\large \emph{F}} \qbezier(-2,0)(-2,0)(1,0)
\qbezier(-2,2)(-2,2)(1,2) \qbezier(1,0)(0.3,1)(1,2)
 \qbezier(1,0)(2.5,0)(4,-0.5)
 \qbezier(1,2)(3,2)(4,2.5) \qbezier(4,0.5)(3.5,0)(4,-0.5)
\qbezier(4,1.5)(3.5,2)(4,2.5) \qbezier(4,0.5)(4.5,0)(4,-0.5)
\qbezier(4,1.5)(4.5,2)(4,2.5) \put(5,1){\vector(-3,-1){1}}
\thicklines\qbezier(4,0.5)(2,1)(4,1.5) \put(5.3,1){$\al$}
\thicklines\qbezier(4,0.5)(2,1)(4,1.5)
\linethickness{0.05mm}\qbezier(1,0)(1.7,1)(1,2)
\end{picture}
\begin{picture}(7,3)
\put(1.6,0.6){\large \emph{F}}
\qbezier(1,0)(2.5,0)(4,-0.5)\qbezier(1,2)(3,2)(4,2.5)
\qbezier(4,0.5)(3.5,0)(4,-0.5) \qbezier(4,1.5)(3.5,2)(4,2.5)
\qbezier(4,0.5)(2,1)(4,1.5) \qbezier(4,0.5)(6,1)(4,1.5)
\qbezier(7,0)(6.3,1)(7,2) \qbezier(7,0)(5.5,0)(4,-0.5)
\qbezier(7,2)(5,2)(4,2.5) \qbezier(7,0)(7.7,1)(7,2)
\put(7,-0.2){\vector(-3,2){1.2}} \put(7.2,-0.5){$\be$} \thicklines
\qbezier(5,1)(6,0.7)(6.65,0.7) \qbezier(7.3,1.3)(7,1.35)(6.7,1.3)
\linethickness{0.05mm} \qbezier(4,0.5)(4.5,0)(4,-0.5)
\qbezier(4,1.5)(4.5,2)(4,2.5) \qbezier(5,1)(6,1.3)(6.65,1.3)
\end{picture}
\end{center}\caption{$\al$ and $\be$.}\label{b:arc}\end{figure}

A diffeomorphism of $F_\al$ that fixes the points on the boundary pointwise extends to a diffeomorphism of $F$ by adding the identity on $N(\al)$, and this defines an inclusion $\G(F_\al)\To \G$ whose image is the stabilizer $\G_\al$.

\subsection{The spectral sequence for the action of the mapping class
group}\label{sc:lemmas} In this section, $F=F_{g,r}$ with $g\ge 2$ and $\G=\G(F)$. We shall consider the spectral sequences $E^{n}_{p,q}=E^{n}_{p,q}(F;i)$ from section \ref{sc:spectral} associated to the action of $\G$ on the arc complexes $C_*(F;i)$ for $i=1,2$. By Cor. \ref{c:spectral} and
Thm. \ref{s:connected}, we have $E^1_{0,q}=H_q(\G)$ and
\begin{equation}\label{e:E1F}
    E^1_{p,q}= \bigoplus_{\al\in\Bar\D_{p-1}} H_q(\G_\al) \Tto 0,
    \quad \textup{for }p+q \le 2g-2+i,
\end{equation}
where $\Dbar_{p-1}\del \D_{p-1}(F;1)$ is a set of representatives of the $\G$-orbits of $\D_{p-1}(F;i)$ in $C_*(F;i)$.

The permutation map
\begin{equation*}
P:\D_{p-1}(F;i)/\G \To \Si_{p}
\end{equation*}
is injective by Prop. \ref{p:inj}. Let $\Sibar_p$ be the image, and $T:\Sibar_p\stackrel{\sim}{\To}\Dbar_{p-1}\into\D_{p-1}(F;i)$ a section, $P\circ T=\id$. Then
\begin{equation}\label{e:E1Fny}
    E^1_{p,q}= \bigoplus_{\s\in\Sibar_p} E^1_{p,q}(\s),\quad E^1_{p,q}(\s)=H_q(\G_{T(\s)}).
\end{equation}

The first differential, $d^1_{p,q}:E^1_{p,q}\To E^1_{p-1,q}$, is
described in section \ref{sc:diff}. The diagrams
\begin{equation*}
    \xymatrix{
    \D_{p}(F;i)\ar[r]^{\dd_j}\ar[d] & \D_{p}(F;i)\ar[d] &\\
    \Sibar_{p+1}\ar[r]^{\dd_j} & \Sibar_p & j=0,\ldots,p
    }
\end{equation*}
commute, where $\dd_j$ omits entry $j$ as in Def. \ref{d:arc} and the vertical arrows divide out the $\G$ action and compose with $P$. Thus for each $\s\in\Sibar_{p+1}$, there is $g_j\in\G$ such that
\begin{equation}\label{e:Ib18}
    g_j\cdot\dd_j T(\s) = T(\dd_j\s),
\end{equation}
and conjugation by $g_j$ induces an isomophism $c_{g_j}:\G_{\dd_j T(\s)}\To \G_{T(\dd_j\s)}$. The induced map on homology is denoted $\dd_j$ again, i.e.
\begin{equation}\label{e:Ib19}
\xymatrix{
    \dd_j: H_q(\G_{T(\s)})\ar[r]^{\textrm{incl}_*}&
    H_q(\G_{\dd_j T(\s)})\ar[r]^{(c_{g_j})_*}&
    H_q(\G_{T(\dd_j\s)})
}.
\end{equation}
Note that $(c_{g_j})_*$ does not depend on the choice of $g_j$ in \eqref{e:Ib18}: Another choice $g_j'$ gives $c_{g_j'}= c_{g_j'g_j^{-1}}c_{g_j}$, and $g_j'g_j^{-1}\in\G_{T(\dd_j\s)}$ so $c_{g_j'g_j^{-1}}$ induces the identity on $H_q(\G_{T(\dd_j\s)})$. Then
\begin{equation}\label{e:Ib20}
    d^1=\sum_{j=0}^{p-1}(-1)^j\dd_j.
\end{equation}

The proof of the main stability Theorem depends on a partial calculation of the spectral sequence \eqref{e:E1F}. More specifically, the first differential $d^1:E^1_{1,q}\To E^1_{0,q}$ is equivalent to a stability map $H_q(\G_\al)\To H_q(\G)$, so the question becomes whether $d^1$ is an isomorphism resp. an epimorphism. In a range of dimensions the spectral sequence converges to zero, so that $d^1$ must be an isomorphism unless other (higher) differentials interfere. The next three lemma are the key elements that give sufficient hold of the spectral sequence. The first lemma gives the general induction step. The next two lemmas about $d^1:E^1_{p,q}\To E^1_{p-1,q}$ for $p=3,4$ are necessary for the improved stability.

\begin{lem}\label{l:E^2_p,q} Let $i=1,2$, and let $k,j\in \N$ with $k\le g-3+i$. For any $\al\in\D_{p-1}(F;i)$ and all $q\le k-j$, assume that
\begin{eqnarray}\label{e:assume}
  & &H_q(\G_\al)\stackrel{\scriptscriptstyle\iso}{\to} H_q(\G) \text{ is an isomorphism} \quad  \text{if } p+q\le k+1, \\
  & &H_q(\G_\al)\twoheadrightarrow H_q(\G) \text{ is surjective} \qquad\qquad\!  \text{if } p+q=
  k+2.
\end{eqnarray}
Then $E^2_{p,q}(F;i) = 0$ for all $p,q$ with $p+q=k+1$ and $q\le
k-j$.
\end{lem}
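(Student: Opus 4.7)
The plan is to compare the $q$-th row of the $E^1$ page with the complex $\bigl(C_{*-1}(F;i)/\G\bigr) \tensor H_q(\G)$ in the appropriate range and invoke Lemma \ref{l:Hlignul}. The inductive assumption provides stabilization maps which assemble into homomorphisms
\begin{equation*}
    \Phi_{p,q}: E^1_{p,q} = \bigoplus_{\s\in\Sibar_p} H_q(\G_{T(\s)}) \To \bigoplus_{\s\in\Sibar_p} H_q(\G) \iso \bigl(C_{p-1}(F;i)/\G\bigr) \tensor H_q(\G),
\end{equation*}
which are isomorphisms when $p+q \le k+1$ and surjective when $p+q = k+2$. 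So for fixed $q \le k-j$, the map $\Phi_{*,q}$ is an isomorphism in degrees $p \le k+1-q$ and surjective in degree $p = k+2-q$.

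The next step is to check that $\Phi$ intertwines $d^1$ with $d \tensor \id$, where $d$ is the differential in $C_*(F;i)/\G$. Each face map $\dd_j$ from \eqref{e:Ib19} is the composite of the inclusion $\G_{T(\s)} \into \G_{\dd_j T(\s)}$ with a conjugation $c_{g_j}: \G_{\dd_j T(\s)} \iso \G_{T(\dd_j\s)}$. Under the stabilization identifications the inclusions become the identity on $H_q(\G)$, and the conjugations also become the identity since inner automorphisms of $\G$ act trivially on $H_*(\G)$ (this is precisely the argument showing $c_{g_j}$ does not depend on the choice of $g_j$, noted after \eqref{e:Ib19}). Hence $d^1 = \sum_j (-1)^j \dd_j$ corresponds under $\Phi$ to $d \tensor \id$. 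This compatibility of differentials is the main technical point of the argument.

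With this in place, the conclusion is a short diagram chase. By Lemma \ref{l:Hlignul}, $C_*(F;i)/\G$ is split exact in degrees $1 \le p \le g-2+i$; since $k \le g-3+i$, the relevant degrees $p = k+1-q$ for $0 \le q \le k-j$ all lie in this range, and split exactness is preserved by tensoring with $H_q(\G)$. For $p+q = k+1$ and $q \le k-j$, take $x \in E^1_{p,q}$ with $d^1 x = 0$; then $\Phi_{p,q}(x)$ is a cycle in the target complex, hence a boundary $(d \tensor \id)(y)$ for some $y$ in degree $p+1$. Surjectivity of $\Phi_{p+1,q}$ (since $(p+1)+q = k+2$) allows us to lift $y$ to $\tilde y \in E^1_{p+1,q}$, and commutativity of $\Phi$ with the differentials together with the isomorphism $\Phi_{p,q}$ yields $d^1 \tilde y = x$. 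This shows $E^2_{p,q} = 0$ as claimed.
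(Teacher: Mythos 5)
Your argument is correct and follows the same approach as the paper: identify the $q$-th row of the $E^1$ page with $\overline{C}_{*-1}(F;i)\otimes H_q(\G)$ via the stabilization maps in the stated range, verify that $d^1$ is carried to the differential of Lemma \ref{l:Hlignul} tensored with $H_q(\G)$ (using triviality of inner automorphisms on $H_*(\G)$), and conclude by exactness of the target together with the iso/surjectivity of the vertical maps. The paper phrases the final step as commutativity of a three-column diagram rather than an explicit chase, but the content is identical.
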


\begin{proof}Let $\overline{C}_n(F;i)=C_n(F;i)/\G$. By
\eqref{e:E1F} and the assumptions, we get for $q\le k-j$:
\begin{eqnarray}\label{e:conclude}
  & &E^1_{p,q}\iso \overline{C}_{p-1}(F;i)\otimes
H_{q}(\G) \quad  \text{if } p+q\le k+1, \\
  & &E^1_{p,q}\twoheadrightarrow
\overline{C}_{p-1}(F;i)\otimes H_{q}(\G) \quad \text{if } p+q=
  k+2.\nonumber
\end{eqnarray}
Now we have the following commutative diagram, for a fixed pair $p,
q$ with $q\le k-j$ and $p+q=k+1$:
\begin{equation}\label{e:diagram}
\xymatrix{E^1_{p-1,q}\ar[d]^{\iso}
  &E^1_{p,q}\ar[l]_{\quad d^1}\ar[d]^{\iso}&
  E^1_{p+1,q}\ar[l]_{d^1}\ar@{->>}[d]\\
\overline{C}_{p-2}(F;i) \otimes H_{q}(\G)& \overline{C}_{p-1}(F;i)
\otimes H_{q}(\G)\ar[l]_{\quad\bar{d}^1}& \overline{C}_{p}(F;i)
\otimes H_{q}(\G)\ar[l]_{\quad\bar{d}^1}
  }
\end{equation}
Using the formula \eqref{e:Ib20} for $\bar{d}^1$, $(c_{g_j})_*(\om)=\om$ for $\om\in H_*(\G)$, since conjugation induces the identity in $H_*(\G)$. Thus the bottom row of diagram \eqref{e:diagram} is just the sequence from Lemma \ref{l:Hlignul}, tensored with $H_{q}(\G)$. Since $p\le k+1\le g-2+i$ that sequence is split exact, so the bottom row of \eqref{e:diagram} is exact. We conclude that $E^2_{p,q}=0$ for all $p,q$ with
$q\le k-j$ and $p+q=k+1$, as desired.
\end{proof}

We next examine the chain complex
\begin{equation*}
\xymatrix{    \ldots\ar[r]^{d^1}& E^1_{3,q}(F,i) \ar[r]^{d^1}& E^1_{2,q}(F,i)\ar[r]^{d^1}& E^1_{1,q}(F,i)\ar[r]^{d^1}&E^1_{0,q}(F,i)}
\end{equation*}
associated  with $C(F;i)$, but first we need an easy geometric proposition.
Recall from definition \ref{d:N}, that for $\al \in \D_p(F;i)$ we write $F_\al =F\fra N(\al)$ for the surface cut along the arcs of $\al$.

\begin{prop}\label{p:8-tal} Let $\al\in \D_n(F;i)$ with permutation $P(\al)=\s$, and assume there is $k,l<n$ such that $\s(k)=l+1$ and $\s(k+1)=l$. Then there exists $f\in \G(F)$ with $f(\al_{k+1})=\al_k$, $f(\al_i)=\al_i$ for $i\notin \set{k, k+1}$ and $f|_{F_\al} =\id_{F_\al}$.
\end{prop}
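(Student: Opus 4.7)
My plan is to construct $f$ geometrically by exhibiting a small disk in $F$ on which the swap of the two arcs can be performed explicitly, using the ``double adjacency'' forced by the permutation condition.

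First I would translate the permutation condition into geometry: $\s(k)=l+1$ and $\s(k+1)=l$ say that $\al_k$ and $\al_{k+1}$ are immediately consecutive in the incoming order at $b_0$ (with $\al_k$ preceding $\al_{k+1}$) and immediately consecutive in the outgoing order at $b_1$ (with $\al_{k+1}$ preceding $\al_k$). In particular there is a sector $S_0$ at $b_0$ between $\al_k$ and $\al_{k+1}$ and a sector $S_1$ at $b_1$ between them, neither containing any other arc. The fact that the consecutive order at $b_1$ is \emph{reversed} from that at $b_0$ is exactly what makes the two arcs suitable for an orientation-preserving swap.

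Next I would apply the right-hand side coloring algorithm of Example \ref{ex:perm} to identify a distinguished boundary component of $F_\al$. Starting on the right-hand side of $\al_{k+1}$ at $b_0$ and following the prescription: traverse the right side of $\al_{k+1}$ to $b_1$, cross via $S_1$ to the left side of $\al_k$ (possible because the outgoing-order condition makes these two sides adjacent at $b_1$), traverse the left side of $\al_k$ back to $b_0$, and return to the right side of $\al_{k+1}$ via $S_0$ (possible by the incoming-order condition at $b_0$). This yields a single closed curve $\g \subset \dd F_\al$ consisting only of sides of $\al_k$, $\al_{k+1}$ and short arcs of $\dd F$ near $b_0, b_1$.

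Then I would argue that $\g$ bounds, on the $N(\al)$ side, a topological disk $B \subset F$ that contains $\al_k \cup \al_{k+1}$ (together with the sectors $S_0, S_1$ and the strips inside the tube neighborhoods of the two arcs) and no other arc of $\al$. Since $B$ is a disk with $b_0, b_1$ on its boundary and $\al_k, \al_{k+1}$ as two interior arcs sharing these endpoints, these two arcs are isotopic rel endpoints inside $B$. By the isotopy extension theorem one obtains a diffeomorphism $\f: B \to B$ fixing $\dd B$ pointwise with $\f(\al_{k+1}) = \al_k$ (and consequently $\f(\al_k) = \al_{k+1}$). Extending $\f$ by the identity on $F \fra B$ gives the desired $f \in \G(F)$: since $\f = \id$ on $\dd B$ and $B$ meets $F_\al$ only in $\g \subset \dd B$, we have $f|_{F_\al} = \id_{F_\al}$; since $\dd B \cap \dd F$ lies in $\dd F$, $f$ is identity near $\dd F$; and $f(\al_i) = \al_i$ for $i\notin\{k,k+1\}$ because those arcs lie entirely outside $B$.

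The main obstacle is step three: verifying that $B$ is really an embedded disk in $F$. A priori the curve $\g$ might bound a surface of positive genus on the $N(\al)$ side (since $N(\al)$ can itself have high genus). The point is that, because no other arc of $\al$ meets either sector $S_0$ or $S_1$ (by the adjacency condition), the region $B$ is assembled from the two sectors and the two arc-strips by gluing along four short arcs in a way dictated by the reversed cyclic orderings at $b_0$ and $b_1$; a direct Euler characteristic and boundary count shows that this plane-like gluing produces a disk. Establishing this combinatorial-to-topological step cleanly is the delicate part of the proof.
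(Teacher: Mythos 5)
The proposal has a fundamental topological error. You construct a region $B$ claimed to be an embedded disk containing $\al_k$ and $\al_{k+1}$ as interior arcs with endpoints $b_0,b_1$ on $\dd B$, and then take $f$ to be a diffeomorphism of $B$ fixing $\dd B$ pointwise and carrying $\al_{k+1}$ to $\al_k$. But any two properly embedded arcs in a disk with the same pair of boundary endpoints are isotopic rel endpoints, and this isotopy extends to an ambient isotopy of $F$ rel $\dd F$; hence $\al_k$ and $\al_{k+1}$ would represent the \emph{same} vertex of the arc complex, contradicting the hypothesis that they are distinct arcs of the simplex $\al$. Equivalently, $\pi_0\,\mathrm{Diff}(D^2,\dd D^2)=1$, so a diffeomorphism supported in a disk and fixing the disk's boundary is isotopic to $\id$ in $\G(F)$, and the identity mapping class cannot carry one vertex to a distinct one.

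The Euler characteristic count you defer would also reveal the problem. The union of tubular neighborhoods of $\al_k$ and $\al_{k+1}$ is a thickening of a circle, hence an annulus with $\chi=0$; attaching the boundary sectors $S_0,S_1$ along arcs in the boundary leaves $\chi$ unchanged, so the resulting region has $\chi = 0$, not $1$: it is an annulus. That the region is an annulus is in fact the crux of the paper's proof: the paper takes $f$ to be a \emph{Dehn twist} in an annulus that meets $\al$ only in $\al_k$ and $\al_{k+1}$. A Dehn twist is precisely the kind of mapping class that is supported away from $F_\al$ and nontrivial, so it can move $\al_{k+1}$ to the distinct isotopy class $\al_k$ while being the identity on $F_\al$ and on the other $\al_i$. (Note also that the statement only asserts $f(\al_{k+1})=\al_k$, not that $f$ swaps the two arcs; a Dehn twist does not swap them.) To repair the proposal you should replace the disk isotopy with a Dehn twist around a suitable curve in the annulus $N(\al_k\cup\al_{k+1})\cup S_0\cup S_1$, as in the paper.
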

\begin{proof} A (right) \emph{Dehn twist} in an annulus in $F$ is an element of $\G(F)$ given by performing a full twist to the right inside the annulus, and extending by the identity outside the annulus. Figure \ref{b:Dehn1} shows a Dehn twist $\g$ in an annulus, and its effect on a curve $\be$ intersecting the annulus.
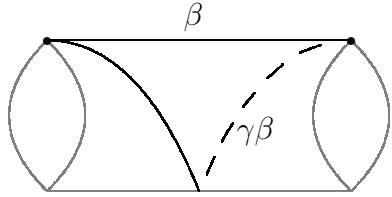
\begin{figure}[!hbt]
\begin{center}
\setlength{\unitlength}{1cm}
\begin{picture}(4,2)(-2,0.5)
\color[rgb]{0.50,0.50,0.50}
\qbezier(-2,0)(-2,0)(2,0)
\qbezier(-2,0)(-3,1)(-2,2)
\qbezier(-2,0)(-1,1)(-2,2)
\qbezier(2,0)(3,1)(2,2)
\qbezier(2,0)(1,1)(2,2)
\color{black}
\qbezier(-2,2)(-2,2)(2,2)
\qbezier(-2,2)(-0.8,2)(0,0)
\qbezier(0.1,0.22)(0.1,0.22)(0.2,0.45)
\qbezier(0.3,0.66)(0.3,0.66)(0.44,0.9)
\qbezier(0.58,1.12)(0.58,1.12)(0.73,1.3)
\qbezier(0.9,1.5)(0.9,1.5)(1.1,1.68)
\qbezier(1.33,1.84)(1.33,1.84)(1.6,1.94)
\put(-2,2){\circle*{0.1}}
\put(2,2){\circle*{0.1}}
\put(-.2,2.2){$\be$}
\put(0.5,.7){$\g\be$}
\end{picture}
\end{center}\caption{A Dehn twist $\g$ in an annulus.}\label{b:Dehn1}\end{figure}

Consider the curves $\al_k$ and $\al_{k+1}$. Take an annulus as depicted on Figure \ref{b:Dehn} below (in grey). By the requirements of the proposition it is easy to construct the annulus so that it only intersects $\al$ in $\al_k$ and $\al_{k+1}$. Let $f$ be the Dehn twist in this annulus.
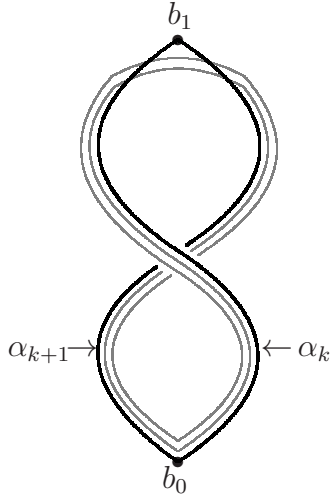
\begin{figure}[!hbt]
\begin{center}
\setlength{\unitlength}{0.7cm}
\begin{picture}(8,9)(-4,0)
\put(0,0){\circle*{0.2}}\put(-0.3,-.5){$b_0$}
\put(0,8){\circle*{0.2}}\put(-0.2,8.3){$b_1$}

\put(1.55,2){$\leftarrow$} \put(2.25,2){$\al_k$}
\put(-2.1,2){$\rightarrow$} \put(-3.2,2){$\al_{k+1}$}
 \color[rgb]{0.50,0.50,0.50}
\qbezier(0,0.2)(2.7,2)(0,3.8)\qbezier(0,3.8)(-2.55,5.5)(-1.2,7.1)
\qbezier(0,0.2)(-2.6,2)(-0.3,3.6)\qbezier(0.25,4)(2.5,5.5)(1.23,7.1)
\qbezier(-1.2,7.1)(0,7.8)(1.23,7.1)

\qbezier(0,0.4)(2.4,2)(0,3.6)\qbezier(0,3.6)(-2.85,5.4)(-1.25,7.3)
\qbezier(-0,0.4)(-2.4,2)(-0.13,3.53)\qbezier(0.38,3.92)(2.8,5.5)(1.25,7.3)
\qbezier(-1.25,7.3)(0,8)(1.25,7.3)

\color{black}
\thicklines \qbezier(0,0)(3,2)(0,4)\qbezier(0,4)(-3,6)(0,8)
\qbezier(0,0)(-2.8,2)(-0.4,3.7)\qbezier(0.15,4.1)(3,6)(0,8)
\end{picture}
\end{center}\caption{The Dehn twist $f$.}\label{b:Dehn}\end{figure}
Since $f$ is the identity outside the annulus, we have $f(\al_i)=\al_i$ for all $i\notin \{k, k+1\}$ and $f|_{F_\al} =\id_{F_\al}$. By Figure \ref{b:Dehn} it is easy to see that $f(\al_{k+1})=\al_k$.
\end{proof}

The stabilizer $\G_\al$ of $\al\in \D_p(F;i)$ depends up to conjugation only on the orbit $\G\al$, i.e. on $P(\al)\in \Si_{p+1}$. So when conjugation is of no importance we shall for $\s\in\overline{\Si}_{p+1}$ write $\G_\s$ for any of the conjugate subgroups $\G_\al$ with $P(\al)=\s$. If $\tau\in \overline{\Si}_{p}$ is a face of $\s\in \overline{\Si}_{p+1}$ then $\G_\s$ is conjugate to a subgroup of $\G_\tau$, and there is a homomorphism
\begin{equation*}
    H_q(\G_\s)\To H_q(\G_\tau),
\end{equation*}
well-determined up to isomorphism of source and target.

\begin{lem}\label{l:E^2_2,q}Let $c_1$ and $c_2$ be the isomorphism classes
\begin{equation*}
    c_1: H_q(\G_{[0\,2\,1]})\To H_q(\G_{[1\,0]}),\quad c_2: H_q(\G_{[1\,2\,0]})\To H_q(\G_{[0\,1]})
\end{equation*}
\begin{itemize}
  \item[$(i)$]If $c_1$ and $c_2$ are surjective, then  $d^1_{3,q}:E^1_{3,q}\To E^1_{2,q}$ is surjective, and
  $E^2_{2,q}=0$.
  \item[$(ii)$]If $c_1$ and $c_2$ are injective, then
\begin{equation*}
d^1_{3,q}:E^1_{3,q}([0\,2\,1])\oplus E^1_{3,q}([1\,2\,0])\To E^1_{2,q}
\end{equation*} is injective.
\end{itemize}
\end{lem}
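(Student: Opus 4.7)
The plan is to analyze $d^1_{3,q}$ explicitly by enumerating $\Si_3$, computing the face permutations, and using Prop.~\ref{p:8-tal} to identify certain face maps in homology. A direct computation via the combinatorial formula in Lemma~\ref{l:Hlignul} gives, for each $\s \in \Si_3$, its three faces $\dd_j\s \in \Si_2$. The crucial cases are $[0\,2\,1]$, where $\dd_0 = [1\,0]$ and $\dd_1 = \dd_2 = [0\,1]$, and $[1\,2\,0]$, where $\dd_0 = \dd_1 = [1\,0]$ and $\dd_2 = [0\,1]$; the remaining four elements of $\Si_3$ are handled analogously. Prop.~\ref{p:8-tal} applies precisely when $\s$ has a descent $\s(k) = l+1, \s(k+1) = l$, which holds for $[0\,2\,1]$ (at $k=1$), $[1\,0\,2]$ (at $k=0$), and $[2\,1\,0]$ (at both $k=0$ and $k=1$), but not for $[1\,2\,0]$ or $[2\,0\,1]$.

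The central technical step is to show that when such a descent exists, the face maps $\dd_k, \dd_{k+1}: H_q(\G_{T(\s)}) \To H_q(\G_{T(\dd_k\s)})$ coincide. A direct check on the arc labels shows $f \cdot \dd_k T(\s) = \dd_{k+1} T(\s)$, so one may take $g_{k+1} = g_k f^{-1}$ in \eqref{e:Ib19}, whence $\dd_{k+1}(h) = g_k (f^{-1} h f) g_k^{-1}$ for $h \in \G_{T(\s)}$. Since $f|_{F_{T(\s)}} = \id$, the class $f$ has a representative supported in $N(T(\s))$; every $h \in \G_{T(\s)}$ has a representative supported on $F_{T(\s)}$ (first isotope to fix each arc pointwise, then further to fix a tubular neighborhood). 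These supports are disjoint, so $f^{-1} h f = h$ in $\G$, giving $\dd_{k+1} = \dd_k$ on homology. Plugging into $d^1 = \dd_0 - \dd_1 + \dd_2$, the summand $H_q(\G_{[0\,2\,1]})$ contributes $(0,\, c_1(x)) \in H_q(\G_{[0\,1]}) \oplus H_q(\G_{[1\,0]})$; analogous vanishings occur for $[1\,0\,2]$ and $[2\,1\,0]$; while $H_q(\G_{[1\,2\,0]})$ contributes $(c_2(y),\, \dd_0(y) - \dd_1(y))$, with no cancellation available since the descent condition fails.

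Both parts of the lemma follow easily. For (i), given any $(u, v) \in H_q(\G_{[0\,1]}) \oplus H_q(\G_{[1\,0]})$, surjectivity of $c_2$ yields $y$ with $c_2(y) = u$, and then surjectivity of $c_1$ yields $x$ with $c_1(x) = v - \dd_0(y) + \dd_1(y)$; the element of $H_q(\G_{[0\,2\,1]}) \oplus H_q(\G_{[1\,2\,0]}) \subseteq E^1_{3,q}$ given by $(x,y)$ maps under $d^1$ to $(c_2(y),\, c_1(x) + \dd_0(y) - \dd_1(y)) = (u, v)$, proving surjectivity of $d^1_{3,q}$ and hence $E^2_{2,q} = 0$. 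For (ii), if $(x, y)$ has $d^1$-image zero, the first coordinate gives $c_2(y) = 0$ so $y = 0$ by injectivity of $c_2$, and the second then reduces to $c_1(x) = 0$ so $x = 0$ by injectivity of $c_1$. The principal obstacle is the identification $\dd_k = \dd_{k+1}$; this hinges on the standard fact that the stabilizer $\G_\al$ is represented by diffeomorphisms supported on the cut surface $F_\al$, so that a Dehn twist supported in the complementary neighborhood $N(\al)$ acts trivially by conjugation on $\G_\al$.
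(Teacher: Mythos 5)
Your proposal is correct and takes essentially the same route as the paper: enumerate $\Si_3$ and its faces, and use Prop.~\ref{p:8-tal} (via the ``disjoint supports'' argument) to show that two face maps sharing a target cancel when a descent exists. Your analysis of the face permutations and the mechanism by which $\dd_k=\dd_{k+1}$ follows from $f|_{F_{T(\s)}}=\id$ matches the paper's argument for the summand $E^1_{3,q}([0\,2\,1])$.

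There is one substantive point where you diverge, and it is worth flagging. The paper asserts that the two terms $\dd_0-\dd_1$ mapping $E^1_{3,q}([1\,2\,0])$ into $E^1_{2,q}([1\,0])$ ``cancel by Prop.~\ref{p:8-tal}.'' You correctly observe that the hypothesis of Prop.~\ref{p:8-tal} --- the existence of $k,l$ with $\s(k)=l+1$ and $\s(k+1)=l$ --- fails for $\s=[1\,2\,0]$ (there is no descent by one at consecutive positions), so the proposition as stated does not directly produce the required $f$ with $f(\al_0)=\al_1$, $f(\al_2)=\al_2$, $f|_{F_\al}=\id$. Rather than try to repair the cancellation, you keep the possibly nonzero term $\dd_0-\dd_1$ and exploit the resulting lower-triangular structure: the matrix of $d^1$ on $H_q(\G_{[0\,2\,1]})\oplus H_q(\G_{[1\,2\,0]})$ has $[0\,1]$-row $(0,\,c_2)$ and $[1\,0]$-row $(c_1,\,\dd_0-\dd_1)$, which is surjective (resp.\ injective) whenever $c_1,c_2$ are, regardless of the off-diagonal entry. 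This makes the argument robust against the point in Prop.~\ref{p:8-tal}'s applicability that the paper glosses over, at the cost of not pinning down the precise form of $d^1$ restricted to those summands. Either route proves the lemma; yours is slightly more cautious and, I think, more faithfully matches what Prop.~\ref{p:8-tal} literally delivers.
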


\begin{proof}The target of $d^1$ is $E^1_{2,q}=E^1_{2,q}([0\,1])\oplus E^1_{2,q}([1\,0])$, and we first examine the component
\begin{equation}\label{e:Ib25}
   d^1_{3,q}: E^1_{3,q}([0\,2\,1])\To E^1_{2,q}([0\,1]).
\end{equation}
If $\be=T([0\,2\,1])$ with $\be= (\be_0,\be_1,\be_2)$, let $\g\in\G$ satisfy $(\g\be_0,\g\be_1)=T([0\,1])$, and write $\al=\g\be$. Then
\begin{equation*}
    (c_g)_*:E^1_{3,q}([0\,2\,1])\stackrel{\iso}{\To}H_q(\G_\al),
\end{equation*}
and the $E^1_{2,q}([0\,1])$-component of $d^1_{3,q}\circ (c_g)_*$ is the difference of
\begin{eqnarray}\label{e:Ib26}
  \dd_2:H_q(\G_\al) &\To& H_q(\G_{(\al_0,\al_1)}) \\
  \dd_1:H_q(\G_\al) &\To& H_q(\G_{(\al_0,\al_2)})\To H_q(\G_{(\al_0,\al_1)})\nonumber
\end{eqnarray}
where $f\cdot(\al_0, \al_2)=(\al_0, \al_1)$. By the previous proposition \ref{p:8-tal} we may choose $f$ such that $f|_{F_\al}=\id_{F_\al}$. It follows that $c_f:\G\To \G$ restricts to the identity on $\G_\al$, and hence that the two maps in \eqref{e:Ib26} are equal. Thus the component of $d^1_{3,q}$ in \eqref{e:Ib25} is zero. On the other hand, the component
\begin{equation*}
     d^1_{3,q}: E^1_{3,q}([0\,2\,1])\To E^1_{2,q}([1\,0])
\end{equation*}
is equal to $\dd_0$, so it belongs to the isomorphism class $c_1$. Thus it is surjective resp. injective under the assumptions $(i)$ resp. $(ii)$.

The restriction of $d^1_{3,q}$ to $E^1_{3,q}([1\,2\,0])$,
\begin{equation*}
   d^1_{3,q}: E^1_{3,q}([1\,2\,0])\To E^1_{2,q}([0\,1])\oplus E^1_{2,q}([1\,0]),
\end{equation*}
is treated in a similar fashion. This time there are two terms with opposite signs in $E^1_{2,q}([1\,0])$ which cancel by Prop. \ref{p:8-tal}, and the component
\begin{equation*}
   d^1_{3,q}: E^1_{3,q}([1\,2\,0])\To E^1_{2,q}([0\,1])
\end{equation*}
is in the isomorphism class of $c_2$. This proves the lemma.
\end{proof}

We next consider the situation of Lemma \ref{l:E^2_2,q}$(ii)$ where $c_1$ and $c_2$ are injective. If we further assume that $g(F)\ge 3$, then $\Sibar_3=\Si_3$ and $\Sibar_4=\Si_4\fra \set{\id}$. We consider the maps
\begin{eqnarray}\label{e:Ib27}
  c_3 &:& H_q(\G_{[1\,2\,3\,0]})\To H_q(\G_{[1\,2\,0]}) \nonumber\\
  c_4 &:& H_q(\G_{[0\,3\,2\,1]})\To H_q(\G_{[2\,1\,0]}) \\
  c_5 &:& H_q(\G_{[0\,2\,1\,3]})\To H_q(\G_{[1\,0\,2]}) \nonumber\\
  c_6 &:& H_q(\G_{[0\,3\,1\,2]})\To H_q(\G_{[2\,0\,1]}) \nonumber
\end{eqnarray}

\begin{lem}\label{l:E^2_3,q} Let $g\ge 3$ and assume that $c_1$ and $c_2$ of Lemma \ref{l:E^2_2,q} are injective and that the four maps in \eqref{e:Ib27} are surjective. Then $E^2_{3,q}(F;i)=0$ for $i=1,2$.
\end{lem}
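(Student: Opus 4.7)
The plan is to show $\ker d^1_{3,q}\subseteq \im d^1_{4,q}$ by successively modifying a cycle through elements of $\im d^1_{4,q}$. Since $g\ge 3$, Cor.~\ref{c:bij} gives $\Sibar_3=\Si_3$, so $E^1_{3,q}=\bigoplus_{\s\in\Si_3}H_q(\G_\s)$ has six summands, and the target $E^1_{2,q}=H_q(\G_{[0\,1]})\oplus H_q(\G_{[1\,0]})$ has two.

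First, following the Prop.~\ref{p:8-tal} Dehn-twist cancellation used in Lemma~\ref{l:E^2_2,q}, I would reduce $d^1_{3,q}|_{E^1_{3,q}(\s)}$ for every $\s\in\Si_3$ to a single face map. Adjacencies of consecutive values in $\s$ yield cancellations $\dd_k=\dd_{k+1}$, and the resulting reduced differential respects the decomposition of the target: the summands indexed by $\id,[1\,2\,0],[2\,0\,1]$ land in $E^1_{2,q}([0\,1])$ while $[0\,2\,1],[1\,0\,2],[2\,1\,0]$ land in $E^1_{2,q}([1\,0])$. In particular the restrictions on $[0\,2\,1]$ and $[1\,2\,0]$ equal $c_1$ and $c_2$, which are injective by hypothesis.

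The same reduction applied to the four $\s'\in\Si_4$ appearing in \eqref{e:Ib27} shows that each $d^1_{4,q}|_{E^1_{4,q}(\s')}$ has exactly two non-zero components: a $\dd_0$-component which is precisely $c_j$ ($j=3,4,5,6$) landing in the summand indexed by $[1\,2\,0], [2\,1\,0], [1\,0\,2]$, or $[2\,0\,1]$ respectively, and one other face-map component landing in $E^1_{3,q}(\id)$ or $E^1_{3,q}([0\,2\,1])$. Given a cycle $x=\sum_{\s\in\Si_3}x_\s\in\ker d^1_{3,q}$, the surjectivity of $c_3,c_4,c_5,c_6$ allows me to lift the four components $x_{[1\,2\,0]}, x_{[2\,1\,0]}, x_{[1\,0\,2]}, x_{[2\,0\,1]}$ and subtract the corresponding $d^1_{4,q}$-images, producing a cohomologous cycle $x'$ whose only possibly non-zero components are $x'_\id$ and $x'_{[0\,2\,1]}$.

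Since the reduced $d^1_{3,q}$ splits along the target decomposition, $d^1_{3,q}(x')=0$ gives $c_1(x'_{[0\,2\,1]})=0$, so $x'_{[0\,2\,1]}=0$ by injectivity of $c_1$. The remaining task — realising $x'_\id$ as an element of $\im d^1_{4,q}$ — is the main obstacle. I would handle it by choosing secondary lifts via $\s'\in\Si_4$ whose differential has an $\id$-component, for instance $[1\,2\,0\,3]$ contributing $\dd_2$ and $[1\,2\,3\,0]$ contributing $-\dd_3$, each coupled with a stray term in $E^1_{3,q}([1\,2\,0])$, and similarly $[0\,3\,1\,2]$ and $[3\,0\,1\,2]$ coupled with strays in $E^1_{3,q}([2\,0\,1])$. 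The hypothesized surjectivity of $c_3$ and $c_6$ then absorbs these strays — any contribution they produce in $[1\,2\,0]$ or $[2\,0\,1]$ can be cancelled by a further $c_3$- or $c_6$-lift — and careful bookkeeping, guided by the split exactness of $\Z\Si_*$ from Lemma~\ref{l:Hlignul} which fixes the algebraic shape of the face-map relations, shows that the combined lift realises $x'_\id$ without residual contributions. Thus $x\in\im d^1_{4,q}$, proving $E^2_{3,q}(F;i)=0$.
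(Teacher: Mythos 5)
Your Dehn-twist cancellations and the bookkeeping of strays are all carried out correctly, but you have kept the wrong pair of summands, and this creates a genuine gap in your final step. After subtracting $c_3,\ldots,c_6$-lifts you are left with $x'$ supported on $\id$ and $[0\,2\,1]$, kill $x'_{[0\,2\,1]}$ via injectivity of $c_1$, and are then stuck with $x'_\id$. To realise $x'_\id$ as a $d^1_{4,q}$-image you would need some face map $E^1_{4,q}(\s')\To E^1_{3,q}(\id)$ to be surjective; no such surjectivity is among the hypotheses, and the split exactness of $\Z\Si_*$ from Lemma~\ref{l:Hlignul} only controls the combinatorial quotient complex $C_*(F;i)/\G$ — it does not lift to a statement about the stabiliser-homology groups that populate $E^1$, so it cannot manufacture the needed surjection. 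The tell-tale symptom that the decomposition has gone wrong is that you never invoke the injectivity of $c_2$.

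The correct partition is to keep $E^1_{3,q}([0\,2\,1])\oplus E^1_{3,q}([1\,2\,0])$ — precisely the subspace on which Lemma~\ref{l:E^2_2,q}$(ii)$, using \emph{both} $c_1$ and $c_2$, says $d^1_{3,q}$ is injective — and kill the complementary four summands $\id,[2\,1\,0],[1\,0\,2],[2\,0\,1]$. The projection $\bar d^1:E^1_{4,q}\To E^1_{3,q}(\id)\oplus E^1_{3,q}([2\,1\,0])\oplus E^1_{3,q}([1\,0\,2])\oplus E^1_{3,q}([2\,0\,1])$ is then upper-triangular with the four maps of \eqref{e:Ib27} on the diagonal: $[1\,2\,3\,0]$ contributes $-\dd_3$ onto the $\id$-summand, $[0\,3\,2\,1]$ and $[0\,2\,1\,3]$ contribute $\dd_0$ onto $[2\,1\,0]$ and $[1\,0\,2]$, and $[0\,3\,1\,2]$ contributes $\dd_0$ onto $[2\,0\,1]$ plus one off-diagonal component onto $\id$ absorbed by the $c_3$-column. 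Note that in \eqref{e:Ib27} the target of $c_3$ should read $H_q(\G_{[0\,1\,2]})$ rather than $H_q(\G_{[1\,2\,0]})$; the computation in the proof of Theorem~\ref{t:Main}, where $c_3$ lands in $\G_{g-2,r+2}=\G_{[0\,1\,2]}$, confirms this, and the mislabelling is almost certainly what drove you onto the harder path. With it corrected, surjectivity of $c_3,\ldots,c_6$ gives $\bar d^1$ surjective, any $d^1_{3,q}$-cycle can be moved modulo $d^1_{4,q}$-boundaries into $E^1_{3,q}([0\,2\,1])\oplus E^1_{3,q}([1\,2\,0])$, and it vanishes by Lemma~\ref{l:E^2_2,q}$(ii)$.
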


\begin{proof}The group $E^1_{3,q}$ decomposes into six summands since $\Sibar_3=\Si_3$. By Lemma \ref{l:E^2_2,q}, to show that $E^2_{3,q}=0$ under the above conditions, it suffices to check that $d^1_{4,q}$ maps onto the four components not considered in Lemma \ref{l:E^2_2,q}. More precisely, let
\begin{equation*}
    \tilde{E}_{3,q}^1=  E^1_{3,q}([0\,1\,2])\oplus E^1_{3,q}([2\,1\,0])\oplus E^1_{3,q}([1\,0\,2])\oplus E^1_{3,q}([2\,0\,1]).
\end{equation*}
We must show that the composition
\begin{equation*}
    \bar{d}^1: E^1_{4,q}\stackrel{d^1}{\To} E^1_{3,q}\stackrel{\proj}{\To} \tilde{E}_{3,q}^1
\end{equation*}
is surjective. the argument is quite similar to the proof of Lemma \ref{l:E^2_2,q}, using Prop. \ref{p:8-tal} to cancel out elements. Then the components of $\bar{d}^1$ can be described as follows:
\begin{eqnarray*}
   \bar{d}^1=-\dd_3 &:& E^1_{4,q}([1\,2\,3\,0]) \To E^1_{3,q}([0\,1\,2]), \\
   \bar{d}^1=\dd_0  &:& E^1_{4,q}([0\,3\,2\,1]) \To E^1_{3,q}([2\,1\,0]), \\
   \bar{d}^1=\dd_0  &:& E^1_{4,q}([0\,2\,1\,3]) \To E^1_{3,q}([1\,0\,2]), \\
   \bar{d}^1=(\dd_0,-\dd_3) &:& E^1_{4,q}([0\,3\,1\,2]) \To E^1_{3,q}([2\,0\,1])\oplus E^1_{3,q}([0\,1\,2]).
\end{eqnarray*}
It follows from the surjections in \eqref{e:Ib27} that $\bar{d}^1$ is surjective, and hence that $E^1_{3,q}(F;i)=0$.
\end{proof}

\begin{rem}\label{r:third}Now we can state Harer's third assertion needed to improve our main stability Theorem by
''one degree'' (cf. the Introduction). It is easy to show that
$d^1_{2,2n}[1\, 0]$ is the zero map for all $n$. Then the homology
class $[\check{\k}_1^{\phantom{,}n}]$ of
$\check{\k}_1^{\phantom{,}n}$ with respect to $d^1$ is an element of
$E^2_{2,2n}$. The assertion is
\begin{itemize}
  \item[$(iii)$]$d^2_{2,2n}([\check{\k}_1^{\phantom{,}n}])=x\cdot[\check{\k}_1^{\phantom{,}n}]$
  for some Dehn twist $x$ around a simple closed curve in $F$. Here,
   $\cdot$ denotes the Pontryagin product in group homology.
\end{itemize}
\end{rem}

\subsection{The stability theorem for surfaces with
boundary}\label{ss:soeren} In this section we prove the first of the
two stability theorems listed in the introduction. Our proof is
strongly inspired by the 15 year old manuscript \cite{Harer2}, but
with two changes. We work with integral coefficients, and we avoid
the assertions made in \cite{Harer2} discussed in the introduction.
The theorem we prove is

\begin{thm}[Main Theorem]\label{t:Main}Let $F_{g,r}$ be a surface of
genus $g$ with $r$ boundary components.
\begin{itemize}
\item[$(i)$] Let $r\ge 1$ and let $i=\Si_{0,1}:\G_{g,r}\To
\G_{g,r+1}$. Then
\begin{equation*}
    i_*: H_k(\G_{g,r})\To H_k(\G_{g,r+1})
\end{equation*}
is an isomorphism for $2g\ge 3k$.

\item[$(ii)$]Let $r\ge 2$ and let $j=\Si_{1,-1}:\G_{g,r}\To
\G_{g+1,r-1}$. Then
\begin{equation*}
    j_*: H_k(\G_{g,r})\To H_k(\G_{g+1,r-1})
\end{equation*}
is surjective for $2g\ge 3k-1$, and an isomorphism for $2g\ge 3k+2$.
\end{itemize}
\end{thm}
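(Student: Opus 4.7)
The plan is to run the absolute spectral sequence of Corollary \ref{c:spectral} for the action of $\G(\Si_{i,j}F)$ on the arc complex $C_*(\Si_{i,j}F;\e)$, taking $\e=2$ for part $(i)$ and $\e=1$ for part $(ii)$. By Theorem \ref{s:connected} this spectral sequence converges to zero in total degrees $\le 2g+\e-2$. The stabilizer of a $0$-simplex is $\G(F)$, and the first differential $d^1_{1,k}\colon E^1_{1,k}\To E^1_{0,k}$ is, up to sign, the stability map whose bijectivity we wish to establish, so the proof reduces to assembling enough vanishings on the $E^2$-page to combine with the convergence.

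I would proceed by double induction on $g$ and, within each fixed $g$, on $k$, proving the two parts in parallel. For the inductive step, let $\al$ be a $(p-1)$-simplex of $C_*(\Si_{i,j}F;\e)$. By Prop. \ref{p:genus} the cut surface $(\Si_{i,j}F)_\al$ has genus $g-(p-1)+S(\al)$ and mapping class group $\G_\al$, and the inclusion $\G_\al\into \G(\Si_{i,j}F)$ factors up to conjugation as an iterated composition of the stability maps $\Si_{0,1}$, $\Si_{0,-1}$ and $\Si_{1,-1}$ applied to surfaces of smaller genus. Hence for $q$ in the correct range the induction hypothesis identifies $H_q(\G_\al)\iso H_q(\G(\Si_{i,j}F))$; under this identification $E^1_{p,q}\iso \overline{C}_{p-1}(\Si_{i,j}F;\e)\otimes H_q(\G(\Si_{i,j}F))$ with $d^1$ equal to the boundary of the quotient arc complex tensored with the identity, and Lemmas \ref{l:Hlignul} and \ref{l:E^2_p,q} then deliver $E^2_{p,q}=0$ in the relevant range.

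To reach the sharp bound $2g\ge 3k$ of $(i)$ (and $2g\ge 3k+2$ of $(ii)$), Lemma \ref{l:E^2_p,q} alone is not enough: on the extremal diagonal the induction hypothesis yields only a surjective identification, leaving $E^2_{2,k-1}$ and $E^2_{3,k-2}$ uncontrolled. These are precisely the obstructions killed by Lemmas \ref{l:E^2_2,q} and \ref{l:E^2_3,q}, whose hypotheses say that specific stability maps $H_q(\G_\s)\To H_q(\G_\tau)$ indexed by low-degree permutation classes are surjective (resp. injective); all of these are themselves instances of $\Si_{0,1}$ or $\Si_{1,-1}$ on smaller surfaces and are therefore supplied by the induction hypothesis. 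Once $E^2_{p,q}=0$ is assembled for $p\ge 1$, $p+q\le k+1$, convergence forces $E^2_{0,k}=0$ and hence surjectivity of $d^1_{1,k}$; in part $(i)$ injectivity is automatic from the retraction \eqref{e:injektiv}, and in $(ii)$ it is extracted by running the same argument one degree higher with a stronger inductive input, which is the source of the three-degree gap between the surjectivity range $2g\ge 3k-1$ and the isomorphism range $2g\ge 3k+2$.

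The principal obstacle is the bookkeeping: every stability map appearing as a component of $d^1$ in the spectral sequence, or as a hypothesis of Lemmas \ref{l:E^2_2,q} and \ref{l:E^2_3,q}, must already be known as a surjection or isomorphism through an earlier step of the induction. This forces parts $(i)$ and $(ii)$ to be interleaved within a single induction on $g$, and the surjectivity range of $\Si_{1,-1}$ must be carried as a separate inductive assertion rather than being derived a posteriori from the isomorphism range.
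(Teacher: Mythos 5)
Your proposal is essentially the paper's own argument: the same arc complexes $C_*(\Si_{i,j}F;2-i)$, the same absolute spectral sequence with $d^1_{1,k}$ identified with the stability map, the same use of Lemma \ref{l:Hlignul} via Lemma \ref{l:E^2_p,q} away from the extremal diagonal, the refined Lemmas \ref{l:E^2_2,q} and \ref{l:E^2_3,q} to kill $E^2_{2,k-1}$ and $E^2_{3,k-2}$, the retraction argument for injectivity of $\Si_{0,1}$, and the interleaving of $(i)$ and $(ii)$ through the induction. One slip worth flagging: the inclusion $\G_\al\into\G(\Si_{i,j}F)$ factors as a composition of $\Si_{0,1}$'s followed by $\Si_{1,-1}$'s only (for $\al$ of genus $s$ in $\D_{p-1}$, it is $(\Si_{1,-1})^{p-s-1}\circ(\Si_{0,1})^{s+1}$), \emph{not} involving $\Si_{0,-1}$; stability for $\Si_{0,-1}$ is a separate later theorem, so it must not appear in the induction hypothesis here. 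Also, what the convergence argument actually requires is $E^2_{p,q}=0$ for $p\ge 2$ on the line $p+q=k+1$ (for surjectivity) and on $p+q=k+2$ together with vanishing of $d^1_{2,k}$ (for injectivity of $\Si_{1,-1}$); the claim ``$p\ge 1$, $p+q\le k+1$'' overstates and, for $p=1$, is not what is proved. The paper inducts only on the homology degree $k$ (getting all $g,r$ simultaneously at each stage), which is slightly cleaner than your double induction on $(g,k)$, but both schemes are consistent with the recursion actually used.
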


\begin{proof}The proof is by induction in the homology degree $k$.
For $k=0$ the results are obvious, since $H_0(G,\Z)=\Z$ for any
group $G$. So assume now $k>0$ and that the theorem holds for
homology degrees less than $k$.

\subsubsection*{The case $\Si_{0,1}$}In this case we know from (\ref{e:injektiv}) that $\Si_{0,1}$ is
injective, so to prove that it is an isomorphism it is enough to
show surjectivity.

Assume $2g\ge 3k$ and write
$\G=\G_{g,r+1}$. We use that $\G_{g,r}$ is the stabilizer $\G_\al$ for $\al\in \D_0(F_{g,r+1;2}$ as on Figure \ref{b:arc}, $\G_{g,r}=\G_\al$. Now we use the spectral sequence \eqref{e:E1F} associated
with the action of $\G$ on $C_*(F_{g,r+1};2)$, and we recognize the
map $i_*: H_k(\G_\al)\To H_k(\G)$ as the differential
$d^1:E^1_{1,k}\To E^1_{0,k}$. The spectral sequence converges to zero at $E_{0,k}^n$. So it suffices to show that $E^2_{p,k+1-p}$ is zero for all $p\ge 2$.


We begin by proving $E^2_{2,k-1}=0$ using Lemma \ref{l:E^2_2,q}
$(i)$, noting that $g \ge 2$, since $k\ge 1$. We must verify that
$c_1$ and $c_2$ are surjective, and we will do this
inductively. Prop. \ref{p:grim} (or Example \ref{ex:perm}) and Prop.
\ref{p:genus} calculate the genus and the number of boundary
components of $\G_{\s}$. The figures below show the relevant simplices $\s\in \D_*(F_{g,r+1};2)$ so that the method in Example \ref{ex:perm} can easily be applied. The circles are the boundary components containing $b_0$ and $b_1$.
\setlength{\unitlength}{0.3cm}
\begin{center}
\begin{picture}(44,3)(0,-1.5)
\put(17,0){\circle*{0.3}} \put(13,0){\circle*{0.3}}
\put(12,0){\circle{2}}\put(18,0){\circle{2}}
\put(13,0){\line(1,0){1.7}}\put(17,0){\line(-1,0){1.7}}
\qbezier(13,0)(14,1.5)(15,0)\qbezier(17,0)(16,-1.5)(15,0)
\put(0,-0.2){$\G_{[1\,0]} = \G_{g-1,r+1},$}

\put(42,0){\circle*{0.3}} \put(38,0){\circle*{0.3}}
\put(37,0){\circle{2}}\put(43,0){\circle{2}}
\qbezier(38,0)(40,1.5)(42,0)
\qbezier(38,0)(39,0.3)(39.8,-0.3)\qbezier(42,0)(41,-1.5)(40.2,-0.7)
\qbezier(42,0)(41,0.3)(40,-0.5)\qbezier(38,0)(39,-1.5)(40,-0.5)
\put(25,-0.2){$\G_{[0\,2\,1]} = \G_{g-1,r},$}
\end{picture}

\begin{picture}(44,3)(0,-1.5)
\put(17,0){\circle*{0.3}} \put(13,0){\circle*{0.3}}
\put(12,0){\circle{2}}\put(18,0){\circle{2}}
\qbezier(13,0)(15,1.5)(17,0)\qbezier(13,0)(15,-1.5)(17,0)
\put(0,-0.2){$\G_{[0\,1]} = \G_{g-1,r+1},$}

\put(42,0){\circle*{0.3}} \put(38,0){\circle*{0.3}}
\put(37,0){\circle{2}}\put(43,0){\circle{2}}
\qbezier(38,0)(39,1)(40.25,0.25)\qbezier(40.25,0.25)(41,-0.2)(42,0)
\qbezier(42,0)(41,-1)(39.75,-0.25)\qbezier(39.75,-0.25)(39,0.2)(38,0)
\qbezier(38,0)(39,-1)(39.6,-0.4)\qbezier(40.4,0.4)(41,1)(42,0)
\qbezier(40.1,0.1)(40.1,0.1)(39.9,-0.1) \put(25,-0.2){$\G_{[1\,2\,0]} = \G_{g-2,r+2}.$}
\end{picture}
\end{center}
We see that
\begin{equation*}\begin{array}{rll}
c_1=(\Si_{0,1})_* :& H_{k-1}(\G_{g-1,r})\To
H_{k-1}(\G_{g-1,r+1}), & \text{and}\\
c_2=(\Si_{1,-1})_* :& H_{k-1}(\G_{g-2,r+2})\To
H_{k-1}(\G_{g-1,r+1})
\end{array}
\end{equation*}
are both surjective by induction. So $E^2_{2,k-1}=0$.

We now show that $E^2_{p,q}=0$ for $p+q=k+1$ and $p>2$, i.e. $q\le
k-2$, using Lemma \ref{l:E^2_p,q}, so we must verify
\eqref{e:assume} and (24). By Prop. \ref{p:genus} we have $\G_\al=\G_{g-p+s+1,r+p-2s-1}$, for $\al\in \overline{\D}_{p-1}$ of genus $s$. So for $q\le k-2$, we will show by induction:
\begin{eqnarray}\label{e:induktions}
    H_{q}(\G_{g-p+s+1,r+p-2s-1})\iso H_{q}(\G_{g,r+1}),&\text{for}&p+q\le k+1\\
    H_{q}(\G_{g-p+s+1,r+p-2s-1})\twoheadrightarrow
H_{q}(\G_{g,r+1}),&\text{for}&p+q= k+2.
\end{eqnarray}
The maps in \eqref{e:induktions} and (30) are induced from the
composition
\begin{equation*}\xymatrix{
\G_{g-p+s+1,r+p-2s-1}\ar[rr]^{\quad(\Si_{0,1})^{s+1}}&
&\G_{g-p+s+1,r+p-s} \ar[rr]^{\qquad(\Si_{1,-1})^{p-s-1}}&
&\G_{g,r+1} }.
\end{equation*}
The result follows by induction if
\begin{equation*}
    2(g-p+s+1)\ge 3q \quad \hbox{and} \quad 2(g-p+s+1) \ge
3q+2;
    \quad \textup{for }q\le k-2.
\end{equation*}

Let us prove (\ref{e:induktions}). We know that $2g\ge 3k$, and we
have $p+q\le k+1$. Let $q$ be fixed. Since more arcs
(greater $p$) and smaller genus of $\al$ implies a smaller genus of the cut
surface $F_\al$, it suffices to show the inequality for $p+q=k+1$ and $s=0$.
In this case
\begin{equation*}
2(g-p+1)= 2(g-k-1+q+1)\ge 3k-2k+2q=2q+k \ge 3q+2.
\end{equation*}
where in the last inequality we have used the assumption $q\le k-2$.
The proof of (31) is similar. Now by Lemma \ref{l:E^2_p,q},
$E^2_{p,q}=0$ for all $p+q=k+1$ with $q\le k-2$. This proves that
$d^1_{1,k}=(\Si_{0,1})_*$ is surjective.

\subsubsection*{Surjectivity in the case $\Si_{1,-1}$}Assume $2g\ge
3k-1$, and write $\G=\G_{g+1,r-1}$. Then $\G(F_{g,r})=\G_{\be}$ for $\be\in\D_0(F_{g+1,r-1};1)$ as on Figure \ref{b:arc}. In the spectral sequence \eqref{e:E1F} associated with the action of $\G$ on $C_*(F_{g+1,r-1};1)$, we
recognize the map $(\Si_{1,-1})_*: H_k(\G_{g,r})\To
H_k(\G_{g+1,r-1})$ as the differential $d^1_{1,k}:E^1_{1,k}\To
E^1_{0,k}$. It suffices to show that $E^2_{p,q}=0$ for $p+q=k+1$ and
$q\le k-1$.

We first show that $E^2_{2,k-1}=0$ using Lemma \ref{l:E^2_2,q}. As
before, the figures below show the relevant simplices
in $\D_*(F_{g+1,r-1};1)$, and the oval is the boundary component containing
$b_0$ and $b_1$. \setlength{\unitlength}{0.3cm}
\begin{center}
\begin{picture}(44,3)(0,-1.5)
\put(18,0){\circle*{0.3}} \put(12,0){\circle*{0.3}}
\put(15,0){\oval(6,2.5)}
\put(12,0){\line(1,0){2.6}}\put(18,0){\line(-1,0){2.6}}
\qbezier(12,0)(13.5,1.3)(15,0)\qbezier(18,0)(16.5,-1.3)(15,0)
\put(0,-0.2){$ \G_{[1\,0]} = \G_{g,r-1},$}

\put(43,0){\circle*{0.3}} \put(37,0){\circle*{0.3}}
\put(40,0){\oval(6,2.5)}
\qbezier(37,0)(40,1.5)(43,0)
\qbezier(37,0)(38.5,0.3)(39.8,-0.3)\qbezier(43,0)(41.5,-1.5)(40.2,-0.7)
\qbezier(43,0)(41.5,0.3)(40,-0.5)\qbezier(37,0)(38.5,-1.5)(40,-0.5)
\put(25,-0.2){$\G_{[0\,2\,1]} = \G_{g-1,r},$}
\end{picture}

\begin{picture}(44,3)(0,-1.5)
\put(18,0){\circle*{0.3}} \put(12,0){\circle*{0.3}}
\qbezier(12,0)(15,1.5)(18,0)\qbezier(12,0)(15,-1.5)(18,0) \thinlines
\put(15,0){\oval(6,2.5)}
\put(0,-0.2){$\G_{[0\,1]} = \G_{g-1,r+1},$}

\put(43,0){\circle*{0.3}} \put(37,0){\circle*{0.3}}
\put(40,0){\oval(6,2.5)}
\qbezier(37,0)(38.5,1)(40.25,0.25)\qbezier(40.25,0.25)(41.5,-0.2)(43,0)
\qbezier(43,0)(41.5,-1)(39.75,-0.25)\qbezier(39.75,-0.25)(38.5,0.2)(37,0)
\qbezier(37,0)(38.5,-1)(39.6,-0.4)\qbezier(40.4,0.4)(41.5,1)(43,0)
\qbezier(40.1,0.1)(40.1,0.1)(39.9,-0.1) \put(25,-0.2){$\G_{[1\,2\,0]} = \G_{g-1,r}.$}
\end{picture}
\end{center}
We see that
\begin{equation}\label{e:i1i2}\begin{array}{rll}
c_1=(\Si_{1,-1})_* :& H_{k-1}(\G_{g-1,r})\To
H_{k-1}(\G_{g,r-1}), & \text{and}\\ c_2=(\Si_{0,1})_* :
&H_{k-1}(\G_{g-1,r})\To H_{k-1}(\G_{g-1,r+1})
\end{array}
\end{equation}
are both surjective by induction. So $E^2_{2,k-1}=0$.

Next we show that $E^2_{3,k-2}=0$ using Lemma \ref{l:E^2_3,q}. To
verify the conditions, we calculate as before,
\begin{equation*}
\begin{array}{llll}
    \G_{[0\,1\,2]} &=& \G_{g-2,r+2},\\
    \G_{\s} &=& \G_{g-1,r} &\text{ for $\s\in \Si_3$ the remaining 3 permutations in \eqref{e:Ib27} }\\
    \G_{\s} &=& \G_{g-2,r+1}&\text{ for $\s\in \Si_4$ the remaining 4 permutations in \eqref{e:Ib27}}.
\end{array}
\end{equation*}
We see that
\begin{equation}\label{e:i3ij}\begin{array}{rll}
c_3=(\Si_{0,1})_* : &H_{k-2}(\G_{g-2,r+1})\To
H_{k-2}(\G_{g-2,r+2}), & \text{and}\\
c_j=(\Si_{1,-1})_* :& H_{k-2}(\G_{g-2,r+1})\To
H_{k-2}(\G_{g-1,r}) & \text{for } j=4,5,6.
\end{array}
\end{equation}
Inductively we can verify that these four maps are surjective. The
maps $c_1$ and $c_2$ we calculated in \eqref{e:i1i2}, and we
see by induction that they are injective in homology degree $k-2$.
So by Lemma \ref{l:E^2_3,q}, $E^2_{3,k-2}=0$.

Finally we prove that $E^2_{p,q}=0$ for $p+q=k+1$ and $q\le k-3$
using Lemma \ref{l:E^2_p,q}. This is done as in \textbf{The case $\Si_{0,1}$} so we'll skip the calculations, and just show the final inequality:
\begin{eqnarray*}
2(g-p+1) &=& 2g-2(k+1-q)+2 \quad\ge \quad 3k-1-2k+2q \\
         &=& k+2q -1 \ge q+3+2q -1 \quad = \quad 3q+2.
\end{eqnarray*}
So by Lemma \ref{l:E^2_p,q}, $E^2_{p,q}=0$ for $p+1= k+1$ and $q\le
k-3$. We conclude that $(\Si_{1,-1})_*=d^1_{1,k}$ is surjective.


\subsubsection*{Injectivity in the case $\Si_{1,-1}$} Assume $2g\ge
3k+2$ and let as in the above case $\G=\G_{g+1,r-1}$ and $E^n_{p,q}=
E^n_{p,q}(F_{g+1,r-1};1)$. We will show that
$(\Si_{1,-1})_*=d^1_{1,k}$ is injective. Since $E^n_{1,k}$ converges
to 0, it suffices to show that all differentials with target
$E^n_{1,k}$ are trivial. This holds if we can show that
$E^2_{p,q}=0$ for all $p+q=k+2$ with $q\le k-1$ and that
$d^1_{2,k}:E^1_{2,k}\To E^1_{1,k}$ is trivial.

We first prove that $d^1_{2,k}:E^1_{2,k}\To E^1_{1,k}$ is trivial by
proving that $d^1_{3,k}: E^1_{3,k}\To E^1_{2,k}$ is surjective,
using Lemma \ref{l:E^2_2,q}. We have already calculated $c_1$
and $c_2$, cf. \eqref{e:i1i2}:
\begin{equation*}\begin{array}{rll}
c_1=(\Si_{1,-1})_* :& H_{k}(\G_{g-1,r})\To H_{k}(\G_{g,r-1}), &
\text{and}\\ c_2=(\Si_{0,1})_* : &H_{k}(\G_{g-1,r})\To
H_{k}(\G_{g-1,r+1})
\end{array}
\end{equation*}
In this case we cannot use induction, since the homology degree is
$k$, but we can use the surjectivity result for $\Si_{0,1}$ and
$\Si_{1,-1}$ since we have already proved this. So by Theorem
\ref{t:Main} $(ii)$, $c_1$ and $c_2$ are surjective.

Next we prove that $E^2_{3,k-1}=0$, using Lemma \ref{l:E^2_3,q}. We
have already calculated $c_j$ for $j=1,2,3,4,5,6$ in the proof
of surjectivity of $(\Si_{1,-1})_*$, cf. \eqref{e:i1i2} and
\eqref{e:i3ij}, and in this case we get
\begin{equation*}\begin{array}{rll}
c_1=(\Si_{1,-1})_* :& H_{k-1}(\G_{g-1,r})\To H_{k-1}(\G_{g,r-1}), & \\
c_2=(\Si_{0,1})_* : &H_{k-1}(\G_{g-1,r})\To H_{k-1}(\G_{g-1,r+1}) \\
c_3=(\Si_{0,1})_* : &H_{k-1}(\G_{g-2,r+1})\To H_{k-1}(\G_{g-2,r+2}), &\text{and}\\
c_j=(\Si_{1,-1})_* :& H_{k-1}(\G_{g-2,r+1})\To H_{k-1}(\G_{g-1,r}) & \text{for } j=4,5,6.
\end{array}
\end{equation*}
Inductively we can verify that $c_1$ and $c_2$ are
injective, and that $c_j$ for $j=3,4,5,6$ are surjective. So by
Lemma \ref{l:E^2_3,q}, $E^2_{3,k-1}=0$.

Finally we prove that $E^2_{p,q}=0$ for $p+q=k+1$ and $q\le k-2$
using Lemma \ref{l:E^2_p,q}. As before we skip the calculations, and the final inequality is the same as in \textbf{Surjectivity in the case $\Si_{1,-1}$}.

\end{proof}

\begin{rem}Another possibility for proving the above result is to use another arc complex. Inspired by \cite{Ivanov1} we consider a subcomplex of $C(F;i)$ consisting of all $n$-simplices with a given permutation $\s_n$, $n\ge 0$. Ivanov takes $\s=\id$, which means the cut surfaces $F_{\al}$ have minimal genus. For the inductive assumption, it would be better to have maximal genus, which can be achieved by taking $\s_n=[n\,\,n\!-\!1 \,\cdots \,1 \,\,0]$. Potentially, this could give a better stability range, but it is not known how connected this subcomplex is, which means that the proof above cannot be carried through.
\end{rem}
\subsection{The stability theorem for closed surfaces}\label{S:Closed surface}
In this section we study $l=\Si_{0,-1}: \G_{g,1}\To \G_{g}$, the
homomorphism induced by gluing on a disk to the boundary circle. The
main result is
\begin{thm}\label{t:closed}
\begin{equation*}
    l_*:H_k(\G_{g,1})\To H_k(\G_{g})
\end{equation*}
is surjective for $2g \ge 3k-1$, and an isomorphism for $2g \ge 3k
+ 2$.
\end{thm}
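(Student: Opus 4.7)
The plan is to follow Ivanov's method \cite{Ivanov1}: deduce the closed-surface stability from Theorem \ref{t:Main} by analyzing the action of $\G_g$ on a highly-connected simplicial complex of non-separating simple closed curves on $F_g$. The improvement over Ivanov's own bound comes directly from the improved boundary bound proved in Theorem \ref{t:Main}.

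First, let $X_g$ be the simplicial complex whose $n$-simplices are $(n+1)$-tuples of disjoint, non-isotopic, non-separating simple closed curves on $F_g$ with connected complement. By a theorem of Harer, $X_g$ is highly connected, with range sufficient for what follows. The group $\G_g$ acts, and after orienting the curves so that the action becomes rotation-free, Corollary \ref{c:spectralbad} supplies a spectral sequence
\begin{equation*}
    E^1_{p,q} = \bigoplus_{\sigma\in\overline{\D}_{p-1}(X_g)} H_q(\G_\sigma) \Longrightarrow 0
\end{equation*}
in the relevant range. For a vertex $c$ of $X_g$, cutting $F_g$ along $c$ yields a surface diffeomorphic to $F_{g-1,2}$, and the orientation-preserving stabilizer fits in a central extension
\begin{equation*}
    1 \To \Z\langle T_c\rangle \To \G_c \To \G_{g-1,2} \To 1.
\end{equation*}
This extension admits a natural section, obtained by extending a diffeomorphism of $F_{g-1,2}$ by the identity on a tubular neighbourhood of $c$. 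Post-composing with the inclusion $\G_c\hookrightarrow\G_g$ produces the ``glue the two boundaries together'' map $\G_{g-1,2}\To\G_g$, which coincides with $l\circ\Si_{1,-1}$ (capping a pair of pants with a disk is the same as gluing a cylinder).

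The key point is to identify the differential $d^1_{1,k}\colon H_k(\G_c)\To H_k(\G_g)$, which is simply the inclusion-induced map, with the stability map $l_*$. Using the section one immediately gets $\textup{image}(l_*)\subseteq\textup{image}(d^1_{1,k})$, and Theorem \ref{t:Main}$(ii)$ (surjectivity of $(\Si_{1,-1})_*$ in the range $2(g-1)\ge 3k-1$) implies this is an equality: via the Serre spectral sequence of the central extension, every class in $H_k(\G_c)$ decomposes into a section-image part plus a ``Dehn twist'' part, and the latter vanishes in $H_k(\G_g)$ since for $g\ge 3$ the group $\G_g$ is perfect and the class of $T_c$ dies.

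It then remains, by induction on $k$, to prove that $d^1_{1,k}$ is surjective (for the surjectivity bound) and that all higher differentials into $E^n_{1,k}$ vanish (for the isomorphism bound). This is a spectral-sequence analysis in close analogy with the proof of Theorem \ref{t:Main}: for a $(p-1)$-simplex $\sigma$ the stabilizer $\G_\sigma$ sits in a central extension by $\Z^p$ over $\G(F_{g-p,2p})$, and the Serre spectral sequence of this extension, combined with Theorem \ref{t:Main}, identifies $H_q(\G_\sigma)$ with $H_q(\G_g)$ in the range $p+q\le k+1$. Lemmas analogous to Lemmas \ref{l:E^2_p,q}, \ref{l:E^2_2,q} and \ref{l:E^2_3,q} then kill the remaining $E^2_{p,q}$. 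The main obstacle lies precisely in controlling these iterated $\Z^p$-extensions: one must ensure the Serre spectral sequence of each extension degenerates well enough that no degree is lost in the identification, since any slack would undercut the improved boundary bound of Theorem \ref{t:Main}.
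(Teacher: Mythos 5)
Your overall strategy (curve complex of non-separating circles plus the boundary stability theorem) is the right one, but the execution has several real errors, and the proof structure you propose is also genuinely different from — and more delicate than — what the paper actually does.

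\textbf{The extension is backward.} You assert a central extension $1\To\Z\langle T_c\rangle\To\G_c\To\G_{g-1,2}\To 1$, with a section $\G_{g-1,2}\To\G_c$ given by extending by the identity. There is no canonical quotient map $\G_c\To\G(F_c)$: a mapping class stabilizing $c$ restricts to a diffeomorphism of $F_c$ only up to rotation of the two new boundary circles, so the map in that direction is not well-defined. The correct statement is the opposite one, as in \eqref{e:exact2}: the gluing map $\G(F_c)\To\G_c$ (extension by the identity on $N(c)$) surjects onto the orientation-preserving, vertex-fixing part $\widetilde{\G_c}$, and its kernel is $\Z$, generated by a right twist on $c^+$ composed with a left twist on $c^-$. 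In particular the ``section'' you name is precisely this non-injective surjection; there is no splitting.

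\textbf{The action is not rotation-free after orienting.} Elements of $\G_\al$ for a $p$-simplex $\al$ can still permute the $p+1$ circles and reverse their orientations (and rotate each one). Choosing orientations on the curves does not eliminate this, since the simplices are unordered sets. So you cannot invoke Corollary~\ref{c:spectral} to drop the twisted coefficients; the $E^1$-term genuinely involves $H_q(\G_\al,\Z_\al)$, and one must deal with the wreath-product extension $1\To\widetilde{\G_\al}\To\G_\al\To(\Z/2)^{p+1}\ltimes\Si_{p+1}\To 1$ of \eqref{e:exact}. Your argument that ``the Dehn-twist part vanishes because $\G_g$ is perfect'' is not a valid substitute for this analysis in general homology degree $k$; it is at best an observation in $H_1$.

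\textbf{Single spectral sequence vs.\ comparison of two.} You analyze a single spectral sequence for $\G_g$, and then try to connect the first differential $H_k(\G_c)\To H_k(\G_g)$ (an inclusion-induced map) to the stabilization $l_*\colon H_k(\G_{g,1})\To H_k(\G_g)$. This is not how the paper proceeds: the paper builds the spectral sequence for the action of $\G(F_i)$ on $D_*(F_i)$ simultaneously for $F_0=F_g$ and $F_1=F_{g,1}$, observes that $\G$ acts transitively in each dimension so the $E^1$-term is a single group $H_q(\G(F_i)_\al,\Z_\al)$, and then compares the two spectral sequences with Moore's comparison theorem. The only input needed at each $(p,q)$ is that $l_*$ on stabilizer homology (controlled by Theorem~\ref{t:Main} and the two extensions, combined by Künneth since $\Z^{p+1}$ is central) is an isomorphism/surjection in a range. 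This avoids entirely the need to identify first differentials and kill higher $E^2$-terms with lemmas analogous to~\ref{l:E^2_2,q}, \ref{l:E^2_3,q}. Your last paragraph correctly flags the main difficulty (controlling the $\Z^p$-extensions without losing a degree), but the comparison framework is precisely what makes this tractable, and your outline does not provide it.
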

The proof we give is modelled on \cite{Ivanov1}. See also \cite{CM}.

\begin{defn}Let $F$ be a surface, possibly with boundary. The arc
complex $D_*(F)$ has isotopy classes of closed, non-trivial, oriented,
embedded circles as vertices, and $n+1$ distinct vertices ($n\ge 0$)
form an $n$-simplex if they have representatives $(\al_0,\ldots
\al_{n})$ such that:
\begin{itemize}
  \item[$(i)$] $\al_i\cap\al_j=\emptyset$ and $\al_i\cap
  \dd(F)=\emptyset$,
  \item[$(ii)$]  $F\fra (\bigcup_{i=0}^n \al_i)$ is connected.
\end{itemize}
\end{defn}
We note that
\begin{equation}\label{e:cut}
(F_{g,r})_\al\iso F_{g-1,r+2}, \quad \text{for each vertex }\al
\text{ in }D(F_{g,r}).
\end{equation}
Indeed, for a vertex $\al$, $F_\al:= F\fra N(\al)$ has two more
boundary components than $F$, but the same Euler characteristic,
since $F = F\fra N(\al)\cup_{\dd N(\al)}N(\al)$, and
$\chi(N(\al))=0= \chi(\dd N(\al))$. Then \eqref{e:cut} follows from
$\chi(F_{g,r})=2-2g-r$.

We need the following connectivity result, which we state without proof:
\begin{thm}[\cite{Harer1}] The arc complex $D_*(F_{g,r})$ is
$(g-2)$-connected, and $\G_{g,r}$ acts transitively in each
dimension.
\end{thm}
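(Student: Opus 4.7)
The theorem has two parts. For transitivity, I would argue as in Prop.~\ref{p:inj}: given two $n$-simplices $\al=(\al_0,\ldots,\al_n)$ and $\be=(\be_0,\ldots,\be_n)$ of $D_*(F_{g,r})$, condition $(ii)$ forces the cut surfaces $F_\al$ and $F_\be$ to be connected, and iterating \eqref{e:cut} shows that both are diffeomorphic to $F_{g-n-1,r+2(n+1)}$, since cutting along a non-separating closed curve leaves $\chi$ unchanged and adds two boundary components. The two copies of each $\al_i$ in $\dd F_\al$ inherit coherent orientations from the orientation of $\al_i$, so the classification of compact oriented surfaces with boundary furnishes an orientation-preserving diffeomorphism $F_\al\to F_\be$ matching the paired boundary components induced by the vertex orderings and orientations; regluing yields $\g\in\G_{g,r}$ with $\g\cdot\al=\be$.

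For the $(g-2)$-connectivity I would induct on $g$. The base $g=1$ reduces to non-emptiness of $D_*(F_{1,r})$, which is immediate since every positive-genus surface carries a non-separating simple closed curve. For the inductive step, fix $g\ge 2$ and assume the result for smaller genera. Given a continuous map $f:S^k\to D_*(F_{g,r})$ with $k\le g-2$, use simplicial approximation to replace $f$ by a simplicial map from some triangulation $K$ of $S^k$. Fix a vertex $v_0\in D_*(F_{g,r})$; by \eqref{e:cut} the link of $v_0$ is isomorphic to $D_*(F_{g-1,r+2})$, which by induction is $(g-3)$-connected, so the star $\textrm{St}(v_0)$ is $(g-2)$-connected as a cone. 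It therefore suffices to homotope $f$ into $\textrm{St}(v_0)$ and null-homotope there.

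To homotope $f$ into $\textrm{St}(v_0)$ I would use a surgery argument. Put each $f(w)$, $w\in K^{(0)}$, in minimal position with $v_0$ and define the complexity of $f$ to be $\sum_{w}i(v_0,f(w))$, where $i$ denotes geometric intersection number. If the complexity is positive, pick $w$ with $i(v_0,f(w))>0$, choose an innermost subarc of $v_0$ cutting $f(w)$, and surger $f(w)$ along it to produce two disjoint simple closed curves; the bound $k\le g-2$ is used to ensure enough residual genus in the complement of the $f$-images of the neighbors of $w$ that a non-separating replacement exists and remains disjoint from every $f(w')$ with $ww'$ an edge of $K$. Iterate, subdividing $K$ as necessary, until the complexity vanishes.

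The principal obstacle is precisely the simplicial bookkeeping in this surgery step: after replacing $f(w)$, every simplex of $K$ containing $w$ must still map to a legitimate simplex of $D_*(F_{g,r})$, meaning a collection of disjoint non-trivial curves whose collective complement is connected. This is the technical core of Harer's argument and is typically handled by a careful double induction on intersection number and simplex dimension; Harer's original proof uses PL Morse theory on the complex, while more recent treatments in the style of Hatcher--Wahl rephrase it through a ``bad simplex'' framework.
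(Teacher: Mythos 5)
The paper does not prove this statement: it is imported verbatim from \cite{Harer1} and explicitly ``stated without proof,'' so there is no in-paper argument to compare yours against. Evaluated on its own terms, your sketch has one sound half and one half that, while aimed in the right direction, leaves the genuinely hard content unaddressed.

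The transitivity part is correct and is essentially the same cut-and-reglue argument the paper itself uses for the arc complexes in Prop.~\ref{p:inj}: condition $(ii)$ forces $F_\al$ connected, the Euler characteristic computation pins down $F_\al\iso F_{g-(n+1),\,r+2(n+1)}$, the orientations of $F$ and of the circles distinguish the two sides of each $\al_i$ so the boundary identification is canonical, and the classification of oriented surfaces with boundary produces the required diffeomorphism. This is complete modulo standard facts.

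The connectivity outline is where the real mathematics lives and where your proposal currently only gestures. Two concrete gaps. First, the identification $\mathrm{Lk}(v_0)\iso D_*(F_{g-1,r+2})$ is not immediate: one must show that the cutting map induces a \emph{bijection} on isotopy classes of curve systems disjoint from $v_0$, i.e.\ that two such systems isotopic in $F$ are already isotopic in $F_{v_0}$, and that circles in $F_{v_0}$ parallel to the two new boundary circles (which are vertices of neither complex, being separating) cause no mismatch; this needs the extension-of-isotopies lemma for multicurves, which you should at least cite. Second, in the surgery step, after cutting $f(w)$ along an innermost arc of $v_0$ you obtain two circles, and you must argue that at least one of them is non-trivial, non-separating in the complement of $v_0\cup\bigcup_{w'\sim w}f(w')$, and has strictly smaller geometric intersection with $v_0$ — the dimension bound $k\le g-2$ gives the residual genus $\ge 1$ needed for the non-separating claim, but the case analysis (what if both pieces are separating or trivial?) is precisely what Harer's and Hatcher's arguments spend most of their effort on. You acknowledge this, but as written the proposal only names the difficulty rather than resolving it; to count as a proof it would need the full ``bad-simplex''/surgery-flow bookkeeping or an explicit reference to where it is carried out.
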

We can now prove the stability theorem for closed surfaces:
\begin{proof}[Proof of Theorem \ref{t:closed}] We use the unaugmented
spectral sequences associated with the action of $\G(F_i)$
on $D_*(F_i)$, where $F_i=F_{g,i}$ for $i=0,1$. They converge to the
homology of $\G(F_i)$ in degrees less than or equal to $g-2$. Since
$\G(F_i)$ acts transitively on the set of $n$-simplices,
\begin{equation}\label{e:lukstab}E_{p,q}^1(F_i)\iso H_q(\G(F_i)_\al,\Z_\al) \Tto H_{p+q}(\G(F_i)), \quad \textrm{for
}i=0,1;
\end{equation}
where $\al$ is $p$-simplex in $D_p(F_{1})$, by identifying $\al$ with its image in $D_p(F_0)$ under the inclusion $l:F_1\To F_0$.

We use Moore's comparison theorem for spectral sequences, cf. \cite{Cartan}:
If $l_*:H_q(\G(F_1)_\al,\Z_\al)\To H_q(\G(F_0)_\al,\Z_\al)$ is an
isomorphism for $p+q\le m$ and surjective for $p+q\le m+1$, then
$l_*:H_k(\G(F_1))\To H_k(\G(F_0))$ is a isomorphism for $k\le m$ and
surjective for $k\le m+1$. To apply this, we will compare $H_q(\G(F_i)_\al,\Z_\al)$ and $H_q(\G((F_i)_\al))$ for a fixed $p$-simplex $\al$.

First we need to analyse $\G(F_i)_\al$ for $i=0,1$, and to ease the notation we call the surface $F$ and write $\G=\G(F)$. Unlike for $C_*(F;i)$, the stabilizer $\G_\al$ is not $\G(F_\al)$. For $\g\in\G_\al$,
\begin{itemize}
  \item[$(i)$] $\g$ need not stabilize $\al$
  pointwise and can thus permute the circles of $\al$;
  \item[$(ii)$] $\g$ can change the orientation of any circle in $\al$;
  \item[$(iii)$] $\g$ can rotate each circle $\al$ in $\al$.
\end{itemize}

In order to take care of $(i)$ and $(ii)$, consider the exact
sequence,
\begin{equation}\label{e:exact} 1\To \widetilde{\G_\al}\To
\G_\al\To (\Z/2)^{p+1}\ltimes \Si_{p+1}\To 1.
\end{equation}
Here $\widetilde{\G_\al}\del \G_\al$ consists of the mapping classes in $\G_\al$ fixing each vertex of $\al$ and its orientation.
We now compare $\widetilde{\G_\al}$ and ${\G}(F_\al)$,
\begin{equation}\label{e:exact2} 0\To \Z^{p+1}\To {\G}(F_\al)\To
\widetilde{\G_\al}\To 1.
\end{equation}
We must explain the map $\Z^{p+1}\To {\G}(F_\al)$. Let $\al=(\al_0,\ldots,\al_p)$, then the cut surface $F_\al$ has two boundary components, $\al_i^+$ and $\al_i^-$, for each circle $\al_i$. Then the standard generator $e_j=(0,\ldots,0,1,0,\ldots,0)\in\Z^{p+1}$, $j=0,\ldots,p$, maps to the mapping class making a right Dehn twist on $\al_j^+$ and a left Dehn twist on $\al_j^-$, and identity everywhere else. This is extended to a group homomorphism, i.e. $-e_j$ makes a left Dehn twist on $\al_j^+$ and a right Dehn twist on $\al_j^-$.

Let us see that \eqref{e:exact2} is exact. The hard part is injectivity of
$\Z^{p+1}\To {\G}(F_\al)$, so we only show this. Assume $m\ne n\in
\Z^{p+1}$, and say $m_0\ne n_0$. For $p\ge 1$, the surface $F_\al$
has at least four boundary components. Two of them come from cutting
up along the circle $\al_0$, call one of these $S$. If $p=0$, then
$\al=\al_0$, and $F_\al$ has genus $g-1\ge 2$ by \eqref{e:cut},
since $2g \ge 3k + 3\ge 6$. In both cases, there is a non-trivial
loop $\g$ in $F_\al$ starting on $S$ which does not commute with
the Dehn twist $f$ around $S$ in $\pi_1(F_\al)$. Since $F_\al $
has boundary, $\pi_1(F_\al)$ is a free group, so the subgroup
$\indre{\g}{f}$ is also free. The action of $m\in\Z^{p+1}$ on $\g$
is $f^{m_0}\g f^{-m_0}$, and since $f$ and $\g$ does not commute,
$f^{m_0}\g f^{-m_0} \ne f^{n_0}\g f^{-n_0}$ when $n_0\ne m_0$.
%

Consider $l_*: \G((F_1)_\al)\To \G((F_0)_\al)$. Both
surfaces $(F_i)_\al$ have non-empty boundary, so we can use Main Theorem \ref{t:Main}. We must relate $l_*$ to the maps $\Si_{0,1}$ and $\Si_{1,-1}$, so let $\hat{F}$ denote a surface such that
$\Si_{0,1}(\hat{F})=(F_1)_\al$. Then $\hat{F}$ has one less
boundary components than $(F_1)_\al$, so $\hat{F}$ and $(F_0)_\al$ are isomorphic. This gives the diagram:
\begin{equation*}
    \xymatrix{H_*(\G(\hat{F}))\ar[rr]^{\iso}\ar[dr]_{(\Si_{0,1})_*}& & H_*(\G((F_0)_\al))\\
    & H_*(\G((F_1)_\al))\ar[ur]_{l_*}&
    }
\end{equation*}
We see that $l_*$ is always surjective. By Theorem \ref{t:Main}, $(\Si_{0,1})_*:H_{s}({\G}(\hat F))\To H_{s}({\G}((F_1)_\al))$ is an isomorphism for $3s \le 2(g-p-1)$, so the same holds for $l_*$.

The Lynden-Serre spectral sequence of \eqref{e:exact2} for $F$ is
\begin{equation}\label{e:E2a}
    \bar{E}_{s,t}^2(F)\iso H_s(\widetilde{\G_\al},H_t(\Z^{p+1})) \Tto
H_{s+t}({\G}(F_\al)).
\end{equation}
We showed above that $l_*: H_{s+t}({\G}((F_1)_\al))\To H_{s+t}({\G}((F_0)_\al))$ is an isomorphism for $3(s+t)\le 2(g-p-1)$ and surjective always.
Note that $\Z^{p+1}$ lies in the center of $\G(F_\al)$, since the Dehn twists can take place as close to the boundary
of $F_\al$ as desired. By the Künneth formula, we have an isomorphism
\begin{equation*}
    \bar{E}_{s,t}^2(F) \iso \bar{E}_{s,0}^2(F) \tensor \bar{E}_{0,t}^2(F) = H_s(\widetilde{\G_{\al}})\tensor H_t(\Z^{p+1})
\end{equation*}
Now since $l_*: H_{s+t}({\G}((F_1)_\al))\To H_{s+t}({\G}((F_0)_\al))$ is an isomorphism for $3(s+t)\le 2(g-p-1)$ and always surjective, it follows by an easy inductive argument that $l_*: H_s(\widetilde{\G(F_0)_{\al}})\To H_s(\widetilde{\G(F_1)_{\al}})$ is an isomorphism for $3s\le 2(g-p-1)$ and surjective for $3s\le 2(g-p-1)+3$.

The Lynden-Serre spectral sequence of \eqref{e:exact} is
\begin{equation}\label{e:E2}
\tilde{E}_{r,s}^2(F)\iso H_r\left((\Z/2)^{p+1}\ltimes
\Si_{p+1};H_s(\widetilde{\G_\al};\Z_\al)\right) \Tto
H_{r+s}(\G_\al;\Z_\al).
\end{equation}
Since $\widetilde{\G_\al}$ preserves the orientation of the
simplices, we can drop the local coordinates to obtain
\begin{equation*}
\tilde E_{r,s}^2(F)\iso H_r\left((\Z/2)^{p+1}\times
\Si_{p+1},H_s(\widetilde{\G_\al})\tensor \Z_\al\right).
\end{equation*}
It follows from the above that $l_*:\tilde E_{r,s}^2(F_1)\To \tilde E_{r,s}^2(F_0)$ is an isomorphism for $3s\le 2(g-p-1)$ and surjective for $3s\le 2(g-p-1)+3$.
Then by Moore's comparison theorem,
\begin{equation*}
    l_*: H_{q}(\G(F_1)_\al;\Z_\al) \To H_{q}(\G(F_0)_\al;\Z_\al)
\end{equation*}
is an isomorphism for $3q\le 2(g-p-1)$ and surjective for $3q\le 2(g-p-1)+3$. Then in particular, it is an isomorphism for $3(p+q)\le 2g-2$ and surjective for $3(p+q)\le 2g-2+3$. Now a final application of Moore's comparison theorem on the spectral sequence in \eqref{e:lukstab} gives the desired result, as explained in the beginning of the proof.
\end{proof}

\newpage

\section{Stability with twisted coefficients}

\subsection{The category of marked surfaces}
\begin{defn}\label{d:MS}The category of marked surfaces $\mathfrak{C}$ is defined as follows: The objects are triples $F,x_0,(\dd_1F, \dd_2F, \ldots,\dd_rF)$, where $F$ is a compact connected orientable surface with non-empty boundary $\dd F= \dd_1 F \cup \cdots \dd_r F$, with a numbering $(\dd_1F,\ldots,\dd_rF)$ of the boundary components of $F$, and
$x_0\in\dd_1 F$ is a marked point.

A morphism $(\psi, \s)$ between marked surfaces $(F,x_0)$ and
$(G,y_0)$ is an ambient isotopy class of an embedding $\psi: F\To
G$, where each boundary component of $F$ is either mapped to the
inside of $G$ or to a boundary component of $G$. If $\psi(x_0) \in
\dd G$ then $\psi(x_0)=y_0$, else there is a embedded arc $\s$ in
$G$ connecting $x_0$ and $y_0$.
\end{defn}
The objects of $\mathfrak{C}$ is can be grouped
\begin{equation*}\textrm{Ob}\,\mathfrak{C}=\coprod_{g,r}\textrm{Ob}\,\mathfrak{C}_{g,r},
\end{equation*}
where $\mathfrak{C}_{g,r}$ consists of the surfaces with genus $g$
and $r$ boundary components.

\begin{defn} The morphisms $\Si_{1,0}$, $\Si_{0,1}$ in $\mathfrak{C}$  are the embeddings $\Si_{i,j}:F\To \Si_{i,j}F$ given by gluing onto $\dd_1F$ a torus with 2 disks cut out, or a pair of pants, respectively, as on Figure \ref{f:sigma}. The embedded arc $\s$ is also shown here. The boundary components of $\Si_{0,1}F$ are numbered such that the new boundary component from the pair of pants is $\dd_{r+1}(\Si_{0,1}F)$.

The morphism $\Si_{1,-1}$ in the subcategory of $\coprod_{r\ge 2}\textrm{Ob}\,\mathfrak{C}_{g,r}$ is the embedding given by gluing a pair of pants onto $\dd_1(F)$ and $\dd_2(F)$, as on Figure \ref{f:sigma}. The numbering is that $\dd_j(\Si_{1,-1}F)=\dd_{j-1}F$ for $j>1$.
\end{defn}
\setlength{\unitlength}{0.5cm}
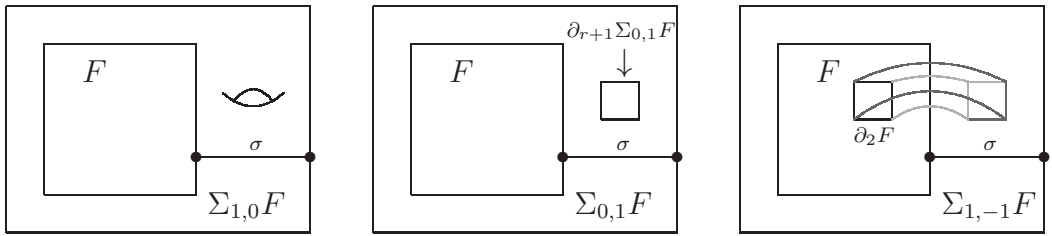
\begin{figure}[!h]
\begin{center}
\begin{picture}(8,5)(0,1)
\multiput(0,0)(0,6){2}{\line(1,0){8}}
\multiput(0,0)(8,0){2}{\line(0,1){6}} \put(2,4){$F$}
\multiput(1,1)(0,4){2}{\line(1,0){4}}
\multiput(1,1)(4,0){2}{\line(0,1){4}} \put(5,2){\circle*{.3}}
\put(6.4,2.2){$\scriptstyle\s$}
\put(5,2){\line(1,0){3}}\put(8,2){\circle*{.3}}\put(5.3,.5){$\Si_{1,0}F$}
\qbezier(5.7,3.7)(6.5,3)(7.3,3.7)
\qbezier(6,3.5)(6.5,4.1)(7,3.5)
\end{picture}\qquad
\begin{picture}(8,5)(0,1)
\multiput(0,0)(0,6){2}{\line(1,0){8}}
\multiput(0,0)(8,0){2}{\line(0,1){6}} \put(2,4){$F$}
\multiput(1,1)(0,4){2}{\line(1,0){4}}
\multiput(1,1)(4,0){2}{\line(0,1){4}} \put(5,2){\circle*{.3}}
\put(5,2){\line(1,0){3}}\put(8,2){\circle*{.3}}\put(5.3,.5){$\Si_{0,1}F$}
\put(6.4,2.2){$\scriptstyle\s$}
\multiput(6,3)(0,1){2}{\line(1,0){1}}
\multiput(6,3)(1,0){2}{\line(0,1){1}}
\put(5.1,5.2){$\scriptstyle\dd_{r+1}\Si_{0,1}F$}
\put(6.4,4.3){$\downarrow$}
\end{picture}\qquad
\begin{picture}(8,5)(0,1)
\multiput(0,0)(0,6){2}{\line(1,0){8}}
\multiput(0,0)(8,0){2}{\line(0,1){6}} \put(2,4){$F$}
\multiput(1,1)(0,4){2}{\line(1,0){4}}
\multiput(1,1)(4,0){2}{\line(0,1){4}} \put(5,2){\circle*{.3}}
\put(5,2){\line(1,0){3}}\put(8,2){\circle*{.3}}\put(5.3,.5){$\Si_{1,-1}F$}
\put(6.4,2.2){$\scriptstyle\s$}
\put(3,2.4){$\scriptstyle\dd_2F$}
\multiput(3,3)(0,1){2}{\line(1,0){1}}
\multiput(3,3)(1,0){2}{\line(0,1){1}}
\color[rgb]{0.40,0.40,0.40}
\put(6,3){\line(1,0){1}}
\put(7,3){\line(0,1){1}}
\color[rgb]{0.70,0.70,0.70}
\put(6,4){\line(1,0){1}}
\put(6,3){\line(0,1){1}}
\color[rgb]{0.40,0.40,0.40}
\qbezier(3,4)(5,5)(7,4)
\qbezier(3,3)(5,4.5)(7,3)
\color[rgb]{0.70,0.70,0.70}
\qbezier(4,4)(5,4.3)(6,4)
\qbezier(4,3)(5,3.7)(6,3)
\end{picture}
\end{center}
\caption{The morphisms $\Si_{1,0}$, $\Si_{0,1}F$, and $\Si_{1,-1}F$.}\label{f:sigma}
\end{figure}
In the figure, the black rectangles are boundary components of $F$ or $\Si_{i,j}F$, and the outer boundary component is always $\dd_1F$ with the marked point indicated. On the figure of $\Si_{1,-1}F$ the grey ''tube'' is a cylinder glued onto $\dd_2F$.

Now we will see how $\Si_{i,j}$ can be made into functors. First we define the subcategory $\mathfrak{C}(2)$ of $\mathfrak{C}$ to be the category with objects $\coprod_{r\ge 2}\textrm{Ob}\,\mathfrak{C}_{g,r}$ and whose morphisms $\f:F\To S$ must restrict to an orientation-preserving diffeomorphism $\f:\dd_2 F\To\dd_2 S$. Note that $\Si_{1,0}$ and $\Si_{0,1}$ are morphisms in this category.

$\Si_{1,0}$ and $\Si_{0,1}$ are functors from $\mathfrak{C}$ to itself, and $\Si_{1,-1}$ is a functor from $\mathfrak{C}(2)$ to $\mathfrak{C}$ in the following way: Given a morphism $\f:F\To S$ we must specify the morphism $\Si_{i,j}(\f)$, and this is done on the following diagram (drawn in the case of $\Si_{1,0}$). Here, the grey line shows how $\Si_{1,0}$ is embedded in $\Si_{0,1}S$ by $\Si_{1,0}(\f)$. Notice how the arc $\s$ determines the embedding.

\setlength{\unitlength}{0.3cm}
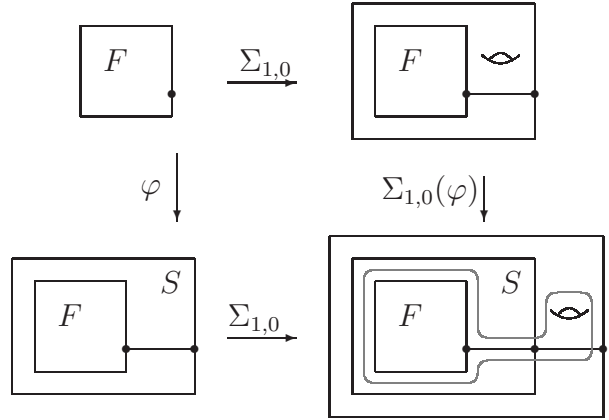
\begin{figure}[!h]
\begin{center}
\begin{picture}(8,5)(0,1)
\put(2,3){$F$}
\multiput(1,1)(0,4){2}{\line(1,0){4}}
\multiput(1,1)(4,0){2}{\line(0,1){4}} \put(5,2){\circle*{.3}}
\end{picture}
\begin{picture}(4,5)(0,1)
\put(-1,2.5){\vector(1,0){3}}
\put(-.5,3){$\Si_{1,0}$}
\end{picture}
\begin{picture}(10,5)(0,1)
\multiput(0,0)(0,6){2}{\line(1,0){8}}
\multiput(0,0)(8,0){2}{\line(0,1){6}} \put(2,3){$F$}
\multiput(1,1)(0,4){2}{\line(1,0){4}}
\multiput(1,1)(4,0){2}{\line(0,1){4}} \put(5,2){\circle*{.3}}
\put(5,2){\line(1,0){3}}\put(8,2){\circle*{.3}}
\qbezier(5.7,3.7)(6.5,3)(7.3,3.7)
\qbezier(6,3.5)(6.5,4.1)(7,3.5)
\end{picture}
\end{center}
\begin{center}
\begin{picture}(13,3)(0,1)
\put(3,4){\vector(0,-1){3}}
\put(1.4,2.2){$\f$}
\end{picture}
\begin{picture}(5,3)(0,1)
\put(3,3){\vector(0,-1){2}}
\put(-1.5,2){$\Si_{1,0}(\f)$}
\end{picture}
\end{center}
\begin{center}
\begin{picture}(8,5)(0,1)
\multiput(0,0)(0,6){2}{\line(1,0){8}}
\multiput(0,0)(8,0){2}{\line(0,1){6}} \put(2,3){$F$}
\multiput(1,1)(0,4){2}{\line(1,0){4}}
\multiput(1,1)(4,0){2}{\line(0,1){4}} \put(5,2){\circle*{.3}}
\put(5,2){\line(1,0){3}}\put(8,2){\circle*{.3}}
\put(6.5,4.5){$S$}
\end{picture}
\begin{picture}(6,5)(0,1)
\put(1,2.5){\vector(1,0){3}}
\put(1,3){$\Si_{1,0}$}
\end{picture}
\begin{picture}(12,5)(0,1)
\multiput(0,0)(0,6){2}{\line(1,0){8}}
\multiput(0,0)(8,0){2}{\line(0,1){6}} \put(2,3){$F$}
\multiput(1,1)(0,4){2}{\line(1,0){4}}
\multiput(1,1)(4,0){2}{\line(0,1){4}} \put(5,2){\circle*{.3}}
\put(5,2){\line(1,0){3}}\put(8,2){\circle*{.3}}
\put(6.5,4.5){$S$}
\multiput(-1,-1)(0,8){2}{\line(1,0){12}}
\multiput(-1,-1)(12,0){2}{\line(0,1){8}}
\qbezier(8.7,3.7)(9.5,3)(10.3,3.7)
\qbezier(9,3.5)(9.5,4.1)(10,3.5)
\put(8,2){\line(1,0){3}}\put(11,2){\circle*{.3}}
\color[rgb]{0.50,0.50,0.50}
\qbezier(.5,1)(.5,1)(.5,5)
\qbezier(.5,1)(.5,.5)(1,.5)
\qbezier(5,.5)(1,.5)(1,.5)
\qbezier(5,.5)(5.5,.5)(5.5,1)
\qbezier(5.5,1)(5.5,1.5)(6,1.5)
\qbezier(6,1.5)(6,1.5)(10,1.5)
\qbezier(10,1.5)(10.5,1.5)(10.5,2)
\qbezier(10.5,2)(10.5,2)(10.5,4)
\qbezier(10.5,4)(10.5,4.5)(10,4.5)
\qbezier(.5,5)(.5,5.5)(1,5.5)
\qbezier(1,5.5)(1,5.5)(5,5.5)
\qbezier(5,5.5)(5.5,5.5)(5.5,5)
\qbezier(5.5,5)(5.5,5)(5.5,3)
\qbezier(5.5,3)(5.5,2.5)(6,2.5)
\qbezier(6,2.5)(6,2.5)(8,2.5)
\qbezier(8,2.5)(8.5,2.5)(8.5,3)
\qbezier(8.5,3)(8.5,3)(8.5,4)
\qbezier(8.5,4)(8.5,4.5)(9,4.5)
\qbezier(9,4.5)(9,4.5)(10,4.5)
\end{picture}
\end{center}
\caption{The functor $\Si_{1,0}$.}\label{f:functor}
\end{figure}
Similar diagrams can be drawn for $\Si_{0,1}$ and $\Si_{1,-1}$. In the latter case $\Si_{1,-1}(\f)$ exists because when $\f\in\mathfrak{C}(2)$, $\f:F\To S$ has not done anything to $\dd_2(F)$, so that $\Si_{1,-1}F$ can be embedded in $\Si_{1,-1}S$ just as on Figure \ref{f:functor}.

\subsection{Coefficient systems}
We now define the coefficient systems we are interested in. We say that an abelian group $G$ is \emph{without infinite division} if the following holds for all $g\in G$: If $n\mid g$ for all $n\in\Z$, then $g=0$. By $n\mid g$ we mean $g=nh$ for some $h\in G$. Note that finitely generated abelian groups are without infinite division.

\begin{defn}A coefficient system is a functor from $\mathfrak{C}$ to
$\textrm{Ab}_{\textrm{wid}}$, the category of abelian groups without infinite division.
\end{defn}
We say that a constant coefficient system has degree 0 and make the general
\begin{defn}\label{d:coef}\cite{Ivanov1} A coefficient system $V$ has degree $\le k$ if the map $V(F){\To}V(\Si_{i,j}F)$ is split injective for $(i,j)\in\set{(1,0),(0,1), (1,-1)}$,  and the cokernel $\Delta_{i,j}V$ is a coefficient system of degree $\le k-1$
for $(i,j)\in\set{(1,0),(0,1)}$. The degree of $V$ is the smallest such $k$.
\end{defn}
\begin{ex}

\begin{itemize}
  \item[$(i)$] $V(F)=H_1(F,\dd F)$ is a coefficient system of degree $1$.
  \item[$(ii)$] $V^*_k(F)=H_k(\Map((F/\dd F),X)$. This is the coefficient system used in \cite{CM}. It has degree $\le \round{\frac{k}{d}}$ if $X$ is $d$-connected, which will be proved in Theorem \ref{t:stabil1}.
\end{itemize}
\end{ex}
We write $\Si_{i,j}V$ for the functor $F\rightsquigarrow V(\Si_{i,j}F)$, where $(i,j)\in\set{(1,0),(0,1)}$.
\begin{lem}[Ivanov]Let $V$ be a coefficient system of degree $\le k$. Then $\Si_{1,0}V$ and $\Si_{0,1}V$ are coefficient systems of degree $\le k$.
\end{lem}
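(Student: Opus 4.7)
I would prove the statement by induction on $k$. The base case $k=0$ is immediate: if $V$ is constant, then $\Si_{1,0}V=V\circ \Si_{1,0}$ and $\Si_{0,1}V=V\circ \Si_{0,1}$ are both constant, hence of degree $0$.

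For $k\ge 1$, assume the conclusion for all coefficient systems of smaller degree. For each $(i,j)\in\set{(1,0),(0,1),(1,-1)}$, the structure map of $\Si_{1,0}V$ is
\begin{equation*}
 (\Si_{1,0}V)(F) = V(\Si_{1,0}F)\,\To\, V(\Si_{1,0}\Si_{i,j}F)=(\Si_{1,0}V)(\Si_{i,j}F),
\end{equation*}
and I must show this is split injective, with cokernel of degree $\le k-1$ when $(i,j)\in\set{(1,0),(0,1)}$. The main step---and what I expect to be essentially the only real obstacle---is a geometric commutation lemma: there is a natural isomorphism of endofunctors $\eta\colon \Si_{1,0}\Si_{i,j} \Rightarrow \Si_{i,j}\Si_{1,0}$ on $\mathfrak{C}$ (restricted to $\mathfrak{C}(2)$ when $(i,j)=(1,-1)$) fitting into the commutative triangle
\begin{equation*}
 \xymatrix{& \Si_{1,0}F \ar[dl]_{\Si_{1,0}(\Si_{i,j})}\ar[dr]^{\Si_{i,j}} & \\ \Si_{1,0}\Si_{i,j}F \ar[rr]^{\eta_F}_{\iso} && \Si_{i,j}\Si_{1,0}F}
\end{equation*}
in $\mathfrak{C}$. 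Geometrically $\eta_F$ is obtained by sliding the two attached pieces (pair of pants and/or torus with two holes) past each other on $\dd_1F$; producing it amounts to exhibiting an ambient isotopy of embeddings that is compatible with the marked point, the boundary numbering, and the connecting arc $\s$ in the marked-surface structure. This is a routine but slightly finicky gluing argument.

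Granting the commutation, applying $V$ to $\eta_F$ identifies the structure map of $\Si_{1,0}V$ at $F$ with the structure map $V(G)\To V(\Si_{i,j}G)$ of $V$ at $G=\Si_{1,0}F$; the latter is split injective because $V$ has degree $\le k$. Naturality of $\eta$ in $F$ upgrades this to a natural isomorphism of coefficient systems, so taking cokernels for $(i,j)\in\set{(1,0),(0,1)}$ yields
\begin{equation*}
 \D_{i,j}(\Si_{1,0}V) \,\iso\, \Si_{1,0}(\D_{i,j}V).
\end{equation*}
Since $\D_{i,j}V$ has degree $\le k-1$, the inductive hypothesis applied to it gives that $\Si_{1,0}(\D_{i,j}V)$, and therefore $\D_{i,j}(\Si_{1,0}V)$, has degree $\le k-1$. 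Thus $\Si_{1,0}V$ has degree $\le k$, closing the induction. The argument for $\Si_{0,1}V$ is identical with $\Si_{1,0}$ replaced by $\Si_{0,1}$ throughout.
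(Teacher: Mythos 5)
Your proposal is correct and supplies the argument that the paper outsources entirely to Ivanov (the proof in the text is a one-line citation). The mechanism you identify---constructing a natural isomorphism $\eta\colon\Si_{1,0}\Si_{i,j}\Rightarrow\Si_{i,j}\Si_{1,0}$ (restricted to $\mathfrak{C}(2)$ for $(i,j)=(1,-1)$) compatible with the structure morphisms, then transporting split injectivity and the cokernel $\D_{i,j}V$ through it and inducting on degree---is exactly Ivanov's argument for $\Si_{1,0}V$, and your uniform handling of $\Si_{0,1}V$ matches the paper's remark that this case is ``handled similarly.'' The only point to be fully scrupulous about, as you flag, is that $\eta_F$ respects not just the underlying surface but the marked point, the boundary numbering, and the connecting arc $\s$, and that the resulting identification of cokernels is natural in $F$ so that $\D_{i,j}(\Si_{1,0}V)\iso\Si_{1,0}(\D_{i,j}V)$ holds as functors (and hence the right-hand side being without infinite division is inherited by the left).
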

\begin{proof}See \cite{Ivanov1} for $\Si_{1,0}V$. The case $\Si_{0,1}V$ can be handled similarly.
\end{proof}

\subsection{The inductive assumption}
Below I will use the following notational conventions: $F$ denotes a surface in $\mathfrak{C}$, and unless otherwise specified, $g$ is the genus of $F$. $\Si_{l,m}$ refers to any of $\Si_{1,0}$, $\Si_{0,1}$, $\Si_{1,-1}$.

\begin{defn}\label{d:Phi}
Given a morphism $\psi:F\To S$, $\Phi$ will denote a finite composition of $\Si_{0,1}$ and $\Si_{1,-1}$ such that $\Phi(\psi)$ is defined, i.e. makes the following diagram comutative
\begin{equation*}
    \xymatrix{
F\ar[r]^{\Phi}\ar[d]^{\psi}& \Phi(F)\ar@{-->}[d]^{\Phi(\psi)} \\
S\ar[r]^{\Phi}& \Phi(S)
}
\end{equation*}
By a finite composition we mean $\Phi=\Si_{i_1,j_1}\circ \cdots\circ \Si_{i_s,j_s}$ for some $s\ge0$, where $(i_k,j_k)\in\set{(0,1), (1,-1)}$ for each $k=1,\ldots,s$. We say that such a $\Phi$ is \emph{compatible} with $\psi:F\To S$.
\end{defn}

To prove our main stability result for twisted coefficients, we will study certain relative homology groups:
\begin{defn}\label{d:Rel}Let $\psi:F\To S$ be a morphism of surfaces, and let $\Phi$ be compatible. Let $V$ be a coefficient system. Then we define
\begin{equation*}
\Rel_n^{V,\Phi}(S,F) = H_n(\G(S),\G(F);V(\Phi(S)),V(\Phi(F))).
\end{equation*}
If $\Phi=\id$, we write $\Rel_n^{V}(G,F)$ for $\Rel_n^{V,\id}(G,F)$.
\end{defn}

\begin{thm}[Ivanov, Madsen-Cohen]\label{t:IvanovCoMa}For sufficiently large $g$:
\begin{itemize}
  \item[$(i)$]$\Rel_q^{V}(\Si_{1,0}F,F)=0.$
  \item[$(ii)$]$\Rel_q^{V}(\Si_{0,1}F,F)=0.$
 \item[$(iii)$]$\Rel_q^{V}(\Si_{1,-1}F,F)=0.$
\end{itemize}
\end{thm}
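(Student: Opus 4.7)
The plan is induction on the degree $k_V$ of $V$, with the three statements proved simultaneously.

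For the base case $k_V = 0$, the coefficient system is constant, say $V \equiv A$ for some abelian group $A$ without infinite division. Since $A$ carries the trivial action of every mapping class group, $B_\ast(\G; A) \iso B_\ast(\G; \Z) \tensor_\Z A$, so the algebraic mapping cone defining $\Rel^V$ is the integral mapping cone tensored with $A$. By the universal coefficient theorem
\begin{equation*}
\Rel_q^V(\Si_{l,m}F, F) \iso \Rel_q^\Z(\Si_{l,m}F, F) \tensor A \;\oplus\; \textup{Tor}_1^\Z\!\bigl(\Rel_{q-1}^\Z(\Si_{l,m}F, F),\, A\bigr),
\end{equation*}
and both summands vanish in the range supplied by Main Theorem \ref{t:Main} for $g$ large enough.

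For the inductive step, fix $V$ of degree $k_V \ge 1$ and consider the split short exact sequence of coefficient systems
\begin{equation*}
0 \to V \to \Si_{l,m} V \to \Delta_{l,m} V \to 0.
\end{equation*}
Evaluated at $F$ it is a split SES of $\G(F)$-modules, and evaluated at $\Si_{l,m}F$ a split SES of $\G(\Si_{l,m}F)$-modules; the two together form a SES of pairs of modules compatible with the stabilization $\G(F) \to \G(\Si_{l,m}F)$. Additivity of the bar construction and of algebraic mapping cones yields a long exact sequence
\begin{equation*}
\cdots \to \Rel_q^V \to \Rel_q^{\Si_{l,m} V} \to \Rel_q^{\Delta_{l,m} V} \to \Rel_{q-1}^V \to \cdots,
\end{equation*}
with all three groups taken relative to $(\Si_{l,m}F, F)$. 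Since $\Delta_{l,m}V$ has degree $k_V - 1$, the outer inductive hypothesis on $k_V$ kills the $\Delta_{l,m}V$-terms in the relevant range. The $\Si_{l,m}V$-terms have the same degree as $V$, but are attached to a larger surface, and I would handle them by a secondary induction on the genus $g$: expressing $\Rel^{\Si_{l,m}V}$ through the spectral sequence of Theorem \ref{s:ss} for the action of $\G(\Si_{l,m}F)$ on an appropriate arc complex $C_\ast(\Si_{l,m}F; i)$, whose stabilizers are, by the genus count of Section 3.2, mapping class groups of cut surfaces of strictly smaller genus.

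The main obstacle is precisely this $\Si_{l,m}V$-term, since the degree does not drop and the naive induction stalls. One must verify that on each stabilizer of an arc simplex the pair of coefficient modules $\bigl(V(\Si_{l,m}^2 F),\, V(\Si_{l,m}F)\bigr)$ restricts to a pair of the same shape on a smaller surface (so that the genus-induction step applies) and that the spectral-sequence differentials are compatible with the splittings coming from the degree filtration. Setting up this double induction on $k_V$ and $g$, and threading it through the arc-complex spectral sequence now equipped with twisted coefficients, is the central technical difficulty; once it is in place, the long exact sequence above closes the induction and delivers the vanishing of $\Rel_q^V$ for $g$ sufficiently large.
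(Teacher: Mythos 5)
Your proposal takes a fundamentally different route from the paper, and it contains a genuine gap at its central step.

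The paper does not reprove parts $(i)$ and $(ii)$ at all: it cites Ivanov \cite{Ivanov1} for $(i)$ and Cohen--Madsen \cite{CM} for $(ii)$, noting only that those arguments go through under the ``without infinite division'' hypothesis. Part $(iii)$ is then derived from $(i)$ and $(ii)$ in a few lines: one writes the long exact sequence for $\Rel_q^V(\Si_{1,-1}F,F)$ and observes that $\Si_{1,-1}$ factors as $\Si_{1,-1}\circ\Si_{0,1}=\Si_{1,0}$ applied to a smaller surface $S$ with $\Si_{0,1}S=F$; the commuting triangle of stabilization maps plus the vanishing from $(i)$ and $(ii)$ forces $\Si_{1,-1}$ to be an isomorphism on $H_q$ and $H_{q-1}$, so the relative group vanishes. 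So the theorem is really a citation plus a factorization trick, not a self-contained induction.

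Your base case $k_V=0$ via the universal coefficient theorem and Theorem~\ref{t:Main} is fine. The gap is in the inductive step, and you name it yourself but do not close it: after splitting off $\Delta_{l,m}V$, you are left with $\Rel_q^{\Si_{l,m}V}(\Si_{l,m}F,F)$, which has the \emph{same} degree as $V$, so the induction on $k_V$ gives no leverage. Your plan to handle it by ``a secondary induction on the genus $g$'' is not well posed here: the statement being proved is a qualitative one (``for sufficiently large $g$''), so there is no base case to anchor such an induction, and the arc-complex spectral sequence expresses the genus-$g$ group in terms of groups at \emph{lower} genus, which is exactly the wrong direction for this claim. Moreover, in the paper's actual logical architecture this $\Si_{l,m}V$-term is precisely what Theorem~\ref{t:vis_indu} and Theorem~\ref{t:maintwist} are built to handle, via a quantitative double induction on $(k_V, q)$ --- and that machinery \emph{uses} Theorem~\ref{t:IvanovCoMa} as an input (at the end of the proof of Theorem~\ref{t:vis_indu}, one iterates $\Si_{1,0}$ to raise the genus and then invokes Theorem~\ref{t:IvanovCoMa} to kill the relative group). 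So attempting to prove Theorem~\ref{t:IvanovCoMa} by running that same machinery would be circular; it genuinely has to come from outside, which is why the paper cites Ivanov and Cohen--Madsen for it.
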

\begin{proof}
For $(i)$, see \cite{Ivanov1}. For $(ii)$, see \cite{CM}. Their proof only requires that the groups $V(\cdot)$ are without infinite division.

To prove $(iii)$, we use the following long exact sequence,
\begin{eqnarray*}
  H_q(F,V(F))&\To&  H_q(\Si_{1,-1}F,V(\Si_{1,-1} F))\To\Rel_q^{V}(\Si_{1,-1}F,F)\To \\
  H_{q-1}(F,V(F))&\To&  H_{q-1}(\Si_{1,-1}F,V(\Si_{1,-1}F))
\end{eqnarray*}
Thus to see that $\Rel_q^{V}(\Si_{1,-1}F,F)=0$ all we have to do is to see that the first map is surjective and that the last map is injective. Both of these maps are $\Si_{1,-1}$, so they fit into the following diagram, for $k \in \set{q, q-1}$:
\begin{equation*}\xymatrix{H_k(F,V(F))\ar[r]^{\Si_{1,-1}} & H_k(F,V(F))\\
  H_k(S,V)\ar[u]^{\Si_{0,1}}\ar[ur]^{\Si_{1,0}} & }
\end{equation*}
where $S$ is a surface with $\Si_{0,1}S=F$. Now by $(i)$ and $(ii)$, if $g$ is sufficiently large, both the diagonal and the vertical map is an isomorphism, so $\Si_{1,-1}$ is also an isomorphism.
\end{proof}



Define $\e_{l,m}$ by
\begin{equation*}
    \e_{l,m}=\left\{
               \begin{array}{ll}
                 1, & \hbox{if $(l,m)=(1,-1)$;} \\
                 0, & \hbox{if $(l,m)=(1,0)$ or $(0,1)$.}
               \end{array}
             \right.
\end{equation*}

\begin{inda}The inductive assumption $I_{k,n}$ is the following: For any coefficient system $W$ of degree
$k_W$, any surface $F$ of genus $g$, and any $\Phi$ compatible with $\Si_{l,m}:F\To\Si_{l,m}F$, we have
\begin{equation*}
    \Rel^{W,\Phi}_q(\Si_{l,m}F, F) = 0\quad\text{for}\quad 2g\ge 3q+k_W-\e_{l,m},
\end{equation*}
if either $k_W<k$, or $k_W=k$ and $q<n$.
\end{inda}
In the rest of this section I am going to assume $I_{k,n}$. Note
that $I_{k,m}$ for all $m\in \N$ is equivalent to $I_{k+1,0}$. Thus
the goal is to prove $I_{k,n+1}$. Let $V$ be a given coefficient
system of degree $k$.

\begin{lem}[Ivanov]\label{l:Ivanov}Let $F$ be a surface of genus $g$. If $2g\ge 3q+k-1-\e_{l,m}$ then for $(i,j)\in \set{(1,0),(0,1)}$
\begin{equation*}
    \Rel_q^{V,\Phi}(\Si_{l,m}F,F)\To \Rel_q^{V,\Si_{i,j}\Phi}(\Si_{l,m}F,F)
\end{equation*}
is surjective.
\end{lem}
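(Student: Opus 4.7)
The plan is to use the defining split short exact sequence of a degree-$k$ coefficient system in order to compare $\Rel^{V,\Phi}$ with $\Rel^{V,\Si_{i,j}\Phi}$, and then to invoke the inductive assumption at degree $k-1$. By Definition \ref{d:coef}, the morphism $\Si_{i,j}$ in $\mathfrak{C}$ produces a natural split short exact sequence of coefficient systems
\[
0\to V\to \Si_{i,j}V\to \D_{i,j}V\to 0,
\]
with $\D_{i,j}V$ of degree at most $k-1$. Evaluating this at $\Phi F$ and at $\Phi\Si_{l,m}F$ yields a short exact sequence of pairs of modules compatible with the inclusion $\G(F)\hookrightarrow \G(\Si_{l,m}F)$, by naturality of $V$ with respect to $\Si_{l,m}$.

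I would then push this sequence through the algebraic mapping-cone construction of Section \ref{S:1,1}. Since the bar complex and the mapping-cone functor are both exact in the coefficient argument, the result is a short exact sequence of the associated relative chain complexes, and hence a long exact sequence
\[
\cdots\to \Rel^{V,\Phi}_q(\Si_{l,m}F,F)\to \Rel^{V,\Si_{i,j}\Phi}_q(\Si_{l,m}F,F)\to \Rel^{\D_{i,j}V,\Phi}_q(\Si_{l,m}F,F)\to \Rel^{V,\Phi}_{q-1}(\Si_{l,m}F,F)\to\cdots
\]
By exactness, surjectivity of the first map will follow as soon as the third term vanishes, since then the second map is zero.

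To obtain this vanishing I would apply the inductive assumption $I_{k,n}$ with $W=\D_{i,j}V$. Since $k_W\le k-1<k$, the assumption applies with no restriction on the homology degree and gives $\Rel^{\D_{i,j}V,\Phi}_q(\Si_{l,m}F,F)=0$ whenever $2g\ge 3q+k_W-\e_{l,m}$. As $k_W\le k-1$, the stated hypothesis $2g\ge 3q+k-1-\e_{l,m}$ is exactly what is required.

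The only point where I expect to need care is the bookkeeping around $\Phi$. One must verify that $\Si_{i,j}\Phi$ still defines a coefficient system compatible with $\Si_{l,m}:F\to \Si_{l,m}F$, so that the middle term $\Rel^{V,\Si_{i,j}\Phi}_q$ of the long exact sequence is genuinely defined, and that the short exact sequence of coefficient modules is honestly equivariant for the inclusion of mapping class groups so that it really lifts to a short exact sequence of mapping cones. Both of these are routine consequences of the naturality of $V$ together with the functoriality of $\Si_{0,1}$, $\Si_{1,0}$, $\Si_{1,-1}$ on $\mathfrak{C}$ and $\mathfrak{C}(2)$, so no serious obstacle is encountered beyond verifying these compatibilities.
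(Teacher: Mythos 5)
Your proposal is correct and follows essentially the same route as the paper's proof: identify $\Rel_q^{V,\Si_{i,j}\Phi}$ with $\Rel_q^{\Si_{i,j}V,\Phi}$, apply the long exact sequence coming from the split short exact sequence $0\to V\to \Si_{i,j}V\to \D_{i,j}V\to 0$ of Definition \ref{d:coef}, and then use the inductive assumption $I_{k,n}$ on the coefficient system $\D_{i,j}V$ of degree $\le k-1$ (for which the inequality $2g\ge 3q+k-1-\e_{l,m}$ suffices, with no constraint on $q$) to kill the third term. Your extra care about the compatibility bookkeeping is appropriate but, as you note, routine, and the paper simply asserts the resulting long exact sequence without further comment.
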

\begin{proof}Since $\Rel_q^{V,\Si_{i,j}\Phi}(\Si_{l,m}F,F)=\Rel_q^{\Si_{i,j}V,\Phi}(\Si_{l,m}F,F)$ we have the following long exact sequence :
\begin{equation*}
    \Rel_q^{V,\Phi}(\Si_{l,m}F,F)\To \Rel_q^{V,\Si_{i,j}\Phi}(\Si_{l,m}F,F)\To \Rel_q^{\Delta_{i,j}V,\Phi}(\Si_{l,m}F,F)
\end{equation*}
Since $\Delta_{i,j}V$ is a coefficient system of degree $k-1$, the
assumption $I_{k,n}$ implies that $\Rel_q^{\Delta_{i,j}V,\Phi}(\Si_{l,m}
F,F)=0$, and the result follows.
\end{proof}

\begin{thm}\label{t:vis_indu}
Assume that $h$ satisfies $2h\ge 3n+k-1-\e_{l,m}$ and that the maps below are injective for all surfaces $F$ of genus $g\ge h$ and $\Phi$ compatible with $\Si_{l,m}:F\To \Si_{l,m}F$,
\begin{eqnarray*}
  \Rel_n^{V,\Phi\Si_{1,-1}}(\Si_{l,m} F,F) &\To&
   \Rel^{V,\Phi}_n(\Si_{l,m}\Si_{1,-1}F,\Si_{1,-1}F), \\
   \Rel_n^{\Si_{0,1}V}(\Si_{l,m} F,F) &\To&
   \Rel^{V}_n(\Si_{l,m}\Si_{0,1}F,\Si_{0,1}F).
\end{eqnarray*}
Then for any compatible $\Phi$, $\Rel_n^{V,\Phi}(\Si_{l,m} F,F)=0$ for $g\ge h$.
\end{thm}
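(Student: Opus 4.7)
My plan is to exhibit $\Rel_n^{V,\Phi}(\Si_{l,m}F,F)$ as a subgroup of a relative group on a surface of arbitrarily high genus, so that Theorem \ref{t:IvanovCoMa} forces it to vanish. The key step is to factor the natural stabilization map
\begin{equation*}
\Rel_n^{V,\Phi}(\Si_{l,m}F,F) \To \Rel_n^{V,\Phi}(\Si_{l,m}\Si_{1,-1}F,\Si_{1,-1}F)
\end{equation*}
as a composition $\iota_2\circ\iota_1$ through $\Rel_n^{V,\Phi\Si_{1,-1}}(\Si_{l,m}F,F)$, where $\iota_2$ is the map provided by Hypothesis A (injective by assumption) and $\iota_1$ is induced by applying $V$ to the $\Phi$-image of the stability morphism $\Si_{1,-1}:X\To\Si_{1,-1}X$. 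Because $V$ has degree $\le k$, the natural transformation $V\Phi\To V\Phi\Si_{1,-1}$ is split injective with natural splittings, so $\iota_1$ descends to a split injection on relative homology and $\iota_2\circ\iota_1$ is injective.

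Next I would iterate. Replacing $F$ by $\Si_{1,-1}^j F$ (which has genus $g+j\ge h$, so the hypotheses continue to apply at each stage), the same factorization gives, for any $N\ge 1$, an injection
\begin{equation*}
\Rel_n^{V,\Phi}(\Si_{l,m}F,F) \hookrightarrow \Rel_n^{V,\Phi}(\Si_{l,m}\Si_{1,-1}^N F,\Si_{1,-1}^N F) = \Rel_n^{V\Phi}(\Si_{l,m}\Si_{1,-1}^N F,\Si_{1,-1}^N F).
\end{equation*}
The composite functor $V\Phi$ is again a coefficient system (this requires extending the paper's lemma about $\Si_{0,1}V$ and $\Si_{1,0}V$ inductively to arbitrary compositions of $\Si_{0,1}$ and $\Si_{1,-1}$), so Theorem \ref{t:IvanovCoMa} applies once $g+N$ is large enough; then the right-hand side vanishes, and so does $\Rel_n^{V,\Phi}(\Si_{l,m}F,F)$.

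The main technical obstacle will be justifying split injectivity of $\iota_1$ on relative group homology: one needs the splitting of $V\Phi\To V\Phi\Si_{1,-1}$ to be natural in $X$, so that it is equivariant under the $\G(F)$- and $\G(\Si_{l,m}F)$-actions and hence survives the passage to homology. The key observation is that every morphism $\Phi X\to\Phi\Si_{1,-1}X$ in $\mathfrak{C}$ decomposes as a composition of elementary stability morphisms $Y\To\Si_{i,j}Y$, each of which $V$ sends to a naturally split injection by the degree hypothesis; these natural splittings compose to a natural splitting of $\iota_1$. Hypothesis B does not enter the iteration above; it is presumably invoked either in the companion $\Si_{0,1}$-stabilization argument or in an auxiliary lemma verifying Hypothesis A itself.
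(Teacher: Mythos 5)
Your proposal has the right overall shape (factor the stabilization through a coefficient change and Hypothesis A, iterate, and finish with Theorem \ref{t:IvanovCoMa}), but the iteration itself cannot run. Each application of $\Si_{1,-1}$ lowers the number of boundary components by one, and $\Si_{1,-1}$ is only defined in the subcategory $\mathfrak{C}(2)$ of surfaces with at least two boundary components. If $F$ has $r$ boundary circles, then $\Si_{1,-1}^NF$ exists only for $N\le r-1$; you cannot let $N\to\infty$ to drive the genus above the (unspecified) threshold of Theorem \ref{t:IvanovCoMa}. This is precisely why the paper's proof does \emph{not} iterate $\Si_{1,-1}$ alone: it first uses the $\Si_{1,-1}$ part of Hypothesis A to strip the $\Si_{1,-1}$-factors out of $\Phi$, reducing to the case where $\Phi'$ is a composition of $\Si_{0,1}$'s and $\Si_{1,0}$'s, then applies Lemma \ref{l:Ivanov} (this is where the hypothesis $2h\ge 3n+k-1-\e_{l,m}$ enters, which your proof never uses) to get down to $\Phi=\id$, and finally builds the chain
\begin{equation*}
\Rel_n^{V}(\Si_{l,m}F,F)\To\Rel_n^{\Si_{0,1}V}(\Si_{l,m}F,F)\To\Rel_n^{V}(\Si_{l,m}\Si_{0,1}F,\Si_{0,1}F)\To\Rel_n^{\Si_{1,-1}V}(\Si_{l,m}\Si_{0,1}F,\Si_{0,1}F)\To\Rel_n^{V}(\Si_{l,m}\Si_{1,0}F,\Si_{1,0}F),
\end{equation*}
in which Hypothesis B and Hypothesis A alternate with the split injections of Definition \ref{d:coef}. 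The net effect of one pass is $\Si_{1,0}=\Si_{1,-1}\Si_{0,1}$, which raises the genus while keeping the number of boundary circles fixed, so it can be iterated indefinitely.

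Two consequences of this gap in your proposal: first, your remark that Hypothesis B ``does not enter the iteration'' is wrong --- it is exactly what supplies the $\Si_{0,1}$ half of the alternating chain and makes unbounded iteration possible; second, your attempt to absorb $\Phi$ into the coefficient system as $V\Phi$ is not justified by the paper (the paper's Lemma only shows $\Si_{0,1}V$ and $\Si_{1,0}V$ are coefficient systems, and $\Si_{1,-1}V$ is only defined on $\mathfrak{C}(2)$), whereas the paper's route via Lemma \ref{l:Ivanov} sidesteps this entirely by getting rid of $\Phi$ before the iteration begins.
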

\begin{proof}Assume $2g\ge 3n+k-1-\e_{l,m}$. Write $\Phi=\Si_{i_1,j_1}\circ \cdots\circ \Si_{i_s,j_s}$, where $(i_k,j_k)\in\set{(1,-1),(0,1)}$. Observe that we can write $\Phi= \Phi'\circ (\Si_{1,-1})^d$ for some $d$, where $\Phi'=\Si_{\la_1,\mu_1}\circ \cdots\circ \Si_{\la_t,\mu_t}$ with $(\la_k,\mu_k)\in\set{(1,0),(0,1)}$. Then by the first assumption in the theorem, we get by induction in $d$:
\begin{equation*}
   \Rel_n^{V,\Phi}(\Si_{l,m} F,F) \To   \Rel^{V,\Phi'}_n(\Si_{l,m}(\Si_{1,-1})^dF,(\Si_{1,-1})^dF)\\
\end{equation*}
is injective. Thus it suffices to show $\Rel_n^{V,\Phi'}(\Si_{l,m}(\Si_{1,-1})^dF,(\Si_{1,-1})^dF)=0$. Since $\genus((\Si_{1,-1})^dF)\ge g\ge h$, it is certainly enough to show $\Rel_n^{V,\Phi'}(\Si_{l,m} F,F)=0$, where $\Phi'$ is a finite composition of $\Si_{1,0}$ and $\Si_{0,1}$. By Lemma
\ref{l:Ivanov}, we get inductively that
\begin{equation*}
    \Rel_n^V(\Si_{l,m}F,F)\To \Rel_n^{V,\Phi'}(\Si_{l,m}F,F)
\end{equation*}
is surjective, so it suffices to show that $\Rel_n^V(\Si_{l,m}F,F)=0$.
Now by the second assumption in the Theorem, we know
\begin{equation*}
   \Rel_n^{\Si_{0,1}V}(\Si_{l,m} F,F) \To
   \Rel^{V}_n(\Si_{l,m}\Si_{0,1}F,\Si_{0,1}F)
\end{equation*}
is injective. Since $V$ is a coefficient system of degree $k$, $V(F)\To V(\Si_{0,1}F)$ and $V(F)\To V(\Si_{1,-1}F)$ are split injective, so the composition,
\begin{eqnarray*}
   \Rel_n^{V}(\Si_{l,m} F,F) \To \Rel_n^{\Si_{0,1}V}(\Si_{l,m} F,F) \To
   \Rel^{V}_n(\Si_{l,m}\Si_{0,1}F,\Si_{0,1}F)\\
 \To \Rel^{\Si_{1,-1}V}_n(\Si_{l,m}\Si_{0,1}F,\Si_{0,1}F) \To \Rel^{V}_n(\Si_{l,m}\Si_{1,0}F,\Si_{1,0}F)
\end{eqnarray*}
is injective, where the second and the last maps are the maps in the assumption and thus injective. Iterating this, we get an injective map
\begin{equation*}
   \Rel_n^{V}(\Si_{l,m} F,F) \To
   \Rel^V_n(\Si_{l,m}(\Si_{1,0})^dF,(\Si_{1,0})^dF)
\end{equation*}
for any $d\in \N$. But $\genus((\Si_{1,0})^dF)=g+d$, so
by Theorem \ref{t:IvanovCoMa},
$\Rel_n^{V}(\Si_{l,m} F,F)$ injects into zero. This proves
$\Rel_n^{V,\Phi}(\Si_{l,m} F,F)=0$.
\end{proof}

\subsection{The main theorem for twisted coefficients}
In the proof of stability for relative homology groups, we will use the relative version of the spectral sequence, cf. Theorem \ref{s:ss}, $E^1_{p,q}=E^1_{p,q}(\Si_{i,j}F;2-i)$ associated with the action of $\G(\Si_{i,j}F)$ on the arc complex
$C_*(\Si_{i,j}F;2-i)$ and the action of $\G(\Si_{l,m}\Si_{i,j}F)$ on
the arc complex $C_*(\Si_{l,m}\Si_{i,j}F;2-i)$. Let $b_0,b_1$ be the points in the definition of $C_*(\Si_{i,j}F;2-i)$; and $\tilde{b}_0, \tilde{b}_1$ be the corresponding points for $C_*(\Si_{l,m}\Si_{i,j}F;2-i)$. We demand that $b_0$, $\tilde{b}_0$ lie in the 1st boundary component, but is different from the marked point. To define the spectral sequence, $\Si_{l,m}$ must induce a map
\begin{equation}\label{e:udvidarc}
\Si_{l,m}:C_*(\Si_{i,j}F;2-i) \To C_*(\Si_{l,m}\Si_{i,j}F;2-i),
\end{equation}
which we now define: If $i=0$, $b_0$ and $b_1$ lie in different boundary components, and the map is given on $\al\in\D_k(\Si_{i,j}F)$ by a simple path $\g$ from $\tilde{b}_0\in \Si_{l,m}\Si_{i,j}F$ to $b_0\in \Si_{i,j}F$ inside $\Si_{l,m}\Si_{i,j}F\fra\Si_{i,j}F$. Then the arcs of $\al$ are extended by parallel copies of $\g$ that all start in $\tilde{b}_0$. Note that in this case $\tilde{b}_1=b_1$, so no extension is necessary here. If $i=1$, $b_0$ and $b_1$ lie on the same boundary component, and we choose disjoint paths for them to the new marked boundary component, and extend as for $i=0$.

Now the spectral sequence (typically) has $E^1$ page:
\begin{eqnarray}\label{e:E^1_p,q}
    E^1_{p,q}&=&\bigoplus_{\s\in\Sibar_p}E^1_{p,q}(\s)\nonumber\\
    E^1_{p,q}(\s) &=&H_q(\G(\Si_{i,j}\Si_{l,m}F)_{\Si_{l,m}T(\s)},\G(\Si_{i,j}F)_{T(\s)};\nonumber\\
&& \quad\:\: V(\Phi\Si_{i,j}\Si_{l,m}\Si_{s,t}(F)),V(\Phi\Si_{i,j}\Si_{s,t}(F)))\nonumber\\
&=&\Rel_q^{V,\Phi_\s}((\Si_{i,j}\Si_{l,m}F)_{\Si_{l,m}T(\s)},(\Si_{i,j}F)_{T(\s)})
\end{eqnarray}
Here, $\Phi_\s: (\Si_{i,j}F)_{T(\s)}\into \Si_{i,j}F$ is the inclusion, which is a finite composition of $\Si_{0,1}$ and $\Si_{1,-1}$. Furthermore, $\G_\s$ denotes the stabilizer of the $(p-1)$-simplex $\s$ in $\G$. The direct sum is over the orbits of $(p-1)$-simplices $\s$ in $C_*(\Si_{i,j}F;2-i)$, whose images under $\Si_{l,m}$ are also $(p-1)$-simplices in  $C_*(\Si_{l,m}\Si_{i,j}F;2-i)$. In most cases, $\Si_{l,m}$ induces a bijection on the representatives of orbits of $(p-1)$-simplices. Also recall that the set of orbits are in $1-1$
correspondence with a subset $\overline{\Si}_p$ of the permutation group $\Si_p$. Lemma \ref{l:perm} characterizes $\overline{\Si}_p$. As a general remark, note that if a permutation is represented in $C_*(F;2-i)$, then it is also represented in
$C_*(\Si_{l,m}F;2-i)$, since $\text{genus}(\Si_{l,m}F) \ge
\text{genus}(F)$. So we will only check the condition for
$C_*(F,2-i)$.

In certain cases we will either not have $\Si_{l,m}$ inducing
bijection on the representatives of orbits of $(p-1)$-simplices, or
they will not include the permutation used in the standard proof.
All such cases will be found in Lemma \ref{l:Si_p} below and taken care of in the \emph{Inductive start} section at the end of the proof.

The first differential, $d^1_{p,q}:E^1_{p,q}\To E^1_{p-1,q}$, is
described in section \ref{sc:diff}. The diagrams
\begin{equation*}
    \xymatrix{
    \D_{p}(F;i)\ar[r]^{\dd_j}\ar[d] & \D_{p}(F;i)\ar[d] &\\
    \Sibar_{p+1}\ar[r]^{\dd_j} & \Sibar_p & j=0,\ldots,p
    }
\end{equation*}
commute, where $\dd_j$ omits entry $j$ as in Def. \ref{d:arc} and the vertical arrows divide out the $\G$ action and compose with $P$. Thus for each $\s\in\Sibar_{p+1}$, there is $g_j\in\G$ such that
\begin{equation}\label{e:Ib18}
    g_j\cdot\dd_j T(\s) = T(\dd_j\s),
\end{equation}
and conjugation by $g_j$ induces an injection $c_{g_j}:\G_{T(\s)}\into \G_{T(\dd_j\s)}$. The induced map on homology is denoted $\dd_j$ again, i.e.
\begin{eqnarray}\label{e:Ib19}
\dd_j:H_q(\G(\Si_{i,j}\Si_{l,m}F)_{\Si_{l,m}T(\s)}, \G(\Si_{i,j}F)_{T(\s)};\textbf{V})\hookrightarrow \nonumber\\
H_q(\G(\Si_{i,j}\Si_{l,m}F)_{\Si_{l,m}\dd_jT(\s)}, \G(\Si_{i,j}F)_{\dd_jT(\s)};\textbf{V})    \stackrel{(c_{g_j})_*}{\To}  \\ H_q(\G(\Si_{i,j}\Si_{l,m}F)_{\Si_{l,m}T\dd_j(\s)}, \G(\Si_{i,j}F)_{T\dd_j(\s)};\textbf{V})\nonumber
\end{eqnarray}
Note that $(c_{g_j})_*$ does not depend on the choice of $g_j$ in \eqref{e:Ib18}: Another choice $g_j'$ gives $c_{g_j'}= c_{g_j'g_j^{-1}}c_{g_j}$, and $g_j'g_j^{-1}\in\G_{T(\dd_j\s)}$ so $c_{g_j'g_j^{-1}}$ induces the identity on the homology. Then
\begin{equation}\label{e:Ib20}
    d^1=\sum_{j=0}^{p-1}(-1)^j\dd_j.
\end{equation}
\begin{lem}\label{l:Si_p}Let $n\ge 1$. The subset $\Sibar_p\del \Si_p$, which is in $1-1$ correspondence with a set of representatives of the orbits of $\D_{p-1}(\Si_{i,j}F;2-i)$, has the following properties:
\begin{description}
  \item[Surjectivity of $\Si_{0,1}$:]Assume $2g\ge
3n+k-2-\e_{l,m}$. Then \\$\Sibar_{p}=\Si_p$ for $2\le p\le n+1$ and for $p=n+2= 3$, unless:
\begin{itemize}
  \item $(l,m)\neq (1,-1),\quad n=1,\quad g=1,\quad k=0,1$, \quad or
  \item $(l,m)= (1,-1),\quad n=1,\quad g=0,\quad k=0$, \quad or
  \item $(l,m)= (1,-1),\quad n=1,\quad g=1,\quad k=0,1,2$.
\end{itemize}
  \item[Surjectivity of $\Si_{1,-1}$:]Assume $2g\ge 3n+k-3-\e_{l,m}$. Then \\$\Sibar_{p}=\Si_p$ for $2\le p\le n+1$, and $\s\in \Sibar_p$ if $S(\s)\ge 1$ for $p=n+2\le 4$, unless:
\begin{itemize}
  \item $(l,m)\neq (1,-1),\quad n=1,\quad g=0,\quad k=0$,\quad or
  \item $(l,m)= (1,-1),\quad n=1,\quad g=0,\quad k=0,1$,\quad or
  \item $(l,m)= (1,-1),\quad n=2,\quad g=1,\quad k=0$.
\end{itemize}
  \item[Injectivity of $\Si_{1,-1}$:]Assume $2g\ge3n+k-\e_{l,m}$. Then\\ $\Sibar_{p}=\Si_p$ for $2\le p\le n+2$, and $\s\in \Sibar_p$ if $S(\s)\ge 1$ for $p=n+3=4$, unless:
\begin{itemize}
  \item $(l,m)= (1,-1),\quad n=1,\quad g=1,\quad k=0$.
\end{itemize}
\end{description}
 \end{lem}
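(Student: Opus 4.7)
The plan is to deduce every case from Lemma \ref{l:perm}, which is the only substantive input needed. Applied to $\Si_{i,j}F$ (of genus $g+i$) with arc-complex parameter $2-i$, it gives the criterion that $\s \in \Sibar_p$ if and only if
\[
    S(\s) \ge (p-1) - (g+i) + 2 - (2-i) = p-1-g,
\]
i.e.\ $g \ge p-1 - S(\s)$. Note that this criterion is independent of the specific pair $(i,j)$ and that $S(\s)$ depends only on $\s$ by Propositions \ref{p:grim} and \ref{p:genus}. In particular, $\id$, which minimises $S$ with $S(\id)=0$, is represented iff $g \ge p-1$, whereas permutations with $S(\s) \ge 1$ are represented as soon as $g \ge p-2$. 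Since $\Si_{l,m}\Si_{i,j}F$ has at least the genus of $\Si_{i,j}F$, the criterion is monotone, so $\Si_{l,m}$ always sends orbit representatives to orbit representatives; bijectivity on orbits then reduces to comparing $\Sibar_p$ computed in $\Si_{i,j}F$ with the target set stated in the lemma.

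Each of the three clauses becomes a direct arithmetic check. For surjectivity of $\Si_{0,1}$, with hypothesis $2g \ge 3n + k - 2 - \e_{l,m}$, the claim $\Sibar_p = \Si_p$ for $2 \le p \le n+1$ requires $g \ge n$; rearranging the hypothesis yields $2g - 2n \ge n + k - 2 - \e_{l,m}$, which is non-negative precisely when $n + k - \e_{l,m} \ge 2$, and the residual small cases where this fails are exactly the first bulleted exception list. The auxiliary assertion at $p = n+2 = 3$ (forcing $n = 1$) requires $g \ge 2$, and a parallel check identifies the listed failures there. The two remaining clauses follow the same template: for surjectivity of $\Si_{1,-1}$ the hypothesis is weakened by one, but at $p = n+2 \le 4$ only permutations with $S(\s) \ge 1$ need be realized, so the required inequality becomes $g \ge p-2 = n$ and the arithmetic matches up; for injectivity of $\Si_{1,-1}$ the hypothesis is strengthened by two, pushing the range of full realizability up to $p \le n+2$ with the analogous partial assertion at $p = 4$.

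The argument contains no conceptual difficulty once Lemma \ref{l:perm} has been invoked; the main hurdle is the combinatorial bookkeeping needed to verify that the three exception lists are exhaustive. One must enumerate the finitely many small tuples $(n, g, k, \e_{l,m})$ for which the hypothesis $2g \ge 3n + k - c - \e_{l,m}$ is satisfied but the derived inequality $g \ge p-1$ (or $g \ge p-2$, as appropriate) fails, and compare these with the bullets. Since only $n \le 2$ and $k \le 3$ can produce boundary failures, the enumeration is finite and routine, but it must be carried out systematically in order to be convincing.
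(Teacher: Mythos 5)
The proposal takes the same route as the paper: apply Lemma \ref{l:perm} to the $(p-1)$-simplices in $\Si_{i,j}F$, observe that the resulting criterion simplifies to $S(\s)\ge p-1-g$ independently of $(i,j)$, note that $\Si_{l,m}$ only raises genus and hence preserves orbit representatives, and then reduce the three clauses to arithmetic. That is exactly the paper's strategy, and the reduction is correct.

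However, the bookkeeping that the proposal defers to ``a systematic enumeration'' is the actual content of the lemma, and the shortcut it suggests for organizing it is not quite right. You claim that $2g-2n\ge n+k-2-\e_{l,m}$ ``is non-negative precisely when $n+k-\e_{l,m}\ge 2$'' and that the failures of this produce the exceptions. But $2g-2n$ is an even integer, so $2g-2n\ge -1$ already forces $g\ge n$; thus a right-hand side of $-1$ or $-2$ does not produce an exception. If one follows the proposed sufficient-condition test naively, one would flag cases such as $n=1$, $k=0$, $\e_{l,m}=0$ (RHS $=-1$, but $g\ge 1$ still holds) and $n=2$, $k=0$, $\e_{l,m}=1$ (RHS $=-1$, but $g\ge 2$ still holds) as exceptions, even though they are not. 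The paper avoids this by handling $n=1,2,3$ separately and exploiting integrality directly (e.g.\ for $n=2$ the hypothesis already gives $g\ge 2$). So the plan is right, but as stated the proof would not produce the exception lists in the lemma; you must treat the small-$n$ cases individually rather than through a single cleanliness threshold, and then verify the three bulleted lists against that case analysis, including the $p=n+2$ (resp.\ $p=n+3$) tails with the $S(\s)\ge 1$ relaxation.
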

\begin{proof}We only prove the first of the three cases, as the other two are completely analogous. So assume $2g\ge 3n+k-2-\e_{l,m}$, and let $\s\in\Si_p$ be a given permutation of genus $s$. Let $2\le p\le n+1$. By Lemma \ref{l:perm}, $\s\in\Sibar_p$ if and only if $s \ge p-1-g$. This inequality is certainly satisfied if $p-1-g\le 0$. The hardest case is $p=n+1$, so we must show $n-g\le 0$. By assumption,
\begin{equation*}
    2(n-g)\le 2n-(3n+k-2+\e_{l,m})=-n-k+2+\e_{l,m}\stackrel{?}{\le} 0,
\end{equation*}
For $n\ge 3$ this holds. If $n=2$, the assumption $2g\ge
3n+k-2-\e_{l,m}$ forces $g\ge 2$, so $n-g\le
0$. For $n=1$ and $(l,m)\neq (1,-1)$, we have $\e_{l,m}=0$, so $g\ge 1$, which means $n-g\le 0$. Last for $n=1$ and $(l,m)= (1,-1)$, we have $\e_{l,m}=1$, so we get one exception, $g=k=0$.

Now let $p=n+2=3$, so $n=1$. The requirement in Lemma \ref{l:perm} is $p-1-g\le 0$, i.e. $g\ge 2$. By assumption $2g\ge
3n+k-2-\e_{l,m}$, so if $g=1$, we have $k-\e_{l,m}-1\le 0$. Now for $(l,m)\ne (1,-1)$, the only exceptions are $k=0,1$, and for $(l,m)=(1,-1)$, the only exceptions are $k=0,1,2$. If $g=0$, we have $k-\e_{l,m}+1\le 0$, so the only exception is $(l,m)=(1,-1)$ and $k=0$.
This finishes the proof.
\end{proof}

\begin{prop}\label{p:8-tal2}Let $\al$ denote a simplex either in $\D_1(F;1)$ with $P(\al)=[1\,0]$, or in $\D_2(F;2)$ with $P(\al)=[2\,1\,0]$. Let $g$ be the genus of $F_\al$, and let $\Phi$ be compatible with $\Si_{l,m}:F\To \Si_{l,m}F$. Then if $2g\ge 3q+k_W-1-\e_{l,m}$, the maps $\dd_0=\dd_{1}$ are equal as maps from
\begin{equation*}Rel_n^{V,\Phi_\al}((\Si_{l,m}F)_{\Si_{l,m}\al},F_\al).
\end{equation*}
\end{prop}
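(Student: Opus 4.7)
The plan is to adapt the face-cancellation argument of Lemma~\ref{l:E^2_2,q} (which handled constant coefficients via Prop.~\ref{p:8-tal}) to the twisted setting. Both cases satisfy the adjacency hypothesis of Prop.~\ref{p:8-tal}: for $P(\al)=[1\,0]$ take $k=l=0$ (so $\s(0)=1=l+1$, $\s(1)=0=l$), and for $P(\al)=[2\,1\,0]$ take $k=0$, $l=1$. This yields a Dehn twist $f\in\G(F)$ with $f|_{F_\al}=\id_{F_\al}$ that swaps two adjacent arcs of $\al$ and fixes the remaining arcs. Choosing $f$ as the element $g_j$ of \eqref{e:Ib18} relating a face of $T(\s)$ with its chosen representative $T(\dd_j\s)$, the maps $\dd_0$ and $\dd_1$ become directly comparable, with $\dd_0-\dd_1$ equal to the action of $c_f-\id$ on $\Rel_n^{V,\Phi_\al}((\Si_{l,m}F)_{\Si_{l,m}\al},F_\al)$.

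On the group-theoretic side, the extension of $f$ by the identity to $\Si_{l,m}F$ (and further to $\Phi\Si_{l,m}F$) is supported in $N(\al)\subset F$, which is disjoint from both cut surfaces $F_\al$ and $(\Si_{l,m}F)_{\Si_{l,m}\al}$. Hence $c_f$ acts as the identity on $\G(F_\al)$ and on $\G((\Si_{l,m}F)_{\Si_{l,m}\al})$, and the group-theoretic contribution to $\dd_0-\dd_1$ vanishes exactly as in the proof of Lemma~\ref{l:E^2_2,q}.

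The main obstacle is the coefficient action, since $V(f)$ need not act as the identity on $V(\Phi_\al F_\al)$ or $V(\Phi_\al(\Si_{l,m}F)_{\Si_{l,m}\al})$. To handle this I would exploit the degree structure of $V$: because $\Phi_\al$ is a composition of functors $\Si_{0,1}$ and $\Si_{1,-1}$, iterating Def.~\ref{d:coef} produces a split injection $V(F_\al)\hookrightarrow V(\Phi_\al F_\al)$ whose cokernel $\Delta V$ is a coefficient system of degree at most $k_W-1$. Since $f|_{F_\al}=\id$, the endomorphism $V(f)-\id$ annihilates the image of $V(F_\al)$ and therefore factors through $\Delta V$. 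Plugging the resulting short exact sequence into the long exact sequence of the relevant pair, the net effect of $c_f-\id$ on $\Rel_n^{V,\Phi_\al}$ lies in the image of a relative homology group with coefficients in $\Delta V$. The inductive assumption $I_{k,n}$, applied to the degree-$(k_W-1)$ system $\Delta V$, forces these relative groups to vanish in the range $2g\ge 3q+(k_W-1)-\e_{l,m}$, which is precisely the hypothesis of the proposition. A diagram chase then yields $\dd_0=\dd_1$.

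The delicacy of the proof is in the last step: constructing the splitting of coefficient systems compatible with the inclusion $\Phi_\al$, and piecing together the diagram so that the factorization of $c_f-\id$ lands in a relative group of low enough coefficient degree for the inductive assumption to apply at the stated genus bound.
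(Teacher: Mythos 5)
Your group-theoretic analysis is correct and matches the paper: the paper also uses the Dehn twist $f$ from Prop.~\ref{p:8-tal} and observes that $c_f$ is trivial on the stabilizer subgroups. The real work, as you correctly identify, is the coefficient action. But your handling of it contains a genuine gap.

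You assert that, ``because $\Phi_\al$ is a composition of functors $\Si_{0,1}$ and $\Si_{1,-1}$, iterating Def.~\ref{d:coef} produces a split injection $V(F_\al)\hookrightarrow V(\Phi_\al F_\al)$ whose cokernel $\Delta V$ is a coefficient system of degree at most $k_W-1$.'' This is not what Definition~\ref{d:coef} gives. The definition requires the cokernel $\Delta_{i,j}V$ to have degree $\le k_W-1$ only for $(i,j)\in\{(1,0),(0,1)\}$; for $\Si_{1,-1}$ it requires only split injectivity, and in fact $\Delta_{1,-1}V$ is not even a coefficient system on all of $\mathfrak{C}$ (it is a functor on the subcategory $\mathfrak{C}(2)$ and carries no degree bound). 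Since $\Phi_\al$ is allowed to contain $\Si_{1,-1}$ steps, the cokernel you construct has uncontrolled degree, and the inductive assumption $I_{k,n}$ cannot be applied to it. This is precisely the difficulty the paper's proof is engineered to avoid: instead of factoring through the cokernel of $V(F_\al)\to V(\Phi_\al F_\al)$, the paper extends $\al$ (via \eqref{e:udvidarc}) to arcs $\tilde\al$ \emph{inside $\Phi F$} with the same permutation $[1\,0]$, so that the inclusion on coefficients is $(\Phi F)_{\tilde\al}\hookrightarrow\Phi F$, which is a single $\Si_{1,0}$ (or $\Si_{1,0}\Si_{0,1}$ in the second case). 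The paper then appeals to Lemma~\ref{l:Ivanov} to conclude that the map $i_*$ on the relative homology is surjective, and combines this with the fact that $(c_g)_*\circ i_*=i_*$ (since $g$ fixes the smaller cut surfaces) to get $(c_g)_*=\id$. Your route would need an analogous geometric input -- a carefully chosen subsurface fixed by $f$ whose inclusion into $\Phi F$ is realized by $\Si_{1,0}$ and $\Si_{0,1}$ only -- and once you have that, the paper's surjectivity argument is both simpler and sharper than the cokernel-and-long-exact-sequence chase you describe. As written, the degree claim for $\Delta V$ does not hold, so the factorization argument does not go through.
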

\begin{proof}Write $\s=P(\al)$. First note that $\dd_0$ and $\dd_{1}$ have the same target, since $\dd_0(\s)=\dd_{1}(\s)=:\t$ by assumption. We can assume $T(\s)=\al$ and $T(\t)=\dd_0\al$. Then we can choose the element $g=g_{1}$ from \eqref{e:Ib18}, which must satisfy $g\cdot\dd_{1} \al = \dd_0\al$, to be as in Prop. \ref{p:8-tal}. Then $g$ commutes with the stabilizers $\G(\Si_{l,m}F)_{\al_0\cup\al_1}$, $\G(F)_{\al_0\cup\al_1}$ and thus also with $\G(\Si_{l,m}F)_{\al}$ and $\G(F)_{\al}$.

We now extend the arcs of $\al$ to arcs in $\Phi F$ as follows: If $\al\in\D_1(F;1)$ we use $\eqref{e:udvidarc}$ to obtain $\tilde\al=\Phi(\al)\in \D_1(\Phi F;1)$. If $\al\in\D_2(F;2)$, we extend, if possible, the 1-simplex $\al_0\cup\al_1$ to a 1-simplex $\tilde\al\in \D_1(\Phi F;1)$, i.e. the extended arcs start and end on the same boundary component in $\Phi F$. If this is not possible, we extend $\al$ to $\tilde\al\in\D_2(\Phi F;2)$. These extensions must satisfy the same requirements as $\eqref{e:udvidarc}$ does. Then we make the same extensions for $\be:=\Si_{l,m}\al$ to $\tilde\be$ in $\Phi\Si_{l,m}F$.
Now the conjugation $(c_g)_*$ acts as the identity on
\begin{equation*}
    H_n(\G(\Si_{l,m}F)_{\be},\G(F)_{\al};         V((\Phi\Si_{l,m}F)_{\tilde{\be}}),V((\Phi F)_{\tilde{\al}}))
\end{equation*}

If we are in the case $\tilde\al\D_1(\Phi F;1)$, then the inclusion map on the coefficients,
\begin{eqnarray}\label{e:i_*}
 i_*&: &  H_n(\G(\Si_{l,m}F)_{\be},\G(F)_{\al};         V((\Phi\Si_{l,m}F)_{\tilde{\be}}),V((\Phi F)_{\tilde{\al}}))\To \\
&&H_n(\G(\Si_{l,m}F)_{\be},\G(F)_{\al}; V(\Phi\Si_{l,m}F),V(\Phi F))=
\Rel_{n}^{V,\Phi_\al}((\Si_{l,m}F)_{\Si_{l,m}\al},F_\al)\nonumber
\end{eqnarray}
equals $\Si_{1,0}$ on the coefficient systems, and by Lemma \ref{l:Ivanov} it is surjective since $2g\ge 3n+k-1-\e_{l,m}$ by assumption. Now as $i_*$ is surjective and $(c_g)_* \circ i_* =i_*$ we see that $(c_g)_*$ is the identity on $\Rel_{n}^{V,\Phi_\al}(\Si_{l,m}F_\al,F_\al)$, and thus $\dd_{1}=(c_g)_*\dd_0 =\dd_0$.
For $\tilde\al\in\D_2(\Phi F;2)$ we do the same, except that we use $\al$ instead of only $\al_0\cup \al_1$. In this case $i_*$ in \eqref{e:i_*} is going to be $\Si_{1,0}\Si_{0,1}$ on the coefficient systems, which again by Lemma \ref{l:Ivanov} is surjective.
\end{proof}

By Theorem \ref{t:vis_indu}, to prove $I_{k,n+1}$ it
is enough to prove:

\begin{thm}\label{t:maintwist}
The map induced by $\Si_{i,j}$,
\begin{equation*}
   \Rel_n^{V,\Phi\Si_{i,j}}(\Si_{l,m} F,F) \To
   \Rel^{V,\Phi}_n(\Si_{i,j}\Si_{l,m}F,\Si_{i,j}F)
\end{equation*}
satisfies: \begin{itemize}
\item[$(i)$]For $\Si_{i,j}=\Si_{0,1}$, it is surjective for $2g\ge
3n+k-2-\e_{l,m}$, and if $\Phi=\id$ it is an isomorphism for $2g\ge3n+k-1-\e_{l,m}$. For $k=0$ it is always injective.
\item[$(ii)$]For $\Si_{i,j}=\Si_{1,-1}$,
it is surjective for $2g\ge 3n+k-3-\e_{l,m}$, and an isomorphism for
$2g\ge3n+k-\e_{l,m}$.
\end{itemize}
\end{thm}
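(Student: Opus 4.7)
The strategy follows the template of the proof of Theorem \ref{t:Main}: express the map in question as the differential $d^1_{1,n}\colon E^1_{1,n}\to E^1_{0,n}$ in the relative spectral sequence \eqref{e:E^1_p,q} coming from the actions of $\G(\Si_{i,j}F)$ and $\G(\Si_{i,j}\Si_{l,m}F)$ on the arc complexes $C_*(\Si_{i,j}F;2-i)$ and $C_*(\Si_{i,j}\Si_{l,m}F;2-i)$. Since these complexes are $(2g-3+(2-i))$-connected by Theorem \ref{s:connected}, the spectral sequence converges to zero in a range of total degrees that includes $n$ whenever $g$ is large enough. Surjectivity of $d^1_{1,n}$ then reduces to showing $E^2_{p,q}=0$ for all $p\ge 2$ with $p+q=n+1$ (and $q\le n$), while injectivity additionally requires $E^2_{p,q}=0$ for $p+q=n+2$, $p\ge 2$, together with the vanishing of $d^1_{2,n}$. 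The stabiliser of a $(p-1)$-simplex $\s$ acts on the cut surface, and by Prop.\ \ref{p:genus} this cut surface has genus $g-p+1+S(\s)$, so $E^1_{p,q}(\s)$ is a relative homology group of the form treated by the inductive assumption $I_{k,n}$ applied to the compatible composition $\Phi_\s\circ\Phi$.

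My plan is to mirror the three structural lemmas from section \ref{sc:lemmas}, but in the twisted, relative setting. For $p\ge 4$ one formulates an analogue of Lemma \ref{l:E^2_p,q}: by the assumption $I_{k,n}$ the $E^1_{p,q}(\s)$-terms are identified in a range with $\overline{C}_{p-1}(\Si_{i,j}F;2-i)\otimes\Rel_q^{V,\Phi}(\Si_{i,j}\Si_{l,m}F,\Si_{i,j}F)$, and Lemma \ref{l:Hlignul} makes the row of $d^1$'s split exact in the appropriate range, so $E^2_{p,q}=0$ follows. For $p=3$ and $p=4$ one repeats the cancellation arguments of Lemmas \ref{l:E^2_2,q} and \ref{l:E^2_3,q}; the key input is Prop.\ \ref{p:8-tal2}, which says that two face maps $\dd_0$ and $\dd_1$ from the distinguished simplices agree as maps of relative homology groups once the coefficient-extension map is surjective (the latter being guaranteed by Lemma \ref{l:Ivanov} in the range we are in). One then reads off that each of the remaining components of $d^1_{p,q}$ (for $p=3,4$) belongs to an isomorphism class $c_j$ of a map between relative homology groups with strictly smaller genus, so that $I_{k,n}$ produces the required surjectivity or injectivity exactly as in the proofs of Lemmas \ref{l:E^2_2,q} and \ref{l:E^2_3,q}. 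The numerical bookkeeping needed to check that $2g'\ge 3q+k-\e_{l,m}$ (or the relevant variant) for the reduced genus $g'$ is identical to the calculations carried out in section \ref{ss:soeren}; the loss of $\e_{l,m}$ for $(l,m)=(1,-1)$ accounts for the extra $+1$ in the stability range for $\Si_{1,-1}$.

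The first genuine difference with the constant-coefficient case is that the orbit map on arc complexes need not be a bijection: we must use the relaxed spectral sequence of Theorem \ref{s:spectralinj} instead of Theorem \ref{s:ss}, and a number of low-genus permutations $\s\in\Si_p$ may lie in $\Si_p\setminus\overline{\Si}_p$. All such degenerate configurations are enumerated in Lemma \ref{l:Si_p}. The first step will therefore be to verify that away from the explicit exceptional triples $(n,g,k)$ listed there, the permutation sets $\overline{\Si}_p$ are large enough for the arguments of Lemmas \ref{l:E^2_2,q} and \ref{l:E^2_3,q} (which only use the permutations involving the symbols $0,1,2,3$ together with the Dehn-twist cancellation) to go through unchanged. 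The surjectivity half of $(i)$ over $\Z$, when $k=0$, recovers Theorem \ref{t:Main}; here the assertion that the map is injective for $k=0$ follows directly from \eqref{e:injektiv} and requires no spectral sequence input.

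The hard part will be the \emph{inductive start}, that is, clearing the finite list of exceptional triples in Lemma \ref{l:Si_p}. In these cases either $g$ is so small (0 or 1) that the standard argument collapses because the needed permutation is unrepresented, or an extra summand $\bigoplus_{\G_{r-1}(Y)}H_s(H_\s;N_\s)$ appears in Theorem \ref{s:spectralinj}. For each exceptional triple one has to verify the conclusion of the theorem directly, either by a small-genus computation or by chasing the long exact sequence and invoking Theorem \ref{t:IvanovCoMa} in a sufficiently large range, using that $\Si_{0,1}$ and $\Si_{1,-1}$ are already known to be stable maps on the coefficient system $V$ (in the sense that $V(F)\to V(\Si_{i,j}F)$ is split injective with cokernel of lower degree). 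Once these finitely many base cases are handled, the inductive step propagates by the spectral sequence argument above, and Theorem \ref{t:vis_indu} converts the conclusion of Theorem \ref{t:maintwist} into the desired inductive statement $I_{k,n+1}$, completing the double induction on $(k,n)$.
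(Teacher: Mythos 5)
Your outline captures the correct spectral-sequence setup, the role of Prop.~\ref{p:8-tal2} in cancelling face maps, the necessity of Theorem~\ref{s:spectralinj} and Lemma~\ref{l:Si_p} for low-genus degeneracies, and the structure of the induction. However, two of your proposed steps diverge from what the paper actually does, and in both cases the deviation would cause genuine trouble.

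First, for the terms with $p\ge 3$ (resp.\ $p\ge 4$ in the $\Si_{1,-1}$ case) you propose a twisted analogue of Lemma~\ref{l:E^2_p,q}: identifying $E^1_{p,q}$ with $\overline C_{p-1}\otimes\Rel_q^{V,\Phi}(\Si_{l,m}F,F)$ and invoking the split exactness of Lemma~\ref{l:Hlignul}. That identification does not hold in the twisted setting, because the coefficient functors $V(\Phi_\s(\cdot))$ in $E^1_{p,q}(\s)$ vary with $\s$, so there is no natural factoring of the form used in the proof of Lemma~\ref{l:E^2_p,q}. The paper avoids the issue entirely: it observes that each individual summand $E^1_{p,q}(\s)$ is itself a relative homology group of the shape $\Rel_q^{V,\Phi_\s}(\cdot,\cdot)$ over a surface of genus $\ge g-p+1$, and therefore \emph{vanishes} by the inductive assumption $I_{k,n}$ once the numerical bound $2(g-p+1)\ge 3q+k-\e_{l,m}$ is verified. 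This direct vanishing is both simpler and what actually carries through; the tensoring trick was only needed in the constant-coefficient case precisely because the groups $H_q(\G_\al)$ are nonzero.

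Second, your route to injectivity of $\Si_{0,1}$ via the spectral sequence (controlling $E^2_{p,q}$ for $p+q=n+2$ and killing $d^1_{2,n}$) is not what the paper does and would not give the stated range. The paper instead factors $\Rel_q^V(\Si_{l,m}F,F)\to\Rel_q^{\Si_{0,1}V}(\Si_{l,m}F,F)\to\Rel_q^V(\Si_{l,m}\Si_{0,1}F,\Si_{0,1}F)\to\Rel_q^V(\Si_{l,m}F,F)$ through the disk-gluing map $\Si_{0,-1}$, observes that the composite is induced by gluing a cylinder onto the marked boundary circle and is therefore an isomorphism, and concludes injectivity of the middle map from surjectivity of the first map (supplied by Lemma~\ref{l:Ivanov}). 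This is also why the isomorphism statement in $(i)$ is restricted to $\Phi=\id$ --- the disk gluing makes sense there. Related to this, your claim that the $k=0$ injectivity ``follows directly from \eqref{e:injektiv}'' is not right: \eqref{e:injektiv} is injectivity of the absolute map $H_n(\G(F))\into H_n(\G(\Si_{0,1}F))$, not of the relative map that the theorem concerns. What works is the same composite argument, noting that for $k=0$ (constant coefficients) the first map in the composite is the identity, so the middle map is forced to be injective unconditionally.

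Apart from these two points the high-level plan is consistent with the paper's, but both are load-bearing and would need to be repaired along the lines above for the proof to close.
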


\begin{proof}We prove the theorem by induction in the homology degree $n$. Assume $n\ge 1$. The induction start $n=0$ will be handled separately below, along with all exceptional cases from Lemma \ref{l:Si_p}. This means that in the main proof, any permutation is represented by an arc simplex (in some special cases only if its genus is $\ge 1$).

\subsubsection*{Surjectivity for $\Si_{0,1}$:}Assume $2g\ge 3n+k-2-\e_{l,m}$. We use the spectral sequence $E^1_{p,q}=E^1_{p,q}(\Si_{0,1}F;2)$, and claim that $E^1_{p,q}=0$ for $p+q=n+1$ with $p\ge 3$. Note that
$\G(\Si_{0,1}F)_\s=\G(\Si_{0,1}F_\s)$, and
$\text{genus}(\Si_{0,1}F_\s)=g-p+1+S(\s)\ge g-p+1$. We will use
the assumption $I_{k,n}$, and must show $2(g-p+1)\ge 3q+k-\e_{l,m}$ for $p\ge
3$. These inequalities follows from the one for $p=3$, which is
$2(g-2)\ge 3(n-2)+k-\e_{l,m}$, and this holds by assumption.

Now all we need is to show that $E^2_{2,n-1}=0$. We consider
\begin{equation*}
    E^1_{2,n-1}= E^1_{2,n-1}([0\,1])\oplus E^1_{2,n-1}([1\,0])
\end{equation*}
We wish to show that $d_1:E^1_{3,n-1}\To  E^1_{2,n-1}$ is surjective and thus $E^1_{2,n-1}=0$. We look at $E^1_{3,n-1}(\t)$ indexed by the
permutation $\t=[2\,1\,0]$. We will show that $d^1$ restricted to $E^1_{3,n-1}(\t)$ surjects onto $E^1_{2,n-1}([1\,0])$ without hitting
$E^1_{2,n-1}([0\,1])$.  Since $S(\t)=1$, $\Si_{0,1}F_\t$ is
$F_{g-1,r}$, and thus by Proposition \ref{p:8-tal2}, $\dd_0=\dd_1$. We then see
\begin{equation*}
    d_1=\dd_0-\dd_1+\dd_2= \dd_2
\end{equation*}
and $\dd_2:E^1_{3,n-1}(\t)\To E^1_{2,n-1}[1\,0]$ equals $\Si_{0,1}$ and so is surjective by induction, since $2(g-1)\ge 3(n-1)+k-2-\e_{m,l}$.
All that remains is to hit $E^1_{2,n-1}([0\,1])$ surjectively,
regardless of $E^1_{2,n-1}([1\,0])$. Consider the following
component of $d^1$:
\begin{equation*}
    \dd_0: E^1_{3,n-1}([2\,0\,1])\To
    E^1_{2,n-1}([0\,1]).
\end{equation*}
This is the map induced by $\Si_{1,-1}$. By induction this map is
surjective, since $2(g-2)\ge 3(n-1)+k-3-\e_{l,m}$ by assumption. This proves
that $E^2_{2,n-1}=0$.

\subsubsection*{Injectivity for $\Si_{0,1}$:}Assume $2g\ge 3n+k-1-\e_{l,m}$. For this proof we take another approach. Consider the following composite map,
\begin{eqnarray}\label{e:inj}
  \Rel_q^V(\Si_{l,m}F,F)&\To& \Rel_q^{\Si_{0,1}V}(\Si_{l,m}F,F) \nonumber \\
  &\stackrel{\Si_{0,1}}{\To}& \Rel_q^{V}(\Si_{l,m}\Si_{0,1}F,
\Si_{0,1}F)\nonumber\\
&\stackrel{p_*}{\To}& \Rel_q^{V}(\Si_{0,-1}\Si_{l,m}\Si_{0,1}F,
\Si_{0,-1}\Si_{0,1}F)\nonumber \\
&=&\Rel_q^{V}(\Si_{l,m}F, F)
\end{eqnarray}
Here $p:F_{g,r}\To F_{g,r-1}$ is the map that glues a disk onto a the unmarked boundary circle created by $\Si_{0,1}$. Since the composite map \eqref{e:inj} is induced by
gluing on a cylinder to the marked boundary circle of
$\Si_{l,m}F$ and $F$, it is an isomorphism. Now by Lemma \ref{l:Ivanov}, since $2g\ge 3n+k-1-\e_{l,m}$, the first map is surjective, so $\Si_{0,1}$ is forced to be injective. Note with constant coefficients ($k=0$), the first map is the identity, so here $\Si_{0,1}$ is always injective.

\subsubsection*{Surjectivity for $\Si_{1,-1}$:} Assume $2g\ge
3n+k-3-\e_{l,m}$. We use the spectral sequence $E^1_{p,q}=E^1_{p,q}(\Si_{1,-1}F;1)$. We show $E^1_{p,q}=0$ if $p+q=n+1$ and $p\ge 4$, using assumption $I_{k,n}$. We know $\G(\Si_{1,-1}F)_\s=\G((\Si_{1,-1}F)_\s)$, and
$\text{genus}((\Si_{1,-1}F)_\s)=g-p+1+S(\s)\ge g-p+1$. So we must show $2(g-p+1)\ge 3q+k-\e_{l,m}$ for all $p+q=n+1$, $p\ge 4$. This follows if we show it for
$p=4$, which is easy:
\begin{equation*}
    2(g-3) =2g-6 \ge 3n +k-3-\e_{m,l}-6 = 3(n-3)+k-\e_{m,l}.
\end{equation*}
To show that the map $d_1:E^1_{1,n}\To E^1_{1,n}$ is surjective, we
thus only need to show that $E^2_{2,n-1}=0$ and $E^2_{3,n-2}=0$. Consider $E^1_{2,n-1}$:
\begin{equation*}
    E^1_{2,n-1}=E^1_{2,n-1}([0\,1])\oplus E^1_{2,n-1}([1\,0]).
\end{equation*}
For $\s=[1\,0]$, since $S(\s)=1$, we have $\text{genus}((\Si_{1,-1}F)_\s)=g-p+1+S(\s)=g$. Thus by $I_{k,n}$, $E^1_{2,n-1}([1\,0])=0$, since $2g\ge 3n+k-1-\e_{m,l}=3(n-1)+k+2-\e_{l,m}$. Now
consider the summand in $E^1_{3,n-1}$ indexed by $\t=[2\,0\,1]$
which has genus 1. Then $(\Si_{1,-1}F)_\t=F_{g-1,r}$, so
$d_1$ on this summand is exactly the map induced by $\Si_{0,1}$
(since $d_1$ has 3 terms, only one of which hit
$E^1_{2,n-1}([0\,1])$). To show this is surjective onto
$E^1_{2,n-1}$, we use induction, and must check that $2(g-1)\ge
3(n-1)+k-\e_{l,m}$, which follows by assumption. So $d^1$ is surjective onto $E^1_{2,n-1}$, which implies that $E^2_{2,n-1}=0$.

Consider $E^1_{3,n-2}$. As above, by $I_{k,n}$, all summands are
zero, except for the one indexed by $\id=[0\,1\,2]$. Consider
$E^1_{4,n-2}(\t')$ indexed by $\t'=[3\,0\,1\,2]$, which has genus
$1$. Restricting $d^1$ to this summand, only one term hits
$E^1_{3,n-2}([0\,1\,2])$. As above, one checks that this
restriction of $d^1$ is exactly the map induced by $\Si_{0,1}$, so
by induction it is surjective.

\subsubsection*{Injectivity for $\Si_{1,-1}$:} Assume $2g\ge 3n+k+2-\e_{l,m}$.
We use the same spectral sequence as in the surjectivity of
$\Si_{1,-1}$. We claim $E^1_{p,q}=0$ if $p+q=n+2$ and $p\ge 4$. Again, $\G(\Si_{1,-1}F)_\s=\G(\Si_{1,-1}F_\s)$, and
$\text{genus}(\Si_{1,-1}F_\s)=g-p+1+S(\s)\ge g-p+1$. So we must show
$2(g-p+1)\ge 3q+k+2-\e_{m,l}$ for all $p+q=n+2$, $p\ge 4$, and this follows from $2g\ge 3n+k+2-\e_{m,l}$, as above.

To show that the map $d_1:E^1_{1,n}\To E^1_{0,n}$ is injective, we
thus only need to show that $E^2_{3,n-1}=0$ and $d^1:E^1_{2,n}\To
E^1_{1,n}$ is the zero-map. That $E^2_{3,n-1}=0$ is proved precisely as for $E^2_{3,n-2}$ in surjectivity for $\Si_{1,-1}$, so we omit it.
To show $d^1:E^1_{2,n}\To E^1_{1,n}$ is the zero-map, note that $E^1_{2,n}$ has two summands, $E^1_{2,n}([0\,1])$ and $E^1_{2,n}([1\,0])$. We get that $d^1$ is zero on $E^1_{2,n}([1\,0])$, since $d_1=\dd_0-\dd_1=0$ by Proposition \ref{p:8-tal2}. Next we consider $d^1: E^1_{3,n}\To E^1_{2,n}$. If we can show this is surjective onto $E^1_{2,n}([0\,1])$, we are done. Again we use the summand $E^1_{3,n}(\t)$, where $\t=[2\,0\,1]$. The restricted differential $d^1: E^1_{3,n}(\t)\To E^1_{2,n}([0\,1])$ is exactly the map induced by $\Si_{0,1}$, so we can show it is surjective, since we have already proved the Theorem for $\Si_{0,1}$. The relevant inequality is $2(g-1)\ge 3n+k-\e_{l,m}$, which holds by assumption. So $d^1:E^1_{2,n}\To E^1_{1,n}$ is the zero-map, and we have shown that $d_1:E^1_{1,n}\To E^1_{1,n}$ is injective.

\subsubsection*{Induction start and special cases:}
Here we handle the the inductive
start $n=0$, along with the cases missing in the general argument
above, namely the exceptions from Lemma \ref{l:Si_p}.

\paragraph{The induction start $n=0$.}For $n=0$ and $k=0$, we always get $\Rel_0^{V,\Phi}(\Si_{l,m} F,F)=0$ since $H_0(F,V(F))\To H_0(\Si_{l,m} F,V(\Si_{l,m} F))$ is an isomorphism when the coefficients are constant. So the theorem holds in this case. Now let $n=0$ and let $k$ be arbitrary. By considering the spectral
sequence, see Figure \ref{b:spectral0},
we see that $\Si_{i,j}$ is automatically surjective, since the
spectral sequence always converges to zero at $(0,0)$.

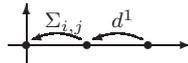
\begin{figure}[!hbt]
\begin{center}
\setlength{\unitlength}{0.8cm}
\begin{picture}(3,0.5)(0.5,0.5)
\put(-0.3,0){\vector(1,0){3}}\put(0,-0.3){\vector(0,1){1}}
\put(0,0){\circle*{0.15}}\put(1,0){\circle*{0.15}}
\put(2,0){\circle*{0.15}} \qbezier(0.9,0.1)(0.5,0.3)(0.1,0.1)
\put(.27,.1575){\vector(-3,-1){0.2}}\qbezier(1.9,0.1)(1.5,0.3)(1.1,0.1)
\put(1.27,.1575){\vector(-3,-1){0.2}} \put(0.3,0.3){$\lil
\Si_{i,j}$} \put(1.4,0.3){$\lil d^1$}
\end{picture}
\end{center}\caption{The spectral sequence for $n=0$.}\label{b:spectral0}\end{figure}

For the sake of the case $n=1$, note that the surjectivity
argument for $\Si_{0,1}$ when $n=0$ also works for any $k$ when using
the spectral sequence for \emph{absolute} homology for the action of
$\G(F_{0,r+1})$ on $C_*(F_{0,r+1};2)$.

For $\Si_{0,1}$, the injectivity argument used above holds for all $n$.
So we must show that $\Si_{1,-1}$ is injective. For $g\ge 1$, the argument from above works, since there are arc simplices
representing all the permutations used above. The problem is thus
$g=0$, which means $k=0,1$, but we will also show the result for $k=2$ since we will need in the case $n=1$ below.

As the complex we use, $C_*(F_{1,r-1};1)$, is
connected, the spectral sequence converges to $0$ for $p+q\le 1$, so
we can apply that spectral sequence. We must show that
$d^1=d^1_{2,0}$ in Figure \ref{b:spectral0} is the zero map. We
consider $(l,m)\in\{(1,0),(1,-1)\}$ and $(l,m)=(0,1)$ separately.
For $\Si_{0,1}$, $E^1_{2,0}=E^1_{2,0}([1\,0])$, since the
permutation $[0\,1]$ has genus $0$ and is by Lemma \ref{l:perm}
neither represented in $C_*(F_{1,r-1};1)$ nor
$C_*(\Si_{0,1}F_{1,r-1};1)$. Now the argument used to show injectivity
of $\Si_{1,-1}$ in general works here, too.

For $\Si_{1,0}$ or $\Si_{1,-1}$, $E^1_{2,0}=E^1_{2,0}([1\,0])\oplus
\tilde{E}^1_{2,0}([0\,1])$ where $\tilde{E}^1_{2,0}([0\,1])$ is the
\emph{absolute} homology group,
\begin{equation*}
    \tilde{E}^1_{2,0}([0\,1])= H_0(\G(\Si_{l,m}F_{1,r-1})_{T([0\,1])}; V(\Si_{l,m}F_{1,r-1})),
\end{equation*}
since $[0\,1]$ is represented in $C_*(\Si_{1,-1}F_{1,r-1};1)$ and $C_*(\Si_{1,0}F_{1,r-1};1)$, but not in $C_*(F_{1,r-1};1)$, see Theorem
\ref{s:spectralinj}. For $E^1_{2,0}([1\,0])$, the general argument
for injectivity of $\Si_{1,-1}$ shows that $d^1_{2,0}([1\,0])$ is
zero. That $d^1:\tilde{E}^1_{2,0}([0\,1])$ is the zero map
will follow if we show that $\tilde{E}^1_{3,0}$ hits
$\tilde{E}^1_{2,0}([0\,1])$ surjectively. But the $d^1$-component
$\tilde{E}^1_{3,0}([2\,0\,1])\To \tilde{E}^1_{2,0}([0\,1])$ is just
$\Si_{0,1}$ in the absolute case for $n=0$, $g=0$ and $k\le 2$. This
$d^1$-component is surjective onto $\tilde{E}^1_{2,0}([0\,1])$, by the remark on surjectivity for $n=0$.

\paragraph{Surjectivity when $n=1$.}
Now let $n=1$ and $k\le 2$. Consider the relative spectral sequence, as depicted in Figure \ref{b:spectral1}. If we show that the map $d^2_{2,0}:E^2_{2,0}\To E^2_{0,1}$ is zero,
we have shown surjectivity. We will show that $E^1_{2,0}=0$. Recall
by Theorem \ref{s:spectralinj}, $E^1_{2,0}=E^1_{2,0}([0\,1])\oplus E^1_{2,0}([1\,0])$, where
\begin{equation}\label{e:EEEEE}
   E^1_{2,0}(\s)=\left\{
                   \begin{array}{ll}
                     \Rel_0^{V,\Phi_\s}(\G(F_{g+i+l,r+j+m})_{\Si_{m,l}\s},\G(F_{g+i,r+j})_\s), & \hbox{if $\s\in\Sibar_1^{l,m}\cap\Sibar_1$;} \\
                     H_0(\G(F_{g+i+l,r+j+m})_{\Si_{m,l}\s}; V(\G(F_{g+i+l,r+j+m}))), & \hbox{if $\s\in \Sibar_1^{l,m}\fra\Sibar_1$;} \\
                     0, & \hbox{if $\s\notin \Sibar_1^{l,m}$.}
                   \end{array}
                 \right.
\end{equation}
and $\Sibar_1$, $\Sibar^{l,m}_1$ are the subsets of $\Si_1$ in $1-1$ correspondence with the orbits of $\D_1(\Si_{i,j}F;2-i)$ and $\D_1(\Si_{l,m}\Si_{i,j}F;2-i)$, respectively.

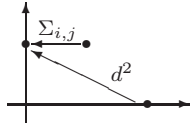
\begin{figure}[!hbt]
\begin{center}
\setlength{\unitlength}{0.8cm}
\begin{picture}(2.5,1.5)(0.5,0.5)
\put(-0.3,0){\vector(1,0){3}}\put(0,-0.3){\vector(0,1){2}}
\put(0,1){\circle*{0.15}}\put(1,1){\circle*{0.15}}
\put(2,0){\circle*{0.15}} \put(0.9,1){\vector(-1,0){0.8}}
\put(1.8,0.05){\vector(-2,1){1.7}} \put(0.2,1.15){$\lil \Si_{i,j}$}
\put(1.4,0.4){$\lil d^2$}
\end{picture}
\end{center}\caption{The spectral sequence for $n=1$.}\label{b:spectral1}\end{figure}

\paragraph{Surjectivity of $\Si_{1,-1}$ when $n=1$.}Assume $(l,m)=(0,1)$, $g=0$ and $k=0$. Then by Lemma \ref{l:perm} only $[1\,0]$ is represented
as an arc simplex, and by \eqref{e:EEEEE} above, $E^1_{2,0}$ is a relative homology group of degree 0 with constant coefficients, so $E^1_{2,0}=0$.

The remaining exceptions are $(l,m)\neq (0,1)$, $g=0$ and $k\le1$. By Lemma \ref{l:perm}, $[1\,0]$ is represented
as an arc simplex in both $F_{1+l,r+m}$ and $F_{1,r-1}$, so $E^1_{2,0}([1\,0])=0$ by Theorem \ref{t:vis_indu}. Now $[0\,1]$ is only represented in $F_{1+l,r+m}$, so by \eqref{e:EEEEE}, $E^1_{2,0}([1\,0])$ is an absolute homology group. To kill it, consider $E^1_{3,0}([2\,0\,1])$,. which is also an absolute homology group. The restricted differential and $d^{1}:E^1_{3,0}([2\,0\,1]) \To E^1_{2,0}([0\,1])$ equals $\Si_{0,1}$, so it is surjective by the case $n=0$, which as remarked also holds for absolute homology group.

\paragraph{Surjectivity of $\Si_{0,1}$ when $n=1$.}First assume $g=1$. The possible permutations $[0\,1]$ and $[1\,0]$ are by Lemma \ref{l:perm} represented as $1$-simplices in both arc complexes. Thus $E^1_{2,0}$ is a direct sum of two relative homology groups in degree 0 with coefficients of degree $k\le 2$. Then by the \emph{Induction start} $n=0$, $\Si_{0,1}$ and $\Si_{1,-1}$ are injective for $g\ge 0$, so by Theorem \ref{t:vis_indu}, $E^1_{2,0}=0$.

For $(m,l)=(1,-1)$, we have the special case $g=k=0$. We will show $H_1(\G_{1,r},\G_{0,r+1})=0$, by showing $\Si_{1,-1}:H_1(\G_{0,r+1};\Z)\To H_1(\G_{1,r};\Z)$ is surjective, and thus that any map into $H_1(\G_{1,r},\G_{0,r+1})$ is surjective. We use \cite{Harer3}, Lemma 1.1 and 1.2, which give sets of generators for $H_1(\G_{0,r+1};\Z)$ and $H_1(\G_{1,r};\Z)$, as follows. Let $\t_i$ be the Dehn twist around each boundary component $\dd_i F_{1,r}$, for $i=1,\ldots,r$, and let $x$ be the Dehn twist on any non-separating simple closed curve $\g$ in $F_{1,r}$. Then $H_1(\G_{1,r};\Z)$ is generated by $\t_2,\ldots,\t_{r},x$. We remark that Harer states this for $\Q$-coefficients, but in $H_1$ his proof also holds for $\Z$-coefficients. We can choose the curve $\g$ as the image of $\dd_2 F_{0,r+1}$ under $\Si_{1,-1}$. Similarly in $\G_{0,r+1}$, we have Dehn twists $\t_i'$ around each boundary component $\dd_i F_{0,r+1}$, and these are among the generators for $H_1(\G_{0,r+1};\Z)$. Then $\Si_{1,-1}$ maps $\t_{i+1}'\mapsto \t_i$ for $i=2,\ldots,r$ by construction of $\Si_{1,-1}$, and $\t_2'\mapsto x$ by the choice of $\g$. So $\Si_{1,-1}:H_1(\G_{0,r+1};\Z)\To H_1(\G_{1,r};\Z)$ is surjective.

\paragraph{Injectivity of $\Si_{1,-1}$ when $n=1$.} The only exception is $(l,m)=(1,-1)$, $g=1$ and $k=0$. For this we will use a different argument, drawing on the stability Theorem for $\Z$-coefficients. Consider the following exact sequence:
\begin{eqnarray}\label{e:k=0}
  &  &H_1(\G_{1,r};V)\twoheadrightarrow H_1(\G_{2,r-1};V)\To\Rel^V_1(\G_{2,r-1},\G_{1,r})\nonumber\\
&\To& H_0(\G_{1,r};V) \stackrel{\iso}{\To} H_0(\G_{2,r-1};V)
\end{eqnarray}

Since $k=0$ we have constant coefficients, so we can use Theorem
\ref{t:Main}. Since $2\cdot 1\ge 3\cdot 1-1$, the first map in \eqref{e:k=0} is surjective, and the last map is an isomorphism. Thus $\Rel^V_1(\G_{2,r-1},\G_{1,r})=0$ and any map from it is thus injective.
This finishes the special cases when $n=1$.
\paragraph{Surjectivity of $\Si_{1,-1}$ when $n=2$.}Again we have only one exception, namely $(l,m)=(1,-1)$, $g=1$ and $k=0$. It suffices to show $E^2_{2,1}=0$ and $E^2_{3,0}=0$. For $E^2_{2,1}$ the argument in \emph{Surjectivity of $\Si_{1,-1}$} works since all the permutations used there are in $\Sibar_2$. So consider $E^2_{3,0}$. Here for all permutations $\t$ except $[0\,1\,2]$ we have $\t\in \Sibar_3\cap\Si_3^{l,m}$ (for this notation, see \eqref{e:EEEEE}. Thus for these $\t$ we know that $E^1_{3,0}(\t)=0$, since it is a relative homology group in degree 0 with constant coefficients. But $[0\,1\,2]\in\Sibar_3^{1,-1}\fra \Sibar_3$, so $E^1_{3,0}([0\,1\,2])$ is an absolute homology group. However, this group is hit surjectively by $E^1_{4,0}[3\,0\,1\,2]$, since the restricted differential equals $\Si_{0,1}$ (see the remark for $n=0$). Thus $E^2_{3,0}=0$, as desired.
\end{proof}

\begin{rem}
As a Corollary to this result, we can be a bit more specific about what happens when stability with $\Z$-coefficients fails, cf. Theorem \ref{t:Main}. More precisely,
\begin{itemize}
\item[$(i)$] The cokernels of the maps
\begin{eqnarray*}
    \Si_{0,1}: H_{2n+1}(\G_{3n+1,r})\To H_k(\G_{3n+1,r+1}) \\
    \Si_{0,1}: H_{2n+2}(\G_{3n+2,r})\To H_k(\G_{3n+2,r+1})
\end{eqnarray*}
are independent of $r\ge 1$.
\item[$(ii)$]Let $r\ge 2$. Then the cokernel of the map
\begin{eqnarray*}
    \Si_{1,-1}: H_{2n+1}(\G_{3n,r})\To H_k(\G_{3n+1,r-1})
   \end{eqnarray*}
is independent of $r$.
\end{itemize}
\end{rem}
\begin{proof}
Since $\Si_{0,1}$ is always injective, it fits into the following long exact sequence,
 \begin{equation*}
   H_{2n+1}(\G_{3n+1,r})\To H_{2n+1}(\G_{3n+1,r+1})\To
   \Rel^{\Z}_{2n+1}(F_{3n+1,r+1},F_{3n+1,r})\To 0.
\end{equation*}
Since $2(3n+2)\ge 3(2n+2)-2$, we get by Theorem \ref{t:maintwist} that the cokernel is independent of $r$. The other case is similar. For $(ii)$ we get
\begin{eqnarray*}
  {\xymatrix{H_{q}(\G_{3n,r})\ar[r]^{\Si_{1,-1}}\ar[d] &H_{q}(\G_{3n+1,r-1})\ar[r]\ar[d]&
   \Rel^{\Z}_{q}(F_{3n+1,r-1},F_{3n,r})\ar[r]\ar[d]^{\iso}& H_{q-1}(\G_{3n,r})\ar[d]^{\iso}\\
H_{q}(\G_{3n,r+1})\ar[r]^{\Si_{1,-1}} &H_{q}(\G_{3n+1,r})\ar[r]&
   \Rel^{\Z}_{q}(F_{3n+1,r},F_{3n,r+1})\ar[r]& H_{q-1}(\G_{3n,r+1})}}
\end{eqnarray*}
(We have written $q=2n+1$ to save space.) As the last two vertical maps are isomorphisms, the cokernels of the first map in the top and bottom rows are equal.
\end{proof}

The above Theorem finishes the inductive proof of the assumption $I_{n,k}$. The reason for proving the inductive assumption is that we now get the following Main Theorem for homology stability with twisted coefficients:

\begin{thm}\label{t:abstwist}Let $F$ be a surface of genus $g$, and let $V$ be a
coefficient system of degree $k$. Let $(l,m)=(1,0)$, $(0,1)$ or $(1,-1)$.
Then the map
\begin{equation*}
   H_n(F; V(F)) \To
   H_n(\Si_{l,m}F; V(\Si_{l,m}F))
\end{equation*}
induced by $\Si_{l,m}$ satisfies: \begin{itemize}
\item[$(i)$]For $\Si_{l,m}=\Si_{0,1}$, it is an isomorphism for $2g\ge 3n+k$.
\item[$(ii)$]For $\Si_{l,m}=\Si_{1,0}$ or $\Si_{1,-1}$,
it is surjective for $2g\ge 3n+k-\e_{l,m}$, and an isomorphism for $2g\ge
3n+k+2$.
\end{itemize}
\end{thm}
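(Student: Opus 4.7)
The plan is to deduce this from the vanishing of the relative homology $\Rel^V_q(\Si_{l,m}F,F)=0$ for $2g\ge 3q+k-\e_{l,m}$, which is the inductive assumption $I_{k,q+1}$ now established for all $k,q$ through the preceding Theorem \ref{t:maintwist}. The task reduces to feeding this into the long exact sequence
\begin{equation*}
    \cdots \to \Rel^V_{n+1}(\Si_{l,m}F,F) \to H_n(\G(F);V(F)) \to H_n(\G(\Si_{l,m}F);V(\Si_{l,m}F)) \to \Rel^V_n(\Si_{l,m}F,F) \to \cdots
\end{equation*}
and reading off: vanishing of $\Rel^V_n$ yields surjectivity of $(\Si_{l,m})_*$ in degree $n$, and vanishing of $\Rel^V_{n+1}$ yields injectivity.

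For part (ii) with $\Si_{l,m}=\Si_{1,-1}$, surjectivity in degree $n$ requires $2g\ge 3n+k-1=3n+k-\e_{1,-1}$, and the additional vanishing of $\Rel^V_{n+1}$ requires $2g\ge 3(n+1)+k-1=3n+k+2$, giving the claimed isomorphism range. For part (i) with $\Si_{0,1}$, surjectivity emerges at $2g\ge 3n+k$ by the same mechanism. Injectivity is unconditional in every range: the composition $\G(F)\stackrel{\Si_{0,1}}{\To}\G(\Si_{0,1}F)\stackrel{\Si_{0,-1}}{\To}\G(F)$ is the identity (gluing a pair of pants and then capping its new leg with a disk produces a cylinder on $\dd_1F$, which is isotopic to $F$), and the analogous composition is the identity on $V(F)$ by functoriality of $V$, so $(\Si_{0,1})_*$ is split injective on $V$-twisted homology, generalizing \eqref{e:injektiv}.

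Finally, for part (ii) with $\Si_{1,0}$, I would factor $\Si_{1,0}=\Si_{1,-1}\circ\Si_{0,1}$ and combine the two cases already treated. Since $(\Si_{0,1})_*\colon H_n(\G_{g,r};V)\to H_n(\G_{g,r+1};V)$ is an isomorphism for $2g\ge 3n+k$ by part (i), and $(\Si_{1,-1})_*\colon H_n(\G_{g,r+1};V)\to H_n(\G_{g+1,r};V)$ is surjective for $2g\ge 3n+k-1$ and an isomorphism for $2g\ge 3n+k+2$ by part (ii) for $\Si_{1,-1}$, the composite $(\Si_{1,0})_*$ is surjective for $2g\ge 3n+k=3n+k-\e_{1,0}$ and an isomorphism for $2g\ge 3n+k+2$, as desired.

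The \emph{hard part} of this result has already been carried out in Theorem \ref{t:maintwist}, whose intricate spectral-sequence argument indexed by permutations constitutes the substantive work. Once the inductive assumption $I_{k,n}$ is in hand for all $k,n$, Theorem \ref{t:abstwist} is a formal consequence of the long exact sequence combined with the retraction argument for $\Si_{0,1}$ and the factorization $\Si_{1,0}=\Si_{1,-1}\circ\Si_{0,1}$; no further new ideas are required.
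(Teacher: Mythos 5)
Your proposal is correct and follows essentially the same route as the paper's own proof: both feed the vanishing of $\Rel^V_n$ and $\Rel^V_{n+1}$ (from the now-established inductive assumption) into the long exact sequence, note that $\Si_{0,1}$ is split injective by the retraction $\Si_{0,-1}\circ\Si_{0,1}=\id$, and handle $\Si_{1,0}$ via the factorization $\Si_{1,0}=\Si_{1,-1}\circ\Si_{0,1}$. The only cosmetic difference is that the paper obtains surjectivity for $\Si_{1,0}$ directly from the vanishing of $\Rel^V_n(\Si_{1,0}F,F)$ (with $\e_{1,0}=0$) rather than by composing the two cases, but the two readings are immediately equivalent.
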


\begin{proof}Consider the following exact sequence
\begin{equation*}
\Rel_{n+1}^V(\Si_{l,m}F,F)\To H_n(F;V)\To
H_n(\Si_{l,m}F;\Si_{l,m}V)\To \Rel_n^V(\Si_{l,m}F,F).
\end{equation*}
To show surjectivity, we must prove that $\Rel_n^V(\Si_{l,m}F,F)=0$.
By $I_{k,n+1}$ this is the case when $2g\ge 3n+k$. To show
injectivity, we first note that as usual, $\Si_{0,1}$ is always
injective. For $\Si_{1,-1}$, we get by $I_{k,n+2}$ that
$\Rel_{n+1}^V(\Si_{l,m}F,F)=0$ when $2g\ge 3(n+1)+k+2$. Finally, $\Si_{1,0}=\Si_{1,-1}\Si_{0,1}$ and thus also injective when $2g\ge 3(n+1)+k+2$.
\end{proof}
\newpage
\section{Stability of the space of surfaces}
In \cite{CM}, Cohen and Madsen consider the following type of
coefficients
\begin{equation*}
  V^X_n(F) := H_n(\Map(F/\dd F,X))
\end{equation*}
for $X$ a fixed topological space.

\begin{lem}\label{l:Eilenberg}
Let $K=K(G;k)$ be an Eilenberg-MacLane space with $k\ge 2$.
Assume $H_*(K)$ is without infinite division. Then $V^K_n$ is a
coefficient system of degree $\le \textstyle\round{\frac{n}{k-1}}$.
\end{lem}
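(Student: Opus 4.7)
The plan is to induct on $n$. The base case $n=0$ is immediate: $V^K_0(F) = H_0(\Map(F/\dd F, K)) = \Z[\pi_0 \Map(F/\dd F, K)] = \Z[H^k(F, \dd F; G)]$, and Poincar\'e-Lefschetz duality identifies $H^k(F, \dd F; G) \iso H_{2-k}(F; G)$, which is $G$ for $k=2$ and vanishes for $k>2$. Either way $V^K_0$ is independent of $F$, hence constant, giving degree $0 = \round{0/(k-1)}$.

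For the induction step, fix $(i,j) \in \set{(0,1),(1,0),(1,-1)}$. Collapsing the added collar $N = \Si_{i,j}F \fra \textrm{int}(F)$ inside $\Si_{i,j}F/\dd(\Si_{i,j}F)$ produces a cofibration sequence
\begin{equation*}
B_{i,j} \To \Si_{i,j}F/\dd(\Si_{i,j}F) \To F/\dd F, \qquad B_{i,j} := N/(N \cap \dd \Si_{i,j}F).
\end{equation*}
A direct calculation of the relevant identifications gives $B_{0,1} \iso B_{1,-1} \iso S^1$ (a pair of pants with two free boundary circles pinched to one basepoint, or with its single free boundary capped, respectively) and $B_{1,0} \iso S^1 \vee S^1$ (a torus with two disks removed and its single free boundary collapsed is the once-punctured torus). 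Applying $\Map_*(-, K)$ converts this into a fiber sequence of abelian topological groups
\begin{equation*}
\Map_*(F/\dd F, K) \To \Map_*(\Si_{i,j}F/\dd, K) \To \Map_*(B_{i,j}, K)
\end{equation*}
whose base is $K(G, k-1)$ or $K(G, k-1)^2$, in particular $(k-2)$-connected; all three terms are generalized Eilenberg-MacLane spaces with $\pi_i \Map_*(Y, K) = \tilde H^{k-i}(Y; G)$.

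Next I would run the Serre spectral sequence $E^2_{p,q} = H_p(\Map_*(B_{i,j}, K); V^K_q(F)) \Tto V^K_{p+q}(\Si_{i,j} F)$ (picking up a harmless tensor factor of $H_*(K)$ via the splitting $\Map = \Map_* \times K$). The $(k-2)$-connectivity of the base forces $E^2_{p,q} = 0$ for $0 < p < k-1$, so the cokernel $\D_{i,j} V^K_n$ is assembled by extensions from subquotients of $H_p(\Map_*(B_{i,j}, K)) \tensor V^K_{n-p}(F)$ (and Tor analogues, all remaining in $\textrm{Ab}_{\textrm{wid}}$ thanks to the hypothesis on $H_*(K)$) with $p \ge k-1$. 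Each such factor involves $V^K_{n-p}(F)$ with $n - p \le n - (k-1)$, hence by the inductive hypothesis has degree at most $\round{(n-k+1)/(k-1)} = \round{n/(k-1)} - 1$. Since tensoring with a constant abelian group and forming extensions both preserve degree bounds, $\D_{i,j} V^K_n$ has degree at most $\round{n/(k-1)} - 1$.

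Split injectivity of $V^K_n(F) \hookrightarrow V^K_n(\Si_{i,j}F)$ is the delicate step and principal obstacle. Because all three terms of the fiber sequence are GEMs, the fibration is determined up to weak equivalence by its long exact homotopy sequence, which via $\pi_i \Map_*(Y, K) = \tilde H^{k-i}(Y; G)$ is precisely the long exact cohomology sequence of $B_{i,j} \To \Si_{i,j}F/\dd \To F/\dd F$. This breaks into short exact sequences $0 \To H^j(F, \dd F; G) \To H^j(\Si_{i,j}F, \dd; G) \To \tilde H^j(B_{i,j}; G) \To 0$ on each degree $j$, and one must verify that each such sequence splits. This is transparent for torsion-free $G$ (the boundary cycles freshly produced by $\Si_{i,j}$ provide a free summand) and in general follows from the geometric wedge splitting $\Si_{i,j}F/\dd \iso F/\dd F \vee B_{i,j}$, obtained using the capping retraction $\Si_{0,-1}$ (for $(i,j) = (0,1)$) and its analogues. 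Once the fibration of GEMs is trivial we have $\Map_*(\Si_{i,j}F/\dd, K) \iso \Map_*(F/\dd F, K) \times \Map_*(B_{i,j}, K)$; split injectivity follows by projection onto the first factor, and the cokernel computation above becomes a clean K\"unneth decomposition.
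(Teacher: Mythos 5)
Your overall strategy coincides with the paper's: both proceed via the cofibration $B_{i,j}\to\Si_{i,j}F/\dd\to F/\dd F$, apply $\Map_*(-,K)$, argue that the resulting fibration of mapping spaces is a product, and read off the cokernel by a K\"unneth-type decomposition (the Serre spectral sequence in your write-up collapses to K\"unneth precisely once the product structure is established, so it is the same argument). You are more careful than the paper in several respects -- treating all three $\Si_{i,j}$, spelling out $n=0$, and correctly flagging the product decomposition as the delicate point; the paper's one-sentence appeal to the H-space structure is itself insufficient, since $K(\Z,n)\xrightarrow{\times 2}K(\Z,n)\to K(\Z/2,n)$ is a non-split fibration of topological abelian groups.

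That said, both of the justifications you give for the splitting are wrong, and one required check is missing. First, a fibration of GEMs is \emph{not} determined up to weak equivalence by its long exact homotopy sequence: $Sq^2\colon K(\Z/2,n)\to K(\Z/2,n+2)$ is a nonzero infinite loop map which induces zero on every homotopy group, so the principal fibration it classifies has a split homotopy LES yet is nontrivial. Second, the claimed wedge splitting $\Si_{i,j}F/\dd\simeq F/\dd F\vee B_{i,j}$ is false for $(i,j)=(1,0)$ and $(1,-1)$: for instance $\Si_{1,0}F/\dd(\Si_{1,0}F)$ is a closed genus-$(g+1)$ surface wedged with circles, and the $H^1$-classes of the new handle have nonzero cup product, whereas in $(F/\dd F)\vee B_{1,0}$ the corresponding classes lie in wedge-summand circles and cup trivially; moreover there is no analogue of the capping retraction, since capping the free boundary of $\Si_{1,0}F$ raises the genus rather than retracting onto $F$. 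What does suffice -- and is close in spirit to your torsion-free remark -- is that $H_*(B_{i,j};\Z)\into H_*(\Si_{i,j}F/\dd;\Z)$ is a split injection of \emph{free} abelian groups (the new handle or pants $1$-cycles extend to a basis), hence split after $\tensor\, G$ for every $G$; since $\Map_*(X,K(G,k))$ depends only on $H_*(X;G)$ as a graded group, the fibration is then a product. Finally, you do not verify that $V^K_n(F)$ is without infinite division, which is part of the definition of a coefficient system; the paper does this by a double induction reducing to $V^K_0$ and to $V^K_*(D)=H_*(\Omega^2K)$ and invoking the hypothesis on $H_*(K)$, and your parenthetical ``all remaining in $\textrm{Ab}_{\textrm{wid}}$'' would need to be turned into an actual argument.
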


\begin{proof}
To prove $V_n^K$ is a coefficient system of degree $\le
\textstyle\round{\frac{n}{k-1}}$, we must prove that the groups $V_n^K(F)$ are without infinite division, and that $V_n^K$ has the right degree.

We consider the degree first, and the proof is by induction on $n$.
Take $\Si=\Si_{1,0}$, the other cases are similar. We have the following homotopy cofibration:
\begin{equation*}
  S^1\wedge S^1 \To \Si F/\dd \Si F\To  F/\dd F
  \end{equation*}
Taking $\Map(-,K)$ leads to the following fibration:
\begin{equation}\label{e:fibration}
  \Map(F/\dd F,K) \To \Map(\Si F/\dd \Si F,K)\To \Omega(K)\times\Omega(K)
  \end{equation}
Since $K=K(G,k)$ is an infinite loop space it has a multiplication,
and consequently so has each space in the fibration
\eqref{e:fibration} above. Thus the total space is up to homotopy
the product of the base and the fiber. Using Künneth's formula, we
get:
\begin{equation}\label{e:Kunneth}
V^K_n(\Si F)=\bigoplus_{i=0}^n V^K_{n-i}(F)\tensor
H_{i}(\Omega(K)\times\Omega(K))
\end{equation}
Note for $n=0$ this says that $\Si$ induces an isomorphism, so
$V_0^K(F)$ has degree $0$. This was the induction start.

Now since $\Omega(K)=K(G,k-1)$ is $(k-2)$-connected and $k\ge 2$,
$H_{0}(\Omega(K)\times\Omega(K))=\Z$ and
$H_{j}(\Omega(K)\times\Omega(K))=0$ for $j\le k-2$. This means that
the cokernel of $\Si$ is:
\begin{equation*}
\D(V_n^K(F))=\bigoplus_{i=k-1}^n V_{n-i}^K(F)\tensor
H_{i}(\Omega(K)\times\Omega(K))
\end{equation*}

Since the degree of a direct sum is the maximum of the degrees of
its components, we get by induction that the degree of
$\D(V_n^K(F))$ is $\le
\textstyle\round{\frac{n-(k-1)}{k-1}}=\textstyle\round{\frac{n}{k-1}}-1$.
This shows that the degree of $V_n^K$ is $\le
\textstyle\round{\frac{n}{k-1}}$.

It remains to show that $V_n^K(F)$ is an abelian group without infinite division for any surface $F$. To prove this, we use a double induction in $n$ and $F$. There are two base cases.

First consider $n=0$, $F$ any surface. From \eqref{e:Kunneth} we see
that $V_0^K$ does not depend on the surface $F$. So we can calculate
$V_0^K(F)$ using $F=D$ a disk:
\begin{equation*}
    V_0^K(F)= H_0(\Map(D/\dd D,K))=\Z[\pi_2(K)]=\left\{
                                                                    \begin{array}{ll}
                                                                      \Z, & \hbox{$k>2$;} \\
                                                                      \Z[G], & \hbox{$k=2$.}
                                                                    \end{array}
                                                                  \right.
\end{equation*}
This is an abelian group without infinite division.

Secondly, let $F=D$ be a disk, and $n$ any natural number. We see
\begin{eqnarray*}
  V_n^K(D) &=& H_n(\Map(D/\dd D,K)) = H_n(\Map(S^2,K)) \\
  &=& H_n(\Map(S^0,\Omega^2(K))=H_n(\Omega^2(K))
\end{eqnarray*}
and according to our assumptions on $H_*(K)$, this is without infinite division.

The general case now follows from induction using \eqref{e:Kunneth}
and its counterpart for $\Si=\Si_{0,1}$, along with the fact that any surface $F$ with boundary can be obtained from a disk $D$ using $\Si_{1,0}$ and $\Si_{0,1}$ finitely many times.
\end{proof}


To prove the next theorem we need a couple of lemmas:

\begin{lem}\label{l:tensor}
Let $V$ and $W$ be coefficient systems of degrees $\le s$ and $\le t$, respectively. Then $V\tensor W$ is a coefficient system of degree $\le s+t$, and $V\oplus W$ is a coefficient system of degree $\le \max(s,t)$.
\end{lem}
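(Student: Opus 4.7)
The plan is to prove both assertions by induction, handling the direct sum first since it is needed for the tensor product step.

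For the direct sum, I would induct on $m=\max(s,t)$. If $m=0$ both $V$ and $W$ are constant, so $V\oplus W$ is constant and has degree $0$. For the inductive step, given $(i,j)\in\{(1,0),(0,1),(1,-1)\}$, the map
\begin{equation*}
(V\oplus W)(F)=V(F)\oplus W(F)\To V(\Si_{i,j}F)\oplus W(\Si_{i,j}F)=(V\oplus W)(\Si_{i,j}F)
\end{equation*}
is a direct sum of split injections, hence split injective with retraction the direct sum of the retractions. For $(i,j)\in\{(1,0),(0,1)\}$ the cokernel is functorially isomorphic to $\Delta_{i,j}V\oplus\Delta_{i,j}W$, which by the inductive hypothesis has degree $\le\max(s-1,t-1)=\max(s,t)-1$.

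For the tensor product I would induct on $s+t$, with base case $s=t=0$ (where $V\tensor W$ is a constant functor with value $V(F_0)\tensor W(F_0)$). For the inductive step, fix $(i,j)$ and use the chosen splittings to write canonically $V(\Si_{i,j}F)=V(F)\oplus\Delta_{i,j}V(F)$ and $W(\Si_{i,j}F)=W(F)\oplus\Delta_{i,j}W(F)$. Distributing the tensor product yields
\begin{equation*}
V(\Si_{i,j}F)\tensor W(\Si_{i,j}F)\iso\big(V(F)\tensor W(F)\big)\oplus\big(V\tensor\Delta_{i,j}W\oplus\Delta_{i,j}V\tensor W\oplus\Delta_{i,j}V\tensor\Delta_{i,j}W\big)(F),
\end{equation*}
which is natural in $F$ (using that the splittings can be chosen naturally from the structure of the coefficient systems). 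Hence the inclusion $(V\tensor W)(F)\To(V\tensor W)(\Si_{i,j}F)$ is split injective, and for $(i,j)\in\{(1,0),(0,1)\}$ the cokernel $\Delta_{i,j}(V\tensor W)$ is the three-term direct sum on the right. By the inductive hypothesis the summands have degrees $\le s+(t-1)$, $\le(s-1)+t$, and $\le(s-1)+(t-1)$ respectively, so by the direct sum result the cokernel has degree $\le s+t-1$, giving $V\tensor W$ degree $\le s+t$.

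The main obstacle I anticipate is not the degree bookkeeping, which is routine once the natural splitting above is in hand, but the verification that $V\tensor W$ actually lands in $\mathrm{Ab}_{\mathrm{wid}}$, i.e.\ that the tensor product of two abelian groups without infinite division is again without infinite division. This fails for pathological groups in general, so one has to rely on the fact that the coefficient systems arising in practice (e.g.\ $H_1(F;\Z)$, its exterior powers, or the $V_n^X$ of the preceding lemma, whose values are $H_*$ of nice spaces) take values in abelian groups for which tensor products behave well — concretely, groups whose underlying modules have enough free/torsion structure that infinite divisibility can be ruled out directly. I would include a short remark or lemma isolating this point before invoking Definition \ref{d:coef}.
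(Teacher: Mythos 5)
Your argument follows essentially the same route as the paper's: use the split decompositions $V(\Si F)\iso V(F)\oplus\D V(F)$, $W(\Si F)\iso W(F)\oplus\D W(F)$, distribute the tensor product, and induct on degree, with the direct-sum case handled by the analogous (easier) argument. You are in fact a bit more careful than the paper --- its displayed split exact sequence records the cokernel of $V(F)\tensor W(F)\To V(\Si F)\tensor W(\Si F)$ as $\D V(F)\tensor W(F)\oplus V(F)\tensor\D W(F)$, silently dropping the summand $\D V(F)\tensor\D W(F)$ that your three-term decomposition correctly supplies (equivalently the paper should have written $\D V(F)\tensor W(\Si F)$) --- and your concern about whether $\textrm{Ab}_{\textrm{wid}}$ is closed under $\tensor$ is genuine; the paper leaves this unaddressed in the lemma and only returns to the point in the proof of Theorem \ref{t:stabil1}, where it is finessed by hypothesizing that the relevant values lie in $\textrm{Ab}_{\textrm{wid}}$ and observing that direct summands inherit the property.
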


\begin{proof}Since $V$ is a coefficient system, we have the split exact sequence:
\begin{equation*}
    0\To V(F)\To V(\Si F)\To \D(V(F))\To 0.
\end{equation*}
Likewise for $W$. Then for the tensor product we get the split exact sequence:
\begin{eqnarray*}
  0&\To& V(F)\tensor W(F)\To V(\Si F)\tensor W(\Si F)\\
  &\To&  \D(V(F))\tensor W(F)\oplus V(F)\tensor \D(W(F)) \To 0.
\end{eqnarray*}
\end{proof}

\begin{thm}\label{t:stabil1}Let $X$ be a $k$-connected space, $k\ge 1$. If $V^X_n(F)$ is without infinite division for any surface $F$, then $V^X_n$ is a coefficient system of degree $\le \round{\frac{n}{k}}$.
\end{thm}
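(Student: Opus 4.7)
The plan is to adapt the argument of Lemma \ref{l:Eilenberg} by replacing the K\"unneth formula (which relied on the $H$-space structure of an Eilenberg--MacLane space to split the total space of the relevant fibration as a product) with the Serre spectral sequence. For each $\Si\in\{\Si_{1,0},\Si_{0,1},\Si_{1,-1}\}$ one has a homotopy cofibration
\begin{equation*}
C_\Si\To \Si F/\dd\Si F\To F/\dd F,
\end{equation*}
with $C_\Si$ a wedge of two circles or $S^1\wedge S^1$, and applying $\Map(-,X)$ yields a Serre fibration with fiber $\Map(F/\dd F,X)$ and base $B_\Si=\Map(C_\Si,X)$, a product of loop spaces on $X$. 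Because $X$ is $k$-connected, $B_\Si$ is $(k-1)$-connected, and in particular $H_p(B_\Si)=0$ for $0<p<k$.

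I would argue by induction on $n$. The base case $n<k$ forces all $E^2_{p,n-p}$ with $p>0$ to vanish, so $V^X_n$ is constant and of degree $0=\round{n/k}$. For the inductive step, since $\Omega X$ is an $H$-space, $\pi_1(B_\Si)$ acts trivially on the homology of any fibration over $B_\Si$, so the Serre spectral sequence has untwisted coefficients,
\begin{equation*}
E^2_{p,q}=H_p(B_\Si;V^X_q(F))\Tto V^X_{p+q}(\Si F).
\end{equation*}
The vanishing strip $0<p<k$ isolates $E^2_{0,n}=V^X_n(F)$ at the left edge, while the rest of the filtration on $V^X_n(\Si F)$ is assembled by iterated extensions from the groups $E^\infty_{p,n-p}$ with $p\ge k$. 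By universal coefficients each such $E^2_{p,n-p}$ is an extension of $V^X_{n-p}(F)\otimes H_p(B_\Si)$ by $\mathrm{Tor}(V^X_{n-p-1}(F),H_p(B_\Si))$; the induction hypothesis gives these coefficient systems of degree at most $\round{(n-p)/k}\le\round{n/k}-1$, and an easy extension of Lemma \ref{l:tensor} (tensor, Tor, direct sum, subquotient, and extension preserve the degree bound) shows the cokernel $\Delta V^X_n$ has degree at most $\round{n/k}-1$, as required.

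It remains to verify that the edge map $V^X_n(F)\To V^X_n(\Si F)$ coincides with the map induced by $\Si$, and that this map is split injective, so that $\Delta V^X_n$ is genuinely a direct summand. For $\Si=\Si_{0,1}$ this is automatic because capping the new boundary component with a disk yields a natural retraction of $\Si_{0,1}F$ onto $F$; for $\Si_{1,0}=\Si_{1,-1}\circ\Si_{0,1}$ it suffices to treat $\Si_{1,-1}$, for which the hypothesis that each $V^X_n(F)$ is without infinite division is used, in analogy with Ivanov's argument in \cite{Ivanov1}, to upgrade the injectivity provided by the spectral sequence to split injectivity.

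The principal obstacle will be the verification of \emph{split} injectivity for $\Si_{1,-1}$, which has no literal geometric retract in $\mathfrak{C}$; here the ``no infinite division'' hypothesis plays an essential role. A secondary technical point is the case $k=1$, where one must use the $H$-space structure of $\Omega X$ to ensure that the Serre spectral sequence has untwisted coefficients, and a further point is the naturality of the entire filtration in $F\in\mathfrak{C}$, so that each subquotient is honestly a functor on $\mathfrak{C}$ and the iterated extensions assembling $\Delta V^X_n$ define a well-posed coefficient system.
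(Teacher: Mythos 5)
Your proposal takes a genuinely different route from the paper. The paper reduces to the Eilenberg--MacLane case (Lemma \ref{l:Eilenberg}) via a Postnikov tower of $X$ and runs the Serre spectral sequence of the fibrations $\Map(F,K(\pi_m X,m))\To \Map(F,X_m)\To \Map(F,X_{m-1})$ with $F$ held fixed, so that the $E^2$-page is a \emph{tensor product of coefficient systems} to which Lemma \ref{l:tensor} applies directly. You instead hold $X$ fixed and run the Serre spectral sequence of the single fibration $\Map(F/\dd F,X)\To\Map(\Si F/\dd \Si F,X)\To B_\Si$ coming from the geometric cofibration, so the $E^2$-page is $H_p(B_\Si;V^X_q(F))$. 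If it worked, this would arguably be shorter, avoiding the Postnikov tower entirely; it does correctly identify that the connectivity of $B_\Si$ isolates $V^X_n(F)$ on the fiber column and pushes the cokernel into filtration degrees $p\ge k$, and the edge map $V^X_n(F)\To V^X_n(\Si F)$ does coincide with the structure map of the coefficient system.

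The central gap is the parenthetical claim that ``tensor, Tor, direct sum, subquotient, and extension preserve the degree bound,'' presented as an easy extension of Lemma \ref{l:tensor}. The tensor and direct sum parts are Lemma \ref{l:tensor}, and Tor can be added by a similar additivity argument, but \emph{subquotient and extension do not preserve degree} in the naive sense: if $V'\subseteq V$ is a subfunctor and $V(F)\To V(\Si F)$ is split injective, the splitting need not restrict to $V'$; and a quotient $V/V'$ need not even have injective structure maps. This is precisely the difficulty that makes up the bulk of the paper's proof: the paper tracks an \emph{additional inductive hypothesis} that all the splittings are natural transformations compatible with the spectral-sequence differentials, and it is this compatibility that lets one pass to kernels modulo images $E^{r+1}=H(E^r)$ while preserving both the degree bound and the naturality of the splitting. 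You mention naturality of the filtration as ``a further point,'' but it is not a further point --- it is where the actual work of the theorem lies, and your appeal to ``an easy extension of Lemma \ref{l:tensor}'' papers over it.

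The second gap, which you do acknowledge, is split injectivity of $V^X_n(F)\To V^X_n(\Si_{1,-1}F)$. In the paper this is supplied at the Eilenberg--MacLane level, where the $H$-space structure of $\Omega K$ makes the fibration \eqref{e:fibration} trivial up to homotopy, so one gets an honest product splitting from \eqref{e:Kunneth}; for a general $k$-connected $X$ this structure is absent, the Serre spectral sequence may well have nonzero differentials $d^r\colon E^r_{r,n-r+1}\To E^r_{0,n}$ for $r\ge k$, and you have no independent argument that $E^2_{0,n}=E^\infty_{0,n}$ --- which you need both for injectivity and for the claim that the cokernel is filtered by the $E^\infty_{p,n-p}$ with $p\ge k$. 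For $\Si_{0,1}$ the disk-capping retraction supplies this, but for $\Si_{1,-1}$ the appeal to ``analogy with Ivanov'' is not yet an argument, and the logic as written is circular: you want the spectral sequence to produce (split) injectivity, but you need the injectivity to control the spectral sequence.
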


\begin{proof}First note: If we prove the assertion concerning the degree as in Def. \ref{d:coef} (not including without infinite division), then since $V^X_n$ is assumed without infinite division, the cokernels $\D_{i,j}(V^X_n)$ (and their cokernels, etc) are automatically without infinite division, since they are direct summands of $V^X_n$.

The proof uses Postnikov towers and Lemma \ref{l:Eilenberg} above. The Postnikov tower of $X$ is a sequence $\set{X_m\To X_{m-1}}_{m\ge k}$ with each term a fibration
\begin{equation}\label{e:postnikov}
    K(\pi_m(X),m)\To X_m\To X_{m-1}.
\end{equation}
The proof is by induction in $m$, so assume for $l<m$ that $V^{X_{l}}_n$ is a coefficient system of degree $\le \round{\frac{n}{k}}$. To make the induction work, we also assume inductively that the splitting $s_l$ we then have by definition,
\begin{equation*}
    \xymatrix{0\ar[r]& V^{X_{l}}_n\ar[r]& \Si V^{X_{l}}_n\ar[r]& \D(V^{X_{l}}_n)\ar@/_/[l]_{s_l}\ar[r]&0
    }
\end{equation*}
is a natural transformation from $\D(V^{X_{l}}_n)$ to $\Si V^{X_{l}}_n$.

Now we take the induction step. Let $F$ be a surface. Then using $\Map(F,-)$ on \eqref{e:postnikov} yields a new fibration
\begin{equation*}
    \Map(F,K(\pi_m(X),m))\To \Map(F,X_m)\To \Map(F,X_{m-1}).
\end{equation*}
Serre's spectral sequence for this fibration has $E^2$-term:
\begin{eqnarray}\label{e:sss}
    E^2_{s,t}(F)&=& H_s(\Map(F,X_{m-1}))\tensor
    H_t(\Map(F,K(\pi_m(X),m))\nonumber \\
    &=& V^{X_{m-1}}_s(F)\tensor V^{K(\pi_m(X),m)}_t(F).
\end{eqnarray}
Now $X_{m-1}$ is $k$-connected, since $X$ is, and $K(\pi_m(X),m)$ is
at least $k$-connected. Then by induction and Lemma \ref{l:tensor}, $ E^2_{s,t}$ is a coefficient system of degree $\le \round{\frac{s}{k}} +\round{\frac{t}{k}}\le \round{\frac{s+t}{k}}$.

We now want to prove that $E^r_{s,t}$ is a coefficient system of degree $\textstyle\le\round{\frac{s+t}{k}}$ for all $r\ge 2$, by induction in $r$.
Let $V_1\stackrel{d}{\To} V\stackrel{d}{\To} V_2$ be groups in the $E^r$ term of the spectral sequence, where $d$ denotes the $r$th differential, and say $V$ has degree $\le q$. We assume by induction in $r$ that the splittings for $V$, $V_1$ and $V_2$ (see \eqref{e:splittings}) are natural transformations. For $r=2$ this holds according to \eqref{e:sss} by induction in $m$ and by \eqref{e:Kunneth} (the Eilenberg-MacLane space case). We want to show that the homology of $V$ with respect to $d$, $H(V)$, is a coefficient system of degree $\le q$, and that the splitting for $H(V)$ is also natural. Suppose by another induction that this holds for coefficient systems of degrees $<q$.

Then consider the following diagram, where $\Si$ as usual denotes either $\Si_{1,0}$ or $\Si_{0,1}$.
\begin{equation}\label{e:splittings}
\xymatrix{
0\ar[r]& V_1\ar[r]^{\Si}\ar[d]^{d} & \Si V_1\ar[r]\ar[d]^{d} & \Delta_1\ar[r]\ar[d]^{d}\ar@/_/[l] &0 \\
0\ar[r]& V\ar[r]^{\Si}\ar[d]^{d} & \Si V\ar[r]\ar[d]^{d} & \Delta\ar[r]\ar[d]^{d}\ar@/_/[l] &0 \\
0\ar[r]& V_2\ar[r]^{\Si} & \Si V_2\ar[r] &\Delta_2\ar[r]\ar@/_/[l] &0
}
\end{equation}
We know $\Si V= V\oplus \D$, and similarly for $V_1$ and $V_2$. By our induction hypothesis in $r$ we get that the splittings in the right-most squares above commute with $d$. Then the homology with respect to $d$ satisfies $H(\Si V)= H(V)\oplus H(\D)$, and the splitting for $H(V)$ is again natural. This shows that the cokernel $\D(H(V))$ of $\Si$ is $H(\D)$. Since $\D$ is a coefficient system of degree $\le q-1$, we get by induction in the degree that $H(V)$ is a coefficient system of degree $\le q$. For the degree-induction start, if $V$ is constant, $H(V)$ is also constant.

To finish the induction in $m$ we must prove that the splitting $s_m: \D(V^{X_{m}}_n)\To\Si V^{X_{m}}_n$ is a natural transformation. By the above, $E^r_{s,t}$ is a coefficient system of degree $\le \round{\frac{s+t}{k}}$ for all $r$, so the same is true for $E^\infty_{s,t}$. Since the spectral sequence converges to $V^{X_m}_n(F)$ for $n=s+t$, we get that $V^{X_m}_n(F)$ is a coefficient system of degree $\le \round{\frac{n}{k}}$.

%
%
The inverse limit of the Postnikov tower $\lim_{\leftarrow}X_m$ is
weakly homotopy equivalent to $X$, and the result follows.
\end{proof}

The space of surfaces mapping into a background space $X$ with
boundary conditions $\g$ is defined as follows: Let $X$ be a space
with base point $x_0\in X$, and let $\g:\coprod S^1\To X$ be $r$
loops in $X$. Then
\begin{eqnarray*}
    \mathcal{S}_{g,r}(X,\g) &=& \left\{(F_{g,r},\f,f)\mid
    F_{g,r}\del \R^\infty\times [a,b], \f:\sqcup S^1\To \dd
F_{g,r}\text{ is a para-}\right.\\
   &&\left. \text{metrization},
    f: F_{g,r}\To X
    \text{ is continuous with }f\circ \f=\g\right\}
\end{eqnarray*}

Assume now $X$ is simply-connected. Then we observe that the
homotopy type of $\mathcal{S}_{g,r}(X,\g)$ does not depend on $\g$:
For consider the space of surfaces with no boundary conditions, call
it $\overline{\mathcal{S}_{g,r}(X)}$. The restriction map to the
boundary of the surfaces,
\begin{equation*}
    \mathcal{S}_{g,r}(X,\g) \To \overline{\mathcal{S}_{g,r}(X)}\To (LX)^r
\end{equation*}
is a Serre fibration. Here, $LX=\textrm{Map}(S^1,X)$ is the free
loop space, so as $X$ is simply-connected, $(LX)^r$ is connected, so
the fiber is independent of the choice of $\g\in (LX)^r$. So when
$X$ is simply-connected, we use the abbreviated notation
$\mathcal{S}_{g,r}(X)=\mathcal{S}_{g,r}(X,\g)$ for any choice of
$\g$.

\begin{thm}
Let $X$ be a simply-connected space such that $V^X_m$ is without infinite division for all $m\le n$. Then
\begin{equation*}
    H_n(\mathcal{S}_{g,r}(X))
\end{equation*}
is independent of $g$ and $r$ for $2g\ge 3n+3$ and $r\ge 1$.
\end{thm}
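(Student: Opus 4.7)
\medskip

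\noindent\textbf{Proof plan.} The strategy is to recognize $\mathcal{S}_{g,r}(X)$ as a Borel construction and reduce the statement to a comparison of Serre spectral sequences whose $E^2$-pages involve precisely the twisted mapping class group homology to which Theorem \ref{t:abstwist} applies. Concretely, $\mathcal{S}_{g,r}(X)$ is homotopy equivalent to $E\textup{Diff}_+(F_{g,r},\dd) \times_{\textup{Diff}_+(F_{g,r},\dd)} \Map^{\g}(F_{g,r},X)$ with fixed boundary data $\g$. Since $X$ is simply connected, the fibre is independent of $\g$ up to homotopy, so we may identify it with $\Map(F_{g,r}/\dd F_{g,r},X)$. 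For $g\ge 1$ the components of $\textup{Diff}_+(F_{g,r},\dd)$ are contractible (Earle--Eells), hence $B\textup{Diff}_+(F_{g,r},\dd)\simeq B\G_{g,r}$, and we obtain a Serre fibration
\begin{equation*}
    \Map(F_{g,r}/\dd F_{g,r},X) \To \mathcal{S}_{g,r}(X) \To B\G_{g,r},
\end{equation*}
with associated spectral sequence
\begin{equation*}
    E^2_{p,q}=H_p(\G_{g,r};V^X_q(F_{g,r})) \Tto H_{p+q}(\mathcal{S}_{g,r}(X)).
\end{equation*}
The three functors $\Si_{l,m}$ act on the whole picture and induce maps of spectral sequences.

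Next I would invoke Theorem \ref{t:stabil1} with $k=1$: the simply-connected hypothesis on $X$, combined with the assumption that each $V^X_m$ is without infinite division for $m\le n$, gives that $V^X_q$ is a coefficient system of degree $\le q$ in this range. Feeding this into Theorem \ref{t:abstwist}, the stabilisation $(\Si_{0,1})_*$ is an isomorphism on $E^2_{p,q}$ as soon as $2g\ge 3p+q$, and $(\Si_{1,-1})_*$ is an isomorphism for $2g\ge 3p+q+2$, with the corresponding surjectivity ranges one degree lower. A direct bookkeeping check shows that the hypothesis $2g\ge 3n+3$ forces isomorphism on all $E^2_{p,q}$ with $p+q\le n$ and surjectivity on all $E^2_{p,q}$ with $p+q=n+1$ (the worst case being $p=n+1$, $q=0$ for $\Si_{0,1}$, which demands exactly $2g\ge 3n+3$). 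Applying Moore's comparison theorem, already used in the proof of Theorem \ref{t:closed}, then yields that each $\Si_{l,m}$ induces an isomorphism on $H_n(\mathcal{S}_{g,r}(X))$ in this range; since any two surfaces of sufficient genus with $r\ge 1$ can be joined by a chain of such stabilisations (and their homotopy inverses for $\Si_{0,1}$, cf.\ the argument in \textbf{Injectivity for $\Si_{0,1}$}), independence of $g$ and $r$ follows.

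The main technical obstacle, and the place where some care is needed, is the identification of $\mathcal{S}_{g,r}(X)$ with the Borel construction and the verification that the stabilisation map on the spaces of surfaces really does induce on $E^2$-pages exactly the map $\Si_{l,m}$ of twisted coefficients as defined in section 4. One has to verify that gluing a pair of pants (or a genus-one surface with two holes) induces on $\Map(F/\dd F,X)$ the structural map that makes $V^X_q$ into a functor on $\mathfrak{C}$; this amounts to unwinding definitions and checking compatibility of the collapse maps $\Si_{l,m}F/\dd\Si_{l,m}F \to F/\dd F$ used in the proof of Lemma \ref{l:Eilenberg} with the geometric stabilisation. Once this is done, the rest is the comparison argument above, and the numerical bound $2g\ge 3n+3$ is seen to be exactly what Theorem \ref{t:abstwist} delivers.
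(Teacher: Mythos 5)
Your proposal follows the paper's argument essentially step for step: you realize $\mathcal{S}_{g,r}(X)$ as the Borel construction for $\textrm{Diff}_+(F_{g,r},\dd)$ acting on the mapping space, pass to the Serre fibration over $B\G_{g,r}$ (using contractibility of $\textrm{Emb}(F_{g,r},\R^\infty)$ and of the diffeomorphism components), identify the $E^2$ page as $H_p(\G_{g,r};V^X_q(F_{g,r}))$, feed in Theorem \ref{t:stabil1} (giving $\deg V^X_q\le q$ since $X$ is $1$-connected) and Theorem \ref{t:abstwist} to stabilize the $E^2$ page, and compare spectral sequences to conclude for $2g\ge 3n+3$ (the binding constraint being surjectivity at $(p,q)=(n+1,0)$). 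The only cosmetic difference is that the paper cites Zeeman's comparison theorem where you cite Moore's; these play the same role and give the same numerical bound, and you are in fact a bit more careful than the paper in identifying the fibre as $\Map(F/\dd F,X)$ rather than $\Map(F,X)$.
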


\begin{proof}
Let $\Si$ be either $\Si_{1,0}$ or $\Si_{0,1}$. From the definition
we observe that
\begin{equation*}
\mathcal{S}_{g,r}(X)\cong
\textrm{Emb}(F_{g,r},\R^\infty)\times_{\textrm{Diff}(F_{g,r},\dd)}\Map(F_{g,r},X),
\end{equation*}
and since $\textrm{Emb}(F_{g,r},\R^\infty)$ is contractible, we get
\begin{equation*}
\mathcal{S}_{g,r}(X)\cong E(\textrm{Diff}(F_{g,r},\dd))
\times_{\textrm{Diff}(F_{g,r},\dd)}\Map(F_{g,r},X).
\end{equation*}
So there is an obvious fibration sequence
\begin{equation*}
\Map(F_{g,r},X)\To\mathcal{S}_{g,r}(X)\To
B(\textrm{Diff}(F_{g,r},\dd),
\end{equation*}
and thus we can apply Serre's spectral sequence, which has $E^2$
term:
\begin{equation*}
E^2_{s,t}= H_s(B(\textrm{Diff}(F_{g,r},\dd);H_t(\Map(F_{g,r},X)))
\end{equation*}
where the coefficients are local. The path components of
$\textrm{Diff}(F_{g,r},\dd)$ are contractible, so we get an
isomorphism
\begin{equation}\label{e:E^2ib}
E^2_{s,t}\iso H_s(\G(F_{g,r});H_t(\Map(F_{g,r},X)))
\end{equation}
Consider the map induced by $\Si$ on this spectral sequence
\begin{equation*}
    \Si_*: H_s(\G(F_{g,r});H_t(\Map(F_{g,r},X))) \To H_s(\G(\Si F_{g,r});H_t(\Map(\Si F_{g,r},X)))
\end{equation*}
By Theorem \ref{t:stabil1} and \ref{t:abstwist}, we know that this map is surjective for
$2g\ge 3s+t$, and an isomorphism for $2g\ge 3s+t+2$. We use Zeeman's comparison theorem to carry the result to $E^\infty$. To get the optimum
stability range, we must find the maximal $N=N(g)\in\Z$ such that
for $t\ge 1$,
\begin{eqnarray*}
  s+t\le N &\Tto& 2g\ge 3s+t+2 \quad \text{(isomorphism)}\\
  s+t =N+1&\Tto & 2g\ge 3s+t \quad \text{(surjectivity)}
\end{eqnarray*}

Zeeman's comparison theorem then says that $\Si_*$ induces
isomorphism on $E^\infty_{s,t}$ for $s+t\le N(g)$ and a surjection
for $s+t=N(g)+1$. Since the spectral sequence converges to
$H_n(\mathcal{S}_{g,r}(X))$, we get stability for $n\le N(g)$.

Clearly, the hardest requirement is $t=0$ (surjectivity), where we
get the inequality $2g\ge 3N+3$. One checks that this satisfies all
the other cases. So $H_n(\mathcal{S}_{g,r}(X))$ is independent of
$g,r$ for $2g\ge 3n+3$.
\end{proof}

Using this we can improve the stability range in Cohen-Madsen's stability result for the homology of the space of surfaces to the following, cf \cite{CM} Theorem 0.1:

\begin{thm}Let $X$ be a simply connected space such that $V^X_m$ is without infinite division for all $m$. Then for $2g\ge 3n+3$ and $r\ge 1$ we get an isomorphism
\begin{equation*}
    H_n(\mathcal{S}_{g,r}(X)_\bullet)\cong  H_n(\Om^\infty(\mathbb{CP}^\infty_{-1} \wedge X_+ )_\bullet).
\end{equation*}
\end{thm}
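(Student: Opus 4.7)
The plan is to combine the previous theorem on stability of $H_n(\mathcal{S}_{g,r}(X))$ with the stable computation carried out by Cohen--Madsen in \cite{CM}. Cohen--Madsen establish, for any simply connected space $X$ with each $V^X_m$ without infinite division, a homology equivalence
\begin{equation*}
\mathcal{S}_{\infty,1}(X)_\bullet \longrightarrow \Om^\infty(\mathbb{CP}^\infty_{-1}\wedge X_+)_\bullet,
\end{equation*}
inducing an isomorphism on all homology groups. This identification is independent of our stability range; what we contribute is a sharper bound on when $H_n(\mathcal{S}_{g,r}(X))$ has reached its stable value.

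First I would invoke the previous theorem to conclude that, under the hypothesis on $X$, the natural maps
\begin{equation*}
H_n(\mathcal{S}_{g,r}(X))\longrightarrow H_n(\mathcal{S}_{g+1,r}(X)), \qquad H_n(\mathcal{S}_{g,r}(X))\longrightarrow H_n(\mathcal{S}_{g,r+1}(X))
\end{equation*}
induced by $\Si_{1,0}$ and $\Si_{0,1}$ are isomorphisms whenever $2g\ge 3n+3$ and $r\ge 1$. The hypothesis that $V^X_m$ is without infinite division for all $m$ is what feeds Theorem \ref{t:stabil1} (giving the coefficient-system degree bound $\round{m/k}$) and hence Theorem \ref{t:abstwist}, which is the precise input needed in the Serre spectral sequence argument of the previous theorem.

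Next I would pass to the stable limit. Stability in $r$ (via $\Si_{0,1}$, which is always injective) together with stability in $g$ yields
\begin{equation*}
H_n(\mathcal{S}_{g,r}(X)) \;\iso\; \varinjlim_{g,r} H_n(\mathcal{S}_{g,r}(X)) \;=\; H_n(\mathcal{S}_{\infty,1}(X))
\end{equation*}
for $2g\ge 3n+3$ and $r\ge 1$, where the $+$-construction $\mathcal{S}_{g,r}(X)_\bullet$ does not affect integral homology. Finally, applying the Cohen--Madsen identification of the stable homology as that of $\Om^\infty(\mathbb{CP}^\infty_{-1}\wedge X_+)_\bullet$ yields the claimed isomorphism.

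The only real obstacle is bookkeeping: one must check that the simply-connectedness of $X$ ensures $\mathcal{S}_{g,r}(X)$ is well-defined up to homotopy independently of the choice of boundary parametrizations (handled in the discussion preceding the previous theorem), and that the hypothesis on $V^X_m$ is strong enough to license the appeal to Theorem \ref{t:stabil1} in every degree $m\le n$ actually used in the Serre spectral sequence. Since these points are settled already, the theorem is essentially a direct corollary of the previous one combined with \cite{CM}.
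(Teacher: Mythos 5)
Your proposal matches the paper's (essentially unstated) intended proof: the theorem is stated immediately after the stability result for $H_n(\mathcal{S}_{g,r}(X))$ with the remark that it follows by combining that stability with \cite{CM} Theorem 0.1, which is exactly the two ingredients you assemble. One small notational point: the subscript $\bullet$ in $\mathcal{S}_{g,r}(X)_\bullet$ and $\Om^\infty(\cdots)_\bullet$ denotes a basepoint (identity) component, following \cite{CM} and \cite{MW}, rather than a plus-construction, but this does not affect the substance of the argument since homology of a connected component is what the stability range controls.
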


\end{document}